\numberwithin{equation}{section}
\newtheorem{theorem}{Theorem}[section]
\newtheorem{lemma}[theorem]{Lemma}
\newtheorem{proposition}[theorem]{Proposition}
\newtheorem{corollary}[theorem]{Corollary}
\theoremstyle{definition}
\newtheorem{definition}[theorem]{Definition}
\newtheorem{remark}[theorem]{Remark}
\newtheorem{assumption}[theorem]{Assumption}
\newtheorem{conjecture}{Conjecture}
\newtheorem{problem}[conjecture]{Open Problem}
\newcommand{\lsi}[1]{\mathrm{LSI}(#1) }
\def\E{{\mathbb E}}
\def\R{{\mathbb R}}
\def\N{{\mathbb N}}
\def\FF{{\mathbb F}}
\def\PP{{\mathbb P}}
\def\P{{\mathcal P}}
\def\J{{\mathcal J}}
\def\W{{\mathcal W}}
\def\F{{\mathcal F}}
\def\C{{\mathcal C}}
\def\fx{a}
\def\fy{b}
\def\Var{\mathrm{Var}}
\def\Cov{\mathrm{Cov}}
\def\Enabla{\E^{\!\nabla}}
\def\const{r}
\def\testf{ h}
\title[Projected Langevin dynamics for entropic optimal transport]{Projected Langevin dynamics and a gradient flow for entropic optimal transport}
\author{Giovanni Conforti}
\address{Giovanni Conforti: Centre de Math\'ematiques Appliqu\'ees, \'Ecole Polytechnique. {Email: Giovanni.conforti@polutechnique.edu}}
\author{Daniel Lacker}
\address{Daniel Lacker: Department of Industrial Engineering \& Operations Research, Columbia University. {Email: daniel.lacker@columbia.edu}}
\thanks{G.C.\ acknowledges funding from the grant SPOT (ANR-20-CE40-0014). D.L.\ is partially supported by the NSF CAREER award DMS-2045328. S.P.\ gratefully acknowledges the support from NSF grants DMS-2134012, DMS-2133244 and DMS-2052239 and a PIMS PRN (Kantorovich Initiative). }
\author{Soumik Pal}
\address{Soumik Pal: Department of Mathematics, University of Washington. {Email: soumik@uw.edu}} 
\subjclass[2020]{49Q22, 60H30}
\keywords{optimal transport, entropy regularization, gradient flow, log-Sobolev inequalities, McKean-Vlasov diffusions}
\begin{document}

\begin{abstract}
The classical (overdamped) Langevin dynamics provide a natural algorithm for sampling from its invariant measure, which uniquely minimizes an energy functional over the space of probability measures, and which concentrates around the minimizer(s) of the associated potential when the noise parameter is small. We introduce analogous diffusion dynamics that sample from an  entropy-regularized optimal transport, which uniquely minimizes the same energy functional but constrained to the set $\Pi(\mu,\nu)$ of couplings of two given marginal probability measures $\mu$ and $\nu$ on $\R^d$, and which concentrates around the optimal transport coupling(s) for small regularization parameter. 
More specifically, our process satisfies two key properties: First, the law of the solution at each time stays in $\Pi(\mu,\nu)$ if it is initialized there. Second, the long-time limit is the unique solution of an entropic optimal transport problem.
In addition, we show by means of a new log-Sobolev-type inequality that the convergence holds exponentially fast, for sufficiently large regularization parameter and for a class of marginals which strictly includes all strongly log-concave measures. By studying the induced Wasserstein geometry of the submanifold $\Pi(\mu,\nu)$, we argue that the SDE can be viewed as a Wasserstein gradient flow on this space of couplings, at least when $d=1$, and we identify a conjectural gradient flow for $d \ge 2$. The main technical difficulties stems from the appearance of conditional expectation terms which serve to constrain the dynamics to $\Pi(\mu,\nu)$.
\end{abstract}

\maketitle

\tableofcontents

\section{Introduction}

Obtaining efficiently high-quality samples from probability measures that minimize a given energy functional is one of the fundamental problems in probability and statistics. A popular approach for its solution is to construct an appropriate dynamics, often taking the the form of either a stochastic process or of a gradient flow, that samples from the desired law in the large time limit. The emergence of optimal transport and its entropic regularization as a powerful and versatile tool for applications in machine learning and beyond naturally leads to consider the problem of sampling efficiently from probability measures that minimize a given energy (entropy) functional \emph{under constraints}. The goal of this work is to initiate the study of an important instance of this general problem by constructing and analyzing a natural stochastic process which is constrained to the space of couplings of two given marginals and which converges to the solution of an entropic optimal transport problem (a.k.a.\ Schr\"odinger bridge).

To set the stage, we first recall the setting of the classical optimal transport problem, which for given probability measures $\mu$ and $\nu$ on $\R^d$ and a given nonnegative continuous cost function $c : \R^{2d} \cong \R^d \times \R^d \to \R$, is given by
\begin{align}
\inf_{\pi \in \Pi(\mu,\nu)} \int_{\R^{2d}} c(x,y)\,\pi(dx,dy). \label{intro:OT}
\end{align}
Here $\Pi(\mu,\nu)$ denotes the set of couplings, probability measures on $R^d \times \R^d$ with first marginal $\mu$ and second marginal $\nu$.
A recently popular variant is the \emph{entropic optimal transport} problem which, for a regularization parameter $\epsilon > 0$, takes the form
\begin{align}
\inf_{\pi \in \Pi(\mu,\nu)} \bigg(\int_{\R^{2d}} c(x,y)\,\pi(dx,dy) + \epsilon H(\pi\,|\,\mu \otimes \nu)\bigg), \label{intro:EOT}
\end{align}
where $H$ is the usual relative entropy (or KL-divergence).
The regularized problem \eqref{intro:EOT} always admits a unique solution $\pi$,
because relative entropy is strictly convex and lower semicontinuous, and because $\Pi(\mu,\nu)$ is convex and weakly compact.
Moreover, the solution $\pi$ is also 
the unique element of $\Pi(\mu,\nu)$ taking the form
\begin{align}
\pi(dx,dy) = \exp\big((\varphi(x)+\psi(y) - c(x,y))/\epsilon \big)\mu(dx)\nu(dy), \label{intro:potentials}
\end{align}
for some Borel functions $\varphi$ and $\psi$, called the \emph{Schr\"odinger potentials}, which are a.s.\ unique up to an additive constant.
The regularized problem \eqref{intro:EOT} is widely studied for its computational advantages \cite{cuturi2013sinkhorn,PeyreCuturi}, the well known Sinkhorn algorithm providing a fast method for approximating its solution, and statistical advantages, such as a sample complexity that does not suffer from the curse of dimensionality \cite{genevay2019sample,mena2019statistical}. It also provides a convenient mathematical tool for studying \eqref{intro:OT} as the $\epsilon \to 0$ limit, furnishing alternative proofs of the HWI inequality \cite{gentil2020entropic} and Caffarelli's contraction theorem \cite{fathi2020proof,chewi2022entropic}, for instance.  Remarkably, when $c(x,y)=|x-y|^2/2$, solving \eqref{intro:EOT} is equivalent to optimizing the large deviations rate function in Sanov's Theorem for a system of independent Brownian particles. In this interpretation, problem \eqref{intro:EOT} is a sound mathematical formulation of an old question posed by E.\ Schr\"odinger in the seminal work \cite{Schr32} about the most likely evolution of a cloud of independent Brownian particles conditionally to the observation of their configuration at two consecutive times. For this reason, \eqref{intro:EOT} is also known as the Schr\"odinger problem and its solutions as Schr\"odinger bridges. The Schr\"odinger problem has been studied for several decades independently of its relation with optimal transport, 
and we refer to the survey articles \cite{LeoSch} and \cite{chen2021stochastic} for further discussion of the history and applications.

In this paper, we introduce stochastic dynamics which converge to the unique solution $\pi$ of \eqref{intro:EOT}, in the case where $(\mu,\nu)$ take the form $\mu(dx)=e^{-U(x)}dx$ and $\nu(dy)=e^{-V(y)}dy$.
The dynamics are governed by the following stochastic differential equation (SDE):
\begin{align}
\begin{split}
dX_t &= \Big( \E[\nabla_x c(X_t,Y_t)\,|\,X_t]  -  \nabla_x c(X_t,Y_t) - \epsilon \nabla U(X_t)\Big)dt + \sqrt{2\epsilon }\,dW_t, \\
dY_t &= \Big(  \E[\nabla_y c(X_t,Y_t)\,|\,Y_t]  -  \nabla_y c(X_t,Y_t) - \epsilon \nabla V(Y_t)\Big)dt + \sqrt{2\epsilon }\,dB_t,
\end{split} \label{def:mainSDE}
\end{align}
where $W$ and $B$ are independent $d$-dimensional Brownian motions.
We term this SDE the \emph{projected Langevin dynamics} in light of the conditional expectation term, which serves, as we will see, to preserve the space of couplings $\Pi(\mu,\nu)$.
Our first main result is the following:

\begin{theorem} \label{th:main}
Suppose the functions $(c,U,V)$ are twice continuously differentiable, such that $\mu(dx)=e^{-U(x)}dx$ and $\nu(dy)=e^{-V(y)}dy$ are probability measures. Assume:
\begin{itemize}
\item Subgaussian marginals: $\int_{\R^d} e^{\delta|x|^2}(e^{-U(x)}+e^{-V(x)})dx < \infty$ for some $\delta > 0$. 
\item $c$ is nonnegative, and $\nabla^2 c$ is bounded.
\item $\nabla^2 U$ and $\nabla^2 V$ are bounded from below in  semidefinite order.
\item $|\nabla U| \in L^p(\mu)$ and $|\nabla V| \in L^p(\nu)$ for some $p > 2(d+1)$, and $U \in L^1(\mu)$ and $V \in L^1(\nu)$.
\end{itemize}
Then, for any $P_0 \in \Pi(\mu,\nu)$, the following hold:
\begin{enumerate}
\item There exists a unique in law weak solution of \eqref{def:mainSDE} starting from $(X_0,Y_0) \sim P_0$.
\item The law $P_t$ of $(X_t,Y_t)$ satisfies $P_t \in \Pi(\mu,\nu)$ for each $t \ge 0$.
\item As $t\to\infty$ we have $P_t \to \pi$ in $2$-Wasserstein distance, and $H(P_t\,|\,\pi) \to 0$.
\item As $t\to\infty$ we have 
\begin{align*}
\int_{\R^d}\big|\E[\nabla_x c(X_t,Y_t)\,|\,X_t=x] - \nabla\varphi(x)\big|^2\,\mu(dx) &\to 0, \\
\int_{\R^d}\big|\E[\nabla_y c(X_t,Y_t)\,|\,Y_t=y] - \nabla\psi(y)\big|^2\,\nu(dy) &\to 0.
\end{align*}
\end{enumerate}
\end{theorem}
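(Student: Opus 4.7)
I plan to prove the four claims in the order (2), (1), (3), (4); marginal preservation reveals the design principle behind the SDE and simplifies all later computations. For (2), I apply It\^o's formula to $\testf(X_t)$ for smooth test $\testf : \R^d \to \R$ and take expectation. The tower property gives $\E[\nabla\testf(X_t)\cdot\E[\nabla_x c(X_t,Y_t)\,|\,X_t]] = \E[\nabla\testf(X_t)\cdot\nabla_x c(X_t,Y_t)]$, so the conditional-expectation term exactly cancels the $-\nabla_x c$ contribution in expectation, leaving only the overdamped Langevin generator for $\mu$ applied to $\testf$. Consequently $(P_t^X)_{t\ge 0}$ satisfies the Fokker--Planck equation with invariant $\mu$; starting from $\mu$ it stays at $\mu$, and symmetrically $P_t^Y\equiv\nu$.

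For (1), I would run a Picard-type contraction on measure flows $(P_t)_{t\in[0,T]}\subset\Pi(\mu,\nu)$ in a $W_2$-type metric. Once a candidate flow is fixed, the map $x\mapsto\E^{P_t}[\nabla_x c(X,Y)\,|\,X=x]$ is an explicit vector field, and the resulting SDE has Lipschitz control provided by the boundedness of $\nabla^2 c$ together with the semiconvexity hypotheses on $U,V$. The assumption $p > 2(d+1)$ enters through a Sobolev embedding applied to the density of $P_t$ with respect to $\mu\otimes\nu$, supplying enough regularity for the conditional expectations to depend continuously on the law and closing the contraction on short intervals; global existence follows by concatenation, and uniqueness in law by Girsanov and the martingale problem.

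The heart of the argument is (3). Define $g_X(x) := \E^{P_t}[\nabla_x c\,|\,X=x] - \nabla\varphi(x)$ and $g_Y(y) := \E^{P_t}[\nabla_y c\,|\,Y=y] - \nabla\psi(y)$. The Schr\"odinger relation $\nabla\varphi(x)=\E^\pi[\nabla_x c\,|\,X=x]$, obtained by differentiating $\int \pi(x,y)\,dy = \mu(x)$, lets me decompose the SDE drift as $b_t = \epsilon\nabla\log\pi + (g_X(x), g_Y(y))$. Because $P_t$ and $\pi$ share both marginals, a direct computation using the $y$-marginal constraint yields the key identity
\begin{align*}
\E^{P_t}[\nabla_x\log(P_t/\pi)\,|\,X=x] = \frac{g_X(x)}{\epsilon}\quad \text{for }\mu\text{-a.e. }x,
\end{align*}
and symmetrically in $y$. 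Plugging into the standard entropy production calculation, all cross terms simplify and completing the square produces
\begin{align*}
\frac{d}{dt}H(P_t\,|\,\pi) = -\epsilon \int \big|\nabla_x\log(P_t/\pi) - g_X/\epsilon\big|^2 dP_t - \epsilon \int \big|\nabla_y\log(P_t/\pi) - g_Y/\epsilon\big|^2 dP_t \le 0.
\end{align*}
Equality forces $\log(P_t/\pi) = F(x)+G(y)$, and uniqueness for the Schr\"odinger problem then gives $P_t=\pi$. Tightness from the subgaussian hypothesis, together with a LaSalle-type argument, upgrades monotone entropy decay to $H(P_t\,|\,\pi)\to 0$; uniform quadratic moment control delivers $W_2(P_t,\pi)\to 0$. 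Claim (4) is a corollary: the integrand there is exactly $|g_X(x)|^2$ in $L^2(\mu)$, the dissipation gives $\int_0^\infty \|g_X\|^2_{L^2(\mu)} dt < \infty$, and combined with $W_2$-convergence of $P_t\to\pi$ and the Lipschitz character of $\nabla_x c$ (from bounded $\nabla^2 c$), this yields the claim.

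The main obstacle is making the entropy dissipation identity in (3) fully rigorous: the integrations by parts and conditional-density manipulations require enough smoothness and tail decay of the density $P_t/\pi$, and the $L^p$ and subgaussian hypotheses are precisely calibrated to provide these. A secondary difficulty is passing from the subsequential information coming from LaSalle's principle to genuine $W_2$-convergence, which leverages the uniqueness of $\pi$ as the minimizer of the free energy $\int c\,dP + \epsilon H(P\,|\,\mu\otimes\nu)$ over $\Pi(\mu,\nu)$.
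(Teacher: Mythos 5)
Your outline of part (2) and the entropy--dissipation identity in part (3) match the paper: the identity $\E_{P_t}[\nabla_x\log(dP_t/d\pi)\,|\,X=x]=\epsilon^{-1}g_X(x)$ is exactly Lemma \ref{le:conditioning-identity}, and your completed-square dissipation is $-\epsilon\overline{I}(P_t\,|\,\pi)$ as in Lemma \ref{le:entropydynamics}. However, there are three genuine gaps. First, in part (1) a Picard contraction in a $\W_2$-type metric cannot close: the map $P\mapsto\E_P[\nabla_xc(X,Y)\,|\,X]$ is not continuous, let alone Lipschitz, with respect to $\W_2$ (or weak convergence) of the law. The paper must work with \emph{total variation} convergence of the time-marginals (via parabolic H\"older estimates, which is where $p>2(d+1)$ actually enters) to get existence by Schauder, and with relative entropy on path space plus Pinsker's inequality applied to the conditional laws $P_{t,X_t}$ to get uniqueness; neither step is a contraction in $\W_2$.

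Second, and most seriously, your LaSalle step in part (3) hides the hardest part of the proof. From $\int_0^\infty\overline{I}(P_t\,|\,\pi)\,dt<\infty$ you only get $\overline{I}(P_{t_n}\,|\,\pi)\to0$ along a sequence, and the rigidity statement ``$\overline{I}(P\,|\,\pi)=0\Rightarrow P=\pi$'' does not upgrade to ``$\overline{I}(P_{t_n}\,|\,\pi)\to0\Rightarrow P_{t_n}\to\pi$'' because $\overline{I}(\cdot\,|\,\pi)$ is \emph{not} lower semicontinuous under weak convergence (the conditional expectations inside it are unstable under weak limits). The paper needs the separate stability result, Proposition \ref{pr:Istability}, whose proof requires the uniform $L^1(\mu)$ bound on $Df_n$ from Lemma \ref{le:Df} and a Rellich--Kondrachov compactness argument to extract a strongly convergent subsequence of the conditional drifts. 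Third, your derivation of (4) is incorrect as stated: the dissipation $\overline{I}$ controls the conditional \emph{variance} $\E_{P_t}|\nabla_xR-\E_{P_t}[\nabla_xR\,|\,X]|^2$, whereas $\|g_X\|^2_{L^2(\mu)}=\epsilon^2\,\E_{P_t}|\E_{P_t}[\nabla_xR\,|\,X]|^2$ is the orthogonal component; the energy identity therefore does not give $\int_0^\infty\|g_X\|^2_{L^2(\mu)}\,dt<\infty$. In the paper the convergence $g_X\to0$ in $L^2(\mu)$ is an output of the same compactness argument in Proposition \ref{pr:Istability}, not a corollary of the integrated dissipation. Finally, note that even granting weak convergence $P_t\to\pi$, monotone decay of $H(P_t\,|\,\pi)$ only yields convergence to some limit $H_*\ge0$; showing $H_*=0$ requires the local uniform convergence of densities and the uniform integrability argument of Lemma \ref{le:entropyconvergence}.
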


The proofs of (1,2) and (3,4) are given in Sections \ref{se:wellposedness} and \ref{se:longtime}, respectively. See Definition \ref{def:weaksolution} for a precise definition of a (probabilistic) weak solution of \eqref{def:mainSDE}.

A corollary of Theorem \ref{th:main} is that $\pi$ itself is the unique invariant measure for the dynamics \eqref{def:mainSDE}. The fact that $\pi$ is invariant is a consequence of the known identities (see Section \ref{se:differentiabilitypotentials})
\begin{equation}
\nabla\varphi(x)=\E_{\pi}[\nabla_xc(X,Y)\,|\,X=x], \qquad \nabla\psi(y)=\E_{\pi}[\nabla_yc(X,Y)\,|\,Y=y], \label{intro:pi-condexp}
\end{equation}
for  sufficiently regular $(c,U,V)$, where $(X,Y) \sim \pi$ in the expectations.  Indeed, these formulas follow by differentiating the so-called \emph{Schr\"odinger equations},
\begin{equation}
\begin{split}
1 &= \int_{\R^d} e^{ (\varphi(x)+\psi(y) - c(x,y))/\epsilon} \nu(dy), \ \ \text{for all } x, \\
1 &= \int_{\R^d} e^{ (\varphi(x)+\psi(y) - c(x,y))/\epsilon} \mu(dx), \ \ \text{for all } y,
\end{split} \label{eq:Schrsyst}
\end{equation}
which are themselves consequences of $\pi$ having the form \eqref{intro:potentials} and marginals $(\mu,\nu)$.

The fact that $\Pi(\mu,\nu)$ is invariant for the dynamics \eqref{def:mainSDE}, in the sense of Theorem \ref{th:main}(2), can be deduced from the so-called \emph{mimicking theorem} from stochastic analysis \cite[Corollary 3.7]{BrunickShreve}. For a direct derivation, note by It\^o's formula and the tower property that a solution of \eqref{def:mainSDE} must satisfy
\begin{align}
\frac{d}{dt}\E[\testf(X_t)] &= \E\Big[ \nabla\testf(X_t) \cdot \Big( \E[\nabla_x c(X_t,Y_t)\,|\,X_t]  -  \nabla_x c(X_t,Y_t) - \epsilon \nabla U(X_t)\Big) + \epsilon \Delta\testf(X_t) \Big] \nonumber \\
	&= \E\Big[ - \epsilon \nabla\testf(X_t) \cdot \nabla U(X_t) + \epsilon \Delta\testf(X_t) \Big], \label{intro:marginalcalculation}
\end{align}
for nice test functions $\testf$.
This shows that the marginal law $\rho_t$ of $X_t$ is a (weak) solution of the Fokker-Planck equation $\partial_t\rho = \epsilon\mathrm{div}(\rho\nabla U) + \epsilon\Delta \rho$, for which $\rho_t \equiv \mu$ is the time-invariant solution.

The rest of this introduction will describe an analogy between \eqref{def:mainSDE} and the classical Langevin dynamics (Section \ref{se:intro:Langevin}), our results on the exponential convergence $P_t \to \pi$ (Section \ref{se:exprate}),  some special cases and extensions (Section \ref{se:extensions}), and related literature (Section \ref{se:literature}).
Then, in Section \ref{se:gradientflow} we explain the geometric perspective, in the language of Otto calculus; namely, we describe the geometry of the (non-geodesically convex!) submanifold $\Pi(\mu,\nu)$ of Wasserstein space and how the flow $(P_t)$ of Theorem \ref{th:main} can be seen as the steepest descent (or gradient flow) for the functinoal $H(\cdot\,|\,\pi)$ in this submanifold, at least when $d=1$.

\subsection{The analogy with Langevin dynamics} \label{se:intro:Langevin}
The dynamics \eqref{def:mainSDE} are perhaps best understood in analogy with the classical (overdamped) Langevin dynamics.
Consider the global minimization problem
\begin{equation}
\inf_{(x,y) \in \R^{2d}} c(x,y) = \inf_{\rho \in \P(\R^{2d})}\int_{\R^{2d}}c\,d\rho, \label{intro:globalminimization}
\end{equation}
where $\P(\R^{2d})$ denotes the set of probability measures on $\R^{2d}$. The right-hand side of \eqref{intro:globalminimization} is a trivial rewriting of the left which is useful for our analogy, as it exhibits the problem as the unconstrained version of the optimal transport problem \eqref{intro:OT}.
Consider then the (unconstrained) entropic regularization:
\begin{align}
\inf_{\rho \in \P(\R^{2d})} \bigg(\int_{\R^{2d}}c\,d\rho + \epsilon H(\rho\,|\,\mu \otimes \nu)\bigg). \label{intro:globalminimization-entropy}
\end{align}
This problem always admits the unique optimizer $\rho_*(dx,dy) \propto e^{-c(x,y)/\epsilon}\mu(dx)\nu(dy)$, even when the minimization problem \eqref{intro:globalminimization} may have multiple solutions. As $\epsilon \downarrow 0$, the measure $\rho_*$ is increasingly concentrated on the set of global minimizers of $c$. If the minimizer of $c$ is unique, then $\rho_*$ converges to it; otherwise, every limit point of $\rho_*$ is a minimizer for the right-hand side of \eqref{intro:globalminimization}. Analogously, the entropic optimal transport problem \eqref{intro:EOT} always admits the unique optimizer $\pi$, and the limit points of $\pi$ as $\epsilon\to 0$ are always optimizers for \eqref{intro:OT}.

The Langevin dynamics corresponding to \eqref{intro:globalminimization-entropy} take the form of the SDE
\begin{align}
\begin{split}
dX'_t &= - \big(\nabla_x c(X'_t,Y'_t) + \epsilon\nabla U(X'_t)\big)\,dt + \sqrt{2\epsilon }\,dW_t \\
dY'_t &= - \big(\nabla_y c(X'_t,Y'_t) + \epsilon\nabla V(Y'_t)\big)\,dt + \sqrt{2\epsilon }\,dB_t.
\end{split} \label{intro:Langevin}
\end{align}
It is well known that this SDE defines a Markov process with $\rho_*$ as its unique invariant measure. 
In this sense, the Langevin dynamics ``sample" from the measure $\rho_*$, which is concentrated on the set of optimizers of \eqref{intro:globalminimization} for $\epsilon$ small. The Langevin dynamics thus yield a sampling (or Monte Carlo) method for approximately solving the optimization problem \eqref{intro:globalminimization}. In perfect analogy, our projected Langevin dynamics \eqref{def:mainSDE} sample from approximate solutions of the optimal transport problem \eqref{intro:OT}.

To take the analogy a step further, let us examine the $\epsilon=0$ version of the equations. Note that any critical point of $c$ is stationary for the noiseless dynamics
\begin{equation}
\frac{d}{dt}(X'_t,Y'_t) = - \nabla c(X'_t,Y'_t) . \label{intro:Langevin-noiseless}
\end{equation}
That is, when $\epsilon=0$, there is no longer a unique invariant measure, and there are potentially many additional unstable equilibrium points.
The same is true for the natural $\epsilon=0$ version of our SDE \eqref{def:mainSDE}, which is
\begin{align}
\begin{split}
dX_t &= \Big( \E[\nabla_x c(X_t,Y_t)\,|\,X_t]  -  \nabla_x c(X_t,Y_t) \Big)dt , \\
dY_t &= \Big(  \E[\nabla_y c(X_t,Y_t)\,|\,Y_t]  -  \nabla_y c(X_t,Y_t) \Big)dt .
\end{split} \label{def:mainSDE-noiseless}
\end{align}
It is easy to see that \emph{any invertible Monge coupling} is stationary for \eqref{def:mainSDE-noiseless}. That is, suppose a random vector $(X_0,Y_0)$ is supported on the graph of a measurable bijection, so that $X_0$ and $Y_0$ generate the same $\sigma$-algebra. Setting $(X_t,Y_t)=(X_0,Y_0)$ for all $t > 0$ defines a solution of \eqref{def:mainSDE-noiseless}, because $\E[\nabla_x c(X_t,Y_t)\,|\,X_t]  =  \nabla_x c(X_t,Y_t)$ and $\E[\nabla_y c(X_t,Y_t)\,|\,Y_t] =  \nabla_y c(X_t,Y_t)$.

We will not rigorously address the $\epsilon\to 0$ limit of our SDE \eqref{def:mainSDE},  nor the well-posedness of the noiseless equation \eqref{def:mainSDE-noiseless}, both of which appear to be quite delicate:

\begin{problem}
When does the noiseless equation \eqref{def:mainSDE-noiseless} admit a unique (weak) solution? Are there invariant measures which are not bijective Monge couplings? Do the dynamics \eqref{def:mainSDE} converge to a solution of \eqref{def:mainSDE-noiseless} as $\epsilon \to 0$, and is there a corresponding (Freidlin-Wentzell) large deviations principle?
\end{problem}

Perhaps the most important special case of the optimal transport problem \eqref{intro:OT} arises from the quadratic cost $c(x,y)=|x-y|^2/2$, for which the projected Langevin dynamics \eqref{def:mainSDE} become
\begin{align}
\begin{split}
dX_t &= \big( Y_t - \E[ Y_t \,|\,X_t] - \epsilon \nabla U(X_t)\big)dt + \sqrt{2\epsilon }\,dW_t, \\
dY_t &= \big( X_t - \E[ X_t \,|\,Y_t]  - \epsilon \nabla V(Y_t)\big)dt + \sqrt{2\epsilon }\,dB_t.
\end{split} \label{def:mainSDE-quadratic}
\end{align}
It is instructive to understand these dynamics in connection with the famous result of Brenier \cite{brenier1991polar}:
Because $\mu$ is absolutely continuous, there is a unique optimal coupling $\pi_0$ for \eqref{intro:OT}, and it is supported on the graph of the gradient of a convex function; this gradient is called \emph{the Brenier map}.
For $\epsilon > 0$, the solution $\pi$ of \eqref{intro:EOT} is absolutely continuous with respect to $\mu \otimes \nu$ and thus also with respect to Lebesgue measure on $\R^{2d}$. As $\epsilon \to 0$, this diffuse measure $\pi$ converges \cite[Theorem 5.10]{nutz2021introduction} to the degenerate measure $\pi_0$ which is supported on the graph of a function. This behavior is reflected in the dynamics \eqref{def:mainSDE-quadratic}: For small $\epsilon$, the magnitude $|Y_t-\E[Y_t|X_t]|$ of the dominant term in the drift of $X_t$ measures how close $Y_t$ is to being a deterministic function of $X_t$. Hence, in the $\epsilon \to 0$ limit, the only way for the dynamics to stabilize is for $Y_t$ to become $X_t$-measurable (and vice versa).

\subsection{Exponential convergence, energy decay and a new logarithmic Sobolev inequality} \label{se:exprate}

In this section, we specialize to the case of quadratic cost $c(x,y)=|x-y|^2/2$, and we  address the exponential convergence properties of \eqref{def:mainSDE-quadratic}. We are unable to apply the general theory of gradient flows on metric spaces developed in \cite{AGSbook}, for the key reason that $\Pi(\mu,\nu)$ fails to be geodesically convex in the Wasserstein space $(\P_2(\R^{2d}),\W_2)$, which we explain further in Section \ref{se:geodesicconvexity}. Instead, we adopt the direct and natural strategy of analyzing the behavior of the ``energy" functional 
\begin{equation}
\J(P) := \int c(x,y)\,P(dx,dy) + \epsilon H(P\,|\,\mu \otimes \nu), \label{def:Jfunctional}
\end{equation}
along the flow $(P_t)_{t \ge 0}$. In fact, a simple computation shows that $\J(P)-\J(\pi)=\epsilon H(P\,|\,\pi)$ for $P \in \Pi(\mu,\nu)$; this is an instance of the Pythagorean theorem for entropic projections \cite{csiszar1975divergence}. We prove in Lemma \ref{le:entropydynamics} the energy dissipation identity (technically, just inequality)
\begin{equation}
\frac{d}{dt}\J(P_t) = \epsilon\frac{d}{dt}H(P_t\,|\,\pi) = - \epsilon^2 \overline{I}(P_t\,|\,\pi), \label{intro:Jdecreases}
\end{equation}
where, for $P\in\Pi(\mu,\nu)$ and $R = \log dP/d\pi$, we define
\begin{equation}
\overline{I}(P\,|\,\pi) := \E_{P}\Big[  \big|\nabla_x R - \E_P[\nabla_x R\,|\,X]\big|^2 + \big|\nabla_y R - \E_P[\nabla_y R\,|\,Y]\big|^2 \Big] \label{intro:Ibar}
\end{equation}
if the weak gradient $\nabla R$ exists and belongs to $L^2(P)$, and otherwise $\overline{I}(P\,|\,\pi):=\infty$. This functional might be regarded as a projected form of the classical (relative) Fisher information, and it plays a critical role in both the qualitative and quantitative convergence analysis.

A key technical step in the qualitative convergence proof is to show (in Proposition \ref{pr:Istability}) that $\overline{I}(P_n\,|\,\pi) \to 0$ implies $P_n \to \pi$ weakly, if $P_n \in \Pi(\mu,\nu)$. This in turn relies on a strong compactness argument, which uses a Sobolev embedding in conjunction with the remarkable fact (Lemma \ref{le:Df}) that the derivative of $x\mapsto \E_P[\nabla_x R\,|\,X=x]$ can be bounded in $L^1(\mu)$-norm by $C(1+\overline{I}(P\,|\,\pi))$, with $C$ not depending on the choice of $P\in \Pi(\mu,\nu)$. 

To prove an exponential rate of convergence, we naturally try to bound $\overline{I}(\cdot\,|\,\pi)$ from below in terms of $H(\cdot\,|\,\pi)$.  This leads naturally to consider the following functional inequality of independent interest:
\begin{equation}
H(P\,|\,\pi) \le \frac{\epsilon}{\const}\overline{I}(P\,|\,\pi), \qquad \forall P \in \Pi(\mu,\nu). \label{intro:newLSI} 
\end{equation}
We call this a \emph{projected logarithmic Sobolev inequality}.
Recall that we say that $\pi$ satisfies the \emph{logarithmic Sobolev inequality} with constant $\kappa > 0$, or $\lsi{\kappa}$, if 
\begin{equation}
H(P\,|\,\pi) \le \kappa^{-1} I(P\,|\,\pi), \qquad \forall P \in \P(\R^{2d}), \label{intro:LSI}\tag{$\lsi{\kappa}$}
\end{equation}
where $I(P\,|\,\pi) := \|\nabla \log dP/d\pi\|_{L^2(P)}^2$ is the usual relative Fisher information (defined to be $\infty$ if the gradient does not exist in $L^2(P)$).
The projected LSI differs from the usual LSI in two ways. On the one hand, it is less restrictive in the sense that it is only required to hold for $P \in \Pi(\mu,\nu)$, not all $P \in \P(\R^{2d})$. On the other hand, it is more restrictive because trivially $\overline{I} \le I$.
Hence, the projected LSI can be viewed as a lower bound on the \emph{deficit} of the LSI, i.e., the amount by which the LSI can be strengthened using additional information on $P$. 
See \cite{fathi2016quantitative,eldan2020stability} for recent general studies on the stability of the Gaussian LSI, i.e., lower bounds on the deficit, although their results are not suitable for our specific situation.

Our next result gives a sufficient condition for the projected LSI \eqref{intro:newLSI}. Roughly speaking, it says that if the conditional distributions of $\pi$ satisfy LSI with a large enough constant, then \eqref{intro:newLSI} holds. We make use of the  disintegrations of $\pi$ with respect to its marginals $(\mu,\nu)$:
\begin{equation*}
\pi(dx,dy)=\mu(dx)\pi^x(dy), \qquad \pi(dx,dy)=\nu(dy)\pi^y(dx).
\end{equation*}
This notation is somewhat abusive, but we will take care to not conflate the symbols $x$ and $y$.
The proof of the following is given in Section \ref{se:newLSIproof-suff}.

\begin{theorem}\label{thm:suff_cond_new_LSI}
   Assume that there exist constants $\kappa^{X|Y}$ and $\kappa^{Y|X}$ such that
   \begin{enumerate}[(i)] 
   \item The conditional measure $\pi^{y}$ satisfies $\lsi{\kappa^{X|Y}}$, uniformly in $y\in\R^d$.
   \item The conditional measure $\pi^{x}$ satisfies $\lsi{\kappa^{Y|X}}$, uniformly in $x\in\R^d$.
   \end{enumerate}
   If, moreover,
\begin{equation}\label{eq:exp_conf_suff_cond}
(\kappa^{X|Y}\kappa^{Y|X})^{1/2}\epsilon > 2       ,
\end{equation}
then the projected LSI \eqref{intro:newLSI} holds with 
\begin{equation}\label{intro:newLSI_r}
r =\epsilon(\kappa^{X|Y}+\kappa^{Y|X})\bigg(1-\frac{4}{\epsilon^2 \kappa^{X|Y}\kappa^{Y|X}}\bigg).
\end{equation}
As a consequence, for any initialization $P\in\Pi(\mu,\nu)$ of the SDE \eqref{def:mainSDE-quadratic} we have
\begin{equation}\label{eq:exp_decay_entropy}
H(P_t\,|\,\pi) \le e^{-  r t} H(P_0\,|\,\pi), \ \text{ for } t \ge 0.
\end{equation}
Moreover, we also have
\begin{equation}\label{eq:exp_conv_drift}
\begin{split}
\int_{\mathbb{R}^d}\,\big|\,\mathbb{E}[X_t-Y_t|X_t=x]-\nabla\varphi(x)\big|^2\, \mu(dx) \leq \frac{4}{\kappa^{Y|X}} e^{-  r t} H(P_0\,|\,\pi),\\
\int_{\mathbb{R}^d} \,\big|\,\mathbb{E}[Y_t-X_t|Y_t=y]-\nabla\psi(y)\big|^2\, \nu(dy) \leq \frac{4}{\kappa^{X|Y}} e^{-  r t} H(P_0\,|\,\pi).
\end{split}
\end{equation}
\end{theorem}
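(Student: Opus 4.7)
The plan is to establish the projected LSI \eqref{intro:newLSI} with the claimed $r$ by bounding the two halves of $\overline{I}(P|\pi)$ --- namely $\E_P[|\nabla_x R|^2]+\E_P[|\nabla_y R|^2]$ from below, and the conditional-expectation terms $\E_\mu[|a_P|^2]+\E_\nu[|b_P|^2]$ from above --- by multiples of $H(P|\pi)$, where I write $R=\log dP/d\pi$, $a_P(x):=\E_P[\nabla_x R|X=x]$, $b_P(y):=\E_P[\nabla_y R|Y=y]$. The exponential decay statements \eqref{eq:exp_decay_entropy} and \eqref{eq:exp_conv_drift} then follow from this inequality together with the energy dissipation identity \eqref{intro:Jdecreases} via Gr\"onwall's lemma. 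I may assume throughout that $\nabla R\in L^2(P)$, since otherwise $\overline{I}(P|\pi)=\infty$ and the inequality is vacuous.

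For the lower bound, the shared-marginals property of $P$ and $\pi$ forces $dP^x/d\pi^x(y)=e^{R(x,y)}$ as a function of $y$ (and symmetrically in $x$), so the chain rule yields $H(P|\pi)=\E_\mu[H(P^x|\pi^x)]=\E_\nu[H(P^y|\pi^y)]$. Applying the two uniform conditional LSI hypotheses and integrating gives
\begin{equation*}
\E_P[|\nabla_x R|^2]\ge \kappa^{X|Y}\,H(P|\pi),\qquad \E_P[|\nabla_y R|^2]\ge \kappa^{Y|X}\,H(P|\pi).
\end{equation*}

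For the upper bound on $a_P,b_P$, I first derive a closed-form representation using the Gibbs form \eqref{intro:potentials} to write $dP/d(\mu\otimes\nu)=e^S$ with $S=R+(\varphi+\psi-c)/\epsilon$, then differentiate the marginal constraint $\int e^{S(x,y)}\nu(dy)=1$ in $x$ to get $\E_P[\nabla_x S|X=x]=0$, which, after invoking the Schr\"odinger identity \eqref{intro:pi-condexp}, rearranges to
\begin{equation*}
\epsilon\,a_P(x)=\E_P[\nabla_x c(X,Y)|X=x]-\E_\pi[\nabla_x c(X,Y)|X=x],
\end{equation*}
and analogously for $b_P$; for quadratic cost this reduces to $\epsilon a_P(x)=\E_\pi[Y|X=x]-\E_P[Y|X=x]$, a scaled difference of conditional means of $\pi^x$ and $P^x$. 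Jensen's inequality gives $|\epsilon a_P(x)|^2\le W_2^2(\pi^x,P^x)$, and Otto-Villani applied to $\pi^x$ --- which in the paper's convention $H\le\kappa^{-1}I$ yields Talagrand's inequality in the form $W_2^2\le 4\kappa^{-1}H$ (as one verifies by converting to the Gross form of LSI) --- delivers, after integrating in $\mu$,
\begin{equation*}
\E_\mu[|a_P|^2]\le \frac{4}{\epsilon^2\kappa^{Y|X}}\,H(P|\pi),\qquad \E_\nu[|b_P|^2]\le \frac{4}{\epsilon^2\kappa^{X|Y}}\,H(P|\pi).
\end{equation*}

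Using the orthogonality $\overline{I}(P|\pi)=\E_P[|\nabla_x R|^2]+\E_P[|\nabla_y R|^2]-\E_\mu[|a_P|^2]-\E_\nu[|b_P|^2]$ and summing the four bounds yields
\begin{equation*}
\overline{I}(P|\pi)\ge (\kappa^{X|Y}+\kappa^{Y|X})\Big(1-\frac{4}{\epsilon^2\kappa^{X|Y}\kappa^{Y|X}}\Big)\,H(P|\pi),
\end{equation*}
which, under the hypothesis $\epsilon^2\kappa^{X|Y}\kappa^{Y|X}>4$, is precisely \eqref{intro:newLSI} with the stated $r$. From \eqref{intro:Jdecreases} we then have $(d/dt)H(P_t|\pi)=-\epsilon\,\overline{I}(P_t|\pi)\le -r\,H(P_t|\pi)$, so Gr\"onwall gives \eqref{eq:exp_decay_entropy}; the drift bounds \eqref{eq:exp_conv_drift} follow by feeding this decay back into the upper bounds for $a_{P_t}$ and $b_{P_t}$, since for quadratic cost $\epsilon a_{P_t}(x)=\E[X_t-Y_t|X_t=x]-\nabla\varphi(x)$ and $\epsilon b_{P_t}(y)=\E[Y_t-X_t|Y_t=y]-\nabla\psi(y)$. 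The main technical obstacle is the rigorous justification of the differentiation of $\int e^{S(x,y)}\nu(dy)=1$ in $x$ under only the hypothesis $\nabla R\in L^2(P)$; I expect this to be handled either by a density/approximation argument exploiting the structural assumptions on $U$, $V$, and $\nabla^2 c$, or by rewriting $a_P$ intrinsically via an integration-by-parts identity that avoids differentiating the Schr\"odinger constraint itself.
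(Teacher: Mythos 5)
Your proposal is correct and follows essentially the same route as the paper: the orthogonality decomposition of $\overline{I}$, the lower bound on $I(P\,|\,\pi)$ via the conditional LSIs and the entropy chain rule, the identity $\epsilon\,\E_P[\nabla_x R\,|\,X]=\E_\pi[Y|X]-\E_P[Y|X]$ obtained by differentiating the marginal constraint (this is the paper's Lemma \ref{le:conditioning-identity}), the Otto--Villani/Talagrand bound on the conditional-mean discrepancies, and Gr\"onwall plus a feedback of the Talagrand estimate for \eqref{eq:exp_decay_entropy} and \eqref{eq:exp_conv_drift}. The technical point you flag at the end is handled in the paper exactly as you anticipate, via the integration-by-parts justification of Lemma \ref{le:conditioning-identity} (see Appendix \ref{ap:differentiability}), with the requisite integrability supplied by Lemma \ref{le:entropydynamics}.
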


We next present two sets of assumptions on the marginals $(\mu,\nu)$ under which the sufficient condition \eqref{eq:exp_conf_suff_cond} is met. First, we consider strongly log-concave marginals, for which the results are cleanest. Secondly, we weaken the strong log-concavity assumption and ask only that $U,V$ have an asymptotically positive integrated convexity profile, defined below. 

\subsubsection{Log-concave marginals}

We adopt the convention that $\beta^{-1}=0$ when $\beta=\infty$. The proof of the following corollary, given in Section \ref{se:logconcave-proofs}, relies on the Bakry-\'Emery criterion along with recent convexity bounds on Schr\"odinger potentials due to \cite{chewi2022entropic}. 

\begin{corollary}\label{intro:corollary_log_conc}
Let $c(x,y)=|x-y|^2/2$. Assume that $U$ and $V$ are twice continuously differentiable and that there exist $\alpha_U,\alpha_V\in(0,\infty)$, $\beta_U,\beta_V\in(0,\infty]$ such that 
\begin{equation*}
\alpha_U\mathrm{I} \preceq    \nabla^2 U(x) \preceq \beta_{U}\mathrm{I}, \quad \alpha_V\mathrm{I} \preceq   \nabla^2 V(y) \preceq \beta_{V}\mathrm{I}, \qquad \text{for all } x,y \in \R^d,
\end{equation*}
in semidefinite order. Then, if 
\begin{equation}\label{eq:log_conc_suff_cond} 
\epsilon >  \frac{\alpha^{-1}_U\alpha^{-1}_V-\beta^{-1}_U\beta^{-1}_V}{(\alpha^{-1}_U+\beta^{-1}_U)^{1/2}(\alpha^{-1}_V+\beta^{-1}_V)^{1/2}} =: \epsilon_c,
\end{equation}
the projected LSI \eqref{intro:newLSI} holds with $r$ as in \eqref{intro:newLSI_r} and with
\begin{equation}
    \begin{split}
            \kappa^{X|Y}=\sqrt{4\alpha_U/(\epsilon^2\beta_V)+\alpha^2_U}+\alpha_U , \\
            \kappa^{Y|X}=\sqrt{4\alpha_V/(\epsilon^2\beta_U)+\alpha^2_V}+\alpha_V .
    \end{split} \label{def:LSI-logco-kappa}
\end{equation}
In the case when $\alpha_U=\alpha_V=\alpha$ and $\beta_U=\beta_V=\beta,$ \eqref{eq:log_conc_suff_cond} takes the simpler form
\begin{equation*}
\epsilon>\alpha^{-1}-\beta^{-1}.
\end{equation*}
Furthermore, the exponential $L^2$-convergence \eqref{eq:exp_conv_drift} of the conditional expectations $\mathbb{E}[X_t-Y_t\,|\,X_t]$ and $\mathbb{E}[Y_t-X_t\,|\,Y_t]$ to the gradients of the Schr\"odinger potentials also holds.
\end{corollary}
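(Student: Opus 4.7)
The plan is to apply Theorem \ref{thm:suff_cond_new_LSI}, so the task is to exhibit $\kappa^{X|Y}, \kappa^{Y|X}$ of the form \eqref{def:LSI-logco-kappa} such that $\pi^y$ and $\pi^x$ satisfy LSI uniformly with these constants, and to check the spectral condition $(\kappa^{X|Y}\kappa^{Y|X})^{1/2}\epsilon>2$. For quadratic cost,
\[
\pi^y(dx) \propto \exp\bigl( \epsilon^{-1}\varphi(x) - \epsilon^{-1}|x-y|^2/2 - U(x) \bigr)\,dx,
\]
so $\nabla^2(-\log\pi^y)(x) = \epsilon^{-1}(I - \nabla^2\varphi(x)) + \nabla^2 U(x)$; given a uniform bound $\nabla^2\varphi \preceq \lambda_X I$, this Hessian is $\succeq (\epsilon^{-1}(1-\lambda_X)+\alpha_U)I$. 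The Bakry--\'Emery criterion, which in the convention $H\le\kappa^{-1}I$ of \eqref{intro:LSI} yields LSI with constant equal to \emph{twice} the convexity parameter, then gives $\pi^y \in \lsi{\kappa^{X|Y}}$ with $\kappa^{X|Y} = 2(\epsilon^{-1}(1-\lambda_X)+\alpha_U)$, and symmetrically for $\pi^x$. The corollary thus reduces to producing explicit uniform upper bounds $\lambda_X,\lambda_Y$ on $\nabla^2\varphi, \nabla^2\psi$.

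Such bounds are precisely what \cite{chewi2022entropic} supplies. Their starting point is the covariance representation $\nabla^2\varphi(x) = I - \epsilon^{-1}\mathrm{Cov}_\pi(Y\,|\,X=x)$ (obtained by differentiating \eqref{eq:Schrsyst} twice in $x$, or from \eqref{intro:pi-condexp}): an upper bound on $\nabla^2\varphi$ is equivalent to a \emph{lower} bound on the conditional covariance. Since $\pi^x$ is log-concave, the Cram\'er--Rao bound yields $\mathrm{Cov}_\pi(Y\,|\,X=x) \succeq M_Y^{-1}I$ whenever $\nabla^2(-\log\pi^x)(y) \preceq M_Y I$; and since $\nabla^2(-\log\pi^x)(y) = \epsilon^{-1}(I-\nabla^2\psi(y))+\nabla^2 V(y)$, the constant $M_Y$ is in turn controlled by a lower bound $\mu_Y I$ on $\nabla^2\psi$, produced by Brascamp--Lieb applied to $\pi^y$ given the preceding bound $\nabla^2\varphi \preceq \lambda_X I$. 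Closing this $\lambda_X \leftrightarrow \mu_Y$ loop yields a scalar quadratic in $\lambda_X$ whose coefficients depend only on $\alpha_U, \beta_V, \epsilon$; its smaller root gives $1-\lambda_X = \tfrac12\bigl(\sqrt{\epsilon^2\alpha_U^2+4\alpha_U/\beta_V}-\epsilon\alpha_U\bigr)$, and inserting into $\kappa^{X|Y} = 2(\epsilon^{-1}(1-\lambda_X)+\alpha_U)$ reproduces exactly the expression in \eqref{def:LSI-logco-kappa}; the formula for $\kappa^{Y|X}$ follows by swapping $U\leftrightarrow V$.

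It remains to translate $(\kappa^{X|Y}\kappa^{Y|X})^{1/2}\epsilon>2$ into \eqref{eq:log_conc_suff_cond}. The handy identity $\epsilon\kappa^{X|Y}(\epsilon\kappa^{X|Y}-2\epsilon\alpha_U) = 4\alpha_U/\beta_V$ (immediate from $\epsilon\kappa^{X|Y} - \epsilon\alpha_U = \sqrt{\epsilon^2\alpha_U^2 + 4\alpha_U/\beta_V}$), together with its symmetric counterpart for $\kappa^{Y|X}$, turns the inequality $\epsilon^2\kappa^{X|Y}\kappa^{Y|X}>4$ into a polynomial inequality in $\epsilon$ which, after elementary if somewhat tedious manipulation, is equivalent to $\epsilon>\epsilon_c$. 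In the symmetric case $\alpha_U=\alpha_V=\alpha$, $\beta_U=\beta_V=\beta$, direct substitution collapses $\epsilon_c$ to $\alpha^{-1}-\beta^{-1}$. Once all hypotheses of Theorem \ref{thm:suff_cond_new_LSI} are verified, the exponential entropy decay \eqref{eq:exp_decay_entropy} and the $L^2$-convergence of the conditional expectations \eqref{eq:exp_conv_drift} follow at once. The main technical obstacle is the fixed-point closure combining Brascamp--Lieb and Cram\'er--Rao in complementary directions; the remainder is routine algebra.
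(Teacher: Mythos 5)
Your proposal is correct and follows essentially the same route as the paper: reduce to Theorem \ref{thm:suff_cond_new_LSI}, obtain uniform LSI constants for the conditionals $\pi^y,\pi^x$ from the Bakry--\'Emery criterion combined with the Hessian bounds on the Schr\"odinger potentials from \cite{chewi2022entropic} (which you re-sketch via the Cram\'er--Rao/Brascamp--Lieb fixed-point loop, whereas the paper simply cites that result as its Theorem \ref{thm:chewi_pool}), and then verify the threshold $(\kappa^{X|Y}\kappa^{Y|X})^{1/2}\epsilon>2$. The only real divergence is in that last step: where you leave an unexecuted polynomial manipulation, the paper observes that $\epsilon\kappa^{X|Y}/2$ is the unique positive root of $x\mapsto\alpha_U^{-1}x-\beta_V^{-1}x^{-1}-\epsilon$ and thereby reduces the threshold to $\epsilon>\inf_{\theta>0}\max\{\theta\alpha_U^{-1}-\theta^{-1}\beta_V^{-1},\ \theta^{-1}\alpha_V^{-1}-\theta\beta_U^{-1}\}$, whose infimum is computed in closed form (attained at $\theta^\star=((\alpha_V^{-1}+\beta_V^{-1})/(\alpha_U^{-1}+\beta_U^{-1}))^{1/2}$) to be exactly $\epsilon_c$ --- a substitution worth adopting, since the direct algebra is genuinely messy.
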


The assumed upper bounds on $\nabla^2U$ and $\nabla^2V$ automatically hold with $\beta_U=\beta_V=\infty$, in which case \eqref{eq:log_conc_suff_cond} and \eqref{def:LSI-logco-kappa} become
\begin{equation}
\epsilon > (\alpha_U\alpha_V)^{-1/2} , \quad \text{and} \quad \kappa^{X|Y}=2\alpha_U, \ \ \kappa^{Y|X}=2\alpha_V. \label{eq:epsilon-restrictive}
\end{equation}
But a finite value of $\beta_U$ or $\beta_V$ permits sharper results, because the constants $(\kappa^{X|Y},\kappa^{Y|X})$ become larger and the condition \eqref{eq:log_conc_suff_cond}  less restrictive.
A noteworthy special case is when the marginals $(\mu,\nu)$ are Gaussian, so that $\alpha_U=\beta_U$ and $\alpha_V=\beta_V$. Then $\epsilon_c=0$, and as $\epsilon \to 0$ the exponent $r$ tends to zero like $\epsilon(\alpha_U+\alpha_V)/2$ at first order.
It is only in the Gaussian case that we are able to show exponential convergence \emph{for all values of} $\epsilon > 0$; when either marginal is non-Gaussian, we have a critical level $\epsilon_c > 0$ below which exponential convergence is not guaranteed. We do not know if this is a fundamental obstruction or a byproduct of our proof.

\begin{problem}
When $\epsilon \le \epsilon_c$, what is the rate of convergence of $H(P_t\,|\,\pi) \to 0$?
\end{problem}

It is worth noting that, for unbounded cost functions, the only known results on the exponential convergence of Sinkhorn's algorithm also require a sufficiently large regularization parameter $\epsilon$ \cite{conforti2023convergence}.

\subsubsection{Asymptotically log-concave marginals}

This section summarizes a significant extension of Corollary \ref{intro:corollary_log_conc} beyond the log-concave setting. 
We introduce the \emph{integrated convexity profile}
   $\kappa_U : \R_+ \to \R \cup \{-\infty\}$ of a function $U \in\mathcal{C}^1(\R^d;\R)$ as follows
\begin{equation}\label{eq:kappa_def}\kappa_U(r):= \inf\biggl\{\frac{\langle\nabla U(x)-\nabla U(y),\,x-y\rangle}{|x-y|^2}:\quad |x-y|=r\biggr\}\,.
\end{equation}
Referring to $\kappa_U$ as to the integrated convexity profile of $U$ is justified by the observation that the condition $ \kappa_U(r) \geq \alpha $ is equivalent to 
\begin{equation*}
    \int_{0}^{1} \big\langle  y-x ,\nabla^2 U ((1-\lambda)x+ \lambda y) (y-x)\big\rangle\,d\lambda \geq \alpha |x-y|^2, \quad \forall \, x,y\in \R^d, |x-y|=r.
\end{equation*}
In words, $\kappa_U(r)\geq \alpha$ means that integrating the second directional derivative of $U$ along a segment of length $r$ yields the same lower bound that one would obtain for a $\alpha$-(semi) convex function. We remark that certain lower bounds on $\kappa_U$ are known to imply the exponential trend to equilibrium for the overdamped Langevin dynamics with drift $-\nabla U$, see \cite{eberle2016reflection} for example. The main result of this section is that if $U,V$ have an asymptotically positive integrated convexity profile, i.e. satisfy the assumptions \eqref{A1_conv_prof}-\eqref{A3_conv_prof} of Theorem \ref{thm:conv_prof_informal} below, then the desired inequality \eqref{intro:newLSI} holds with a positive constant, and thus the SDE \eqref{def:mainSDE-quadratic} converges exponentially in relative entropy. 
The following theorem summarizes Theorem \ref{thm:conv_prof_rigorous} stated in Section \ref{sec:proof_conc_prof}, where the reader can find full details and the explicit but complicated  constants.

\begin{theorem}[Summary of Theorem \ref{thm:conv_prof_rigorous}]\label{thm:conv_prof_informal}
Let $c(x,y)=|x-y|^2/2$. Assume that  for $W=U,V$ the following hold.
\begin{enumerate}[(i)]
    \item\label{A1_conv_prof} There exist $\alpha_W\in(0,\infty)$and $R_W,L_W\in [0,\infty)$ such that 
\begin{equation}\label{def:as_conv_prof}
\kappa_{W}(r)\geq \begin{cases}\alpha_W &\quad \mbox{for $r> R_W$}\\ \alpha_W-L_W & \quad \mbox{for $r\leq R_W$}.\end{cases} 
\end{equation}
\item\label{A3_conv_prof} There exist $\beta_W\in (0,\infty]$ such that for all $x\in\R^d$, in semidefinite order,
\begin{equation*}
    \nabla^2 W(x) \preceq \beta_{W} \mathrm{I}.
\end{equation*}
\end{enumerate}
    Then, there exist a strictly positive function $\kappa$ on $(0,\infty)^2\times[0,\infty)^2\times(0,\infty]$ such that 
    \begin{enumerate}[(1)]
    \item $\kappa=\kappa(\epsilon,\alpha,L,R,\beta)$ is increasing in $\alpha,\epsilon$ and decreasing in $L,R,\beta$. Moreover, for any fixed $(\alpha,L,R,\beta)$, we have $\liminf_{\epsilon\rightarrow\infty} \kappa(\epsilon,\alpha,L,R,\beta)>0$.
    \item  Assumptions (i) and (ii) of Theorem \ref{thm:suff_cond_new_LSI} hold with
    \begin{equation}\label{eq:kappa_cond_conv_prof}
        \kappa^{X|Y} = \kappa(\epsilon,\alpha_U,L_U,R_U,\beta_V),\quad         \kappa^{Y|X} = \kappa(\epsilon,\alpha_V,L_V,R_V,\beta_U).
    \end{equation}
    \end{enumerate}
As a corollary, the condition \eqref{eq:exp_conf_suff_cond} holds for all sufficiently large $\epsilon$, and thus so does the projected LSI \eqref{intro:newLSI}, with constant $r$ given by \eqref{intro:newLSI_r} and with $\kappa^{X|Y},\kappa^{Y|X}$ given by \eqref{eq:kappa_cond_conv_prof}. Furthermore, the exponential $L^2$-convergence \eqref{eq:exp_conv_drift} of the conditional expectations $\mathbb{E}[X_t-Y_t\,|\,X_t]$ and $\mathbb{E}[Y_t-X_t\,|\,Y_t]$ to the gradients of the Schr\"odinger potentials also holds.
\end{theorem}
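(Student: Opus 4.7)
The plan is to reduce the theorem to an application of Theorem \ref{thm:suff_cond_new_LSI} by establishing, under assumptions \eqref{A1_conv_prof}--\eqref{A3_conv_prof}, uniform log-Sobolev inequalities for the conditional measures $\pi^y$ and $\pi^x$ with constants of the form \eqref{eq:kappa_cond_conv_prof}. Once such conditional LSIs are available, the monotonicity of $\kappa$ together with $\liminf_{\epsilon\to\infty}\kappa>0$ ensure that the threshold \eqref{eq:exp_conf_suff_cond} is met for all sufficiently large $\epsilon$, whence Theorem \ref{thm:suff_cond_new_LSI} directly delivers the projected LSI \eqref{intro:newLSI}, the entropy decay \eqref{eq:exp_decay_entropy}, and the $L^2$-convergence of the conditional drifts \eqref{eq:exp_conv_drift}.

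I would start by using the Schr\"odinger form \eqref{intro:potentials} with $c(x,y)=|x-y|^2/2$ to write the potential of $\pi^y$ as
\[
\Phi_y(x) \;=\; U(x) - \frac{\varphi(x)}{\epsilon} + \frac{|x-y|^2}{2\epsilon}.
\]
The essential input is a semiconvexity control on the Schr\"odinger potential $\varphi$: under \eqref{A3_conv_prof} for $V$, the estimates of \cite{chewi2022entropic} yield an upper bound of the form $\nabla^2\varphi \preceq h(\epsilon,\beta_V)\,\mathrm{I}$ with $h(\epsilon,\beta_V)/\epsilon\to 0$ as $\epsilon\to\infty$. Substituting this into the definition \eqref{eq:kappa_def} and using the profile \eqref{def:as_conv_prof} for $U$ yields, uniformly in $y$, an integrated convexity profile
\[
\kappa_{\Phi_y}(r) \;\ge\; \kappa_U(r) + \frac{1 - h(\epsilon,\beta_V)}{\epsilon},
\]
which stays asymptotically positive on $\{r > R_U\}$ while retaining only a bounded deficit of size $L_U$ on the ball $\{r\le R_U\}$ for $\epsilon$ large enough.

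The main obstacle is then to extract, from this asymptotically positive profile, an explicit LSI constant $\kappa(\epsilon,\alpha_U,L_U,R_U,\beta_V)$ for $\pi^y$ with the stated monotonicity. The natural route is to apply Eberle's reflection coupling \cite{eberle2016reflection} to the overdamped Langevin diffusion with drift $-\nabla\Phi_y$, producing a Wasserstein contraction rate depending explicitly on $(\alpha_U,L_U,R_U,h(\epsilon,\beta_V)/\epsilon)$, and then to upgrade this contraction to an LSI either via a Holley--Stroock type perturbation argument isolating the bounded-deficit region, or via a Lyapunov criterion in the spirit of Cattiaux--Guillin--Wu combined with a local Poincar\'e/LSI estimate on that region. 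Tracking constants through this pipeline produces a strictly positive $\kappa$ that is increasing in $\alpha$ and $\epsilon$, decreasing in $L$, $R$, and $\beta$ (since a smaller $\beta$ tightens the control on $\nabla^2\varphi$ and hence improves the convexity profile of $\Phi_y$), and satisfies $\liminf_{\epsilon\to\infty}\kappa>0$ because $h(\epsilon,\beta_V)/\epsilon\to 0$ makes $\Phi_y$ effectively inherit the $\kappa_U$-profile in that limit. A completely symmetric argument gives $\kappa^{Y|X}=\kappa(\epsilon,\alpha_V,L_V,R_V,\beta_U)$.

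Finally, the positivity of $\kappa^{X|Y}$ and $\kappa^{Y|X}$ together with their non-degeneracy as $\epsilon\to\infty$ force $(\kappa^{X|Y}\kappa^{Y|X})^{1/2}\epsilon\to\infty$, so \eqref{eq:exp_conf_suff_cond} holds for all sufficiently large $\epsilon$ and Theorem \ref{thm:suff_cond_new_LSI} closes the argument. The remaining content of Section \ref{sec:proof_conc_prof} is the careful but essentially technical bookkeeping needed to make $h(\epsilon,\beta_V)$ explicit and to verify the claimed structural properties of $\kappa$, which together justify the (complicated) closed-form constants promised in Theorem \ref{thm:conv_prof_rigorous}.
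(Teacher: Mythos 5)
Your high-level architecture (establish uniform conditional LSIs, then invoke Theorem \ref{thm:suff_cond_new_LSI}) matches the paper, but there are two genuine gaps in the middle. First, you cannot invoke the Hessian bounds of \cite{chewi2022entropic} here: Theorem \ref{thm:chewi_pool} requires two-sided pointwise bounds $\alpha_U\mathrm{I}\preceq\nabla^2U$ and $\alpha_V\mathrm{I}\preceq\nabla^2V$, i.e.\ strongly log-concave marginals, whereas under \eqref{def:as_conv_prof} the potentials $U,V$ need not be convex at any point (only their \emph{integrated} convexity profile is asymptotically positive). The whole purpose of Theorem \ref{thm:conv_prof_informal} is to go beyond that log-concave regime, so the input you need is not a pointwise bound $\nabla^2\varphi\preceq h(\epsilon,\beta_V)\mathrm{I}$ but the integrated convexity estimate $\kappa_{\bar\varphi}(r)\ge\alpha_{\bar\varphi}-r^{-1}\hat g_U(r)$ of Theorem \ref{gio2023:thm} (from \cite{conforti2022weak,conforti2023convergence}), which rests on the semigroup-invariance Theorem \ref{thm:HJB} applied to the Schr\"odinger system, not on \cite{chewi2022entropic}.

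Second, the step from ``$\pi^y$ has an asymptotically positive integrated convexity profile'' to ``$\pi^y$ satisfies LSI with a constant that is monotone in the parameters and non-degenerate as $\epsilon\to\infty$'' is precisely the hard new content of the paper, and your proposed pipeline (Eberle reflection coupling, then Holley--Stroock or a Lyapunov criterion) does not deliver it. Reflection coupling yields contraction in a twisted $\W_1$ distance, which does not upgrade to LSI in any standard way; Holley--Stroock needs the potential to be a \emph{bounded} perturbation of a convex one, which is not implied by \eqref{def:as_conv_prof}; and Lyapunov-based routes typically produce dimension-dependent constants, whereas the claimed $\kappa$ is dimension-free and satisfies $\liminf_{\epsilon\to\infty}\kappa>0$ (the earlier bound of \cite{conforti2022weak} obtained by softer means actually \emph{vanishes} as $\epsilon\to\infty$). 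The paper instead proves Theorem \ref{th:newLSI-kappa}: via the Kim--Milman heat-flow map (time-reversed Ornstein--Uhlenbeck flow, with the one-sided Lipschitz velocity bound of Lemma \ref{lemma:log_hess_bound}, itself another application of Theorem \ref{thm:HJB}), any such measure is an explicit $L$-Lipschitz pushforward of the Gaussian, whence $\lsi{2L^{-2}}$ with the stated constants \eqref{eq:conditional_kappa_1}--\eqref{eq:conditional_kappa_2}. Without an argument of this strength your claimed properties of $\kappa$ remain unproved.
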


If $W$ is the sum of a strongly convex function and a Lipschitz function with second derivative bounded below, then $W$ satisfies the assumption \eqref{def:as_conv_prof}. This reveals an important class of examples of marginals $(\mu,\nu)$ for which  Theorem \ref{thm:conv_prof_informal}  applies but Corollary \ref{intro:corollary_log_conc} does not.

The qualitative behavior of $\kappa$ with respect to its arguments $\alpha,R,L$ reflects the intuition that stronger lower bounds on the convexity profile of $U$ (resp. $V$) translate into better values of $\kappa^{X|Y}$(resp. $\kappa^{Y|X}$). The fact that $\kappa$ is decreasing in $\beta$ can be interpreted as the fact that the more convex is $V$, the worse is the LSI constant of the conditional distributions $\pi^y$. The dependence of $\kappa$ in $\epsilon$ can be explained noting that as $\epsilon$ grows (the ``high temperature" regime), the optimal coupling $\pi$ approaches the product measure $\mu \otimes \nu$, and LSI constants for product measures are notoriously easier to bound. 
The explicit expressions of $\kappa^{X|Y}$ and $\kappa^{Y|X}$ are given at \eqref{eq:conditional_kappa_1} and \eqref{eq:conditional_kappa_2} below and require additional definitions that we prefer to omit from this introduction.

The findings leading to Theorem \ref{thm:conv_prof_informal} and the more general Theorem \ref{thm:conv_prof_rigorous} have a broader scope than establishing the exponential trend to equilibrium for the projected Langevin dynamics \eqref{def:mainSDE}.
Along the way we prove a new result of independent interest in Theorem \ref{th:newLSI-kappa}: Any probability density $\rho$ with $-\log\rho$ having an asymptotically positive integrated convexity profile in the sense of \eqref{def:as_conv_prof}  can be realized as the push-forward of the Gaussian measure through a Lipschitz map. It is well known that this implies that $\rho$ satisfies the LSI and is in fact considerably stronger: for example, it implies eigenvalues comparison \cite[Section 1.1]{mikulincer2022lipschitz}. There has been significant progress in recent years in showing that suitable perturbations of log-concave probability measures can be obtained as Lipschitz push-forwards of the Gaussian \cite{mikulincer2021brownian,mikulincer2022lipschitz,fathi2023transportation}. As in some of these works, our proof is based on an analysis of the so-called heat flow maps introduced by Kim and Milman in \cite{kim2012generalization}. Our results also improve
on a recent result of the first author \cite{conforti2022weak}, which established lower bounds on the  LSI constant for the conditional distribution of $\pi$ under similar assumptions, but with constants exhibiting worse dependence on $\epsilon$. Notably, our $\kappa^{X|Y}$ is increasing as a function of $\epsilon$, whereas the lower bound on the LSI constant in \cite{conforti2022weak} vanishes as $\epsilon$ diverges.

\subsection{Special cases and extensions} \label{se:extensions}

\subsubsection{Quadratic cost and Gaussian marginals}

When the functions $(c,U,V)$ are quadratic, the SDE \eqref{def:mainSDE} admits a fairly explicit solution. 
We record this here both because it is an instructive tractable example and because of its potential interest in applied contexts. Indeed, the submanifold of $\P_2(\R^d)$ consisting of Gaussian measures can be identified with the space of positive definite matrices, equipped with a particular Reimannian geometry sometimes known as the \emph{Bures-Wasserstein geometry}; see \cite{bhatia2019bures} for details of this geometry and \cite{lambert2022variational} for an analysis of a gradient flow on this submanifold and its relevance to variatonal inference in statistics.

Let $\mu$ and $\nu$ be the centered Gaussian measures with covariance matrices $\Sigma_\mu$ and $\Sigma_\nu$, which are given  $d \times d$ positive definite matrices. The SDE \eqref{def:mainSDE-quadratic} corresponding to the quadratic cost function becomes
\begin{align}
\begin{split}
dX_t &= \big( Y_t - \E[Y_t\,|\,X_t]  - \epsilon \Sigma_\mu^{-1} X_t\big)dt + \sqrt{2\epsilon }\,dW_t, \\
dY_t &= \big( X_t - \E[X_t\,|\,Y_t]  - \epsilon \Sigma_\nu^{-1} Y_t\big)dt + \sqrt{2\epsilon }\,dB_t.
\end{split} \label{def:SDE-Gaussian}
\end{align}
Suppose we make an ansatz, that the law of $(X_t,Y_t)$ is a nondegenerate Gaussian measure belonging to $\Pi(\mu,\nu)$. Then, necessarily, $\E[X_tX_t^\top] = \Sigma_\mu$ and $\E[Y_tY_t^\top] = \Sigma_\nu$ for all $t \ge 0$. Let $\Sigma_t = \E[X_tY_t^\top]$.
A well known property of Gaussian measures yields the following expressions for the conditional expectations:
\begin{align*}
\E[Y_t\,|\,X_t] = \Sigma_t^\top \Sigma_\mu^{-1}X_t, \qquad \E[X_t\,|\,Y_t] = \Sigma_t \Sigma_\nu^{-1}Y_t.
\end{align*}
The SDE \eqref{def:SDE-Gaussian} then becomes
\begin{align*}
\begin{split}
dX_t &= \big( Y_t -  (\Sigma_t^\top + \epsilon I) \Sigma_\mu^{-1} X_t\big)dt + \sqrt{2\epsilon }\,dW_t, \\
dY_t &= \big( X_t - (\Sigma_t + \epsilon I) \Sigma_\nu^{-1} Y_t\big)dt + \sqrt{2\epsilon }\,dB_t.
\end{split}
\end{align*}
To determine the dynamics of $\Sigma_t$, we use It\^o's formula:
\begin{align}
\frac{d}{dt} \Sigma_t &= \E\big[ X_t \big( X_t - (\Sigma_t + \epsilon I) \Sigma_\nu^{-1} Y_t\big)^\top + \big( Y_t - (\Sigma_t^\top + \epsilon I) \Sigma_\mu^{-1} X_t\big)Y_t^\top \big] \nonumber \\
	&= \Sigma_\mu + \Sigma_\nu - (\Sigma_t^\top + \epsilon I) \Sigma_\mu^{-1} \Sigma_t - \Sigma_t \Sigma_\nu^{-1} (\Sigma_t^\top + \epsilon I) . \label{def:Riccati}
\end{align}
This matrix-Riccati equation admits a local-in-time solution by Peano's theorem, which must then provide a local-in-time solution to our SDE. In conjunction with Theorem \ref{th:main}(1), we deduce that the unique solution of the SDE \eqref{def:SDE-Gaussian} is given by the Gaussian whose covariance matrix is the unique global-in-time solution of this Riccati equation.

Note that there is an abundance of stationary solutions for the $\epsilon=0$ equation. Indeed, if $S$ is any invertible matrix satisfying $S\Sigma_\mu S^\top = \Sigma_\nu$, then $\Sigma=\Sigma_\mu S^\top$ defines a stationary solution of \eqref{def:Riccati} (corresponding to the coupling $Y=SX$).
For example, if $\Sigma_\mu=\Sigma_\nu=I$, then $S$ can be any orthogonal matrix.
This reflects the previously observed fact that any bijective Monge coupling yields a stationary solution of the noiseless version of our SDE \eqref{def:mainSDE-noiseless}.

Even more transparent is the case of dimension $d=1$, when $\mu$ and $\nu$ are centered Gaussians with variances $\sigma_\mu,\sigma_\nu > 0$. The Riccati equation \eqref{def:Riccati} for $\sigma_t:=\E[X_tY_t]$ reduces to the ODE
\begin{align*}
\frac{d}{dt} \sigma_t &= \sigma_\mu + \sigma_\nu - (\sigma_\mu^{-1}+\sigma_\nu^{-1})\sigma_t(\sigma_t+\epsilon) = (\sigma_\mu^{-1}+\sigma_\nu^{-1})[\sigma_\mu \sigma_\nu - \sigma_t(\sigma_t+\epsilon)].
\end{align*}
As a function of $\sigma_t$, the quadratic right-hand side has roots at
\begin{align*}
\sigma^{\pm} := -(\epsilon/2) \pm \sqrt{(\epsilon/2)^2+\sigma_\mu \sigma_\nu}
\end{align*}
and is positive between these two roots.
Solutions of the above ODE converge exponentially fast to $\sigma^+$ when initialized from $\sigma_0 > \sigma^-$, increasing monotonically if $\sigma^- < \sigma_0 < \sigma^+$ and decreasing monotonically if $\sigma_0 > \sigma^+$. For $\sigma_0 \le \sigma^-$ the ODE behaves differently, but this case is irrelevant: Clearly $\sigma^- < -\sqrt{\sigma_\mu \sigma_\nu}$, whereas Cauchy-Schwarz implies $|\sigma_t| \le \sqrt{\Var(X_t)\Var(Y_t)}= \sqrt{\sigma_\mu \sigma_\nu}$.
When $\epsilon=0$ the ODE acquires an additional (unstable) equilibrium point at $\sigma_0=-\sqrt{\sigma_\mu \sigma_\nu}$.

\subsubsection{The case of non-stationary marginals}

This paper limits its discussion to the setting in which the initial law $P_0$ of $(X_0,Y_0)$ belongs to $\Pi(\mu,\nu)$. We note here some observations and speculations beyond this setting. Suppose $P_0$ has marginals $(\mu_0,\nu_0)$, potentially distinct from $\mu(dx)=e^{-U(x)}dx$ and $\nu(dy)=e^{-V(y)}dy$. Our well-posedness proof in Section \ref{se:wellposedness} can be adapted under some additional smoothness restrictions on $(U,V)$. The solution $(X,Y)$ of the SDE \eqref{def:mainSDE} must satisfy $X_t \stackrel{d}{=} \overline{X}_t$ and $Y_t \stackrel{d}{=} \overline{Y}_t$ for each $t \ge 0$, where $\overline{X}$ and $\overline{Y}$ solve the SDEs
\begin{align*}
d\overline{X}_t &= -\epsilon \nabla U(\overline{X}_t)dt + \sqrt{2\epsilon}\, dW_t, \quad \overline{X}_0 \sim \mu_0, \\
d\overline{Y}_t &= -\epsilon \nabla V(\overline{Y}_t)dt + \sqrt{2\epsilon}\, dB_t, \quad \overline{Y}_0 \sim \nu_0.
\end{align*}
That is, $P_t \in \Pi(\mu_t,\nu_t)$, where $\mu_t$ and $\nu_t$ are the laws of $\overline{X}_t$ and $\overline{Y}_t$, respectively. These two (marginal) processes are ergodic, ensuring that $\mu_t \to \mu$ and $\nu_t \to \nu$. Hence, any limit point of $P_t$ as $t\to\infty$ must belong to $\Pi(\mu,\nu)$.
For this reason, we expect that $P_t$ in fact converges to the same limit $\pi$, regardless of the initialization.

\begin{problem}
Does $P_t \to \pi$, even when $P_0$ does not belong to $\Pi(\mu,\nu)$? A key difficulty is the lack of an obvious choice of  ``energy" functional to play the role of $\J$.
\end{problem}

\subsubsection{The multi-marginal case} \label{se:multimarginal}

It is straightforward to extend all of the results of this paper to the multi-marginal setting, which we sketch in this section without specifying precise assumptions. Consider probability measures $\mu_i(dx)=e^{-U_i(x)}dx$, for $i=1,\ldots,m$, and a cost function $c : (\R^d)^m \to \R$. The multi-marginal optimal transport problem (see \cite{pass2015multi} for a survey) takes the form
\begin{equation*}
\inf_{\pi \in \Pi(\mu_1,\ldots,\mu_n)}\int_{(\R^d)^m} c(x_1,\ldots,x_n) \,\pi(dx_1,\ldots,dx_n),
\end{equation*}
where $\Pi(\mu_1,\ldots,\mu_n)$ is the set of probability measures on $(\R^d)^m$ with marginals $(\mu_1,\ldots,\mu_n)$. This  includes the Wasserstein barycenter problem as notable special case, as explained in \cite{AguehCarlier}. The natural entropic regularization \cite{carlier2020differential} takes the form
\begin{equation}
\inf_{\pi \in \Pi(\mu_1,\ldots,\mu_n)}\bigg(\int_{(\R^d)^m} c(x_1,\ldots,x_n) \,\pi(dx_1,\ldots,dx_n) + \epsilon H(\pi \,|\, \mu_1\otimes \cdots \otimes \mu_n)\bigg). \label{def:EOTmultimarginal}
\end{equation}
The natural analogue of the SDE \eqref{def:mainSDE} is the following:
\begin{equation*}
dX^i_t = \big( \E[\nabla_{x_i} c(\bm{X}_t)\,|\,X^i_t]  -  \nabla_{x_i} c(\bm{X}_t) - \epsilon \nabla U_i(X^i_t)\big)dt + \sqrt{2\epsilon }\,dW^i_t, \quad i=1,\ldots,m,
\end{equation*}
where $\bm{X}_t=(X^1_,\ldots,X^m_t)$, and $(W^1,\ldots,W^n)$ are independent Brownian motions.
The obvious analogue of Theorem \ref{th:main} holds in this setting: If the initial law belongs to $\Pi(\mu_1,\ldots,\mu_m)$, then so does the law at any time $t > 0$, and as $t \to \infty$ it converges to the unique optimizer of \eqref{def:EOTmultimarginal}.

\subsection{Additional prior literature} \label{se:literature}

The SDE \eqref{def:mainSDE} can be viewed as a McKean-Vlasov SDE, in the sense that the drift can be expressed as a functional of the joint law of $(X_t,Y_t)$. There are some similar SDEs involving conditional expectations which have appeared in prior work, in the study of Lagrangian stochastic models of turbulent flows \cite{bossy2011conditional,bossy2018wellposedness} and stochastic local volatility models in mathematical finance \cite{JourdainZhou,LacShkZha,djete2022non}. 
The papers \cite{LelRouSto,JouLelRou} also study SDEs involving a conditional expectation, designed to speed up the simulation of a low-dimensional pushforward of a high-dimensional distribution.

The recent concurrent work \cite{deb2023wasserstein}, which shares an author in common with this paper, describes a Mckean-Vlasov SDE that arises out of the Sinkhorn algorithm used to solve the entropic optimal transport problem. Their SDE is constructed from the continuous-time limit of the marginals from the Sinkhorn algorithm as the regularization parameter $\epsilon \rightarrow 0+$. Since the Sinkhorn algorithm does not preserve the marginal distribution at each iteration, unlike our setting, their dynamics are not constrained to the set of couplings. However, the associated Fokker-Planck equation can be interpreted as a ``mirror'' gradient flow on the Wasserstein space.

There is a curious link with the recent work \cite{LackerZhang}, which, for $c$ of the form $c(x,y)=K(x-y)$, studied the stationary solutions of the SDE 
\begin{align}
\begin{split}
dX_t &= -\Big(\nabla_xc(X_t,Y_t) + (m-1)\E[\nabla_xc(X_t,Y_t)\,|\,X_t] + \nabla U(X_t) \Big) dt + \sqrt{2}dW_t, \\
dY_t &= -\Big(\nabla_yc(X_t,Y_t) + (m-1)\E[\nabla_yc(X_t,Y_t)\,|\,Y_t] + \nabla U(Y_t) \Big) dt + \sqrt{2}dB_t,
\end{split} \label{intro:LackerZhangSDE}
\end{align}
For an integer $m \ge 2$, the stationary solution of \eqref{intro:LackerZhangSDE} characterizes the edge-marginal of a certain infinite (automorphism-invariant) system of particles interacting over the $m$-regular tree. This still makes sense for $m=1$, if we interpret the $1$-regular tree as  the complete graph on two vertices. The  results of \cite{LackerZhang} have no meaning in the case $m=0$, but, perhaps by mere coincidence, the SDE \eqref{intro:LackerZhangSDE} becomes a special case of ours \eqref{def:mainSDE}.
Non-stationary solutions of \eqref{intro:LackerZhangSDE} were not studied in \cite{LackerZhang}, nor the case where $U$ and $V$ are distinct.

A distinct but philosophically connected topic  in discrete probability is the construction of rapidly mixing Markov chains on contingency tables. Contingency tables are matrices whose entries are nonnegative integers and which have fixed row and column sums, which can obviously be viewed as discrete couplings.  The problem of counting the number of contingency tables or that of sampling a randomly chosen contingency table has a long history in statistics, computer science and combinatorics. See the references in \cite{DiaGangoli}, especially in their Section 10 where they introduce a canonical random walk to sample from the uniform distribution on contingency tables. This was generalized to other exponential families in \cite{DiaSturm} where a beautiful connection to techniques from computational algebra such as Gr\"obner bases was exploited. 
As opposed to these stochastic processes on discrete couplings that converge to an invariant distribution, 
our projected Langevin dynamics induce deterministic flows on the set of couplings that converge to the optimizer of a functional.

\subsection{Organization of the paper}

The remaining four sections of the paper may be read independently, for the most part; the exceptions are some regularity lemmas in Section \ref{se:entropycomesdown} which are applied in Section \ref{se:longtime}, and an identity in Section \ref{se:keyidentity} which is reused in Section \ref{se:newLSIproof-suff}.
Section \ref{se:gradientflow} contains a mostly formal discussion of the Wasserstein geometry of $\Pi(\mu,\nu)$ and the interpretation of our SDE as a gradient flow, in the language of Otto calculus. 
Then, Section \ref{se:wellposedness} proves well-posedness of (a somewhat more general version of) the SDE \eqref{def:mainSDE}. Section \ref{se:longtime} studies the long-time convergence, qualitatively, and the final Section \ref{se:expconvergence} justifies the results on quantitative convergence and projected log-Sobolev inequalities announced in Section \ref{se:exprate}.

\section{Gradient flows and the geometry of $\Pi(\mu,\nu)$} \label{se:gradientflow}

In this section we explain how to formally relate the SDE \eqref{def:mainSDE} to a Wasserstein gradient flow in the submanifold $\Pi(\mu,\nu) \subset \P_2(\R^{2d})$. We define as usual the (quadratic) Wasserstein distance between any two probability measures $m$ and $m'$ on a common Euclidean space as
\begin{equation*}
\W_2^2(m,m') := \inf_{\gamma \in \Pi(m,m')}\int |x-y|^2\gamma(dx,dy).
\end{equation*}
The groundbreaking work of Jordan-Kinderlehrer-Otto \cite{jordan1998variational} showed that the time-marginals of the  Langevin dynamics \eqref{intro:Langevin} are a gradient flow in the Wasserstein space $(\P_2(\R^{2d}),\W_2)$ of probability measures. 
They formulate this rigorously in terms of a discrete-time (implicit Euler) approximation, now known as the \emph{JKO scheme}, discussed in Section \ref{se:JKO} below. 
Their perspective was further developed in the so-called \emph{Otto calculus} \cite{otto2001geometry}, which we take a moment to review. We tacitly assume throughout this section that probability measures have smooth positive densities.

The Otto calculus  is a powerful heuristic view of $(\P_2(\R^m),\W_2)$ as a (formal) Riemannian manifold. In order to illustrate this geometric formalism, we start by recalling that if $\nabla \theta$ is the Brenier map pushing forward $\rho$ onto $\rho'$, we can define the corresponding displacement interpolation by $\rho_t=(\mathrm{id}+t(\nabla\theta-\mathrm{id}))_{\#}\rho$. This curve is a constant speed geodesic between $\rho$ and $\rho'$, in the metric sense  \cite[Section 7.2]{AGSbook}. This suggests that we identify Brenier maps with tangent vectors and leads naturally to the following definition of tangent space
\begin{align}
\mathrm{Tan}_\rho\P_2(\R^m) = \overline{ \{\nabla\testf : \testf \in C^\infty_c(\R^m)\} }^{L^2(\rho;\R^m)}, \qquad \rho \in \P_2(\R^m). \label{intro:tangentspace}
\end{align}
The tangent space at each $\rho$ is equipped with the $L^2(\rho;\R^m)$ inner product, which defines a (formal) Riemannian metric sometimes known as the Otto metric.
Remarkably, Otto calculus facilitates several explicit calculations. In particular, it turns out that an absolutely continuous curve $(\rho_t)$  must be the weak solution of a continuity equation $\partial_t\rho_t+\nabla\cdot(v_t\rho_t)=0$ such that $v_t\in\mathrm{Tan}_{\rho_t}\P_2(\R^m)$  for a.e.\ $t$; see \cite[Theorem 8.3.1]{AGSbook} for a fully rigorous statement. Another remarkable fact is that, for a reasonable reference measure $\rho_*$, the gradient of the relative entropy functional $H(\cdot\,|\,\rho_*)$ at a measure $\rho$ can be computed explicitly as the vector field $\nabla_{\W}H(\cdot\,|\,\rho_*)(\rho) = \nabla\log(\rho/\rho_*)$; see \cite{jordan1998variational}.

With Otto calculus in hand, gradient flows on $(\P_2(\R^m),\W_2)$ can be defined in analogy with the Euclidean setting. We recall that the prototype of gradient flow is the Euclidean ODE
\begin{equation}\label{GF_toy_intro_sec2}
    \dot{Z}_t=-\nabla H(Z_t),
\end{equation}
where $H : \R^m \to \R$ is a suitable energy functional. In light of the above discussion, one defines a gradient flow on $\mathcal{P}_2(\R^m)$ for a given energy functional such as $\epsilon H(\cdot\,|\,\rho_*)$ via the formula
\begin{equation}\label{eq:Wasserstein_GF}
\partial_t \rho_t - \epsilon \nabla \cdot\big( \rho_t\,\nabla_{\W}H(\cdot\,|\,\rho_*)(\rho_t)  \big)=0.
\end{equation}
In other words, \eqref{eq:Wasserstein_GF} is the continuity equation $\partial_t\rho_t + \nabla \cdot (\rho_t v_t)=0$, with the vector field $v_t$ taken to be $v_t=-\epsilon \nabla_{\W}H(\cdot\,|\,\rho_*)(\rho_t)$.
Recalling the aforementioned formula for $\nabla_\W H(\cdot\,|\,\rho_*)$, we can write \eqref{eq:Wasserstein_GF} as
\begin{equation}\label{eq:Wasserstein_GF2}
\partial_t \rho_t = \epsilon \nabla \cdot\big( \rho_t\,\nabla \log(\rho_t/\rho_*)  \big) = -\epsilon \nabla \cdot (\rho_t\log\rho_*) + \epsilon \Delta \rho_t.
\end{equation}
When $\rho_*(dx,dy) \propto e^{-c(x,y)/\epsilon}\mu(dx)\nu(dy)$, this is precisely the Fokker-Planck equation associated with the Langevin dynamics \eqref{intro:Langevin}.

In the upcoming sections we shall make the case that the SDE \eqref{def:mainSDE} relates to the gradient flow of $\epsilon H(\cdot\,|\,\pi)$ in $\Pi(\mu,\nu)$, which we view as a Riemannian submanifold of $(\P_2(\R^{2d}),\W_2)$. 
In Section \ref{se:tangentspace}, we formally identify the tangent space at $P\in\Pi(\mu,\nu)$ as 
\begin{align}
&\mathrm{Tan}_P\Pi(\mu,\nu) \label{eq:tan_sp_coupl} \\
& \ \ := \bigg\{ v \in \mathrm{Tan}_P\P_2(\R^{2d}) : \int v(x,y) \cdot \begin{pmatrix}
\nabla \testf_1(x) \\ \nabla \testf_2(y) 
\end{pmatrix}\,P(dx,dy) =0, \ \forall \testf_1,\testf_2 \in C^\infty_c(\R^{d})\bigg\}.  \nonumber
\end{align}
Gradient flows on submanifolds of $\P_2(\R^{2d})$ can be understood in analogy with the finite dimensional case: If $M\subset \R^m$ is a submanifold of a Euclidean space, it is well known that the gradient flow of $H$ in $M$ is defined by
\begin{equation}\label{eq:Euclidean_GF}
    \dot{Z}_t=-P(Z_t)\nabla H(Z_t), 
\end{equation}
where $P(Z_t)$ is the orthogonal projection  $\R^m \to \mathrm{Tan}_{Z_t}M$. We review this in more detail in Section \ref{se:GF-couplings}, and we will see by analogy that  the gradient flow of $\epsilon H(\cdot\,|\,\pi)$ in $\Pi(\mu,\nu)$ should be identified with the PDE
\begin{align} 
\partial_t P_t = \epsilon \nabla \cdot \big(P_t\,\textsf{T}_{P_t}\nabla_{\W}H(\cdot\,|\,\pi)(P_t)\big )  = \epsilon \nabla \cdot \big(P_t\,\textsf{T}_{P_t}\nabla \log (P_t/\pi)\big ) , \label{def:GFPDE}
\end{align}
where $\textsf{T}_{P_t}$ denotes the orthogonal projection $L^2(P_t;\R^{2d}) \to \mathrm{Tan}_{P_t}\Pi(\mu,\nu)$. 
We then explain in Section \ref{se:d=1} that the time-marginal flow of the SDE \eqref{def:mainSDE} provides a solution of \eqref{def:GFPDE} when $d=1$. Section \ref{se:d>1} explains why the two dynamics are different though tightly related when $d\ge 2$.
Lastly, we explore some further topics of a geometric nature in the remaining Sections \ref{se:JKO}, \ref{se:geodesicconvexity}, and \ref{se:generalization}, which respectively discuss the JKO scheme, the geodesic non-convexity of $\Pi(\mu,\nu)$, and some generalities on gradient flows on certain submanifolds of Wasserstein space.

\subsection{The tangent space of $\Pi(\mu,\nu)$} \label{se:tangentspace}

In this section we justify the claimed formula \eqref{eq:tan_sp_coupl} for the tangent spaces of $\Pi(\mu,\nu)$.
Recalling the definition of $\mathrm{Tan}_P\P_2(\R^{2d})$ from \eqref{intro:tangentspace}, we see that \eqref{eq:tan_sp_coupl} is precisely the orthogonal complement in $\mathrm{Tan}_P\P_2(\R^{2d})$ of the set of the vector fields of the form $(x,y) \mapsto (\nabla\testf_1(x),\nabla\testf_2(y))$, where $\testf_1,\testf_2 \in C^\infty_c(\R^d)$. Perhaps more suggestively, we may write \eqref{eq:tan_sp_coupl}  as 
\begin{equation}
\mathrm{Tan}_P\Pi(\mu,\nu) = \big(\mathrm{Tan}_\mu\P_2(\R^{d}) \otimes  \mathrm{Tan}_\nu\P_2(\R^{d})\big)^{\perp}. \label{def:TanPi}
\end{equation}

To justify this definition of $\mathrm{Tan}_P\Pi(\mu,\nu)$, let us fix an absolutely continuous curve $t \mapsto P_t \in \P_2(\R^{2d})$ satisfying $P_0 \in \Pi(\mu,\nu)$. We will argue that $P_t \in \Pi(\mu,\nu)$ for all $t$ if and only if the tangent vector at time $t$ belongs to the set $\mathrm{Tan}_P\Pi(\mu,\nu)$ for almost every $t$. More precisely, recall from \cite[Theorem 8.3.1]{AGSbook}, that absolute continuity of $(P_t)$ is equivalent to the existence of an vector field $(t,x,y) \mapsto v_t(x,y)$ in $L^2(dtP_t(dx);\R^{2d})$ such that $v_t \in \mathrm{Tan}_{P_t}\P_2(\R^{2d})$ for a.e.\ $t$ and the continuity equation $\partial_t P + \nabla \cdot (Pv)=0$ holds in the sense of distributions. Write $P^1_t$ and $P^2_t$ for the first and second marginals of $P_t \in \P_2(\R^d \times \R^d)$. Apply the continuity equation to a test function of the form $(x,y) \mapsto \testf_1(x) + \testf_2(y)$ to get
\begin{align*}
\frac{d}{dt}\int_{\R^d} \testf_1\, dP^1_t + \int_{\R^d} \testf_2\, dP^2_t = \int_{\R^{2d}} v_t(x,y) \cdot \begin{pmatrix}
\nabla\testf_1(x) \\ \nabla\testf_2(y)
\end{pmatrix} \,P_t(dx,dy), \ \ a.e. \ t.
\end{align*}
Because $P_0 \in \Pi(\mu,\nu)$, we see that $P_t \in \Pi(\mu,\nu)$ for all $t$ if and only if the left-hand side is zero for all $(\testf_1,\testf_2)$ and a.e.\ $t$, which is equivalent to $v_t$ belonging to $\mathrm{Tan}_P\Pi(\mu,\nu)$ for a.e.\ $t$.

\begin{remark}
Let us stress that, in the geometric discussion of this section, we view $\Pi(\mu,\nu)$ as a \emph{Riemannian submanifold} but not as a \emph{metric subspace} of $\P_2(\R^{2d})$. In particular, $\Pi(\mu,\nu)$ is not equipped with the restriction to $\Pi(\mu,\nu)$ of the metric $\W_2$, but rather the tangent spaces $\mathrm{Tan}_P\Pi(\mu,\nu)$ are equipped with the restriction of the Riemannian (Otto) metric of $\mathrm{Tan}_P\P_2(\R^{2d})$. This (formal) Riemannian metric on $\Pi(\mu,\nu)$ induces a distance which is different from $\W_2$, given instead by
\begin{equation*}
(Q_0,Q_1) \mapsto \inf \bigg(\int_0^1\|v_t\|_{P_t}^2\,dt\bigg)^{1/2},
\end{equation*}
where the infimum is over all absolutely continuous curves $(P_t)_{t \in [0,1]}$ in $(\P_2(\R^{2d}),\W_2)$ satisfying $P_0=Q_0$, $P_1=Q_1$, $P_t \in \Pi(\mu,\nu)$ for all $t \in (0,1)$, and finally the continuity equation $\partial_tP + \nabla \cdot (Pv)=0$. In other words, one should restrict the Benamou-Brenier formula \cite{BenamouBrenier} for $\W_2$ to include only those paths that lie within $\Pi(\mu,\nu)$.
\end{remark}

\subsection{Gradient flows on $\Pi(\mu,\nu)$}\label{se:GF-couplings}

Having now identified the tangent spaces $\mathrm{Tan}_P\Pi(\mu,\nu)$, we next explain in more detail the derivation of the PDE \eqref{def:GFPDE}.
It is helpful to build intuition by reviewing the ideas in a finite dimensional setting.
Let $M=\{x \in \R^{2d} : g(x)=r_0\}$ be a submanifold determined by a smooth function $g : \R^{2d} \to \R^k$ and a fixed vector $r_0 \in \R^k$.
 Denoting by $|\cdot|$ and $\langle\cdot\,,\,\cdot\rangle$ the standard norm and inner product on $\mathbb{R}^{2d}$, we can view $M$ as a Riemannian submanifold of $(\mathbb{R}^{2d},\langle\cdot\,,\,\cdot\rangle)$ by considering the restriction of $\langle\cdot\,,\,\cdot\rangle$ onto $\mathrm{Tan}_xM$, where the tangent space $\mathrm{Tan}_{x}M$ is
defined as the set of velocities $\dot{Z}_0$ of smooth curves $(Z_t)_{t \in (-\delta,\delta)}$ contained in $M$.
In this simple setting, we have the following fundamental facts:
 \begin{itemize}
 \item $\mathrm{Tan}_xM = \mathrm{ker}(Dg(x))$ for all $x\in M$.
 \item  Let $\nabla H$ denote the (Euclidean) gradient of a smooth function $H : \R^{2d} \to \R$. Then the gradient of $H$ on the submanifold at $x\in M$ is $P(x)\nabla H(x)$, where  $P(x)$ is the orthogonal projection onto $\mathrm{Tan}_xM$. To justify this, let $(Z_t)_{t \in (-\delta,\delta)}$ be a smooth curve contained in $M$. Then
\begin{equation*}
\lim_{t\downarrow0} \frac{H(Z_t)-H(Z_0)}{t}=\langle\nabla H(Z_0),\dot{Z}_0\rangle=\langle P(Z_0)\nabla H(Z_0),\dot{Z}_0\rangle,
\end{equation*} 
with the last step using $\dot{Z}_0 \in \mathrm{Tan}_xM$.
 \item The gradient flow of $H$ in $M$ is given by 
 \begin{equation}\label{eq:toy_submanifold_gf}
 \dot{Z}_t = -P(Z_t)\nabla H(Z_t), 
 \end{equation}
where $\dot{Z}_t$ is the velocity computed under the standard Euclidean metric. This is a straightforward consequence of the fact that we have identified the gradient of $H$ in the submanifold $M$ and that the velocity of a curve in $M$ coincides with the velocity of the same curve viewed as a curve in the Euclidean space.
 \end{itemize}
Let us now return to our infinite-dimensional setting. Write $\Pi(\mu,\nu)=\{P \in \P_2(\R^{2d}) : \widehat{\Pi}(P)=(\mu,\nu)\}$, where $\widehat{\Pi} : \P_2(\R^{2d}) \to\P_2(\R^d)^2$ is the map which sends a probability measure to its pair of marginals. The natural analogues of the above statements in our context are the following:
\begin{itemize}
\item The tangent space $\mathrm{Tan}_{P}\Pi(\mu,\nu)$ at $P \in \Pi(\mu,\nu)$ coincides with $\mathrm{ker}(D\widehat{\Pi}_P)$.
 We will explain below that this is consistent with our definition of $\mathrm{Tan}_{P}\Pi(\mu,\nu)$ in \eqref{eq:tan_sp_coupl}.
\item 
As noted before, the Wasserstein gradient of relative entropy $P \mapsto H(P\,|\,\pi)$ in $\P_2(\R^{2d})$ is given by $ \nabla \log (P/\pi)$. Hence, the gradient in the submanifold $\Pi(\mu,\nu)$ is given by $\textsf{T}_P\nabla \log (P/\pi)$, where $\textsf{T}_P$ denotes the $L^2(P;\R^{2d})$-projection onto $\mathrm{Tan}_P\Pi(\mu,\nu)$ for each $P \in \Pi(\mu,\nu)$.
To justify this, let $(P_t)_{t \in (-\delta,\delta)}$ be a smooth curve in $\Pi(\mu,\nu)$, which must therefore satisfy the continuity equation $\partial_tP+\nabla \cdot (Pv)=0$ for some velocity field $(v_t)$. In the framework of Otto calculus, we have
 \begin{equation*}
 \frac{d}{dt}\bigg|_{t=0} \! H(P_t\,|\,\pi) = \int_{\mathbb{R}^{2d}} v_0\cdot \nabla\log (P_0/\pi) \, dP_0 = \int_{\mathbb{R}^{2d}} v_0\cdot \textsf{T}_{P_0}\nabla\log (P_0/\pi)\,dP_0,
\end{equation*}
with the last step using $v_0\in\mathrm{Tan}_{P_0}\Pi(\mu,\nu)$.
 \item The gradient flow of $\epsilon H(\cdot\,|\,\pi)$ is given by 
\begin{align}
\partial_t P_t =\epsilon \nabla \cdot \big(P_t\,\textsf{T}_{P_t}\nabla \log (P_t/\pi)\big).  \label{def:GF-PDE}
\end{align}
As before, this follows from the fact that $\textsf{T}_{P}\nabla\log(P/\pi)$ is the gradient of the relative entropy in the submanifold $\Pi(\mu,\nu)$ and that the velocity of a curve in $\Pi(\mu,\nu)$ coincides with the velocity of the same curve viewed as a curve in the $\mathcal{P}_2(\mathbb{R}^{2d})$ equipped with the Otto metric.
 \end{itemize}
Let us now further elaborate on the identity $\mathrm{Tan}_{P}\Pi(\mu,\nu)=\mathrm{ker}(D\widehat{\Pi}_{P})$.
First, in the setting of two finite-dimensional manifolds $M_1$ and $M_2$, recall that the differential $Dg_{y_0}$ of a smooth map $g : M_1 \to M_2$ at a point $y_0 \in M$ is the linear map $\mathrm{Tan}_{y_0}M_1 \to \mathrm{Tan}_{g(y_0)}M_2$ uniquely defined by the formula
\begin{equation*}
Dg_{y_0}(y_0') = \frac{d}{dt}\Big|_{t=0} g(y_t),
\end{equation*}
which must hold for all smooth curves $(y_t)$ in $M_1$ passing through $y_0$ at $t=0$. The differential $D\widehat{\Pi}_{P_0}$ at $P_0 \in \Pi(\mu,\nu)$ is similarly a linear map $\mathrm{Tan}_{P_0}\P_2(\R^{2d}) \to \mathrm{Tan}_{(\mu,\nu)}(\P_2(\R^d))^2 \cong \mathrm{Tan}_\mu\P_2(\R^d) \otimes \mathrm{Tan}_\nu\P_2(\R^d)$, where we implicitly view $(\P_2(\R^d))^2$ as a formal product of two Riemannian manifolds. 
Let $(P_t)$ be an absolutely continuous curve in $\P_2(\R^{2d})$ passing through $P_0$ at $t=0$, and let $v_t \in \mathrm{Tan}_{P_t}(\R^{2d})$ be the associated vector field such that the continuity equation $\partial_t P + \nabla \cdot (Pv)=0$ holds. Identifying $v_0$ with $P_0'$, we should define $D\widehat{\Pi}_{P_0}(v_0) := (d/dt)\widehat{\Pi}(P_t)|_{t=0}$ to parallel the finite-dimensional formula. The tangent vector $(d/dt)\widehat{\Pi}(P_t)|_{t=0}$ should be computed as the time-zero value of the vector field governing the continuity equation satisfied by the curve $(\widehat{\Pi}(P_t))$ in $(\P_2(\R^d))^2$. To this end, for smooth functions $h_1,h_2:\R^d\to\R$ of compact support we compute
\begin{align*}
\frac{d}{dt}\Big|_{t=0} \langle \widehat{\Pi}(P_t),(\testf_1,\testf_2)\rangle &:= \frac{d}{dt}\Big|_{t=0} \int_{\R^{2d}}(\testf_1(x) + \testf_2(y))\,P_t(dx,dy) \\
	&= \int_{\R^{2d}} v_0(x,y) \cdot \begin{pmatrix}
\nabla\testf_1(x) \\ \nabla\testf_2(y)
\end{pmatrix}\,P_0(dx,dy) \\
	&= \int_{\R^{2d}} (\textsf{I}-\textsf{T}_{P_0})v_0(x,y) \cdot \begin{pmatrix}
\nabla\testf_1(x) \\ \nabla\testf_2(y)
\end{pmatrix}\,P_0(dx,dy).
\end{align*}
Here $\textsf{I}$ denotes the identity operator, and thus $\textsf{I}-\textsf{T}_{P_0}$ is the projection in $L^2(P_0;\R^{2d})$ onto the closure of the space of vector fields of the form $(x,y) \mapsto (\nabla\testf_1(x),\nabla\testf_2(y))$, which can be identified with $\mathrm{Tan}_\mu\P_2(\R^d) \otimes \mathrm{Tan}_\nu\P_2(\R^d) = (\mathrm{Tan}_{P_0}\Pi(\mu,\nu))^\perp$.
This justifies the formula $D\widehat{\Pi}_{P_0}=\textsf{I}-\textsf{T}_{P_0}$, which yields the identity $\mathrm{ker}(D\widehat{\Pi}_{P_0})=\mathrm{Tan}_{P_0}\Pi(\mu,\nu)$.

We lastly note that, because $D\widehat{\Pi}_{P}$ is an orthogonal projection, we should (formally) interpret this ``marginal map" $\widehat{\Pi}$ as a Riemannian submersion, and then $\mathrm{Tan}_{P}\Pi(P^1,P^2)$ is the associated vertical space at $P$, where $P^1$ and $P^2$ are the marginals of $P$. 

\subsection{Projected Langevin dynamics as gradient flow in $d=1$} \label{se:d=1}

We have justified (formally, via Otto calculus) that the PDE \eqref{def:GF-PDE} is the gradient flow for $\epsilon H(\cdot\,|\,\pi)$ in the submanifold $\Pi(\mu,\nu)$. 
In dimension $d=1$, we will now connect this PDE to the SDE \eqref{def:mainSDE}. This will rely on the crucial simplification
\begin{equation}
\mathrm{Tan}_\mu\P_2(\R) := \overline{ \{\nabla\testf : \testf \in C^\infty_c(\R)\} }^{L^2(\mu)} = L^2(\mu), \label{pf:Tan-simplfication}
\end{equation}
and similarly $\mathrm{Tan}_\nu\P_2(\R) =L^2(\nu)$, 
which lets us rewrite \eqref{def:TanPi} as
\begin{equation*}
\mathrm{Tan}_P\Pi(\mu,\nu) = (L^2(\mu) \otimes L^2(\nu))^\perp.
\end{equation*}
Note then that a vector field $v=(v^1,v^2)$ in $L^2(P;\R^2)$ is in $\mathrm{Tan}_P\Pi(\mu,\nu)$ if and only if
\begin{equation*}
\int_{\R^2} \big(v^1(x,y)\testf_1(x) + v^2(x,y)\testf_2(y) \big)\,P(dx,dy) = 0,
\end{equation*}
for all bounded measurable $\testf_1,\testf_2$. In probabilistic notation, this means precisely that
\begin{equation*}
\E_P[v^1(X,Y)\,|\,X]=0, \qquad \E_P[v^2(X,Y)\,|\,Y]=0.
\end{equation*}
This leads to a simple expression for the projection,
\begin{equation}
\textsf{T}_Pv(X,Y) = \begin{pmatrix}
v^1(X,Y) - \E_P[v^1(X,Y)\,|\,X] \\ v^2(X,Y) - \E_P[v^2(X,Y)\,|\,Y]
\end{pmatrix}. \label{pf:simplification-TP}
\end{equation}
From this we deduce the following key identity: For $P \in \Pi(\mu,\nu)$ sufficiently regular, we have
\begin{align}
\textsf{T}_P &\nabla \log \frac{P}{\pi}(X,Y)  \label{def:GF-TP-projection} \\
&= \nabla\log P(X,Y) + \begin{pmatrix}
\epsilon^{-1} \big(\nabla_x c(X,Y) - \E_P[ \nabla_x c(X,Y)\,|\,X]\big) + \nabla U(X) \\
\epsilon^{-1} \big(\nabla_y c(X,Y) - \E_P[ \nabla_y c(X,Y)\,|\,Y]\big) + \nabla V(Y)
\end{pmatrix}. \nonumber
\end{align}
To see this, use  the specific form of $\pi$ given in \eqref{intro:potentials} to deduce
\begin{equation*}
\nabla_x\log\pi(X,Y) - \E_P[ \nabla_x\log\pi(X,Y) \,|\,X] = \epsilon^{-1}\E_P[ \nabla_x c(X,Y)\,|\,X] - \epsilon^{-1} \nabla_x c(X,Y),
\end{equation*}
and combine this with the marginalization identity
\begin{equation*}
\E_P[\nabla_x\log P(X,Y)\,|\,X] = \nabla \log\mu(X)=-\nabla U(X),
\end{equation*}
and with the analogous identities for $\nabla_y$ terms. 
Finally, the identity \eqref{def:GF-TP-projection} shows that the PDE \eqref{def:GF-PDE} can be written as
\begin{equation*}
\partial_t P_t(x,y) = \epsilon \Delta P_t(x,y) - \nabla \cdot \left( P_t(x,y) \begin{pmatrix}
 \E_{P_t}[ \nabla_x c(X,Y)\,|\,X=x] - \nabla_x c(x,y) - \epsilon \nabla U(x) \\
 \E_{P_t}[ \nabla_y c(X,Y)\,|\,Y=y] - \nabla_y c(x,y) - \epsilon \nabla V(y)
\end{pmatrix} \right).
\end{equation*}
By a standard application of It\^o's formula, this is precisely the (nonlinear) Fokker-Planck equation satisfied by the measure flow given by the solution of the SDE \eqref{def:mainSDE}.

\subsection{An alternative SDE in $d \ge 2$} \label{se:d>1}

The simplification \eqref{pf:Tan-simplfication} occurs only in $d=1$, not in $d \ge 2$, essentially due to the existence of non-conservative vector fields in $d \ge 2$. That is, $\mathrm{Tan}_\mu\P_2(\R^d)$ is a strict subspace of $L^2(\mu;\R^d)$, at least for absolutely continuous $\mu$.
For this reason, the simple expression \eqref{pf:simplification-TP} for the projection $\textsf{T}_P$ is no longer valid in $d \ge 2$, and it must be corrected as follows: The conditional expectation $\E_P[v^1(X,Y)\,|\,X]$ must be replaced by the projection in $L^2(\mu;\R^d)$ of the function $x \mapsto \E_P[v^1(X,Y)\,|\,X=x]$ onto the subspace $\mathrm{Tan}_\mu\P_2(\R^d)$. 
Applying this additional projection throughout leads to a different SDE system, which we discuss here but do not give any rigorous results.

To write the different SDE, we favor the following probabilistic notation.
For a probability space $(\Omega,\F,\PP)$ supporting  $\R^d$-valued random vectors $X$ and $Z$, with $Z$ being square-integrable, we define $\Enabla[Z\,|\,X]$ to be the $L^2(\PP)$-projection of $Z$ onto the $L^2(\PP)$-closure of $\{\nabla\testf(X) : \testf \in C^\infty_c(\R^d)\}$. In other words, $\Enabla[Z\,|\,X]$ is the unique element of this closure satisfying
\begin{align}
\E\big[ \Enabla[Z\,|\,X] \cdot \nabla \testf(X) \big] = \E\big[ Z \cdot \nabla \testf(X) \big], \quad \forall \testf \in C^\infty_c(\R^d). \label{def:Enabla}
\end{align}
It follows that $\Enabla[Z\,|\,X]$ is also the projection of $\E[Z\,|\,X]$ onto the same space.
In $d=1$, we have in fact $\Enabla[Z\,|\,X] = \E[Z\,|\,X]$, because the $L^2(\PP)$-closure of $\{\testf'(X) : \testf \in C^\infty_c(\R)\}$ is exactly the set of $X$-measurable elements of $L^2(\PP)$.
Note in general that $\Enabla[Z\,|\,X]$ defines an $X$-measurable random variable, which can thus be represented uniquely (up to a.s.-equality) by a Borel function of $X$; we abuse notation by writing $x \mapsto \Enabla[Z\,|\,X=x]$ for this function, as is common practice for the usual conditional expectation. For another perspective: If $X \sim \mu$, then $x \mapsto \Enabla[Z\,|\,X=x]$ is the element of $L^2(\mu)$ obtained as the $L^2(\mu)$-projection of $f$ onto the subspace $\mathrm{Tan}_\mu \P_2(\R^d)$, when $f$ is defined as the unique element (or, more precisely, equivalence class) of $L^2(\mu)$ satisfying $f(X)=\E[Z\,|\,X]$ a.s.
With the new notation at hand, we can state the following multidimensional generalisation of \eqref{pf:simplification-TP}:
\begin{align*}
\textsf{T}_Pv(X,Y) = \begin{pmatrix}
v^1(X,Y) - \E^{\nabla}_P[v^1(X,Y)\,|\,X] \\ v^2(X,Y) - \Enabla_P[v^2(X,Y)\,|\,Y]
\end{pmatrix}. 
\end{align*}

We may now write the proposed variant of the SDE \eqref{def:mainSDE}, in which the conditional expectations are replaced by this gradient-projection:
\begin{align}
\begin{split}
dX_t &= \Big( \Enabla[\nabla_x c(X_t,Y_t)\,|\,X_t]  -  \nabla_x c(X_t,Y_t) - \epsilon \nabla U(X_t)\Big)dt + \sqrt{2\epsilon }\,dW_t, \\
dY_t &= \Big( \Enabla[\nabla_y c(X_t,Y_t)\,|\,Y_t]  -  \nabla_y c(X_t,Y_t) - \epsilon \nabla V(Y_t)\Big)dt + \sqrt{2\epsilon }\,dB_t.
\end{split} \label{def:mainSDE-proj}
\end{align}
The marginal flow of this SDE is expected to solve \eqref{def:GF-PDE}, thus providing with a probabilistic representation of the gradient flow of the entropy in the submanifold $\Pi(\mu,\nu)$. In $d=1$, this coincides with \eqref{def:mainSDE}, but in general it is distinct in $d \ge 2$.

\begin{problem}
Does an analogue of Theorem \ref{th:main} holds for the SDE \eqref{def:mainSDE-proj}?
\end{problem}

We were unable to prove that a weak solution of the SDE \eqref{def:mainSDE-proj} exists, even when $\nabla c$ is smooth and bounded. A fundamental difficulty is that the gradient projection operator, unlike the ordinary conditional expectation, may not necessarily be bounded from $L^\infty$ to $L^\infty$. 
But if a solution of \eqref{def:mainSDE-proj} does exist, the other properties as in Theorem \ref{th:main} do not appear as difficult to show. In particular, the law of $(X_t,Y_t)$ remains in $\Pi(\mu,\nu)$ for each $t > 0$ if it is initialized as such at $t=0$. This follows from the  calculation sketched in \eqref{intro:marginalcalculation}, using \eqref{def:Enabla} in place of the tower property. 
In addition, analogous results to Theorem \ref{th:main}(3,4) and Section \ref{se:exprate} should hold with similar proofs.

\subsection{The JKO scheme} \label{se:JKO}

The well known JKO scheme \cite{jordan1998variational} for the Langevin dynamics \ref{intro:Langevin} takes the following form. 
Let $\rho_0 \in \P_2(\R^{2d})$, and for each $h > 0$ and $k \in \N$ define
\begin{equation}
\rho^h_{(k)} = \mathrm{argmin}_{\rho \in \P_2(\R^{2d})} \bigg(H(\rho\,|\,\rho_*) + \frac{1}{2h}\W_2^2(\rho,\rho^h_{(k-1)})\bigg), \label{def:JKO}
\end{equation}
with $\rho^h_{(0)}=\rho_0$, where again $\rho_*(dx,dy) \propto e^{-c(x,y)/\epsilon}\mu(dx)\nu(dy)$. Define interpolants $\rho^h_t = \rho^h_{(k)}$ for $t \in [kh,(k+1)h)$. Then it was shown in \cite{jordan1998variational} that $(\rho^h_t)_{t \ge 0}$ converges pointwise to the law of the solution of \ref{intro:Langevin} initialized from $\rho_0$.

It is natural in our context to try restricting the argmin in \eqref{def:JKO} to $\Pi(\mu,\nu)$. 
Precisely: Let $P_0 \in \P_2(\R^{2d})$, and for each $h > 0$ and $k \in \N$ define
\begin{equation}
P^h_{(k)} = \mathrm{argmin}_{P \in \Pi(\mu,\nu)} \bigg(H(P\,|\,\pi) + \frac{1}{2h}\W_2^2(P,P^h_{(k-1)})\bigg), \label{def:JKO-new}
\end{equation}
with $P^h_{(0)}=P_0$. Define interpolants $P^h_t = P^h_{(k)}$ for $t \in [kh,(k+1)h)$. 

\begin{problem}
Does $(P^h_t)_{t \ge 0}$ converge as $h\to 0$? What is its limit?
\end{problem}

We do not attack this natural problem here, but, because of the formal discussions above, we expect when $d=1$ that $(P^h_t)_{t \ge 0}$ converges pointwise to the flow $(P_t)_{t \ge 0}$ from Theorem \ref{th:main}.

\subsection{Geodesic convexity, or lack thereof} \label{se:geodesicconvexity}

The finite-dimensional analogy reveals the difficulty in obtaining exponential rates of convergence, as in our Theorem \ref{thm:suff_cond_new_LSI}.
In the finite-dimensional setting, if we know that $c$ is strongly convex, then the usual gradient flow \eqref{intro:Langevin-noiseless} converges exponentially fast to the unique global minimizer of $c$. This does not imply anything, however, about the gradient flow of $c|_M$ on the submanifold $M$, i.e., the dynamics  \eqref{eq:toy_submanifold_gf} above. Indeed, the latter can have many critical points, stable or unstable. Examples are abundant and simple; for instance, if $c(x,y)=|x|^2+|y|^2$ and $M$ is a centered sphere, then $c|_M$ is constant.

If we know that $c|_M$ is a \emph{geodesically} strongly convex function on the manifold $M$, then we can indeed deduce that $Z_t$ converges exponentially fast to the unique minimizer of $c$ over $M$.
But, unless $M$ is convex as a subset of $\R^{2d}$, there is apparently no simple relationship between the ordinary versus geodesic convexity of $c$, 
essentially because the relationship between the Euclidean Hessian of $c$ and the Riemannian Hessian of $c|_M$ is complex.

Returning to our infinite-dimensional setting, there are many functionals known since the work of McCann \cite{mccann1997convexity} to be geodesically convex on $(\P_2(\R^{2d}),\W_2)$, leading to exponential rates of convergence to equilibrium for Fokker-Planck equations \cite{carrillo2003kinetic}. But these results do not transfer easily to our setting because the space $\Pi(\mu,\nu)$ fails to be geodesically convex in $(\P_2(\R^{2d}),\W_2)$. 
We highlight this observation in the following lemma.
Note, in contrast, that $\Pi(\mu,\nu)$ is always trivially convex in the usual linear structure of the space of measures.

\begin{proposition} \label{pr:geodesicconvexity}
For $\mu,\nu \in \P_2(\R^d)$, the following are equivalent:
\begin{enumerate}[(1)]
\item $\Pi(\mu,\nu)$ is a geodesically convex subset of $(\P_2(\R^{2d}),\W_2)$.
\item $\Pi(\mu,\nu)$ is a singleton.
\item Either $\mu$ or $\nu$ is a point mass (or both).
\end{enumerate} 
\end{proposition}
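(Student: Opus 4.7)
The plan is to show $(3) \Rightarrow (2) \Rightarrow (1) \Rightarrow (3)$. The first implication is immediate: if $\mu = \delta_{x_0}$ then every $\pi \in \Pi(\mu,\nu)$ satisfies $\pi(\{x_0\}\times\R^d)=1$, forcing $\pi = \delta_{x_0} \otimes \nu$, and $(2) \Rightarrow (1)$ is trivial since singletons are vacuously geodesically convex. All the substance is in $(1) \Rightarrow (3)$, which I plan to prove by contraposition in two steps.

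First, assuming neither $\mu$ nor $\nu$ is a point mass, I would exhibit two distinct elements of $\Pi(\mu,\nu)$. Pick disjoint Borel sets $A_1,A_2 \subset \R^d$ with $\mu(A_i)>0$ and $B_1,B_2 \subset \R^d$ with $\nu(B_j)>0$, and set $\rho_i := \mu|_{A_i}/\mu(A_i)$ and $\sigma_j := \nu|_{B_j}/\nu(B_j)$. The signed product measure $\eta := (\rho_1-\rho_2)\otimes(\sigma_1-\sigma_2)$ has both marginals equal to zero because $(\rho_1-\rho_2)(\R^d)=(\sigma_1-\sigma_2)(\R^d)=0$, so $\pi_1 := \mu\otimes \nu + \epsilon \eta$ lies in $\Pi(\mu,\nu)$ for every $\epsilon \in (0,\min(\mu(A_1)\nu(B_2),\mu(A_2)\nu(B_1)))$, which suffices to preserve nonnegativity. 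Since $\pi_1 \ne \pi_0 := \mu\otimes\nu$, this shows $\Pi(\mu,\nu)$ has at least two elements.

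Second, given $\pi_0 \ne \pi_1$ in $\Pi(\mu,\nu)$, I would show that no $\W_2$-geodesic between them remains in $\Pi(\mu,\nu)$. By standard optimal transport theory, every such geodesic is a displacement interpolation $\pi_t = (T_t)_\# \gamma$ with $T_t((x_0,y_0),(x_1,y_1)) := ((1-t)x_0 + tx_1,(1-t)y_0 + ty_1)$ for some optimal $\gamma \in \Pi(\pi_0,\pi_1)$. Writing $\gamma_X \in \Pi(\mu,\mu)$ for the joint law of $(x_0,x_1)$ under $\gamma$, the first marginal $\mu_t$ of $\pi_t$ satisfies
\begin{equation*}
\int_{\R^d}|x|^2\,\mu_t(dx) = M - 2t(1-t)(M - C_X),
\end{equation*}
where $M := \int|x|^2\,d\mu$ and $C_X := \int x_0\cdot x_1\, d\gamma_X$. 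Demanding $\mu_t = \mu$ for all $t \in (0,1)$ forces $C_X = M$, which combined with $\int|x_0|^2\,d\gamma_X = \int|x_1|^2\,d\gamma_X = M$ yields $\int|x_0-x_1|^2\,d\gamma_X = 0$, so $\gamma_X$ is concentrated on the diagonal. The analogous argument for $\nu_t$ does the same for the $(y_0,y_1)$-marginal of $\gamma$. Together these force $\gamma$ to be supported on the diagonal of $\R^{2d}\times\R^{2d}$, whence $\pi_0 = \pi_1$, a contradiction.

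The main obstacle is this last step: one must rule out constancy of the marginals along \emph{every} optimal displacement interpolation, since multiple optimal $\gamma$ may exist. The second-moment identity above (which genuinely uses the assumption $\mu,\nu \in \P_2(\R^d)$) provides a clean scalar criterion that handles all choices of $\gamma$ uniformly, sidestepping any case analysis on the structure of the set of optimal couplings.
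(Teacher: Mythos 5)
Your proof is correct and follows essentially the same route as the paper: the key step for $(1)\Rightarrow(3)$ is identical (constancy of the marginals' second moments along a displacement interpolation forces the optimal coupling $\gamma$ onto the diagonal), with your uncentered second-moment computation being a cosmetic variant of the paper's centered one, which goes through the equality case of Cauchy--Schwarz. Your perturbation $\mu\otimes\nu+\epsilon(\rho_1-\rho_2)\otimes(\sigma_1-\sigma_2)$ for producing two distinct couplings is a valid alternative to the paper's mixture $\tfrac12\mu_1\otimes\nu_1+\tfrac12\mu_2\otimes\nu_2$, but plays the same role.
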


Because of Proposition \ref{pr:geodesicconvexity}, we are unable to directly apply the general theory of gradient flows in metric spaces and Wasserstein space, developed in great detail and rigor in the now-canonical reference \cite{AGSbook}. Indeed, the bulk of this theory requires semiconvexity of the relevant functional $\J$ along geodesics.
Because $\Pi(\mu,\nu)$ fails to be geodesically convex, the geodesics in this submanifold (in the induced geometry) are different from the usual displacement interpolations in $\P_2(\R^{2d})$. 
We did not find a useful description of the geodesics of $\Pi(\mu,\nu)$, if they even exist.
In general, it is difficult to identify geodesics on submanifolds of Wasserstein space. The only example we know of is the remarkable work of Carlen-Gangbo \cite{CarlenGangbo}, which explicitly describes the geodesics of sets of measures with prescribed first and second moments.

\begin{problem}
Describe the geodesics of $\Pi(\mu,\nu)$.
\end{problem}

\begin{proof}[Proof of Proposition \ref{pr:geodesicconvexity}]
Note that $\Pi(\mu,\nu)$ is never empty, as it always contains $\mu \otimes \nu$.
The implications (3) $\Rightarrow$  (2) $\Rightarrow$ (1) are trivial. 

To show that (2) $\Rightarrow$ (3),  assume that neither $\mu$ nor $\nu$ is point mass. As point masses are the extreme points of $\P(\R^d)$, there exist probability measures $(\mu_1,\mu_2,\nu_1,\nu_2)$, necessarily with finite second moment, such that $\mu=\tfrac12(\mu_1+\mu_2)$ and $\nu=\tfrac12(\nu_1+\nu_2)$ and also $\mu_1\neq\mu_2$, $\nu_1\neq \nu_2$. The measure $\tfrac12 \mu_1 \otimes \nu_1 + \tfrac12\mu_2\otimes\nu_2$ belongs to $\Pi(\mu,\nu)$ and is distinct from $\mu \otimes \nu$, which shows that $\Pi(\mu,\nu)$ is not a singleton.

We lastly show that (1) $\Rightarrow$ (2). 
Let $P_0,P_1 \in \Pi(\mu,\nu)$. We will show that the assumption of geodesic convexity implies that  $P_1=P_0$. 
By \cite[Theorem 7.2.2]{AGSbook}, a constant-speed geodesic in $(\P_2(\R^{2d}),\W_2)$ between $P_0$ and $P_1$ takes the form $P_t = \mathrm{Law}(Z_t)$ where $Z_t:= (1-t)Z_0 + tZ_1$ for $t\in [0,1]$, where $(Z_0,Z_1)$ is a random variable whose law is an optimal coupling of $(P_0,P_1)$ for $\W_2$. Let us decompose the $\R^{2d}$-valued random vectors $Z_t$ into pairs of $\R^d$-valued random vectors, $Z_t=(X_t,Y_t)$. For $t=0$ or $t=1$, because $P_t$ belongs to $\Pi(\mu,\nu)$, we have $X_t \sim \mu$ and $Y_t \sim \nu$.
Now, the assumed geodesic convexity means that $P_t \in \Pi(\mu,\nu)$ and thus $X_t \sim \mu$ and $Y_t \sim \nu$ holds also for all intermediate $t \in (0,1)$. Let $m=\E X_0$. The function $t \mapsto \E|X_t-m|^2$ is constant, which, when combined with the identity
\begin{align*}
\E|X_t-m|^2 &= \E\big|(1-t) X_0 + t X_1) - m\big|^2 \\
	&= (1-t)^2\E|X_0-m|^2 + t^2\E|X_1-m|^2 + 2t(1-t)\E\big[(X_0-m)\cdot(X_1-m)\big],
\end{align*}
implies that
\begin{align*}
\E|X_0-m|^2=\E\big[(X_0-m)\cdot(X_1-m)\big].
\end{align*}
This is the equality case of Cauchy-Schwarz, and we deduce that $X_0-m$ and $X_1-m$ are proportional. Because they have the same law, they must in fact be identical, i.e., $X_1=X_0$. Similarly, $Y_1=Y_0$, and it follows that $P_1=P_0$.
\end{proof}

\begin{remark}
An inspection of the proof of Proposition \ref{pr:geodesicconvexity} reveals a stronger fact: The set $\Pi(\mu,\nu)$ contains no geodesics of $(\P_2(\R^{2d}),\W_2)$ of positive length.
\end{remark}

\begin{remark}
Instead of studying the gradient flow of the functional $\J$ on the submanifold $\Pi(\mu,\nu)$, it is tempting to alternatively study the gradient flow for the functional $\J+\iota$ on the entire space $\P_2(\R^{2d})$, where $\iota$ is defined to be zero on $\Pi(\mu,\nu)$ and $\infty$ elsewhere. The potential benefit would be to exploit the well-developed theory of gradient flows on $\P_2(\R^{2d})$ in \cite{AGSbook}. The difficulty, however, is that the functional $\iota$ is poorly behaved, for instance not $\lambda$-convex for any $\lambda \in \R$ due to Proposition \ref{pr:geodesicconvexity}.
\end{remark}

\subsection{A route to generalization} \label{se:generalization}

It is natural to wonder to what extent the analysis of the paper can extend beyond the setting of entropic optimal transport, to other constrained optimization problems on the space of probability measures.
Gradient flows in a few other submanifolds of Wasserstein space have been studied before. In \cite{CarlenGangbo} the submanifold is sphere-like, in \cite{lambert2022variational} it is the (geodesically convex) set of Gaussian measures, and in \cite{caglioti2009constrained,eberle2017gradient} it is determined by a finite set of conserved observables.
These settings appear to require case-by-case analysis, except perhaps when the submanifold is geodesically convex and the general theory of \cite{AGSbook} is applicable.

A general theory of gradient flows on submanifolds $\P_2$ appears to be out of reach, given the variety of geometries which might appear, and given the distinct technical challenges faced in our work and in \cite{CarlenGangbo}.
Let us nonetheless outline in this section a general framework which shows some promise, which is the setting of \emph{linear} constraints.

Consider a set $\Pi$ determined by a family of linear constraints, $\Pi=\{P \in \P(\R^d) : \langle P,F\rangle=0, \ \forall F \in \F\}$ for some vector space $\F$ of bounded smooth functions. 
In the case where $\Pi=\Pi(\mu,\nu)$ is the space of couplings, we can take $\F$ to consist of functions of the form $(x,y) \mapsto f(x)-\langle\mu,f\rangle + g(y)-\langle\nu,g\rangle$.
The set $\Pi$ is always convex, in the usual vector space sense, but typically not geodesically convex in $(\P_2(\R^d),\W_2)$.
Similar arguments to those of Section \ref{se:tangentspace} show that, if $\Pi$ is viewed as a submanifold of $\P_2(\R^d)$, then we should identify
\begin{equation*}
\mathrm{Tan}_P\Pi = (\overline{\nabla \F}^{L^2(P;\R^d)})^\perp,
\end{equation*}
for $P \in \Pi$, where $\nabla F = \{\nabla F : F \in \F\}$.  Set $\textsf{S}_P$ to be the $L^2(P;\R^d)$-projection onto $\mathrm{Tan}_P\Pi$.
We are thus led as in Section \ref{se:GF-couplings} to consider the following PDE, as the gradient flow for $H(\cdot\,|\,\rho)$ on the submanifold $\Pi$:
\eqref{def:GF-PDE}:
\begin{equation*}
\partial_t P_t = \nabla \cdot \big(P_t\textsf{S}_{P_t}\nabla \log(P_t/\rho)\big).
\end{equation*}
The corresponding energy decay identity is 
\begin{equation}
\frac{d}{dt}H(P_t\,|\,\rho) = -\int_{\R^d}\big|  \textsf{S}_{P_t} \nabla \log( P_t/\rho)\big|^2\,dP_t. \label{eq:generalization2}
\end{equation}

Beyond these formal considerations, it appears difficult at this level of generality to do any systematic analysis, or to prove anything analogous to Theorem \ref{th:main}, as we rely in many places on specific properties of conditional expectation which are not shared by other projection operators. Specifically, our existence and uniqueness arguments in Section \ref{se:wellposedness} require a certain continuity of $\textsf{S}_P$ with respect to $P$ (as in Proposition \ref{pr:condexp-continuity} below), and the (uniform) boundedness of $\textsf{S}_P$ as an operator on $L^\infty$. Our long-time convergence proof in Section \ref{se:longtime} requires a certain coercivity of the right-hand side of \eqref{eq:generalization2} as a function of $P_t$ (as in Proposition \ref{pr:Istability} below).

\section{Well-posedness of the SDEs} \label{se:wellposedness}

In this section we prove parts (1) and (2) of Theorem \ref{th:main}. They will follow respectively from Theorem \ref{th:wellposed-unbounded} and Lemma \ref{le:mimicking-proj} below. 
In fact, we study a generalization in which $\nabla c$ is replaced by some Borel function $(\fx,\fy) : \R^{2d} \to \R^{2d}$, which need not be a gradient, and this generality will be useful for some approximation arguments. Throughout the section, $(U,V)$ obey the assumptions of Theorem \ref{th:main}, though this could be generalized.

Consider the SDE
\begin{align}
\begin{split}
dX_t &= \Big( \E[\fx(X_t,Y_t) \,|\,X_t]  - \fx(X_t,Y_t) - \nabla U(X_t)\Big)dt + \sqrt{2}dW_t, \\
dY_t &= \Big( \E[\fy(X_t,Y_t) \,|\,Y_t]  - \fy(X_t,Y_t) - \nabla V(Y_t)\Big)dt + \sqrt{2}dB_t.
\end{split} \label{def:mainSDE-proof}
\end{align}
We will show weak existence and uniqueness for this SDE system. By choosing $(\fx,\fy) = \epsilon^{-1}\nabla c$ and performing the time change $t \mapsto \epsilon t$, this will show the claims in Theorems \ref{th:main}(1).

\begin{definition} \label{def:weaksolution}
A ``weak solution" of \eqref{def:mainSDE-proof} is defined to be a filtered probability space $(\Omega,\F,\FF,\PP)$ supporting independent $d$-dimensional $\FF$-Brownian motions $W$ and $B$, a continuous $\FF$-adapted process $(X,Y)$ of dimension $2d$, and two $\FF$-progressively measurable processes $\widehat\fx$ and $\widehat\fy$ each of dimension $d$ such that the following hold:
\begin{itemize}
\item For each $T > 0$,
\begin{align*}
\E\int_0^T\left(\big|\widehat\fx_t - \fx(X_t,Y_t) -\nabla U(X_t)\big| + \big|\widehat\fy_t - \fy(X_t,Y_t) -\nabla V(Y_t)\big|\right)\,dt < \infty.
\end{align*}
\item The SDE holds: 
\begin{align*}
dX_t &= \big( \widehat\fx_t - \fx(X_t,Y_t) - \nabla U(X_t)\big)dt + \sqrt{2}\,dW_t, \\
dY_t &= \big( \widehat\fy_t  - \fy(X_t,Y_t) - \nabla V(Y_t)\big)dt + \sqrt{2}\,dB_t.
\end{align*}
\item We have $\widehat\fx_t = \E[\fx(X_t,Y_t)\,|\,X_t]$ and $\widehat\fy_t = \E[\fy(X_t,Y_t)\,|\,Y_t]$ a.s.\ for a.e.\ $t \ge 0$.
\end{itemize}
For a given initial distribution $P_0$, ``uniqueness in law for initial distribution $P_0$" means that the law of $(X_t,Y_t)_{t \ge 0}$ on $C(\R_+;\R^{2d})$ is the same for any weak solution satisfying $(X_0,Y_0) \sim P_0$.
\end{definition}

The strategy of the existence and uniqueness proof is as follows. 
We begin by proving existence in the case that the functions $(\fx,\fy)$ are bounded. This uses Schauder's fixed point theorem and requires a strong ($L^p$-norm) compactness for the densities of candidate solutions, for which boundedness is helpful in appealing to interior H\"older regularity estimates. To handle the case of unbounded $(\fx,\fy)$, we truncate the coefficients and show that the resulting sequence of laws on path space is Cauchy with respect to the total variation norm. This strong mode of convergence (as opposed to weak convergence) is needed in order to deal with the condition expectation terms and establish that the limit is indeed a solution of the desired equation.

\subsection{Preservation of marginals}
We begin by showing that weak solutions of \eqref{def:mainSDE-proof} leaves the set of couplings $\Pi(\mu,\nu)$ invariant. Part (2) of Theorem \ref{th:main} follows immediately from this lemma, and this structure will be heavily exploited in every step of the well-posedness proofs, with the exception of the proof of existence in the case of bounded $(\fx,\fy)$. Recall that we assume $(\fx,\fy)$ to be Borel, and $(U,V)$ are as in Theorem \ref{th:main}.

Under these assumptions, we note here for later use that the following SDEs  admit unique in law weak solutions starting from any initial position:
\begin{align}
d\bar{X}_t &= -\nabla U(\bar{X}_t)dt + \sqrt{2}\,dW_t, \label{def:X-SDE-proofs} \\
d\bar{Y}_t &= -\nabla V(\bar{Y}_t)dt + \sqrt{2}\,dB_t. \label{def:Y-SDE-proofs}
\end{align}
Indeed,  $(\nabla U,\nabla V)$ are locally Lipschitz, which implies well-posedness up to a possible explosion time. And $\nabla^2 U$ and $\nabla^2V$ are bounded from below in semidefinite order, which is well known to prevent any explosion.

\begin{lemma} \label{le:mimicking-proj}
If $(X,Y)$ is a weak solution of \eqref{def:mainSDE-proof} satisfying $X_0 \sim \mu$ and $Y_0 \sim \nu$, then $X_t \sim \mu$ and $Y_t \sim \nu$ for all $t \ge 0$.
\end{lemma}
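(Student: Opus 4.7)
The plan is to show that the time-marginal $\rho_t := \mathrm{Law}(X_t)$ is a distributional solution of the Fokker--Planck equation associated to the ordinary Langevin SDE \eqref{def:X-SDE-proofs}. Since \eqref{def:X-SDE-proofs} is weakly well-posed under the stated hypotheses and admits $\mu$ as its unique invariant measure, and since $\rho_0 = \mu$ by hypothesis, we can conclude $\rho_t = \mu$ for all $t \ge 0$ either by invoking weak uniqueness for this Fokker--Planck equation or by appealing to the Brunick--Shreve mimicking theorem. The argument for $Y_t \sim \nu$ is entirely symmetric.

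To carry this out, fix a test function $\testf \in C^2_c(\R^d)$ and apply It\^o's formula to $\testf(X_t)$ along \eqref{def:mainSDE-proof}. The integrability clause in Definition \ref{def:weaksolution}, combined with the boundedness of $\nabla \testf$ and $\Delta\testf$ on the compact support of $\testf$, justifies taking expectations and ensures that the stochastic integral is a true martingale with zero mean. We obtain
\begin{align*}
\frac{d}{dt}\E[\testf(X_t)] = \E\Big[\nabla \testf(X_t) \cdot \big(\widehat{\fx}_t - \fx(X_t,Y_t) - \nabla U(X_t)\big) + \Delta \testf(X_t)\Big].
\end{align*}
Now $\nabla \testf(X_t)$ is $\sigma(X_t)$-measurable, so the tower property together with the defining identity $\widehat{\fx}_t = \E[\fx(X_t,Y_t) \mid X_t]$ yields
\begin{align*}
\E\big[\nabla \testf(X_t) \cdot \widehat{\fx}_t\big] = \E\big[\nabla \testf(X_t) \cdot \fx(X_t,Y_t)\big],
\end{align*}
and the two $\fx$ terms cancel. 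This is exactly the mechanism by which the conditional-expectation term in the drift ``projects out'' the $\fx$ term at the level of marginals. We are left with
\begin{align*}
\frac{d}{dt}\E[\testf(X_t)] = \E\big[-\nabla \testf(X_t) \cdot \nabla U(X_t) + \Delta \testf(X_t)\big],
\end{align*}
which is the weak formulation of $\partial_t \rho = \mathrm{div}(\rho \nabla U) + \Delta \rho$.

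To close, I would invoke \cite[Corollary 3.7]{BrunickShreve}: there exists a Markovian process whose one-dimensional marginals agree with those of $X$, and whose drift, obtained by conditioning the drift of \eqref{def:mainSDE-proof} on $X_t$, collapses by the same tower-property cancellation to $-\nabla U(X_t)$. Because \eqref{def:X-SDE-proofs} is weakly unique (as noted immediately above the lemma) and $\mu$ is invariant for it, starting at $\mu$ propagates to $\mu$ for all times, and we conclude $\rho_t = \mu$.

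The only subtle point is ensuring all expectations and the tower-property step are legitimate; this is precisely what the $L^1$-integrability hypothesis in Definition \ref{def:weaksolution} supplies, combined with compact support of the test function. No Sobolev regularity on $\rho_t$ is needed because the argument is purely weak. The lemma is in this sense a direct payoff of the specific ``$\E[\fx\mid X] - \fx$'' design of the drift in \eqref{def:mainSDE-proof}.
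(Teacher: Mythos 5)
Your proposal is correct and follows essentially the same route as the paper: It\^o's formula plus the tower property to cancel the $\fx$ terms, yielding that $\mathrm{Law}(X_t)$ weakly solves the linear Fokker--Planck equation of \eqref{def:X-SDE-proofs}, then concluding by uniqueness. The only cosmetic difference is the closing citation: the paper invokes uniqueness for that Fokker--Planck equation via \cite[Theorem 4.1.11]{BKRSbook} rather than the Brunick--Shreve mimicking theorem, but both routes are acknowledged in the paper and both suffice.
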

\begin{proof}
Apply It\^o's formula to a test function $\testf \in C^\infty_c(\R^d)$ to get
\begin{align*}
\frac{d}{dt}\E[\testf(X_t)] &= \E\bigg[ \nabla\testf(X_t) \cdot \Big( \E[\fx(X_t,Y_t) \,|\,X_t]  - \fx(X_t,Y_t) - \nabla U(X_t)\Big) + \Delta \testf(X_t) \bigg] \\
	&= \E\big[ -\nabla\testf(X_t) \cdot \nabla U(X_t)  + \Delta \testf(X_t) \big].
\end{align*}
In other words, the law $\mu_t$ of $X_t$ is a weak solution of the Fokker-Planck equation associated with the SDE \eqref{def:X-SDE-proofs}:
\begin{align*}
\langle \mu_t,\testf\rangle = \langle \mu_0,\testf\rangle  + \int_0^t \langle \mu_t,-\nabla\testf \cdot \nabla U  + \Delta \testf\rangle, \quad t > 0, \ \testf \in C^\infty_c(\R^d).
\end{align*}
This PDE (with initial condition $\mu$) is uniquely satisfied when $\mu_t=\mu$ for all $t > 0$; this follows from the assumptions on $(U,V)$ and \cite[Theorem 4.1.11]{BKRSbook}. Hence, $X_t \sim \mu$ for all $t > 0$. A similar argument shows $Y_t \sim \nu$.
\end{proof}

\subsection{Preliminary lemmas}  \label{se:preliminaries}

We first quote two ``regularity" properties of conditional expectation. The first deals with measurability: the conditional expectation $\E[\fx(X_t,Y_t)\,|\,X_t]$ can always be viewed as a (a.s.\ uniquely defined) measurable function of $X_t$, for each fixed $t$, but the \emph{joint} measurability is less obvious, or at least less standard.
The following proposition is used implicitly throughout the paper, with conditional expectations always taken to mean a jointly measurable version:

\begin{proposition}[Proposition 5.1 of \cite{BrunickShreve}] \label{pr:measurability}
Suppose a probability space $(\Omega,\F,\PP)$ supports processes $X=(X_t)_{t \ge 0}$ and $Z=(Z_t)_{t \ge 0}$ with values in $\R^d$, which are jointly measurable in $(t,\omega)$. Assume $\E\int_0^T|Z_t|\,dt < \infty$ for each $T > 0$. Then there exists a Borel function $\widehat{z} : \R_+ \times \R^d \to \R^d$ such that $\widehat{z}(t,X_t) = \E[Z_t\,|\,X_t]$ a.s.\ for a.e.\ $t$.
\end{proposition}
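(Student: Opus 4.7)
The plan is to realize the jointly measurable $\widehat{z}$ as a Radon--Nikodym derivative on the product space $\R_+\times\R^d$. Fix $T>0$ and let $\lambda_T$ denote Lebesgue measure on $[0,T]$. Using the assumed joint measurability of $(t,\omega)\mapsto X_t(\omega)$ and $(t,\omega)\mapsto Z_t(\omega)$, define the finite measure
\begin{equation*}
Q_T(A):=(\lambda_T\otimes\PP)\big(\{(t,\omega) : (t,X_t(\omega))\in A\}\big), \qquad A\in\B([0,T]\times\R^d),
\end{equation*}
together with, for each coordinate $i=1,\ldots,d$, the signed measure
\begin{equation*}
Q^i_T(A):=\int_\Omega\int_0^T \mathbf{1}_A(t,X_t(\omega))\,Z^i_t(\omega)\,dt\,d\PP(\omega).
\end{equation*}
The integrability hypothesis ensures $|Q^i_T|$ has total mass at most $\E\int_0^T|Z_t|\,dt<\infty$.

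Next, I would verify $Q^i_T\ll Q_T$: if $Q_T(A)=0$, then $\mathbf{1}_A(t,X_t)=0$ for $(\lambda_T\otimes\PP)$-a.e.\ $(t,\omega)$, so $Q^i_T(A)=0$ by the dominating bound $|Z^i_t|$. The Radon--Nikodym theorem then supplies a Borel function $\widehat{z}^{i,T}:[0,T]\times\R^d\to\R$ with $dQ^i_T=\widehat{z}^{i,T}\,dQ_T$. Unpacking this identity against product test functions $\phi(t,x)=\mathbf{1}_B(t)\psi(x)$, for $B\in\B([0,T])$ and $\psi$ ranging over a countable family in $C_b(\R^d)$ that separates probability measures, I would apply Fubini to obtain
\begin{equation*}
\int_B \E\big[\psi(X_t)Z^i_t\big]\,dt=\int_B\E\big[\psi(X_t)\widehat{z}^{i,T}(t,X_t)\big]\,dt, \qquad\forall B\in\B([0,T]),\ \psi.
\end{equation*}
Since this holds for all such $B$, the integrands agree for $\lambda_T$-a.e.\ $t$ (depending on $\psi$); intersecting the countable collection of null sets gives a single $\lambda_T$-full subset of $[0,T]$ on which $\widehat{z}^{i,T}(t,X_t)=\E[Z^i_t\,|\,X_t]$ a.s.

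The final step is a gluing argument over $T=n\in\N$. By uniqueness of Radon--Nikodym derivatives, for $n_1<n_2$ the restrictions of $\widehat{z}^{i,n_2}$ to $[0,n_1]\times\R^d$ agree $Q_{n_1}$-a.e.\ with $\widehat{z}^{i,n_1}$; I would simply set $\widehat{z}^i:=\widehat{z}^{i,n}$ on $[n-1,n)\times\R^d$ (modifying on a $\lambda$-null set of times if needed) and assemble the coordinates into $\widehat{z}:\R_+\times\R^d\to\R^d$. The main technical point is not really an obstacle but rather a matter of bookkeeping: one must be careful that the Fubini interchange is justified (guaranteed by $|Z_t|\in L^1(dt\,d\PP)$ on $[0,T]$) and that a countable separating family of test functions is used so that the exceptional $t$-set does not depend on $\psi$. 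No additional regularity hypotheses on $X$ or $Z$ beyond joint measurability and local integrability are invoked.
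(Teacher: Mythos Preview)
The paper does not supply its own proof of this proposition; it is quoted verbatim as Proposition~5.1 of Brunick--Shreve and used as a black box. There is therefore nothing in the paper to compare your argument against.

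That said, your Radon--Nikodym construction on the product space $[0,T]\times\R^d$ is correct and is essentially the standard way this result is established. One small point of care: the phrase ``a countable family in $C_b(\R^d)$ that separates probability measures'' is slightly weaker than what you actually need. To conclude that $\widehat{z}^{i,T}(t,X_t)=\E[Z^i_t\mid X_t]$ a.s., you need the signed measure $C\mapsto \E[\mathbf{1}_C(X_t)(\widehat{z}^{i,T}(t,X_t)-\E[Z^i_t\mid X_t])]$ to vanish, which requires your test class to determine finite \emph{signed} measures, not just probability measures. The cleanest fix is to replace the $C_b$ family by indicators $\mathbf{1}_C$ with $C$ ranging over a countable $\pi$-system generating $\B(\R^d)$ (e.g.\ rational rectangles) and invoke the $\pi$--$\lambda$ theorem; alternatively, use a countable family dense in $C_c(\R^d)$ in sup norm, which suffices by Riesz representation since the signed measure in question is finite. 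With that adjustment the argument goes through as written.
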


We next quote a continuity property of conditional expectation with respect to the measure:

\begin{proposition}[Theorem 3.1 of \cite{CrimaldiPratelli}] \label{pr:condexp-continuity}
Let $(\Omega,\F,\PP)$ be a probability space. Let $X,Z : \Omega \to \R^d$ be measurable, with $Z$ bounded. For $n \in \N$ let $\PP_n$ be a probability measure on $(\Omega,\F)$ with $\PP_n \to \PP$ in total variation. Then $\E_{\PP_n}[Z\,|\,X] \to \E_{\PP}[Z\,|\,X]$ in $L^p(\Omega,\PP;\R^d)$ for every $p \in [1,\infty)$.\footnote{To be completely precise, in case $\PP_n$ is not absolutely continuous with respect to $\PP$ for each $n$, we should say that there exist bounded Borel functions $\hat{z}_n : \R^d \to \R^d$ such that $\hat{z}_n(X)=\E_{\PP_n}[Z\,|\,X]$ holds $\PP_n$-a.s.\ for each $n$, and $\hat{z}_n(X) \to \E_{\PP}[Z\,|\,X]$ in $L^2(\Omega,\PP;\R^d)$. This detail will be inconsequential in applications.}
\end{proposition}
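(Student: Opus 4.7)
My plan is to show that $\hat z_n(X) \to g(X)$ in $L^2(\PP;\R^d)$ by a direct expansion of the squared norm, where $\hat z_n, g : \R^d \to \R^d$ are fixed bounded Borel representatives of $\E_{\PP_n}[Z\,|\,X]$ and $\E_\PP[Z\,|\,X]$, each dominated pointwise by $M := \|Z\|_\infty$ (obtained by truncating any Borel version at level $M$, which changes it only on a $\PP_n$-null or $\PP$-null set, respectively). Once $L^2$-convergence is established, convergence in $L^p$ for every $p \in [1,\infty)$ is immediate: Jensen handles $p \in [1,2]$, and the uniform bound $|\hat z_n(X) - g(X)| \le 2M$ lets one dominate $|\hat z_n(X) - g(X)|^p$ by $(2M)^{p-2} |\hat z_n(X) - g(X)|^2$ for $p > 2$.

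The two ingredients driving the proof are: (i) total-variation convergence $\PP_n \to \PP$ lets us replace $\PP_n$ with $\PP$ in any integral of a uniformly bounded integrand at $o(1)$ cost; and (ii) the defining property of $\E_{\PP_n}[Z\,|\,X]$ together with the $\sigma(X)$-measurability of $\hat z_n(X)$ yields
\begin{equation*}
\int_\Omega \hat z_n(X) \cdot \psi(X)\, d\PP_n = \int_\Omega Z \cdot \psi(X)\, d\PP_n
\end{equation*}
for every bounded Borel $\psi : \R^d \to \R^d$. Expanding
\begin{equation*}
\|\hat z_n(X) - g(X)\|_{L^2(\PP)}^2 = \int|\hat z_n(X)|^2\, d\PP - 2\int \hat z_n(X) \cdot g(X)\, d\PP + \int|g(X)|^2\, d\PP,
\end{equation*}
I would apply (ii) with $\psi = g$ and then $\psi = \hat z_n$ to rewrite the first two terms as $\PP_n$-integrals of $Z$ dotted with a bounded $\sigma(X)$-measurable function, invoke (i) to convert each back to a $\PP$-integral, and finally use the defining identity $\int Z \cdot g(X)\,d\PP = \int |g(X)|^2\, d\PP$ for $g(X) = \E_\PP[Z\,|\,X]$. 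The cross term converges to $\int |g(X)|^2 d\PP$, and the same chain of manipulations shows the first term converges to the same limit, so the whole expression vanishes.

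The main technical obstacle, in my view, is the measurability subtlety highlighted in the footnote: when $\PP_n$ is not absolutely continuous with respect to $\PP$, the random variable $\E_{\PP_n}[Z\,|\,X]$ is defined only up to a $\PP_n$-null set, which may carry positive $\PP$-mass, so the naive comparison of $\E_{\PP_n}[Z\,|\,X]$ and $\E_\PP[Z\,|\,X]$ as elements of $L^2(\PP)$ is ill-posed. Fixing pointwise-bounded Borel representatives $\hat z_n$ at the outset (not just $\PP_n$-a.s.\ bounded) removes the ambiguity and makes all the integrals and identities above interpretable literally under $\PP$, after which the computation proceeds as described.
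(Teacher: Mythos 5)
Your argument is correct. Note, however, that the paper does not prove this proposition at all: it is quoted verbatim from Theorem 3.1 of Crimaldi--Pratelli, so there is no internal proof to compare against. Your proposal supplies a short, self-contained replacement for that citation in the special case actually needed here (bounded $Z$, total-variation convergence), whereas the cited reference treats more general modes of convergence and integrands. The three ingredients all hold up: the truncation of Borel versions at level $M=\|Z\|_\infty$ is legitimate because conditional Jensen gives $|\E_{\PP_n}[Z\,|\,X]|\le M$ $\PP_n$-a.s., so the truncation alters the version only on a $\PP_n$-null set; the identity $\int \hat z_n(X)\cdot\psi(X)\,d\PP_n=\int Z\cdot\psi(X)\,d\PP_n$ is exactly the defining property of conditional expectation tested against bounded $\sigma(X)$-measurable functions; and every integrand appearing in the expansion of $\|\hat z_n(X)-g(X)\|_{L^2(\PP)}^2$ is bounded by $M^2$ uniformly in $n$, so each swap between $\PP_n$ and $\PP$ costs at most $M^2\|\PP_n-\PP\|_{TV}=o(1)$. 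The one step you compress is the treatment of the first term: after reducing $\int|\hat z_n(X)|^2\,d\PP$ to $\int Z\cdot\hat z_n(X)\,d\PP+o(1)$, you must invoke the defining identity of $g$ under $\PP$ with test function $\psi=\hat z_n$ (not $\psi=g$) to land on the cross term $\int g(X)\cdot\hat z_n(X)\,d\PP$, which you have already shown converges to $\int|g(X)|^2\,d\PP$; this is clearly what "the same chain of manipulations" intends, and it closes the argument. The passage from $L^2$ to general $L^p$ via the uniform bound $|\hat z_n(X)-g(X)|\le 2M$ is also fine.
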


We nex state an entropy estimate which will be used repeatedly in the sequel. It is essentially a simple (and well known) consequence of Girsanov's theorem, but there is some subtlety in the case of unbounded coefficients. In the following, for a Borel measurable $b : [0,T] \times \R^k \to \R^k$, let us say that ``$X$ is a weak solution of the SDE($b,\epsilon$)" if $X$ is a continuous adapted process, defined on some filtered probability space supporting a Brownian motion $W$, such that
\begin{align*}
dX_t = b(t,X_t)dt + \sqrt{2\epsilon}\, dW_t
\end{align*}
and also $\int_0^T |b(t,X_t)|\,dt < \infty$ a.s.
We say the SDE($b,\epsilon$) is \emph{well-posed} if for each $(s,x) \in [0,T] \times \R^k$ there exists a unique in law weak solution of the SDE
\begin{align*}
dX_t = b(t,X_t)dt + \sqrt{2\epsilon}\, dW_t, \ t \in (s,T], \ X_s=x.
\end{align*}

\begin{lemma}[Lemma 4.4(i) of \cite{lacker2021hierarchies}] \label{le:entropy-pathspace}
Let $k \in \N$ and $T > 0$, and let $b^1,b^2 : [0,T] \times \R^k \to \R^k$ be Borel measurable. Suppose that the SDE$(b^2,\epsilon)$ is well-posed.
For each $i=1,2$, let $Z^i$ be a weak solution of the SDE$(b^i,\epsilon)$, and let  $P^i \in \P(C([0,T];\R^k))$ denote its law.  Assume $Z^1_0 \stackrel{d}{=} Z^2_0$, and also
\begin{align}
\E\int_0^T|b^1(t,Z^i_t)-b^2(t,Z^i_t)|^2\,dt < \infty, \quad i=1,2. \label{asmp:entropy-girsanov}
\end{align}
Then
\begin{align*}
H(P^1\,|\,P^2) = \frac{1}{4\epsilon}\E\int_0^T |b^1(t,Z^1_t)-b^2(t,Z^1_t)|^2\,dt.
\end{align*}
\end{lemma}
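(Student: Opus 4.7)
The plan is to identify the Radon--Nikodym density $dP^2/dP^1$ as a stochastic exponential via Girsanov's theorem, and then read the entropy off of its logarithm. Write $Z := Z^1$, let $W$ denote its driving Brownian motion under $P^1$, and set $\beta_t := (b^2-b^1)(t,Z_t)/\sqrt{2\epsilon}$; by hypothesis \eqref{asmp:entropy-girsanov} with $i=1$, $\beta \in L^2(dt\otimes dP^1)$. Introduce the stochastic exponential
\[
\Lambda_t := \exp\!\left(\int_0^t \beta_s \cdot dW_s - \tfrac{1}{2}\int_0^t |\beta_s|^2\,ds\right),
\]
a nonnegative local martingale, hence a supermartingale with $\E_{P^1}[\Lambda_T] \le 1$. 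The proof proceeds in two steps: first derive the identity assuming $\Lambda$ is a true martingale, then verify this assumption.

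For the first step, suppose $\E_{P^1}[\Lambda_T]=1$, so $Q := \Lambda_T \cdot P^1$ is a probability measure. Girsanov's theorem supplies a $Q$-Brownian motion $\widetilde W$ with respect to which $Z$ satisfies $dZ_t = b^2(t,Z_t)\,dt + \sqrt{2\epsilon}\,d\widetilde W_t$, with the law of $Z_0$ under $Q$ unchanged from $P^1$ and hence equal to the law of $Z^2_0$. The assumed well-posedness of SDE$(b^2,\epsilon)$ forces $Q=P^2$, so $dP^2/dP^1=\Lambda_T$ and
\[
H(P^1\,|\,P^2) \;=\; -\E_{P^1}[\log\Lambda_T] \;=\; -\E_{P^1}\!\int_0^T \beta_s \cdot dW_s \;+\; \tfrac{1}{2}\,\E_{P^1}\!\int_0^T |\beta_s|^2\,ds.
\]
The It\^o isometry together with the $L^2$ bound on $\beta$ makes $\int \beta \cdot dW$ a true $P^1$-martingale, so its expectation vanishes; substituting $|\beta|^2 = |b^1-b^2|^2/(2\epsilon)$ gives the claimed identity.

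The main obstacle is verifying $\E_{P^1}[\Lambda_T]=1$, since Novikov's criterion does not follow from an $L^2$ bound alone. I would handle this by localization: set $\tau_n := \inf\{t\ge 0 : \int_0^t |\beta_s|^2\,ds \ge n\}\wedge T$, so that $\Lambda^{\tau_n}$ has bounded quadratic variation, is a true martingale, and $Q_n := \Lambda_{\tau_n}\cdot P^1$ is a probability. The Girsanov--uniqueness argument above, applied up to the stopping time $\tau_n$, shows that $Q_n$ agrees with $P^2$ on $\F_{\tau_n}$. Since $\{\tau_n = T\}\in\F_{\tau_n}$ and $\Lambda_T = \Lambda_{\tau_n}$ on this event,
\[
\E_{P^1}[\Lambda_T \mathbf{1}_{\{\tau_n = T\}}] \;=\; \E_{Q_n}[\mathbf{1}_{\{\tau_n = T\}}] \;=\; P^2(\tau_n = T).
\]
Hypothesis \eqref{asmp:entropy-girsanov} with $i=1$ gives $\tau_n\uparrow T$ almost surely under $P^1$, so monotone convergence sends the left side to $\E_{P^1}[\Lambda_T]$; hypothesis \eqref{asmp:entropy-girsanov} with $i=2$ gives $P^2(\tau_n = T)\to 1$. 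Combining yields $\E_{P^1}[\Lambda_T]=1$, closing the argument. The two-sided integrability hypothesis is precisely what balances the localization: with only the $i=1$ half one would obtain merely the inequality $H(P^1\,|\,P^2) \le (4\epsilon)^{-1}\,\E\int_0^T|b^1-b^2|^2(t,Z^1_t)\,dt$.
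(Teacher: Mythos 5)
The paper does not prove this lemma; it is imported verbatim from \cite{lacker2021hierarchies}, and your argument is the standard Girsanov-plus-localization proof used there: identify $dP^2/dP^1$ with the stochastic exponential, kill the martingale term by the $L^2$ bound, and verify $\E_{P^1}[\Lambda_T]=1$ by stopping at $\tau_n$ and using the $i=1$ hypothesis on one side and the $i=2$ hypothesis (via $P^2(\tau_n=T)\to 1$) on the other. The argument is correct, including the observation that only the one-sided inequality survives without the $i=2$ integrability; the only point left implicit is that $\Lambda_T$ is a measurable functional of the path of $Z$ (recover $W$ from $Z$ via $W_t=(Z_t-Z_0-\int_0^t b^1(s,Z_s)\,ds)/\sqrt{2\epsilon}$), which is what lets you treat it as a density on $C([0,T];\R^k)$ rather than on the underlying probability space.
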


\begin{remark} \label{re:lingrowth}
The well-posedness assumption for SDE($b^2,\epsilon$) holds if $b^2$ is bounded or is measurable with linear growth. Indeed, this follows from Girsanov's theorem, with existence explained in \cite[Proposition 5.3.6]{KaratzasShreve} and uniqueness in 
\cite[Theorem 7.7]{LiptserShiryaev}.
\end{remark}

We finally state a version of a classical interior H\"older estimate from parabolic PDE theory. It will be used for compactness in the existence proof below, and again for a certain uniform continuity in Section \ref{se:longtime}. 
The essential point is that sufficient local  integrability of the drift implies a Gaussian upper bound on the density, which combines with interior H\"older estimates (\`a la De Giorgi-Nash-Moser, but easier because the diffusion matrix is constant) to yield H\"older estimates in bounded domains, in which the constants depend only on  the integrability of the drift and the size of the domain. See \cite[Corollary 6.4.3]{BKRSbook} for a statement which cover our needs, which we summarize in the following Theorem \ref{th:Holder-estimate}; the assumption $C(r) < \infty$ is precisely what is needed for \cite[Corollary 6.4.3]{BKRSbook}. See also the classical paper of Aronson \cite{aronson1968non} or the book of Lieberman \cite[Sections VI.5--7]{lieberman1996second} for similar results.

\begin{theorem}  \label{th:Holder-estimate}
Let $k \in \N$, $p > k + 2$, and $\sigma > 0$.
Let $b : \R_+ \times \R^k \to \R^d$ be Borel measurable.
Suppose that there exists a weak solution of the SDE
\begin{equation*}
dX_t = b(t,X_t)dt + \sigma dW_t
\end{equation*}
such that 
\begin{equation*}
C(r) := \sup_{t > 0}\bigg(\E\big[b(t,X_t)1_{\{|X_t| \le r\}}\big] + \int_{\{|x| \le r\}}|b(t,x)|^p\,dx\bigg) < \infty, \quad \forall r > 0.
\end{equation*} 
Then the law $P_t$ of $X_t$ admits a density $P_t(\cdot)$ for each $t > 0$. Moreover, for each $\delta,R > 0$ there exist constants $(\alpha,\beta,K)$ depending only on $(C(r))_{r > 0}$ and $(\sigma,R,\delta)$ such that
\begin{align*}
|P(t,x)-P(t',x')| \le K(|x-x'|^\alpha + |t-t'|^{\beta}), 
\end{align*}
for each $x,x' \in \R^k$ with norm at most $R$, and each $t,t' \ge \delta$ with $|t-t'| \le 1$.
\end{theorem}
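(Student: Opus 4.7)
The plan is to reduce the claim to the classical regularity theory for linear parabolic equations with integrable drift, in the form packaged by \cite[Corollary 6.4.3]{BKRSbook}. The first step is to derive from It\^o's formula, applied to arbitrary $\testf \in C^\infty_c(\R^k)$, that the flow $(P_t)_{t \ge 0}$ satisfies the Fokker-Planck equation
\[
\partial_t P_t = \tfrac{\sigma^2}{2}\Delta P_t - \nabla \cdot (b(t,\cdot)P_t),
\]
in the sense of distributions on $(0,\infty) \times \R^k$. In order to make sense of the drift term, one uses the first part of the hypothesis, namely $\sup_t \E[|b(t,X_t)|1_{\{|X_t|\le r\}}] < \infty$, which guarantees that the signed measure $b(t,\cdot)P_t(dx)\,dt$ is of locally finite total variation on $(0,\infty) \times \R^k$.

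The second step is to show that $P_t$ is absolutely continuous for each $t > 0$ and that its density admits a local upper bound. This is where the local $L^p$-integrability of $b$ in space, with $p > k+2$, comes into play: together with the constant diffusion coefficient it places the drift in the right parabolic Kato/Morrey-type class to run the Moser iteration scheme on the weak form of the Fokker-Planck equation. The resulting estimate controls $\|P\|_{L^\infty}$ on any parabolic cylinder $(\delta,T) \times B_R$ in terms of the $L^p$ norm of $b$ on a slightly larger cylinder and the total mass of $P$; this is precisely the content of the preliminary bounds in Chapter~6 of \cite{BKRSbook}. Once this local $L^\infty$ bound is in hand, the De Giorgi-Nash-Moser Hölder estimate for parabolic equations applies, since again $p > k+2$ sits strictly above the scaling threshold for Hölder regularity. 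The exponents $(\alpha,\beta)$ and the constant $K$ then depend only on $(\sigma,R,\delta)$ and on $(C(r))_{r>0}$, exactly as claimed; the role of $\delta>0$ is to stay away from the possibly irregular initial datum.

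The main obstacle is that the hypothesis is formulated probabilistically via the law $P_t$ (through $\E[|b(t,X_t)|1_{\{|X_t|\le r\}}]$), whereas the BKRS machinery is formulated analytically for weak solutions of the Fokker-Planck equation. Translating between the two requires verifying, carefully but without surprises, that the integrability of $b$ against $P_t$ supplied by the first part of $C(r)<\infty$ is exactly what is needed to write and manipulate the Fokker-Planck equation rigorously as a distributional identity, while the $L^p$-in-space part of $C(r)<\infty$ is what feeds the parabolic regularity estimates. Once this bookkeeping is done, the statement follows by a direct application of \cite[Corollary 6.4.3]{BKRSbook}; no further probabilistic input is needed.
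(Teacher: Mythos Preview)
Your proposal is correct and matches the paper's own treatment: the paper does not give an independent proof of this theorem but simply quotes it as a summary of \cite[Corollary 6.4.3]{BKRSbook}, noting that the assumption $C(r)<\infty$ is exactly the hypothesis needed there. Your sketch of the underlying mechanism (Fokker--Planck equation in distributional form, local $L^p$ bound on the drift feeding Moser iteration and then De Giorgi--Nash--Moser H\"older regularity) agrees with the brief explanation the paper offers before stating the theorem.
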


\subsection{The bounded case}

We are now ready to prove well-posedness in the bounded case:

\begin{proposition} \label{pr:wellposed-bounded}
Suppose the assumptions on $(U,V)$ of Theorem  \ref{th:main} hold, and also that $(\fx,\fy)$ is bounded. Then the SDE  \eqref{def:mainSDE-proof} admits a unique in law weak solution for any initial distribution $P_0 \in \Pi(\mu,\nu)$.
\end{proposition}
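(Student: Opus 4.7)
The plan is to establish existence via a Schauder--Tychonoff fixed-point argument on path-space and uniqueness via a Girsanov--Pinsker--Gronwall estimate. Fix $T>0$, set $\Omega := C([0,T];\R^{2d})$, and let $\mathcal{R} \in \P(\Omega)$ be the path-law of the decoupled Langevin reference SDEs~\eqref{def:X-SDE-proofs}--\eqref{def:Y-SDE-proofs} started from $P_0$; this is well-defined under the hypotheses on $(U,V)$. For any $Q \in \P(\Omega)$ with time-marginals $(Q_t)$, I will use a jointly Borel version (via Proposition~\ref{pr:measurability}) of the bounded drift perturbation
\[
b^Q(t,x,y) := \bigl(\E_{Q_t}[\fx(X,Y)\,|\,X=x] - \fx(x,y),\ \E_{Q_t}[\fy(X,Y)\,|\,Y=y] - \fy(x,y)\bigr),
\]
so that $\|b^Q\|_\infty \le 2(\|\fx\|_\infty+\|\fy\|_\infty) =: 2M$. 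Since the reference SDE is well-posed, Girsanov yields a unique weak solution of the linear SDE with drift $(-\nabla U,-\nabla V)+b^Q$ and initial distribution $P_0$; denote its path-law by $\Phi(Q)$. Novikov's bound supplies $\|d\Phi(Q)/d\mathcal{R}\|_{L^2(\mathcal{R})} \le e^{TM^2}$.

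For existence, I take $\mathcal{K} := \{Q \in \P(\Omega) : \|dQ/d\mathcal{R}\|_{L^2(\mathcal{R})} \le e^{TM^2}\}$, which is convex, nonempty, and weakly compact in $L^1(\mathcal{R})$ by Dunford--Pettis, with $\Phi(\mathcal{K}) \subset \mathcal{K}$. The delicate step will be continuity of $\Phi$. Proposition~\ref{pr:condexp-continuity} is tailor-made for passing to the limit in the conditional expectations inside $b^Q$, but it demands total-variation convergence of the time-marginals, which is stronger than what weak $L^1(\mathcal{R})$-convergence directly provides. I will bridge this gap by invoking Theorem~\ref{th:Holder-estimate}: the hypotheses give $\nabla U,\nabla V \in L^p_{\mathrm{loc}}$ for $p > 2d+2$ (combining continuity of $U,V$ with $|\nabla U|\in L^p(\mu)$ and $|\nabla V|\in L^p(\nu)$), so all time-$t$ densities of measures in $\Phi(\mathcal{K})$ are equi-H\"older on compact sets. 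Combined with tightness inherited from the $L^2(\mathcal{R})$-density bound, this upgrades weak convergence to TV-convergence of time-marginals for candidates lying in $\Phi(\mathcal{K})$; since any fixed point must lie there, it suffices to verify continuity of $\Phi$ on this smaller set, and Proposition~\ref{pr:condexp-continuity} together with dominated convergence on the Girsanov exponentials then delivers $\Phi(Q^n)\to\Phi(Q)$ weakly in $L^1(\mathcal{R})$. Schauder--Tychonoff produces a fixed point $P^*=\Phi(P^*)$, which is by construction a weak solution of~\eqref{def:mainSDE-proof} (the drift of the linear SDE defining $\Phi(P^*)$ becomes the true conditional-expectation drift of the McKean--Vlasov SDE under $P^*$). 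Lemma~\ref{le:mimicking-proj} then forces $P^*_t \in \Pi(\mu,\nu)$ for all $t$.

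For uniqueness, I take two weak solutions $P^1, P^2$ starting from $P_0\in\Pi(\mu,\nu)$ and let $h(t) := H(P^1|_{[0,t]}\,|\,P^2|_{[0,t]})$. Both solve linear SDEs with drifts $(-\nabla U,-\nabla V)+b^{P^i}$, each well-posed by Girsanov, so Lemma~\ref{le:entropy-pathspace} gives
\[
h(T) = \tfrac14\, \E^{P^1}\!\int_0^T \bigl|b^{P^1}(t,X_t,Y_t) - b^{P^2}(t,X_t,Y_t)\bigr|^2\,dt.
\]
The crucial move is to exploit Lemma~\ref{le:mimicking-proj}: both $P^i_t$ lie in $\Pi(\mu,\nu)$, so I disintegrate $P^i_t(dx,dy) = \mu(dx)\, P^{i,x}_t(dy)$ and bound pointwise $|\E_{P^1_t}[\fx|X=x] - \E_{P^2_t}[\fx|X=x]|^2 \le \|\fx\|_\infty^2 \|P^{1,x}_t - P^{2,x}_t\|_{TV}^2$. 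Applying Pinsker's inequality fiberwise together with the chain rule for relative entropy (valid because the $X$-marginals coincide) yields $\int|\cdot|^2\,d\mu \le 2\|\fx\|_\infty^2 H(P^1_t\,|\,P^2_t) \le 2\|\fx\|_\infty^2\,h(t)$, and similarly for the $Y$-component. Hence $h(T) \le C\int_0^T h(t)\,dt$ with $h(0)=0$, and Gronwall forces $h\equiv 0$, so $P^1 = P^2$. The principal obstacle throughout is the continuity step in Schauder, where the equi-H\"older density estimates from Theorem~\ref{th:Holder-estimate} are essential to upgrade weak convergence to the TV convergence required by Proposition~\ref{pr:condexp-continuity}.
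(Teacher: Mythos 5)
Your proposal is correct and follows essentially the same route as the paper: the same freezing-of-the-nonlinearity map $\Phi$, Girsanov density bounds for compactness, Theorem~\ref{th:Holder-estimate} to upgrade weak convergence to total-variation convergence of time-marginals so that Proposition~\ref{pr:condexp-continuity} applies, Schauder for existence, and the Girsanov--Pinsker--chain-rule--Gronwall scheme (using Lemma~\ref{le:mimicking-proj}) for uniqueness. One small repair: ``any fixed point lies in $\Phi(\mathcal{K})$, so it suffices to check continuity there'' is not how Schauder is invoked, since $\Phi(\mathcal{K})$ need not be convex; instead apply Schauder on the closed convex hull $K$ of $\Phi(\mathcal{K})$, noting that the uniform H\"older estimates (hence the TV-upgrade of weak convergence) are preserved under convex combinations and weak limits, which is exactly what the paper does.
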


\subsection*{Existence in the bounded case}
We first prove existence of a solution, up to any fixed time horizon $T > 0$. Let $\C_T^k=C([0,T];\R^k)$ for each $k \in \N$. For $P \in \P(\C_T^k)$ let $P_t$ denotes the time-$t$ marginal of $P$, i.e., the pushforward through $P$ of the map $\C_T^{k} \ni z \mapsto z_t \in \R^{k}$.

Let $P^* \in \P(\C_T^{2d})$ denote the joint law of $(\bar{X},\bar{Y})$, the unique solution of \eqref{def:X-SDE-proofs} and \eqref{def:Y-SDE-proofs} initialized from $(\bar{X}_0,\bar{Y}_0) \sim P_0$.
For $P \in \P(\C_T^{2d})$, we define $\Phi(P) \in \P(\C_T^{2d})$ as the law of the solution of the SDE
\begin{align*}
dX_t &= \Big( \E_{P}[\fx(X_t,Y_t) \,|\,X_t]  - \fx(X_t,Y_t) - \nabla U(X_t)\Big)dt + \sqrt{2}dW_t, \\
dY_t &= \Big( \E_{P}[\fy(X_t,Y_t) \,|\,Y_t]  - \fy(X_t,Y_t) - \nabla V(Y_t)\Big)dt + \sqrt{2}dB_t.
\end{align*}
Note that this SDE is well-posed by Girsanov's theorem and the well-posedness of the SDEs \eqref{def:X-SDE-proofs} and \eqref{def:Y-SDE-proofs}.
To be clear, we are ``freezing the nonlinearity" here; the measure $P$ governing the conditional expectations is not the same as the law $\Phi(P)$ of the solution. When these two measures match, i.e., when $P$ is a fixed point of $\Phi$, we have a weak solution of the desired SDE.

The conditional expectation $\E_{P}[\fx(X_t,Y_t) \,|\,X_t]$ admits a version (selected arbitrarily) which is a bounded Borel measurable function of $(t,X_t)$, by Proposition \ref{pr:measurability}, and analogously for $\E_{P_t}[\fy(X_t,Y_t) \,|\,Y_t]$. The Radon-Nikodym derivative $d\Phi(P)/dP^*$ is easily identified using Girsanov's theorem, and it follows easily from boundedness of $(\fx,\fy)$ that
\begin{equation}
M_q := \sup_{P \in \P(\C_T^{2d})}\|d\Phi(P)/dP^*\|_{L^q(P^*)} < \infty, \quad \forall q \in [1,\infty). \label{def:Mq}
\end{equation}
It will follow from Schauder's theorem that $\Phi$ admits a fixed point if we can show that $\Phi$ is weakly continuous on the set $K$ defined as the closed convex hull of $\Phi(K_0)$, where $K_0:=\{P \in \P(\C_T^{2d}) : \|dP/dP^*\|_{L^2(P^*)} \le M_2\}$; note that $K$ is weakly compact because $K_0$ is weakly compact and because $\Phi(K_0) \subset K_0$ and by \eqref{def:Mq}.

We next apply Theorem \ref{th:Holder-estimate} to deduce that, for any open ball $S$ in $\R^{2d}$ and any $ s > 0$, the densities $(t,x,y) \mapsto (\Phi(P))_t(x,y)$ are H\"older continuous on $[s,T] \times S$, uniformly over all $P \in \P(\C_T^{2d})$. Indeed, Theorem \ref{th:Holder-estimate} applies because $(\fx,\fy)$ is bounded, and because $(\nabla U,\nabla V)$ is locally bounded.
Moreover, the uniform H\"older continuity implies that  $\{\Phi(P)_T(0,0) : P \in \P(\C_T^{2d})\} \subset \R$ must be bounded, as otherwise otherwise would contradict the fact that $\Phi(P)_t$ are probability measures.
This  H\"older estimate and boundedness are preserved by convex combinations and weak limits and thus satisfied by all measures in $K$.
Hence, if $P^n \to P$ in $K$, it follows from the Arzel\`a-Ascoli theorem that the density $P^n_t(x,y)$ converges to $P_t(x,y)$ uniformly on compact subsets of $(0,T] \times \R^{2d}$. In particular, $P^n_t \to P_t$ in total variation.

Now, to prove continuity of $\Phi$ on $K$, let $P^n,P \in K$ with $P^n \to P$ weakly. 
As noted in the previous paragraph, it holds that $P^n\to P$ in total variation.
Using the entropy estimate of Lemma \ref{le:entropy-pathspace}, and letting $(X,Y)=(X_t,Y_t)_{t \in [0,T]}$ denote the canonical process on $\C_T^{2d}$,  we have
\begin{align*}
H(\Phi(P)\,|\,\Phi(P^n))	&= \frac14\int_0^T\E_{\Phi(P)}\bigg[ \Big|\E_{P}[\fx(X_t,Y_t) \,|\,X_t] - \E_{P^n}[\fx(X_t,Y_t) \,|\,X_t]\Big|^2 \\
	&\qquad\qquad\qquad\quad +\Big|\E_{P}[\fy(X_t,Y_t) \,|\,Y_t] - \E_{P^n}[\fy(X_t,Y_t) \,|\,Y_t]\Big|^2\bigg]\,dt.
\end{align*}
We claim that this vanishes as $n\to\infty$, which will show that $\Phi(P^n) \to \Phi(P)$.
Because $\fx$ is bounded, it follows from Proposition \ref{pr:condexp-continuity} that
\begin{align*}
\E_{P^n}[\fx(X_t,Y_t) \,|\,X_t] \to \E_{P}[\fx(X_t,Y_t) \,|\,X_t]
\end{align*}
in $P$-measure. Because $P_t$ and $\Phi(P)_t$ have positive (Lebesgue) densities, the above convergence holds in $\Phi(P)$-measure as well.  The claim now follows from dominated convergence. \hfill \qedsymbol

\subsection*{Uniqueness in the bounded case}
Technically, this is a special case of the uniqueness proof given in the unbounded case for Theorem \ref{th:wellposed-unbounded} below, but it is helpful to understand the main idea first in the simpler case of bounded coefficients.
Consider two solutions with the same initial condition,  with laws $P,Q \in \P(\C_T^{2d})$. Let $P[t] \in \P(\C_t^{2d})$ denote the image under the restriction map $z \mapsto z|_{[0,t]}$, for $t \in [0,T]$. 
Note that $X_t \sim \mu$ and $Y_t \sim \nu$ under each $P$ and $Q$, for each $t$.
We then have by Lemma \ref{le:entropy-pathspace} that
\begin{align*}
 H(Q[t]\,|\,P[t]) = \frac{1}{4} \int_0^t\E_Q\Big[ &\big| \E_P[\fx(X_s,Y_s)\,|\,X_s]- \E_Q[\fx(X_s,Y_s)\,|\,X_s]\big|^2 \\
 	&+ \big|\E_P[\fy(X_s,Y_s)\,|\,Y_s]-\E_Q[\fy(X_s,Y_s)\,|\,Y_s]\big|^2\Big]\,ds.
\end{align*}
Let $P_{s,X_s}$ denote the (regular) conditional law of $Y_s$ given $X_s$ under $P$, and similarly for $Q$. Using Pinsker's inequality, we have
\begin{align*}
\E_Q \big| \E_P[\fx(X_s,Y_s)\,|\,X_s]- \E_Q[\fx(X_s,Y_s)\,|\,X_s]\big|^2 &= \E_Q \big| \langle P_{s,X_s} - Q_{s,X_s}, \fx(X_s,\cdot)\rangle\big|^2 \\
	&\le 2\||\fx|^2\|_\infty \E_Q \big[H(Q_{s,X_s}\,|\,P_{s,X_s})\big].
\end{align*}
Lemma \ref{le:mimicking-proj} ensures that $Q_s,P_s\in\Pi(\mu,\nu)$. Using the chain rule followed by the data processing inequality of relative entropy,
\begin{align*}
\E_Q \big[H(Q_{s,X_s}\,|\,P_{s,X_s}) \big] = H(Q_s\,|\,P_s) \le H(Q[s]\,|\,P[s]).
\end{align*}
Combine the last two inequalities, along with the analogous inequalities for the $\fy$ term, to get
\begin{align*}
 H(Q[t]\,|\,P[t]) &\le  \kappa \int_0^t H(Q_s\,|\,P_s)\,ds \le \kappa\int_0^t H(Q[s]\,|\,P[s])\,ds,
\end{align*}
where $\kappa = (\||a|^2\|_\infty + \||b|^2\|_\infty)/2$.
Uniquenes now follows from Gronwall's inequality. \hfill \qedsymbol

\subsection{The unbounded case}

With well-posedness proven for bounded $(\fx,\fy)$, we now turn to the unbounded case. Part (1) of Theorem \ref{th:main} follows immediately from the following:

\begin{theorem} \label{th:wellposed-unbounded}
Suppose the assumptions on $(U,V)$ of Theorem  \ref{th:main} hold. Assume that the functions $\fx$ and $\fy$ are measurable, and that there exists $r_* > 0$ such that
\begin{align}
\sup_{Q \in \Pi(\mu,\nu)} \int_{\R^d \times \R^d} \big(e^{r_*|\fx(x,y)|^2} + e^{r_*|\fy(x,y)|^2}\big)Q(dx,dy) < \infty. \label{asmp:subgaussian}
\end{align}
Then the SDE  \eqref{def:mainSDE-proof} admits a unique in law weak solution for any initial distribution in $\Pi(\mu,\nu)$.
Moreover, if $P_t(x,y)$ denotes the time-$t$ density of the solution, then
for each $\delta > 0$ and each compact set $S \subset \R^{2d}$, we may find $K,\alpha,\beta > 0$ such that 
\begin{align*}
|P_t(x,y)-P_t(x',y')| \le K(|x-x'|^\alpha + |t-t'|^\beta),
\end{align*}
for each $x,x' \in S$ and each $t,t' \ge \delta$ with $|t-t'| \le 1$.
\end{theorem}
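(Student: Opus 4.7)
The plan is to truncate the unbounded coefficients, apply Proposition~\ref{pr:wellposed-bounded} at each truncation level, and promote the resulting sequence of approximate solutions to a genuine weak solution of~\eqref{def:mainSDE-proof} via convergence in total variation on path space. For $N \in \N$, set $\fx_N := \fx\,\mathbf{1}_{\{|\fx|\le N\}}$ and $\fy_N := \fy\,\mathbf{1}_{\{|\fy|\le N\}}$, and let $P^N \in \P(C([0,T];\R^{2d}))$ denote the law of the unique weak solution of the truncated SDE starting from $P_0$. The crucial structural input is Lemma~\ref{le:mimicking-proj}, which forces $P^N_t \in \Pi(\mu,\nu)$ for every $N$ and every $t$; combined with~\eqref{asmp:subgaussian}, this provides subgaussian integrability of $\fx(X_t,Y_t)$ and $\fy(X_t,Y_t)$ under each $P^N$ uniformly in $N$ and $t$.

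\textbf{Cauchy in total variation.} For $N \le M$, Lemma~\ref{le:entropy-pathspace} writes $H(P^N\,|\,P^M)$ as the time integral of a squared drift difference under $P^N$. After adding and subtracting the non-truncated conditional expectations, this difference splits into a \emph{truncation error} $\fx_N-\fx_M$ (and a conditional expectation thereof) and a \emph{frozen-measure error} $\E_{P^N}[\fx|X_t] - \E_{P^M}[\fx|X_t]$, plus $\fy$-analogues. The truncation part vanishes uniformly in $N, M \to \infty$ via $P^N_t \in \Pi(\mu,\nu)$, Jensen, and the uniform integrability implicit in~\eqref{asmp:subgaussian}:
\begin{equation*}
\E_{P^N}\big[|\fx-\fx_N|^2\big] \le \sup_{Q\in\Pi(\mu,\nu)} \int |\fx|^2\,\mathbf{1}_{\{|\fx|>N\}}\,dQ \xrightarrow{N\to\infty} 0.
\end{equation*}
For the frozen-measure error, I expect a subgaussian transport-type inequality of the form
\begin{equation*}
\int \big|\E_{P^N}[\fx|X_t{=}x]-\E_{P^M}[\fx|X_t{=}x]\big|^2\,\mu(dx) \le C\big(H(P^N_t|P^M_t) + H(P^M_t|P^N_t)\big),
\end{equation*}
obtained by a Donsker--Varadhan argument that uses the uniform subgaussianity of $\fx$ along the conditional disintegrations of couplings in $\Pi(\mu,\nu)$ (which in turn follows from~\eqref{asmp:subgaussian}). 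Combining this with the data processing inequality $H(P^N_t|P^M_t)\le H(P^N[t]|P^M[t])$ and symmetrising yields, with $\Psi_{N,M}(t):=H(P^N[t]|P^M[t])+H(P^M[t]|P^N[t])$, an estimate of the form
\begin{equation*}
\Psi_{N,M}(t) \le \varepsilon_{N,M} + C\int_0^t \Psi_{N,M}(s)\,ds, \qquad \varepsilon_{N,M}\xrightarrow{N,M\to\infty} 0,
\end{equation*}
and Gronwall's inequality together with Pinsker closes the Cauchy property.

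\textbf{Limit, uniqueness, and regularity.} Since $(P^N)$ is Cauchy in total variation on path space, we extract a limit $P^\infty$; in particular the time marginals $P^N_t$ converge in total variation, so Proposition~\ref{pr:condexp-continuity} identifies the limits of the conditional-expectation drift terms as $\E_{P^\infty}[\fx|X_t]$ and $\E_{P^\infty}[\fy|Y_t]$. Combined with the vanishing truncation error, this shows $P^\infty$ solves~\eqref{def:mainSDE-proof}. Uniqueness for any initial law $P_0 \in \Pi(\mu,\nu)$ is proven by applying the same entropy plus Gronwall estimate to any two weak solutions, again leveraging $P_t, Q_t \in \Pi(\mu,\nu)$ and the uniform subgaussian control. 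Finally, the Hölder regularity of $P_t(x,y)$ on compact sets follows from Theorem~\ref{th:Holder-estimate}, whose hypothesis $C(r)<\infty$ reduces to local $L^p$-control of the drift components for some $p > 2d+2$: $\fx, \fy$ are controlled by~\eqref{asmp:subgaussian} together with the local densities of $\mu, \nu$, while $|\nabla U|\in L^p(\mu), |\nabla V|\in L^p(\nu)$ are assumed. The main obstacle is the subgaussian transport inequality used in the Cauchy step: a crude Pinsker bound $|\int \fx\,d(P-Q)|^2 \le \|\fx\|_\infty^2\|P-Q\|_{\mathrm{TV}}^2$ fails because $\fx$ is unbounded, so the proof must really exploit the subgaussianity of $\fx$ along conditional disintegrations of couplings in $\Pi(\mu,\nu)$ to close the Gronwall loop.
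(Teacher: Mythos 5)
Your overall architecture (truncate, solve via Proposition \ref{pr:wellposed-bounded}, pass to the limit, and prove uniqueness by an entropy--Pinsker--Gronwall loop exploiting $P_t\in\Pi(\mu,\nu)$) is the paper's, but there is a genuine gap at precisely the step you flag as the main obstacle. The ``subgaussian transport-type inequality''
\[
\int \big|\E_{P}[\fx\,|\,X=x]-\E_{Q}[\fx\,|\,X=x]\big|^2\,\mu(dx) \le C\big(H(P\,|\,Q)+H(Q\,|\,P)\big),
\]
with $C$ uniform over $P,Q\in\Pi(\mu,\nu)$, does \emph{not} follow from \eqref{asmp:subgaussian}. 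That hypothesis controls the exponential moment of $\fx$ only in an integrated sense: disintegrating gives $\int e^{r_*|\fx(x,y)|^2}P^x(dy)<\infty$ for $\mu$-a.e.\ $x$, with no bound uniform in $x$ (a coupling may concentrate the heavy tail of $\nu$ on a small set of $x$'s). Consequently the conditional weighted Pinsker inequality yields an $x$-dependent weight $\Lambda_r(t,X_t)=\tfrac{2}{r}\log\E_P[e^{r|\fx-\E_P[\fx|X_t]|^2}\,|\,X_t]$ that is unbounded, and $\E_Q\big[\Lambda_r(t,X_t)H(Q_{t,X_t}\,|\,P_{t,X_t})\big]$ cannot be dominated by $C\,H(Q_t\,|\,P_t)$. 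The paper closes the loop by splitting on $\{\Lambda_r(t,X_t)>m\}$, bounding the remainder by $C e^{-rm/4}\E_P[e^{4r|\fx|^2}]^{1/2}$ via Cauchy--Schwarz and Markov, and noting that after Gronwall the exponent $r^{-1}T+\tfrac14(2T-r)m$ tends to $-\infty$ as $m\to\infty$ only when $T<r/2$ with $r\le r_*/4$; uniqueness is thus obtained on a short horizon and iterated. As written, your Gronwall step does not close without this mechanism, and your Cauchy-in-total-variation existence argument inherits the same defect.

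Two further points of divergence from the paper. For existence, the paper does not show $(P^n)$ is Cauchy in total variation: it obtains tightness from the uniform entropy bound relative to the decoupled reference system $P^*$, upgrades to total-variation convergence of the time marginals $P^n_t$ via the uniform H\"older estimates of Theorem \ref{th:Holder-estimate} (Arzel\`a--Ascoli applied to the densities), and identifies the limit through the martingale problem, using Proposition \ref{pr:condexp-continuity} together with a truncation of $\fx$ to handle the conditional-expectation terms. So the strong compactness of the marginal densities is not merely the source of the final H\"older claim, as in your sketch, but the essential ingredient for identifying the limit. Finally, in the uniqueness step one cannot apply Lemma \ref{le:entropy-pathspace} directly with an arbitrary solution $P$ as reference, since well-posedness of the SDE with drift $\E_P[\fx\,|\,X_t=x]-\fx-\nabla U$ is unclear; the paper first applies the lemma with $P^n$ as reference and then passes to the limit.
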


A typical example of when the assumption \eqref{asmp:subgaussian} holds is when $\mu$ and $\nu$ are subgaussian and $(\fx,\fy)$ have linear growth, in the sense that for some constant $L$ we have
\begin{equation}
|\fx(x,y)|+|\fy(x,y)| \le L(1+|x|+|y|), \quad \forall x,y \in \R^d. \label{asmp:lingrowth}
\end{equation}

\subsection*{Existence in the unbounded case} \label{se:existence-unbounded}

We prove existence by truncating $(a,b)$, deducing existence from the bounded case of Proposition \ref{pr:wellposed-bounded}, and then carefully taking limits. Fix $T > 0$ arbitrarily.
For each $n \in \N$ and $(x,y) \in \R^{2d}$, let $\fx_n(x,y) \in \R^d$ denote the projection of $\fx(x,y)$ onto the centered ball of radius $n$. Define $\fy_n(x,y)$ similarly. Then $(\fx_n,\fy_n)$ is bounded, so for each $n$ there exists a unique solution of the corresponding SDE \eqref{def:mainSDE-proof}, by Proposition \ref{pr:wellposed-bounded}. Let $P^n \in \P(\C_T^{2d})$ denote the law  of the solution, where again $\C_T^{2d} := C([0,T];\R^{2d})$.

We first establish compactness. Let $P^* \in \P(\C_T^{2d})$ again denote the joint law of $(\bar{X},\bar{Y})$, the unique solution of \eqref{def:X-SDE-proofs} and \eqref{def:Y-SDE-proofs} initialized from $(\bar{X}_0,\bar{Y}_0) \sim P_0$. Using Lemma \ref{le:entropy-pathspace}, we have
\begin{align*} 
H(P^n[T]\,|\,P^*[T]) &=  \frac14 \int_0^T\E_{P^n}\Big[ \big|\E_{P^n}[\fx_n(X_t,Y_t)\,|\,X_t]-\fx_n(X_t,Y_t)\big|^2  \\
	&\qquad\qquad\qquad + \big|\E_{P^n}[\fy_n(X_t,Y_t)\,|\,Y_t]-\fy_n(X_t,Y_t)\big|^2\Big]\,dt \\
	&\le \frac14 \int_0^T\E_{P^n}\Big[|\fx_n(X_t,Y_t)|^2 + |\fy_n(X_t,Y_t)|^2\Big]\,dt \\
	&\le \frac14 \int_0^T\E_{P^n}\Big[|\fx(X_t,Y_t)|^2 + |\fy(X_t,Y_t)|^2\Big]\,dt.
\end{align*}
This quantity is bounded uniformly in $n$ thanks to the assumption \eqref{asmp:subgaussian}. Because sub-level sets of entropy are weakly compact, we deduce that $(P^n[T])$ is a tight sequence in $\P(\C_T^{2d})$ for each $T > 0$, and thus $(P^n)$ is a tight sequence in $\P(C(\R_+;\R^{2d}))$, where $C(\R_+;\R^{2d})$ is equipped with the topology of uniform convergence on compacts.

We next apply Theorem \ref{th:Holder-estimate} to establish stronger compactness. Let $\widehat{\fx}^n_t(x)=\E_{P^n}[\fx(X_t,Y_t)\,|\,X_t=x]$. Then, since $P^n_t \in \Pi(\mu,\nu)$, Jensen's inequality yields
\begin{align*}
\int_{\R^d}|\widehat{\fx}^n_t|^p\,d\mu \le \E_{P^n}|\fx(X_t,Y_t)|^p \le \sup_{P \in \Pi(\mu,\nu)} \int_{\R^{2d}}|\fx|^p\,dP =: C_{\fx}^p, \ \ \forall p \ge 1, \ t \ge 0.
\end{align*}
Let $B^k_r$ denote the centered ball of radius $r$ in $\R^k$, and let $|B^k_r|$ denote its Lebesgue measure, for any $k \in \N$.
Because $\mu$ is locally bounded away from zero, we deduce that
\begin{align*}
 \int_{B_r^{2d}}|\widehat{\fx}^n_t(x)|^p\,dxdy \le  C_{\mu,r}|B^d_r| \int_{B_r^d}|\widehat{\fx}^n_t(x)|^p\,\mu(dx)  \le C_{\mu,r}C_{\fx}^p,
\end{align*}
where $C_{\mu,r}=1/\inf\{\mu(x) : x \in B_r^d\} > 0$. This and the analogous estimate for the conditional expectation terms involving $\fy$ allow us to apply Theorem \ref{th:Holder-estimate}, with the constants being uniform with respect to $n$. That is, on compact subsets of $(0,\infty) \times \R^{2d}$, the densities $(t,x,y) \mapsto P^n_t(x,y)$ are uniformly H\"older continuous, and they are thus also uniformly bounded because they are probability measures. From this it follows that $\{P^n_t : n \in \N\}$ is norm-precompact in $L^1(\R^{2d})$.

To summarize: By tightness, we may extract a  weakly convergent subsequence (relabeled) of $P^n \to P$, and by strong compactness $P^n_t$ converges in total variation to $P_t$ for each $t > 0$.

We show next that $P$ is the law of a solution of the desired SDE, by using the martingale problem formulation of Stroock and Varadhan \cite{StroockVaradhan}. Let $\testf \in C^\infty_c(\R^{2d})$.
Under $P^n$, the process $(M^n_t)_{t \in [0,T]}$ defined by
\begin{align*}
\testf(X_t,Y_t) - \int_0^t &\bigg( \big(\E_{P^n}[\fx_n(X_s,Y_s)\,|\,X_s] - \fx_n(X_s,Y_s) - \nabla U(X_s)\big) \cdot \nabla_x\testf(X_s,Y_s)  + \Delta_x \testf(X_s,Y_s) \\
& + \big(\E_{P^n}[\fy_n(X_s,Y_s)\,|\,Y_s] - \fy_n(X_s,Y_s) - \nabla V(Y_s)\big) \cdot \nabla_y\testf(X_s,Y_s)  + \Delta_y \testf(X_s,Y_s)\bigg)ds
\end{align*}
is a $P^n$-martingale. Similarly, define $M_t$ to be
\begin{align*}
\testf(X_t,Y_t) - \int_0^t &\bigg( \big(\E_{P}[\fx(X_s,Y_s)\,|\,X_s] - \fx(X_s,Y_s) - \nabla U(X_s)\big) \cdot \nabla_x\testf(X_s,Y_s)  + \Delta_x \testf(X_s,Y_s) \\
& + \big(\E_{P}[\fy(X_s,Y_s)\,|\,Y_s] - \fy(X_s,Y_s) - \nabla V(Y_s)\big) \cdot \nabla_y\testf(X_s,Y_s)  + \Delta_y \testf(X_s,Y_s)\bigg)ds.
\end{align*}
We will show that $M$ is a $P$-martingale. An intermediate process will be useful: define $\widetilde{M}^n_t$ exactly like $M^n_t$ but with $(\fx_n,\fy_n)$ replaced by $(\fx,\fy)$ (though conditional expectations are still under $P^n$). Fix $0 \le t_1 < t_2 \le T$, and let $Z$ be any bounded $(X_s,Y_s)_{s \le t_1}$-measurable random variable. We claim that
\begin{align}
\lim_{n\to\infty}\E_{P^n}[(M^n_{t_2}-M^n_{t_1})Z] = \E_P[(M_{t_2}-M_{t_1})Z]. \label{pf:M-limit}
\end{align}
Since the left-hand side is zero, this will show as desired that $M$ is a $P$-martingale. To show \eqref{pf:M-limit}, we first note that $\E_{P^n}|M^n_t-\widetilde{M}^n_t| \to 0$ because
\begin{align}
\E_{P^n}|(\fx-\fx_n)(X_t,Y_t)| &= \E_{P^n}[| \fx(X_t,Y_t)|1_{\{|\fx(X_t,Y_t)| > n\}}] \to 0, \label{pf:EPn|a-a_n|->0}
\end{align}
which follows from the assumption \eqref{asmp:subgaussian}. Hence, we must only show that
\begin{align}
\lim_{n\to\infty}\E_{P^n}[(\widetilde{M}^n_{t_2}-\widetilde{M}^n_{t_1})Z] = \E_P[(M_{t_2}-M_{t_1})Z]. \label{pf:tildeM-limit}
\end{align}
To this end, note that $\widetilde{M}^n_t$ and $M_t$ differ only in the conditional expectations terms. Two of the terms where they do not differ, $\nabla U \cdot \nabla_x\testf$ and $\nabla V \cdot \nabla_y\testf$, are \emph{bounded} functions of $(X_t,Y_t)$ since $\testf$ has compact support and $(\nabla U,\nabla V)$ are locally bounded. The other term where $\widetilde{M}^n_t$ and $M_t$ do not differ is $\fx \cdot \nabla_x\testf + \fy \cdot\nabla_y\testf$, which may not be bounded but is uniformly integrable over $(n,t)$ thanks to the assumption \eqref{asmp:subgaussian}. Hence, the only difficulty is to show for each $t$ and each bounded random vector $Z$  we have
\begin{align}
\E_{P^n}[Z \cdot \E_{P^n}[\fx(X_t,Y_t)\,|\,X_t]] \to \E_{P}[Z\cdot \E_{P}[\fx(X_t,Y_t)\,|\,X_t]], \label{pf:condexp-conv1}
\end{align}
and similarly for the $\fy$ term.

We first note that \eqref{pf:condexp-conv1} is true if $\fx$ is bounded. Indeed, then $\E_{P^n}[\fx(X_t,Y_t)\,|\,X_t]$ is uniformly bounded in $n$, and the total variation convergence $P^n \to P$ implies
\begin{align*}
\left|\E_{P^n}[Z \cdot \E_{P^n}[\fx(X_t,Y_t)\,|\,X_t]] - \E_{P}[Z \cdot \E_{P^n}[\fx(X_t,Y_t)\,|\,X_t]]\right| \to 0.
\end{align*}
The total variation convergence $P^n \to P$ also implies
\begin{align*}
\E_{P}&\left|\E_{P^n}[\fx(X_t,Y_t)\,|\,X_t] - \E_{P}[\fx(X_t,Y_t)\,|\,X_t]\right|  \to 0
\end{align*}
by Proposition \ref{pr:condexp-continuity}.
Hence, \eqref{pf:condexp-conv1} holds if $\fx$ is bounded.
In the general case, we proceed by truncation. Let  $r > 0$. We have
\begin{align*}
\E_{P^n}[Z \cdot \E_{P^n}[\fx(X_t,Y_t)\,|\,X_t]] &= \E_{P^n}[Z \cdot \E_{P^n}[\fx(X_t,Y_t)1_{\{|\fx(X_t,Y_t)| \le r\}}\,|\,X_t]] \\
	&\quad + \E_{P^n}[Z \cdot \E_{P^n}[\fx(X_t,Y_t) 1_{\{|\fx(X_t,Y_t)| > r\}} \,|\,X_t]].
\end{align*}
The first term converges to $\E_{P}[Z \cdot \E_{P}[\fx(X_t,Y_t)1_{\{|\fx(X_t,Y_t)| \le r\}}\,|\,X_t]]$ by the argument for the bounded case. The second term vanishes as $r\to\infty$, uniformly in $(n,t)$, thanks to the boundedness of $Z$ and the integrability assumption \eqref{asmp:subgaussian}, which proves \eqref{pf:condexp-conv1} in the general case.

We have thus shown that $(M_t)_{t \in [0,T]}$ is a martingale, for each choice of $\testf \in C^\infty_c(\R^{2d})$. By the usual equivalence between weak solutions of SDEs and martingale problems \cite[Section V.20]{RogersWilliams}, we deduce that $P$ is in fact the law of a solution to \eqref{def:mainSDE}.

\subsection*{Uniqueness in the unbounded case}

We lastly prove uniqueness, extending the argument given for the case of bounded $(\fx,\fy)$. 
Let $P=\lim_{n\to\infty}P^n$ denote the solution law constructed above (with the convergence holding along a subsequence). Let $Q \in \P(C(\R_+;\R^{2d}))$ be the law of any other solution.
Note that $Q$, $P$, and $P^n$ for each $n\in \N$ all have the same marginals $X_t \sim \mu$ and $Y_t \sim \nu$, for each $t \ge 0$.
We first claim that
\begin{align}
\begin{split}
H(Q[T]\,|\,P[T]) \le \frac14 \int_0^T\E_Q\Big[ &\big|\E_P[\fx(X_t,Y_t)\,|\,X_t] - \E_Q[\fx(X_t,Y_t)\,|\,X_t]\big|^2 \\
 	&+ \big|\E_P[\fy(X_t,Y_t)\,|\,Y_t]-\E_Q[\fy(X_t,Y_t)\,|\,Y_t]\big|^2 \Big]\,dt. 
\end{split} \label{pf:uniq-ent-est1}
\end{align}

\begin{proof}[Proof of \eqref{pf:uniq-ent-est1}]
Unfortunately, it is not clear that this follows directly from Lemma \ref{le:entropy-pathspace};  the well-posedness assumption therein is difficult to check because $\E_P[\fx(X_t,Y_t)\,|\,X_t]$ may not have much regularity as a function of $X_t$. Instead, we will first apply Lemma \ref{le:entropy-pathspace} with $P^n$ in place of $P$, and then take limits. Applying Lemma \ref{le:entropy-pathspace} yields
\begin{align}
H(Q[T]\,|\,P^n[T]) \le \ &\frac14 \E_Q\int_0^T\Big( \Big|\E_Q[\fx(X_t,Y_t)\,|\,X_t] - \E_{P^n}[\fx_n(X_t,Y_t)\,|\,X_t] -(\fx-\fx_n)(X_t,Y_t)\Big|^2 \nonumber \\
	&+ \Big|\E_Q[\fy(X_t,Y_t)\,|\,Y_t] - \E_{P^n}[\fy_n(X_t,Y_t)\,|\,Y_t] -(\fy-\fy_n)(X_t,Y_t)\Big|^2\Big)\,dt. \label{pf:uniq-ent-est1-approx}
\end{align}
Lower semicontinuity of relative entropy yields $H(Q[T]\,|\,P[T]) \le \liminf_{n\to\infty}H(Q[T]\,|\,P^n[T])$. To take limits on the right-hand side, we claim that
\begin{align}
\lim_{n\to\infty} \E_Q\big|\E_{P}[\fx(X_t,Y_t)\,|\,X_t] - \E_{P^n}[\fx_n(X_t,Y_t)\,|\,X_t]\big|^2 = 0. \label{pf:ent-est-approx1}
\end{align}
To see this, we note that it follows from the equality of $X_t$-marginals under $Q$ and $P^n$ and from \eqref{pf:EPn|a-a_n|->0} that
\begin{align*}
\E_Q\big|\E_{P^n}[\fx(X_t,Y_t)\,|\,X_t] - \E_{P^n}[\fx_n(X_t,Y_t)\,|\,X_t]\big|^2 &\le \E_{P^n}\big|(\fx - \fx_n)(X_t,Y_t) \big|^2 \to 0.
\end{align*}
Moreover, since $P^n \to P$ in total variation, we have by Proposition \ref{pr:condexp-continuity} that
\begin{align*}
\E_Q\big|\E_{P}[\fx(X_t,Y_t)\,|\,X_t] - \E_{P^n}[\fx(X_t,Y_t)\,|\,X_t]\big| = \E_P\big|\E_{P}[\fx(X_t,Y_t)\,|\,X_t] - \E_{P^n}[\fx(X_t,Y_t)\,|\,X_t]\big| &\to 0,
\end{align*}
using a truncation argument similar to that leading to \eqref{pf:condexp-conv1}.
Combining the last two displays proves \eqref{pf:ent-est-approx1}. In addition, we have seen by now how to use the given marginals of $X_t$ and $Y_t$ along with the finite moments implied by \eqref{asmp:subgaussian} to deduce limits like $\E_Q|(\fx-\fx_n)(X_t,Y_t)|^2 \to 0$. We may now safely take limits on the right-hand side of\eqref{pf:uniq-ent-est1-approx} to deduce \eqref{pf:uniq-ent-est1}.
\end{proof}

We now start the main line of the uniqueness proof, beginning from the estimate \eqref{pf:uniq-ent-est1}. 
Let $P_{t,X_t}$ denote the regular conditional law of $Y_t$ given $X_t$ under $P$, and similarly for $Q$.
Let $r > 0$ and use the weighted Pinsker inequality \cite[Theorem 2.1(ii)]{bolley2005weighted} to get
\begin{align*}
\big|\E_P[\fx(X_t,Y_t)\,|\,X_t] - \E_Q[\fx(X_t,Y_t)\,|\,X_t]\big|^2 &\le \left(2r^{-1} + \Lambda_r(t,X_t) \right)H(Q_{t,X_t}\,|\,P_{t,X_t}),
\end{align*}
where we define
\begin{align*}
\Lambda_r(t,X_t) := \frac{2}{r}\log \E_P \Big[e^{r|\fx(X_t,Y_t) - \E[\fx(X_t,\cdot)\,|\,X_t]|^2}  \,\Big|\, X_t\Big].
\end{align*}
By the chain rule for relative entropy, $ \E_Q H(Q_{t,X_t}\,|\,P_{t,X_t}) = H(Q_t\,|\,P_t)$.
Hence, for any $m > 0$,
\begin{align}
\E_Q &\big|\E_P[\fx(X_t,Y_t)\,|\,X_t] - \E_Q[\fx(X_t,Y_t)\,|\,X_t]\big|^2 \label{pf:uniq-ent-est1-2-1} \\
	&\le (2r^{-1}+m) H(Q_t\,|\,P_t) + \E_Q\Big[ \big|\E_P[\fx(X_t,Y_t)\,|\,X_t] - \E_Q[\fx(X_t,Y_t)\,|\,X_t]\big|^2 1_{\{\Lambda_r(t,X_t) > m\}}\Big]. \nonumber
\end{align}

To bound the second term in \eqref{pf:uniq-ent-est1-2-1}, note that $Q$ and $P$ have the same $X_t$-marginal. Use Jensen and Cauchy-Schwarz to get
\begin{align*}
\E_Q &\Big[ \big|\E_P[\fx(X_t,Y_t)\,|\,X_t] - \E_Q[\fx(X_t,Y_t)\,|\,X_t]\big|^2 1_{\{\Lambda_r(t,X_t) > m\}}\Big] \\
	&\le 2\E_Q\Big[ \big|\E_Q[\fx(X_t,Y_t)\,|\,X_t]\big|^2 1_{\{\Lambda_r(t,X_t) > m\}}\Big] + 2\E_P\Big[ \big|\E_P[\fx(X_t,Y_t)\,|\,X_t]\big|^2 1_{\{\Lambda_r(t,X_t) > m\}}\Big] \\
	&\le  2\E_Q\left[ |\fx(X_t,Y_t)|^2 1_{\{\Lambda_r(t,X_t) > m\}}\right] + 2\E_P\left[ |\fx(X_t,Y_t)|^2 1_{\{\Lambda_r(t,X_t) > m\}}\right] \\
	&\le C^{\fx}_t P( \Lambda_r(t,X_t) > m)^{1/2}.
\end{align*}
where we define
\begin{align*}
C^{\fx}_t := 2\E_Q\left[ |\fx(X_t,Y_t)|^4\right]^{1/2} + 2\E_P\left[ |\fx(X_t,Y_t)|^4\right]^{1/2}.
\end{align*} 
Use the inequalities of Markov, Jensen, and Cauchy-Schwarz to get
\begin{align*}
P( \Lambda_r(t,X_t) > m) &\le P\left( \E_P\left[ e^{ r|\fx(X_t,Y_t) - \E_P[\fx(X_t,Y_t)\,|\,X_t]|^2} \,\bigg|\,X_t\right] > e^{rm/2}\right) \\
	&\le e^{-rm/2}\E_P\left[ e^{ r|\fx(X_t,Y_t) - \E_P[\fx(X_t,Y_t)\,|\,X_t]|^2} \right] \\
	&\le e^{-rm/2}\E_P\left[ e^{ 4r|\fx(X_t,Y_t)|^2} \right].
\end{align*}
Finally, apply our bounds for the right-hand side of \eqref{pf:uniq-ent-est1-2-1} in the original inequality  returning to \eqref{pf:uniq-ent-est1}, and similarly for the $\fy$ term. This yields 
\begin{align*}
H(Q[T]\,|\,P[T]) \le \frac14\int_0^T \bigg( &2(2r^{-1}+m)H(Q_t\,|\,P_t) + C^{\fx}_t e^{-rm/4}\E_P\left[ e^{ 4r|\fx(X_t,Y_t)|^2} \right]^{1/2} \\
	& + C^{\fy}_t e^{-rm/4}\E_P\left[ e^{ 4r|\fy(X_t,Y_t)|^2} \right]^{1/2} \bigg)\,dt,
\end{align*}
where $C^{\fy}_t$ is defined exactly like $C^{\fx}_t$ except with $\fx$ replaced by $\fy$.  
The assumption \eqref{asmp:subgaussian} easily implies that $C:= \sup_{t \ge 0} (C^{\fx}_t \vee C^{\fy}_t) < \infty$.
Using $H(Q_t\,|\,P_t) \le H(Q[t]\,|\,P[t])$ and Gronwall, we deduce
\begin{align*}
H(Q[T]\,|\,P[T]) \le Ce^{ r^{-1} T + \frac14(2T-r)m } \int_0^T  \bigg( \E_P\left[ e^{ 4r|\fx(X_t,Y_t)|^2} \right]^{1/2} +  \E_P\left[ e^{ 4r|\fy(X_t,Y_t)|^2} \right]^{1/2} \bigg)\,dt.
\end{align*}
The two expectations on the right-hand side are bounded uniformly in $t$ if $r \le r_*/4$ by the assumption \eqref{asmp:subgaussian}. Hence, if we choose such an $r$ and then choose $T < r/2$, we may send $m\to\infty$ to get $H(Q[T]\,|\,P[T])=0$.

This proves uniqueness on a sufficiently small time interval. The length $T$ of this interval depends only on $r_*$, and, in particular, not on the initial distribution $P_0$. We may thus apply the same argument on successive time intervals to deduce that $Q[T]=P[T]$ for any $T > 0$, proving that $Q=P$. \hfill\qedsymbol

\subsection{Regularity of the density of the solution} \label{se:entropycomesdown}

In this section we collect some regularity properties of the solution of \eqref{def:mainSDE-proof} which will be useful in Section \ref{se:longtime}. They follow quickly from known results on regularization effects of Fokker-Planck equations, borrowed from \cite{BKRSbook}. 
We first to show that the entropy immediately ``comes down from infinity," in the sense that $H(P_t\,|\,\mu \otimes \nu)$ is finite for all $t>0$ even if not for $t=0$. 

\begin{lemma} \label{le:entropycomesdown}
Suppose the assumptions of Theorem \ref{th:wellposed-unbounded} hold.
For a given initial distribution $P_0 \in \Pi(\mu,\nu)$, let $P_t$ denote the time-$t$ law of the unique solution of the SDE \eqref{def:mainSDE-proof}. Then $P_t(x,y)$ is uniformly bounded on $(t,x,y) \in [1,\infty) \times \R^{2d})$, and $H(P_t\,|\,\mu \otimes \nu) < \infty$ for a.e.\ $t > 0$.
\end{lemma}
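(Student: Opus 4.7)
The plan is to treat $(P_t)_{t \ge 0}$ as a weak solution of the linear Fokker-Planck equation with effective drift
\begin{equation*}
b(t,x,y) = \big(\widehat{\fx}(t,x) - \fx(x,y) - \nabla U(x),\ \widehat{\fy}(t,y) - \fy(x,y) - \nabla V(y)\big),
\end{equation*}
where $\widehat{\fx}(t,x) := \E[\fx(X_t,Y_t)\,|\,X_t=x]$ and analogously for $\widehat{\fy}$, and then invoke parabolic regularity from \cite[Ch.~6]{BKRSbook}. The crucial input is uniform-in-$t$ local $L^p$ integrability of $b$, which is exactly the hypothesis of Theorem \ref{th:Holder-estimate} and the closely related Corollary 6.4.2 of \cite{BKRSbook}.

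To establish that integrability, I would use that by Lemma \ref{le:mimicking-proj} we have $P_t \in \Pi(\mu,\nu)$ for every $t \ge 0$. Consequently, for each $p \in [1,\infty)$, Jensen's inequality and \eqref{asmp:subgaussian} give $\sup_t\int |\widehat{\fx}(t,x)|^p\,\mu(dx) < \infty$, and similarly for $\widehat{\fy}$ and for $\fx,\fy$ themselves under $\mu\otimes\nu$. Since $U,V \in C^2$ imply that $\mu$ and $\nu$ are bounded below on every compact set, converting integration against $\mu$ (resp.\ $\mu\otimes\nu$) to integration against Lebesgue measure on balls is harmless, yielding
\begin{equation*}
\sup_{t > 0} \int_{\{|(x,y)| \le r\}} |b(t,x,y)|^p \, dx dy < \infty, \qquad \forall r > 0,
\end{equation*}
for every $p$; here $\nabla U,\nabla V$ being continuous and hence locally bounded handles the deterministic pieces. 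With this in hand, the full version of Theorem \ref{th:Holder-estimate} (cf.\ \cite[Cor.~6.4.2]{BKRSbook}) yields a Gaussian upper bound $P_t(x,y) \le \Lambda e^{-\kappa|(x,y)|^2}$ valid uniformly for $t \ge 1$, which in particular delivers the claimed uniform pointwise bound on $[1,\infty) \times \R^{2d}$.

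For the entropy finiteness, I would use $P_t \in \Pi(\mu,\nu)$ together with $U \in L^1(\mu)$ and $V \in L^1(\nu)$ to write
\begin{equation*}
H(P_t\,|\,\mu \otimes \nu) = \int P_t \log P_t\, dx dy + \int U\, d\mu + \int V\, d\nu.
\end{equation*}
The positive part of $\int P_t \log P_t$ is controlled via $P_t \le \Lambda$ and $|\{P_t > 1\}| \le 1$, and the negative part is controlled by nonnegativity of relative entropy, $H(P_t\,|\,\mu \otimes \nu) \ge 0$. Repeating the density bound with an arbitrary $\delta > 0$ in place of $1$ gives the entropy bound for a.e.\ $t > 0$.

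The main obstacle is verifying the uniform local integrability of $b$ in the presence of the conditional-expectation drift; but once one observes that the marginals of $P_t$ are frozen, Jensen's inequality reduces everything to the subgaussian assumption \eqref{asmp:subgaussian}, and the remainder is a clean appeal to standard Fokker-Planck regularity.
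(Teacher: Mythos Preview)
Your approach is essentially the same as the paper's---freeze the marginals via Lemma \ref{le:mimicking-proj}, use Jensen on the conditional-expectation pieces, feed the resulting drift integrability into parabolic regularity from \cite{BKRSbook}, and then deduce finite entropy from the density bound plus $U\in L^1(\mu)$, $V\in L^1(\nu)$. The entropy computation at the end is fine.

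There is one genuine gap in the density-bound step. You only verify \emph{local} Lebesgue integrability $\sup_t\int_{\{|(x,y)|\le r\}}|b|^p\,dxdy<\infty$, treating $\nabla U,\nabla V$ via local boundedness, and then assert a \emph{global} Gaussian upper bound $P_t(x,y)\le\Lambda e^{-\kappa|(x,y)|^2}$ uniformly on $[1,\infty)\times\R^{2d}$ from Corollary 6.4.2 of \cite{BKRSbook}. That reference (like Theorem \ref{th:Holder-estimate}, which is Corollary 6.4.3) yields local H\"older regularity and hence local boundedness of the density, but it does not by itself give a bound that is uniform over all of $\R^{2d}$, let alone a Gaussian one. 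The paper does something slightly different: it establishes the \emph{global} bound
\[
\sup_{t\ge 0}\E_{P_t}|b(t,X_t,Y_t)|^p<\infty,\qquad p>2(d+1),
\]
which, unlike your local estimate, requires the hypothesis $|\nabla U|\in L^p(\mu)$, $|\nabla V|\in L^p(\nu)$ from the fourth bullet of Theorem \ref{th:main}; this is precisely where that assumption enters the paper. With this global $L^p(P_t)$ control, Corollary 7.2.2 of \cite{BKRSbook} (a Chapter 7 result on global density bounds, not Chapter 6) gives the uniform bound on $[1,\infty)\times\R^{2d}$. To close the gap, replace local boundedness of $\nabla U,\nabla V$ by $\nabla U\in L^p(\mu)$, $\nabla V\in L^p(\nu)$, verify $\sup_t\E_{P_t}|b|^p<\infty$, and invoke Corollary 7.2.2 instead of 6.4.2.
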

\begin{proof}
In order to apply known regularity results, we need to establish some integrability for the drift appearing in \eqref{def:mainSDE-proof},
\begin{align*}
\mathcal{D}(t,x,y)  := \begin{pmatrix}
\E[\fx(X_t,Y_t) \,|\,X_t=x]  - \fx(x,y) - \nabla U(x) \\
\E[\fy(X_t,Y_t) \,|\,Y_t=y]  - \fy(x,y) - \nabla V(y)
\end{pmatrix}.
\end{align*}
This is where we make use of the assumptions of the fourth bullet point of Theorem \ref{th:main}, which we recall here:
\begin{enumerate}[(i)]
\item $\nabla U \in L^p(\mu)$ and $\nabla V \in L^p(\nu)$ for some $p > 2(d + 1)$.
\item $\mu$ and $\nu$ have finite differential entropy, i.e., $U \in L^1(\mu)$ and $V \in L^1(\nu)$.
\end{enumerate}
Using Jensen's inequality, the integrability assumption \eqref{asmp:subgaussian}, and the property (i), we easily find
\begin{align*}
\sup_{t \ge 0}\E|\mathcal{D}(t,X_t,Y_t)|^p < \infty.
\end{align*}
This allows us to apply
Corollary 7.2.2 of \cite{BKRSbook} to deduce the claimed boundedness of the density on $[1,\infty) \times\R^{2d}$. Note that the density is continuous \cite[Corollary 6.4.3]{BKRSbook}, so this is truly a uniform bounded, not just an a.e.\ bound.
It follows that
\begin{align*}
\int_{\R^{2d}} P_t(z)\log^+ P_t(z)\,dz \le  \log^+\|P_t\|_\infty < \infty,
\end{align*}
for $t \ge 1$, and thus
\begin{align*}
H(P_t\,|\,\mu \otimes \nu) &= \int_{\R^{2d}} P_t(x,y)\log \frac{P_t(x,y)}{\mu(x)\nu(y)}\,dxdy \\
	&\le  \log^+ \|P_t\|_\infty  - \int_{\R^{d}}\mu(x)\log\mu(x) dx - \int_{\R^{d}}\nu(y)\log \nu(y)dy
\end{align*}
is finite, because the final two differential entropy terms are finite by  (ii). 
\end{proof}

A final lemma shows how to use smoothness of the densities allows to upgrade weak convergence of $P_t$ as $t\to\infty$ to convergence in relative entropy:

\begin{lemma} \label{le:entropyconvergence}
Suppose the assumptions of Theorem \ref{th:wellposed-unbounded} hold. For a given initial distribution $P_0 \in \Pi(\mu,\nu)$, let $P_t(x,y)$ denote the time-$t$ density of the unique solution of the SDE \eqref{def:mainSDE-proof}. Suppose $P_t$ converges weakly to some $P_\infty \in \P(\R^{2d})$ as $t\to\infty$, where $P_\infty$ has a positive continuous density satisfying
\begin{equation}
\sup_{t \ge 1}\int |\log P_\infty|^2dP_t < \infty. \label{asmp:entropyconvergence}
\end{equation}
Then $H(P_t\,|\,P_\infty) \to 0$ as $t\to\infty$.
\end{lemma}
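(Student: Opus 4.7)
The plan is to split $H(P_t\,|\,P_\infty)$ over a large centered ball $B_R\subset\R^{2d}$ and its complement: for each fixed $R$, I will show that the integral over $B_R$ vanishes as $t\to\infty$, and that the integral over $B_R^c$ can be made uniformly small in $t\ge 1$ by taking $R$ large. Since $H(P_t\,|\,P_\infty)\ge 0$, this yields $H(P_t\,|\,P_\infty)\to 0$.

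For the compact part, Lemma \ref{le:entropycomesdown} provides $M:=\sup_{t\ge 1}\|P_t\|_\infty<\infty$, and the final Hölder assertion of Theorem \ref{th:wellposed-unbounded} gives that $\{P_t\}_{t\ge 1}$ is equicontinuous on every compact set. Arzelà–Ascoli (applied along any subsequence, with the weak limit identified as $P_\infty$ by the standing assumption) then upgrades the convergence $P_t\to P_\infty$ to uniform convergence on compact sets. Since $P_\infty$ is continuous and strictly positive, it is bounded below on $\overline{B_R}$, so $P_t/P_\infty\to 1$ uniformly on $B_R$ and $\int_{B_R} P_t\log(P_t/P_\infty)\,dz\to 0$ as $t\to\infty$ for each fixed $R$.

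For the tail I decompose $P_t\log(P_t/P_\infty)=P_t\log P_t-P_t\log P_\infty$. By Cauchy–Schwarz, the hypothesis \eqref{asmp:entropyconvergence}, and the tightness of $\{P_t\}_{t\ge 1}$ (a consequence of weak convergence),
\[
\sup_{t\ge 1}\bigg|\int_{B_R^c} P_t\log P_\infty\,dz\bigg| \le \bigg(\sup_{t\ge 1}\int |\log P_\infty|^2\,dP_t\bigg)^{1/2}\sup_{t\ge 1}P_t(B_R^c)^{1/2} \to 0 \quad \text{as } R\to\infty.
\]
Writing $P_t\log P_t=P_t\log^+ P_t-P_t\log^- P_t$, the positive part is immediate since $\int_{B_R^c} P_t\log^+ P_t\,dz\le (\log^+ M)\,P_t(B_R^c)\to 0$ uniformly. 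The main obstacle is the negative part, because the pointwise bound $\|P_t\|_\infty\le M$ does not integrate over the infinite-volume tail. Here the crucial trick is the elementary inequality $\log x\le x-1$, which yields
\[
P_t[\log(P_\infty/P_t)]^+\le (P_\infty - P_t)^+ \le P_\infty, \qquad \text{so} \qquad P_t\log^- P_t \le P_t|\log P_\infty|+P_\infty.
\]
Integrating over $B_R^c$, the first term is controlled exactly as $\int P_t\log P_\infty$ above, and the second vanishes as $R\to\infty$ since $P_\infty$ is a probability measure. Both controls are uniform in $t\ge 1$, which completes the plan.
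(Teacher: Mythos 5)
Your proof is correct and follows the same architecture as the paper's: split over $B_R$ and $B_R^c$, use Lemma \ref{le:entropycomesdown} for the uniform sup-bound on $P_t$ and the H\"older estimate of Theorem \ref{th:wellposed-unbounded} plus Arzel\`a--Ascoli to upgrade weak convergence to local uniform convergence (so the compact part vanishes), then control the tail using \eqref{asmp:entropyconvergence}. The only difference is in the tail: the paper exploits $H(P_t\,|\,P_\infty)\ge 0$, so it only needs the one-sided bound $\log(P_t/P_\infty)\le \log C + \tfrac{1}{2\delta}+\tfrac{\delta}{2}|\log P_\infty|^2$ (Young's inequality), whereas you use Cauchy--Schwarz for the $\log P_\infty$ term and additionally bound the negative part of $P_t\log P_t$ via $\log x\le x-1$. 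That extra step is correct and gives slightly more (smallness of the tail integral in absolute value), but it is superfluous given that you already invoke $H\ge 0$ in your opening paragraph. One small justification to tighten: uniform tightness $\sup_{t\ge 1}P_t(B_R^c)\to 0$ does not follow from weak convergence of the family alone; either argue only with $\limsup_{t\to\infty}P_t(B_R^c)$ via the portmanteau theorem (which suffices here, as the paper does), or note that $P_t\in\Pi(\mu,\nu)$ for all $t$, so the tails are controlled by the fixed marginals.
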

\begin{proof}
Using the H\"older estimate from Theorem \ref{th:wellposed-unbounded}, for any compact set $S \subset \R^{2d}$, the set $(P_t|_{S})_{t \ge 1}$ is pre-compact  in $C(S)$. By the Arzel\`a-Ascoli theorem and a diagonal argument, for any diverging sequence of times there exists a locally uniformly convergent subsequence. By the assumed weak convergence, these subsequential limits must coincide with $P_\infty$, and we deduce the local uniform convergence of $P_t \to P_\infty$. Next, recall from Lemma \ref{le:entropycomesdown} that $P_t(x,y)$ is bounded on $[1,\infty) \times \R^{2d}$, say by a constant $C$. 
For any $r > 0$, write
\begin{equation*}
H(P_t\,|\,P_\infty) = \int_{B_r^{2d}} P_t \log \frac{P_t }{P_\infty } + \int_{(B_r^{2d})^c}  P_t \log \frac{P_t }{P_\infty}.
\end{equation*}
The first term vanishes as $t\to\infty$, for each $r > 0$, thanks to the local uniform convergence and the fact that $P_\infty$ is bounded away from zero on $B_r^{2d}$. For the second term, note for any $\delta > 0$ that
\begin{equation*}
\log \frac{P_t}{P_\infty} \le \log C + \frac{1}{2\delta} + \frac{\delta}{2}|\log P_\infty|^2.
\end{equation*}
From this we deduce that
\begin{equation*}
\limsup_{t \to\infty}H(P_t\,|\,P_\infty) \le \Big(\log C + \frac{1}{2\delta}\Big)\limsup_{t\to\infty}P_t((B_r^{2d})^c) + \frac{\delta}{2}\sup_{t \ge 1}\int |\log P_\infty|^2dP_t.
\end{equation*}
By tightness, the first term on the right-hand side vanishes as $r \to \infty$. Then send $\delta \to 0$ so that the last term vanishes by assumption \eqref{asmp:entropyconvergence}.
\end{proof}

\section{Long-time convergence} \label{se:longtime}

This section proves the convergence claims (3,4) in Theorems \ref{th:main}.
Recall the formula for $\pi$ from \eqref{intro:potentials}, and recall the definitions of the relevant variant of relative Fisher information: For $P\in\P(\R^{2d})$ with $R = \log dP/d\pi$, define
\begin{align*}
\overline{I}(P\,|\,\pi) &:= \E_{P}\Big[  \big|\nabla_x R - \E_P[\nabla_x R\,|\,X]\big|^2 + \big|\nabla_y R - \E_P[\nabla_y R\,|\,Y]\big|^2 \Big],
\end{align*}
if the weak gradient $\nabla R$ exists and belongs to $L^2(P)$, and otherwise $\overline{I}(P\,|\,\pi)=\infty$.
Throughout this section, we use the same notation $(X_t,Y_t)$ and $(P_t)_{t \ge 0}$ as in Theorem \ref{th:main}. It will be convenient to define also
\begin{align}
f_t(x) := \E[\nabla_xc(X_t,Y_t)\,|\,X_t=x], \qquad g_t(y) := \E[\nabla_yc(X_t,Y_t)\,|\,Y_t=y]. \label{def:ft,gt}
\end{align}
The proofs will be based on three key ingredients, which we state first in the following lemmas and proposition below.

\begin{lemma}[Entropy dynamics] \label{le:entropydynamics}
For each $t > s \ge 0$, we have $H(P_t\,|\,\pi) < \infty$ and
\begin{align}
H(P_t\,|\,\pi) + \epsilon \int_s^t \overline{I}(P_r\,|\,\pi)\,dr \le H(P_s\,|\,\pi). \label{def:entropydecay}
\end{align}
\end{lemma}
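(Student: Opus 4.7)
The plan is threefold: (1) establish finiteness of $H(P_t|\pi)$ for every $t>0$ using the regularity from Section \ref{se:entropycomesdown}; (2) derive formally the dissipation identity $\tfrac{d}{dt}H(P_t|\pi)=-\epsilon\,\overline{I}(P_t|\pi)$; (3) upgrade this to the integrated inequality \eqref{def:entropydecay} by an approximation argument, where lower semicontinuity of $\overline{I}(\cdot|\pi)$ yields the $\leq$ rather than $=$.

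For (1), Lemma \ref{le:entropycomesdown} gives boundedness of $P_t$ on $[1,\infty)\times\R^{2d}$, and an analogous application of the regularity estimates from \cite{BKRSbook} covers $t\in(0,1)$; combined with at most quadratic growth of $|\log\pi|$ at infinity (from \eqref{intro:potentials} and the standing hypotheses, which also control the Schr\"odinger potentials $\varphi,\psi$), this yields $H(P_t|\pi)<\infty$. For (2), write the Fokker--Planck equation $\partial_tP_t=\epsilon\Delta P_t-\nabla\cdot(b_tP_t)$ with $b_t$ the drift of \eqref{def:mainSDE}; using \eqref{intro:potentials}, the ``equilibrium drift'' equals $b^\pi=\epsilon\nabla\log\pi$. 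The key algebraic identity, specific to the marginal-preserving setting $P_t\in\Pi(\mu,\nu)$, is
\begin{equation*}
\E_{P_t}[\nabla_x\log P_t(X,Y)\,|\,X]=\nabla\log\mu(X)=-\nabla U(X),
\end{equation*}
obtained by differentiating the marginal relation $\mu(x)=\int P_t(x,y)\,dy$ and dividing by $\mu$. Combining this with $\nabla_x\log\pi=(\nabla\varphi-\nabla_xc)/\epsilon-\nabla U$ gives the crucial formula
\begin{equation*}
b_t-b^\pi=\epsilon\bigl(\E_{P_t}[\nabla_xR_t\,|\,X],\;\E_{P_t}[\nabla_yR_t\,|\,Y]\bigr),\qquad R_t:=\log(dP_t/d\pi).
\end{equation*}
Testing the Fokker--Planck equation against $R_t$ and integrating by parts, the $\nabla\log\pi$ cross-term cancels the $b^\pi$ contribution, leaving
\begin{equation*}
\tfrac{d}{dt}H(P_t|\pi)=-\epsilon I(P_t|\pi)+\epsilon\,\E_{P_t}\bigl[\nabla_xR_t\cdot\E_{P_t}[\nabla_xR_t|X]+\nabla_yR_t\cdot\E_{P_t}[\nabla_yR_t|Y]\bigr]=-\epsilon\,\overline{I}(P_t|\pi),
\end{equation*}
by the Pythagorean identity for the $L^2(P_t)$-projection onto $X$- and $Y$-measurable vector fields.

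To rigorize (3), I would test the Fokker--Planck equation against a smoothed, truncated, and spatially cut-off version $\phi_N(R_t^\delta)\chi_K$, where $R_t^\delta$ is a spatial mollification of $R_t$, $\phi_N$ is a smooth bounded approximation of the identity, and $\chi_K$ is a compactly supported smooth cutoff. The H\"older regularity of $P_t$ from Theorem \ref{th:wellposed-unbounded}, together with the local positivity and continuity of $\pi$, justifies this testing and produces an exact integrated identity for the approximants on any interval $[s,t]\subset(0,\infty)$. Passing $\delta\to0$, then $K,N\to\infty$, one recovers $H(P_t|\pi)-H(P_s|\pi)$ on the left via dominated convergence (using (1)), and Fatou's lemma on the right produces the integral $\epsilon\int_s^t\overline{I}(P_r|\pi)\,dr$, using that $\overline{I}(\cdot|\pi)$ is lower semicontinuous as the squared $L^2(P)$-distance of $\nabla R$ to the closed subspace of vector fields of the form $(X,Y)\mapsto(f(X),g(Y))$. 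This lower semicontinuity is precisely what turns equality into the inequality \eqref{def:entropydecay}. The main obstacle I anticipate will be controlling the approximation of the conditional-expectation terms appearing in $b_t$ and in $\overline{I}$; this should follow from Proposition \ref{pr:condexp-continuity} applied in conjunction with the local uniform convergence of the mollified densities to $P_t$, although carefully tracking the limits across both the mollification parameter $\delta$ and the truncation level $N$ is where the bulk of the technical work lies.
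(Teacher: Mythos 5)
Your proposal follows essentially the same route as the paper: the same key identity $\E_{P_t}[\nabla_x\log P_t\,|\,X]=-\nabla U(X)$ obtained by differentiating the marginal constraint (the paper's Lemma \ref{le:conditioning-identity}), converting the drift difference $b_t-b^\pi$ into $\epsilon(\E[\nabla_xR_t|X],\E[\nabla_yR_t|Y])$, the same Pythagorean cancellation producing $-\epsilon\overline{I}(P_t|\pi)$, and the same mollification/truncation strategy with lower semicontinuity turning the formal identity into the integrated inequality. The one point to correct is your justification of $H(P_t|\pi)<\infty$: under the standing hypotheses $|\log\pi|$ need \emph{not} have quadratic growth (e.g.\ $U(x)\sim|x|^4$ is admissible, as only $\nabla^2U\succeq -C$, $U\in L^1(\mu)$, $|\nabla U|\in L^p(\mu)$ are assumed), so rather than a pointwise growth bound you should use that $\int U\,dP_t=\int U\,d\mu<\infty$ and $\int V\,dP_t=\int V\,d\nu<\infty$ for $P_t\in\Pi(\mu,\nu)$ — or, as the paper does, the exact identity $H(P_t|\pi)=H(P_t|\mu\otimes\nu)-H(\pi|\mu\otimes\nu)+\epsilon^{-1}\big(\int c\,dP_t-\int c\,d\pi\big)$, which reduces finiteness to Lemma \ref{le:entropycomesdown} together with the quadratic growth of $c$.
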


\begin{lemma}[Uniform continuity] \label{le:W1continuity}
The following maps are uniformly continuous:
\begin{align*}
[0,\infty) &\ni t \mapsto P_t \in (\P_2(\R^{2d}),\W_2) \\
[1,\infty) &\ni t \mapsto f_t \in L^2(\mu) \\
[1,\infty) &\ni t \mapsto g_t \in L^2(\nu).
\end{align*}
\end{lemma}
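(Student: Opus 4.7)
The plan for the first claim is to exploit the SDE solution itself as a synchronous coupling, which gives $\W_2^2(P_t,P_s) \le \E[|X_t-X_s|^2 + |Y_t-Y_s|^2]$. Standard $L^2$ estimates on \eqref{def:mainSDE} then bound $\E|X_t-X_s|^2 \le 2(t-s)\int_s^t\E|b_r|^2\,dr + 4\epsilon d(t-s)$, where $b_r := \E[\nabla_x c(X_r,Y_r)\,|\,X_r] - \nabla_x c(X_r,Y_r) - \epsilon\nabla U(X_r)$ denotes the drift. Since $\nabla^2 c$ is bounded (so $\nabla_x c$ has linear growth), since $X_r\sim\mu$ and $Y_r\sim\nu$ have all moments finite by the subgaussianity assumption, and since $\nabla U \in L^2(\mu)$ by the $L^p$-assumption with $p>2$, one obtains $\sup_{r\ge 0}\E|b_r|^2 < \infty$, hence $\W_2(P_t,P_s) \le C\sqrt{|t-s|}$ for $|t-s|\le 1$, which yields uniform continuity on $[0,\infty)$.

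For the second and third claims, the plan is to approximate $f_t$ by a truncated version and then invoke the parabolic H\"older regularity of the density. Let $\chi_R \in C^\infty_c(\R^{2d})$ be a smooth cutoff with $\chi_R\equiv 1$ on $B_R^{2d}$ and support in $B_{2R}^{2d}$, and set $f_t^R(x) := \E[(\nabla_x c\cdot\chi_R)(X_t,Y_t)\,|\,X_t=x]$. Jensen's and Cauchy-Schwarz inequalities give
\begin{equation*}
\|f_t-f_t^R\|_{L^2(\mu)}^2 \le \E\big[|\nabla_x c(X_t,Y_t)|^2\, 1_{\{|(X_t,Y_t)|>R\}}\big] \le \sqrt{\E|\nabla_x c(X_t,Y_t)|^4}\,\sqrt{\PP(|(X_t,Y_t)|>R)},
\end{equation*}
and this vanishes as $R\to\infty$ uniformly in $t\ge 0$, since the linear growth of $\nabla_x c$ combined with the marginal constraints $X_t\sim\mu$, $Y_t\sim\nu$ yield uniformly bounded fourth moments and subgaussian tails. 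It then remains to establish uniform continuity of $t \mapsto f_t^R$ for each fixed $R$. For $t\ge 1$, the density $P_t(x,y)$ exists and is locally bounded by Lemma \ref{le:entropycomesdown}, so that $\mu(x) f_t^R(x) = \int (\nabla_x c\cdot\chi_R)(x,y)\,P_t(x,y)\,dy$. The integrand being bounded and compactly supported in $B_{2R}^{2d}$, the H\"older estimate of Theorem \ref{th:wellposed-unbounded} applied on this compact yields $|\mu(x)f_t^R(x) - \mu(x)f_s^R(x)| \le C_R |t-s|^{\beta_R}\, 1_{\{|x|\le 2R\}}$ for $t,s\ge 1$ with $|t-s|\le 1$; dividing by $\mu(x)$, integrating against $\mu$, and using that $1/\mu = e^U$ is bounded on $B_{2R}^d$ by continuity of $U$, yields $\|f_t^R - f_s^R\|_{L^2(\mu)} \le C'_R |t-s|^{\beta_R}$. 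Combining the two steps gives uniform continuity of $t\mapsto f_t$ on $[1,\infty)$, and the argument for $g_t$ is identical by symmetry.

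The principal obstacle is the interplay between the unbounded growth of $\nabla_x c$ and the purely local character of the parabolic H\"older estimate on $P_t$. The resolution exploits the rigid marginal constraint $P_t\in\Pi(\mu,\nu)$ for all $t$ provided by Lemma \ref{le:mimicking-proj}, together with the subgaussian integrability of the marginals, which together make tail truncation effective uniformly in time. A secondary point is that the density only regularizes for $t\ge 1$, which justifies the restricted range $[1,\infty)$ appearing in the last two statements of the lemma, whereas the $\W_2$-continuity of $P_t$ holds from $t=0$ because the coupling argument relies only on moment bounds and no density information.
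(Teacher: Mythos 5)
Your proof is correct and follows essentially the same route as the paper: the synchronous coupling together with uniform second-moment bounds on the drift for the $\W_2$-continuity, and a uniform-in-$t$ tail truncation (enabled by the fixed marginals) combined with the parabolic H\"older estimate of Theorem \ref{th:wellposed-unbounded} for $f_t$ and $g_t$. The only difference is cosmetic — you cut off the cost with a smooth $\chi_R$ and estimate directly in $L^2(\mu)$, whereas the paper truncates the domain of integration and bounds the $L^1(\mu)$ norm — and your version is if anything slightly more faithful to the $L^2$ statement being proved.
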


\begin{proposition}[Stability of $\overline{I}$] \label{pr:Istability}
Let $P_n\in\Pi(\mu,\nu)$ satisfy $\sup_n H(P_n\,|\,\pi) < \infty$ and $\overline{I}(P_n\,|\,\pi) \to 0$. Then $\W_2(P_n,\pi)\to 0$, and also $dP_n/d\pi \to dP/d\pi$ weakly in $L^1(\pi)$. Moreover, 
\begin{align*}
\E_{P_n}[\nabla_x c(X,Y)\,|\,X] &\to \nabla\varphi(X), \quad \text{in } L^2(\mu), \\
\E_{P_n}[\nabla_y c(X,Y)\,|\,Y] &\to \nabla\psi(Y), \quad \text{in } L^2(\nu).
\end{align*}
\end{proposition}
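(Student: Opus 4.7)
The plan is to exploit the decomposition
\[
\overline{I}(P\,|\,\pi) = I(P\,|\,\pi) - \epsilon^{-2}\|f_P - \nabla\varphi\|_{L^2(\mu)}^2 - \epsilon^{-2}\|g_P - \nabla\psi\|_{L^2(\nu)}^2, \qquad P \in \Pi(\mu,\nu),
\]
valid whenever $I(P\,|\,\pi)<\infty$, where $f_P(x) := \E_P[\nabla_x c(X,Y)\,|\,X=x]$ and analogously for $g_P$. This identity comes from combining the marginalization identity $\E_P[\nabla_x \log(dP/d(\mu\otimes\nu))\,|\,X] = 0$ (available because $P\in\Pi(\mu,\nu)$) with the explicit Schr\"odinger form \eqref{intro:potentials} of $\pi$ to obtain $\E_P[\nabla_x R\,|\,X] = \epsilon^{-1}(f_P(X)-\nabla\varphi(X))$ together with the analogous $Y$-identity, and then invoking the conditional variance formula.

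First I would extract a convergent subsequence. The bound $\sup_n H(P_n\,|\,\pi)<\infty$ yields equi-integrability of $(dP_n/d\pi)$ in $L^1(\pi)$ via de la Vall\'ee-Poussin, so by Dunford-Pettis a subsequence converges weakly in $L^1(\pi)$ to some $dP/d\pi$; the marginal constraints being weak-$L^1$ conditions, $P\in\Pi(\mu,\nu)$. A uniform $L^p(\mu)$ bound on $f_{P_n}$ (for any $p<\infty$) follows from Jensen, the linear growth of $\nabla_x c$ (from bounded $\nabla^2 c$), and subgaussianity of the fixed marginals, and likewise for $g_{P_n}$; via the decomposition this, combined with $\overline{I}(P_n\,|\,\pi)\to 0$, yields $\sup_n I(P_n\,|\,\pi)<\infty$.

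Second I would identify $P=\pi$. By Lemma~\ref{le:Df} the norms $\|\nabla f_{P_n}\|_{L^1(\mu)}$ are uniformly bounded, and together with the uniform $L^p(\mu)$-bound (taking $p>2$) a Sobolev-type compactness argument (local $BV$-compactness on balls combined with an $L^p$-tail estimate) extracts a further subsequence along which $f_{P_n}\to f_\infty$ in $L^2(\mu)$, and analogously $g_{P_n}\to g_\infty$ in $L^2(\nu)$. Testing against $\phi\in C_c^\infty(\R^d)$ and using weak convergence $P_n\to P$ (with uniform integrability of $\phi\,\nabla_xc$ by linear growth and uniform second moments) identifies $f_\infty = f_P$ and $g_\infty = g_P$. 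Weak lower semicontinuity of $I(\cdot\,|\,\pi)$, continuity of $P\mapsto \|f_P-\nabla\varphi\|_{L^2(\mu)}^2 + \|g_P-\nabla\psi\|_{L^2(\nu)}^2$ along the subsequence, and $\overline{I}(P_n\,|\,\pi)\to 0$ then combine, via the decomposition, to give
\[
I(P\,|\,\pi)\le \liminf_n I(P_n\,|\,\pi) = \epsilon^{-2}\|f_P-\nabla\varphi\|_{L^2(\mu)}^2 + \epsilon^{-2}\|g_P-\nabla\psi\|_{L^2(\nu)}^2,
\]
and plugging this back into the decomposition at $P$ forces $\overline{I}(P\,|\,\pi)=0$. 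Hence $\nabla_x R_P$ is a.s.\ $X$-measurable and $\nabla_y R_P$ a.s.\ $Y$-measurable; a calculus argument on the weak gradient then yields $R_P(x,y) = u(x)+v(y)$, so $P=e^{u+v}\pi$ has Schr\"odinger form with potentials $(\varphi+\epsilon u, \psi+\epsilon v)$, and the a.s.\ uniqueness of Schr\"odinger potentials up to additive constants forces $u,v$ to be constants, whence $P=\pi$.

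The remaining claims then follow: uniqueness of the subsequential limit promotes the convergence to the full sequence, giving $dP_n/d\pi\to 1$ weakly in $L^1(\pi)$, $f_{P_n}\to\nabla\varphi$ in $L^2(\mu)$, and $g_{P_n}\to\nabla\psi$ in $L^2(\nu)$; $\W_2(P_n,\pi)\to 0$ follows from the weak convergence together with uniform second-moment control coming from the fixed marginals (Prokhorov plus uniform integrability of $|\cdot|^2$). The main obstacle is the Sobolev-type compactness of Step two: Lemma~\ref{le:Df} controls $\nabla f_{P_n}$ only in $L^1(\mu)$, a weighted norm against a non-compactly supported measure, so the passage to strong $L^2(\mu)$-convergence requires a careful interplay between local $BV$-compactness and tail control from the $L^p$-bound. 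A secondary subtle point is rigorously extracting the additive decomposition $R_P = u+v$ from the vanishing conditional variance of the weak gradient.
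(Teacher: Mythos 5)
Your proposal is correct, and for the hardest part it coincides with the paper's argument: both proofs hinge on the uniform $L^p(\mu)$ moment bounds for the conditional expectations, the uniform $L^1(\mu)$ control of their Jacobians from Lemma \ref{le:Df}, and a Rellich--Kondrachov compactness on balls upgraded to strong $L^2(\mu)$ convergence via the moment bounds, followed by identification of the limit by testing against $C^\infty_c$ functions. Where you genuinely diverge is in how you force $P=\pi$ at the limit. The paper's Step 3 passes to the limit in the integration-by-parts identity to prove the weak-derivative statement $\nabla_x\log(dP/d\pi)(x,y)=f(x)$ directly, which requires handling the error terms $\xi_n$ with $\E_{P_n}|\xi_n|^2\le\overline{I}(P_n\,|\,\pi)\to0$ and a truncation of $f$. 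You instead exploit the exact conditional-variance decomposition $\overline{I}(P\,|\,\pi)=I(P\,|\,\pi)-\epsilon^{-2}\|f_P-\nabla\varphi\|_{L^2(\mu)}^2-\epsilon^{-2}\|g_P-\nabla\psi\|_{L^2(\nu)}^2$ (which is valid by Lemma \ref{le:conditioning-identity} and is essentially the identity \eqref{eq:suff_cond_LSI_1} used later in the proof of Theorem \ref{thm:suff_cond_new_LSI}) together with weak lower semicontinuity of $I(\cdot\,|\,\pi)$, concluding $\overline{I}(P\,|\,\pi)\le0$ and then invoking the rigidity statement $\overline{I}(P\,|\,\pi)=0\Rightarrow P=\pi$ (Proposition \ref{pr:Imin=0} in the paper, which you re-derive). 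Your route is shorter and avoids the limiting weak-derivative identity entirely; its one extra ingredient is the lower semicontinuity of the relative Fisher information under weak convergence, which you should justify explicitly (e.g., via the variational representation $I(P\,|\,\pi)=\sup_{\Phi\in C^\infty_c}\int(2\,\mathrm{div}\,\Phi+2\Phi\cdot\nabla\log\pi-|\Phi|^2)\,dP$, noting that $\nabla\log\pi$ is continuous by Lemma \ref{le:momentbounds}(4)). With that point filled in, the argument is complete.
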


Before proving these three ingredients, we first show how they lead quickly to the proofs of parts (3,4) of Theorems \ref{th:main}.

\begin{proof}[Proof of Theorem \ref{th:main}(3,4)]
We only prove the claimed convergence of $P_t$ to $\pi$ and of $f_t$ to $\nabla\varphi$, as the $g_t \to \nabla\psi$ convergence is treated analogously.
Let $\eta > 0$. By Lemma \ref{le:W1continuity}, we may find $\delta > 0$ such that 
\begin{equation}
\W_2(P_t,P_s) \le \eta, \ \text{ and } \ \|f_t-f_s\|_{L^2(\mu)} \le \eta, \ \text{ for all } t,s \ge 1, \ |t-s|\le\delta. \label{pf:eq:unifcont}
\end{equation}

By Lemma \ref{le:entropydynamics}, even though $H(P_t\,|\,\pi)$ may be infinite at $t=0$, it is finite for all $t > 0$, and we may thus shift time and assume it is finite at $t=0$.
Lemma \ref{le:entropydynamics} also tells us that $H(P_t\,|\,\pi)$ is decreasing in $t$, and it must therefore converge to a limit $H_* \in [0,\infty)$. Using the formula of Lemma \ref{le:entropydynamics} and Tonelli's theorem,
\begin{align*}
H(P_0\,|\,\pi) - H_* &\ge \epsilon\int_0^\infty \overline{I}(P_u\,|\,\pi)\,du = \epsilon\int_0^\delta \sum_{k=0}^\infty \overline{I}(P_{u+k\delta }\,|\,\pi)\,du.
\end{align*}
As this quantity is finite, we deduce that $\lim_{k\to\infty}\overline{I}(P_{u+k\delta }\,|\,\pi) = 0$ for a.e.\ $u \in [0,\delta]$.
By Proposition \ref{pr:Istability}, we deduce that 
\begin{align*}
\lim_{k\to\infty} \W_2(P_{u+k\delta},\pi)=0, \qquad \lim_{k\to\infty} \|f_{u+k\delta} - \nabla\varphi\|_{L^2(\mu)} =0,
\end{align*}
for a.e.\ $u \in [0,\delta]$.
Fix arbitrarily some $u \in [0,\delta]$ for which these limits hold. Then there exists $k^*\in \N$ such that $\W_2(P_{u+k\delta },\pi) \le \eta$ for all integers $k \ge k^*$. For any $t \ge 1 \vee (k^*\delta)$, we may find an integer $k \ge k^*$ such that $|t-(u+k\delta)| \le \delta$, and we deduce from \eqref{pf:eq:unifcont} that
\begin{align*}
\W_2(P_t,\pi) &\le \W_2(P_t,P_{u+k\delta}) + \W_2(P_{u+k\delta},\pi) \le 2\eta, \\
\|f_t - \nabla \varphi\|_{L^2(\mu)} &\le \|f_t - f_{u+k\delta}\|_{L^2(\mu)} + \|f_{u+k\delta} - \nabla \varphi\|_{L^2(\mu)} \le 2\eta.
\end{align*}
This shows that $\W_2(P_t,\pi)\to 0$ and $\|f_t-\nabla\varphi\|_{L^2(\mu)}\to 0$, along the entire sequence. 

Finally, we prove the claimed convergence in entropy.  This follows immediately from Lemma \ref{le:entropyconvergence} upon checking that the condition \eqref{asmp:entropyconvergence} holds with $P_\infty = \pi$. This is straightforward using the explicit form for $\log \pi$ afforded by \eqref{intro:potentials}, because $|\nabla U| \in L^2(\mu)$ and $|\nabla V| \in L^2(\nu)$ by assumption, because $\nabla c$ has linear growth, and because $\mu$ and $\nu$ are subgaussian. (See also Lemma \ref{le:momentbounds} below for even stronger integrability properties.)
\end{proof}

\subsection{Differentiability of the potentials} \label{se:differentiabilitypotentials}

Here we record an identity already noted in \eqref{intro:pi-condexp} for the derivatives of the Schr\"odinger potentials, as well one not yet mentioned for the second derivatives. These are essentially known (see, e.g., \cite[Proposition 2]{NilesWeed-Pooladian}), and they are quite straightforward if one does not worry about differentiating under the integral sign. To be careful about the latter point, we give a detailed proof in Appendix \ref{ap:differentiability}.
Recall that a function $f : \R^d \to \R$ between Euclidean spaces is said to have weak gradient $g : \R^d \to \R^d$ if $f$ and $g$ are measurable and locally integrable, and if
\begin{align*}
\int_{\R^d} f\,\mathrm{div} (h) = -\int_{\R^d} g \cdot h,
\end{align*}
for all smooth functions $h : \R^d \to \R^d$ of compact support. In the following, we write
\begin{align*}
\Cov_P(A,B\,|\,X) := \E[A \otimes B\,|\,X] - \E[A\,|\,X] \otimes \E[B\,|\,X], \quad \Cov_P(A\,|\,X) := \Cov_P(A,A\,|\,X),
\end{align*}
for two random vectors $A$ and $B$ of the same dimension.

\begin{proposition} \label{pr:derivatives-potentials}
In the sense of weak derivatives, we have the identities
\begin{equation}
\nabla\varphi(x)=\E_{\pi}[\nabla_xc(X,Y)\,|\,X=x], \qquad \nabla\psi(y)=\E_{\pi}[\nabla_yc(X,Y)\,|\,Y=y], \label{intro:pi-condexp1}
\end{equation}
as well as
\begin{equation}
\begin{split}
\nabla^2\varphi(x) &= \E_\pi[\nabla_{xx}^2 c(X,Y)\,|\,X=x] - \epsilon^{-1}\Cov_\pi(\nabla_x c(X,Y)\,|\,X=x), \\
\nabla^2\psi(y) &= \E_\pi[\nabla_{yy}^2 c(X,Y)\,|\,Y=y] - \epsilon^{-1}\Cov_\pi(\nabla_y c(X,Y)\,|\,Y=y).
\end{split} \label{intro:pi-condexp2}
\end{equation}
\end{proposition}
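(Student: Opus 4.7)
The plan is to differentiate both sides of the Schr\"odinger equations \eqref{eq:Schrsyst} twice under the integral sign. Setting $g(x):=\int_{\R^d} e^{(\psi(y)-c(x,y))/\epsilon}\nu(dy)$, the first equation in \eqref{eq:Schrsyst} reads $g(x)=e^{-\varphi(x)/\epsilon}$, so $\varphi = -\epsilon\log g$ and the regularity of $\varphi$ is inherited from $g$. Formal differentiation yields
\[
\nabla g(x) = -\epsilon^{-1}\int \nabla_x c(x,y)\, e^{(\psi(y)-c(x,y))/\epsilon}\,\nu(dy),
\]
and dividing by $-\epsilon^{-1}g(x)$ while recognizing $e^{(\varphi(x)+\psi(y)-c(x,y))/\epsilon}\nu(dy)=\pi^x(dy)$ yields the first identity $\nabla\varphi(x)=\E_\pi[\nabla_x c(X,Y)\,|\,X=x]$. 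Differentiating once more,
\[
\nabla^2 g(x) = \epsilon^{-2}\int (\nabla_x c)(\nabla_x c)^\top e^{(\psi-c)/\epsilon}d\nu \;-\; \epsilon^{-1}\int \nabla_{xx}^2 c\; e^{(\psi-c)/\epsilon}d\nu,
\]
and comparing with the direct computation $\nabla^2 g = e^{-\varphi/\epsilon}\bigl(\epsilon^{-2}\nabla\varphi\nabla\varphi^\top - \epsilon^{-1}\nabla^2\varphi\bigr)$, dividing by $g$, and substituting $\nabla\varphi \nabla\varphi^\top = \E_\pi[\nabla_x c\,|\,X]\,\E_\pi[\nabla_x c\,|\,X]^\top$ via the first identity produces the Hessian formula in \eqref{intro:pi-condexp2}. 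The symmetric statements for $\psi$ follow from the same calculation applied to the second Schr\"odinger equation.

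The real work is to justify differentiation under the integral sign. I would localize: fix $x_0\in\R^d$ and a small closed ball $K$ around $x_0$, then dominate the relevant integrands pointwise in $y$ by a $\nu$-integrable function, uniformly in $x\in K$. Boundedness of $\nabla^2 c$ yields the growth bound $|\nabla_x c(x,y)|\le C(1+|x|+|y|)$ together with the Lipschitz estimate $c(x,y)\ge c(x_0,y)-C(1+|y|)|x-x_0|$, so for $x\in K$ the first-order integrand is bounded by
\[
C'(1+|y|)\exp\!\bigl(C(1+|y|)\,\mathrm{diam}(K)/\epsilon\bigr)\,e^{(\psi(y)-c(x_0,y))/\epsilon},
\]
with an analogous bound (carrying one extra factor of $1+|y|$, plus an additive $\epsilon\|\nabla^2 c\|_\infty$ term) controlling the second-order integrand. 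Crucially, the last factor equals $e^{-\varphi(x_0)/\epsilon}$ times the density of $\pi^{x_0}$ with respect to $\nu$, so $\nu$-integrability reduces to an exponential moment estimate $\int e^{\gamma|y|}\pi^{x_0}(dy) < \infty$ for $\gamma$ of order $\mathrm{diam}(K)/\epsilon$, which follows from the subgaussian marginal hypothesis on $\nu$ once $\mathrm{diam}(K)$ is chosen small enough.

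The main obstacle I foresee is precisely this domination step, specifically the a priori unknown growth of $\psi$. The key trick is to never estimate $e^{\psi(y)/\epsilon}\nu(dy)$ in isolation; instead, keep it packaged as part of the probability measure $\pi^{x_0}(dy)\propto e^{(\psi(y)-c(x_0,y))/\epsilon}\nu(dy)$, whose moments are manifestly controlled by the subgaussianity of $\nu$. Combined with a standard dominated-convergence argument applied to the difference quotients of $g$, this delivers classical $C^2$ regularity of $g$, hence of $\varphi=-\epsilon\log g$, and the identities \eqref{intro:pi-condexp1}--\eqref{intro:pi-condexp2} then hold pointwise, which is stronger than the weak-derivative statement in the proposition.
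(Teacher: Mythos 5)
Your overall route is the same as the paper's: differentiate the Schr\"odinger equations under the integral sign, recognize $e^{(\varphi(x)+\psi(y)-c(x,y))/\epsilon}\nu(dy)$ as the conditional law $\pi^x(dy)$, and read off the conditional expectation and conditional covariance; the algebra for both the gradient and the Hessian is correct. The paper phrases the interchange in terms of weak derivatives and a Fubini criterion rather than difference quotients, but that is a cosmetic difference.

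The gap is in the step you yourself identify as the crux. To make your dominating function $\nu$-integrable you need, for each fixed $x_0$, an estimate of the form $\int (1+|y|)^2 e^{\gamma|y|}\,\pi^{x_0}(dy)<\infty$, and you assert that this is ``manifestly controlled by the subgaussianity of $\nu$'' because $\pi^{x_0}$ is a probability measure absolutely continuous with respect to $\nu$. That implication is false in general: a probability measure with a density with respect to a subgaussian measure need not have any finite exponential moments (consider a density proportional to $e^{|y|^2/2-\sqrt{|y|}}$ against the standard Gaussian on $\R$), and subgaussianity of the \emph{marginal} $\nu$ controls the conditional laws $\pi^x$ only for $\mu$-a.e.\ $x$ in an averaged sense, not pointwise and not locally uniformly. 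What is actually needed is an a priori upper bound on $\psi(y)-c(x_0,y)$. The paper obtains one by fixing the normalization $\int\varphi\,d\mu=0$ (whence $\int\psi\,d\nu\ge0$ since $c\ge0$), applying Jensen's inequality to the Schr\"odinger equation to get $\psi(y)\le\int c(x,y)\,\mu(dx)$, and then Taylor-expanding the difference $c(x',y)-c(x,y)$ so that the quadratic-in-$y$ contributions cancel, leaving $\psi(y)-c(x,y)\le g(x)+\tfrac{\delta R'}{2}|y|^2$ with $\delta>0$ arbitrarily small; only at that point does subgaussianity of $\nu$ finish the job (this is Lemma \ref{ap:le:integrability}). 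Your ``packaging'' device does not substitute for this bound. Once it is inserted, the remainder of your argument, including the upgrade to classical $C^2$ regularity of $g=e^{-\varphi/\epsilon}$, goes through.
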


\subsection{Integrability and continuity lemmas}

We begin with an integrability lemma that will be used repeatedly. The choice of Frobenius norm $\|\cdot\|_{\mathrm{Frob}}$ in (3)  is immaterial, merely for the sake of concreteness.

\begin{lemma} \label{le:momentbounds}
There exists $r > 0$ such that the following estimates hold:
\begin{enumerate}
\item Integrability for couplings: $\sup_{P \in \Pi(\mu,\nu)}\E_P[ e^{r|\nabla c(X,Y)|^2}] < \infty$.
\item First derivatives of potentials:
\begin{align*}
\int_{\R^d} e^{r|\nabla \varphi(x)|^2}\,\mu(dx) < \infty, \qquad \int_{\R^d} e^{r|\nabla \psi(y)|^2}\,\nu(dy) < \infty,
\end{align*}
\item Second derivatives of potentials: 
\begin{align*}
\int_{\R^d} e^{r\|\nabla^2 \varphi(x)\|_{\mathrm{Frob}}}\,\mu(dx) < \infty, \qquad \int_{\R^d} e^{r\|\nabla^2 \psi(y)\|_{\mathrm{Frob}}}\,\nu(dy) < \infty.
\end{align*}
\item The potentials $\varphi$ and $\psi$ are $C^1$, with H\"older continuous first derivatives.
\end{enumerate}
\end{lemma}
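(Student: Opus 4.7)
The plan is to establish the four statements in order, the first three relying on boundedness of $\nabla^2c$, the subgaussian marginal hypothesis, and the representations from Proposition \ref{pr:derivatives-potentials}, and the last one through a Sobolev embedding argument.

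For (1), boundedness of $\nabla^2c$ forces $\nabla c$ to have at most linear growth, so $|\nabla c(x,y)|^2 \le C_1(1+|x|^2+|y|^2)$ for some constant $C_1$. For $r>0$ sufficiently small, Cauchy--Schwarz gives
\begin{equation*}
\E_P\big[e^{r|\nabla c(X,Y)|^2}\big] \le e^{rC_1}\Big(\int_{\R^d} e^{2rC_1|x|^2}\mu(dx)\Big)^{1/2}\Big(\int_{\R^d} e^{2rC_1|y|^2}\nu(dy)\Big)^{1/2},
\end{equation*}
which is finite uniformly over $P\in\Pi(\mu,\nu)$ by the subgaussian marginal assumption.

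For (2), I apply Jensen twice to the identity $\nabla\varphi(x)=\E_\pi[\nabla_xc(X,Y)\,|\,X=x]$ of Proposition \ref{pr:derivatives-potentials}: first to $|\cdot|^2$, then to $\exp$, yielding $e^{r|\nabla\varphi(x)|^2}\le \E_\pi[e^{r|\nabla_xc(X,Y)|^2}\,|\,X=x]$; integrating against $\mu$ and invoking (1) (with $\pi\in\Pi(\mu,\nu)$) closes the estimate, and symmetrically for $\nabla\psi$. For (3), the first term in the second identity of Proposition \ref{pr:derivatives-potentials} has Frobenius norm bounded by the constant $\|\nabla^2c\|_\infty$. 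For the covariance term I use the elementary fact that for any positive semidefinite matrix $M$ with eigenvalues $\lambda_i\ge 0$, $\|M\|_{\mathrm{Frob}}=\sqrt{\sum\lambda_i^2}\le \sum\lambda_i=\mathrm{tr}(M)$; since $\mathrm{tr}\,\Cov_\pi(\nabla_xc\,|\,X)=\E_\pi[|\nabla_xc|^2|X]-|\E_\pi[\nabla_xc|X]|^2\le \E_\pi[|\nabla_xc|^2|X]$, the exponential $e^{r\|\nabla^2\varphi(x)\|_{\mathrm{Frob}}}$ is dominated up to a constant by $e^{(r/\epsilon)\E_\pi[|\nabla_xc|^2|X=x]}\le \E_\pi[e^{(r/\epsilon)|\nabla_xc|^2}\,|\,X=x]$ via Jensen, and integration reduces again to (1).

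For (4), parts (2) and (3) place $|\nabla\varphi|$ and $\|\nabla^2\varphi\|_{\mathrm{Frob}}$ in $L^p(\mu)$ for every $p<\infty$. Since $\mu$ has a continuous positive density ($U\in C^2$), this transfers to $L^p_{\mathrm{loc}}(dx)$, giving $\nabla\varphi\in W^{1,p}_{\mathrm{loc}}(\R^d;\R^d)$ for all $p$. By Morrey's embedding $W^{1,p}_{\mathrm{loc}}\hookrightarrow C^{0,\alpha}_{\mathrm{loc}}$ (any $\alpha<1$, for $p$ large), $\nabla\varphi$ has a locally Hölder continuous representative. Combined with the Schrödinger equation \eqref{eq:Schrsyst}---which represents $\varphi(x)=-\epsilon\log\int e^{(\psi(y)-c(x,y))/\epsilon}\nu(dy)$ as a continuous function---this yields $\varphi\in C^1$ with locally Hölder first derivative, and symmetrically for $\psi$. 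The main technical point to check carefully is that the weak second-derivative formulas from Proposition \ref{pr:derivatives-potentials} are genuine $W^{2,p}_{\mathrm{loc}}$ derivatives of $\varphi$, compatible with Morrey's inequality; this is however essentially built into those identities, since their right-hand sides are the locally integrable conditional expectations that the estimates (2) and (3) control.
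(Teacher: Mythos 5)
Your proof is correct and follows essentially the same route as the paper: linear growth of $\nabla c$ plus subgaussianity of the marginals for (1), Jensen's inequality applied to the conditional-expectation identities of Proposition \ref{pr:derivatives-potentials} for (2) and (3), and local positivity of $\mu$ together with Morrey's inequality for (4). The only difference is that you spell out details (the Cauchy--Schwarz splitting over the coupling, the $\|M\|_{\mathrm{Frob}}\le\mathrm{tr}(M)$ bound) that the paper leaves implicit.
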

\begin{proof}
The first claim follows from the  linear growth of $\nabla c$ (due to boundedness of $\nabla^2c$) and the assumed subgaussianity of $\mu$ and $\nu$. 
Using the identities \eqref{intro:pi-condexp1} of Proposition \ref{pr:derivatives-potentials}, the claim (2) follows easily from (1) by Jensen's inequality.
Similarly, using the identites \eqref{intro:pi-condexp2} of Proposition \ref{pr:derivatives-potentials} and the boundedness of $\nabla^2 c$, the claim (3) follows from (1) by Jensen's inequality. 
To prove (4), note that $\mu$ is continuous and thus bounded away from zero on compacts, so that (2) and (3) imply that the first and second order derivatives of $\varphi$ are in $L^p_{\mathrm{loc}}(\R^d)$ for every $1 \le p < \infty$. The claimed continuity follows from Morrey's inequality \cite[Section 5.6.2]{EvansPDE}.
\end{proof}

\begin{proof}[Proof of Continuity Lemma \ref{le:W1continuity}]
Using the obvious coupling $((X_t,Y_t),(X_s,Y_s))$ of $(P_t,P_s)$,
\begin{align*}
\W_2^2&(P_t,P_s)  \le  \E\big[|X_t-X_s|^2 + |Y_t-Y_s|^2\big] \\
	&\le  2\bigg|\E\int_s^t \Big( \E[\nabla_x c(X_u,Y_u)\,|\,X_u]  -  \nabla_x c(X_u,Y_u) - \epsilon \nabla U(X_u)\Big)\,du\bigg|^2 \!\!+ 2\sqrt{2\epsilon}\E|W_t-W_s|^2 \\
	&\quad +2\bigg|\E\int_s^t \Big( \E[\nabla_y c(X_u,Y_u)\,|\,Y_u]  -  \nabla_y c(X_u,Y_u) - \epsilon \nabla V(Y_u)\Big)\,du\bigg|^2 \!\!+ 2\sqrt{2\epsilon}\E|B_t-B_s|^2.
\end{align*}
The first claim of the lemma now follows quickly from Lemma \ref{le:momentbounds}(1). For the second claim, note that $P_t(x,y)/\mu(x)$ is the conditional density of $Y_t$ given $X_t=x$, and so
\begin{align*}
\int_{\R^d}|f_t - f_s|\,d\mu &= \int_{\R^d}\bigg|\int_{\R^d} \nabla_xc(x,y) \big(P_t(x,y)-P_s(x,y)\big)\,dy\big|\,dx.
\end{align*}
Recalling that $B_r^d$ is the centered ball of radius $r$ in $\R^d$, we deduce from Lemma \ref{le:momentbounds}(1) that
\begin{align*}
\int_{(B_r^d \times B_r^d)^c} & |\nabla_xc(x,y) |\big|P_t(x,y)-P_s(x,y)\big|\,dy dx \\
	&\le \E\big[|\nabla_xc(X_t,Y_t)|1_{\{|X_t| > r \text{ or } |Y_t| > r\}}+|\nabla_xc(X_s,Y_s)|1_{\{|X_s| > r \text{ or } |Y_s| > r\}}\big]
\end{align*}
tends to zero as $r\to\infty$, uniformly in $t,s > 0$. Hence, for any $\eta > 0$, we may find $r > 0$ such that
\begin{align*}
\int_{\R^d}|f_t - f_s|\,d\mu &\le \eta + \int_{B_r^d}\bigg|\int_{B_r^d} \nabla_xc(x,y) \big(P_t(x,y)-P_s(x,y)\big)\,dy\bigg|\,dx,
\end{align*}
for all $t,s > 0$. Now, using the H\"older estimate from Theorem \ref{th:wellposed-unbounded}, we may find $K,\beta > 0$ such that $|P_t(x,y)-P_s(x,y)| \le K|t-s|^\beta$ for all $t,s \ge 1$ with $|t-s| \le 1$. Because $\nabla_x c$ is locally bounded, the claim follows easily. The $g$ term is handled in exactly the same manner.
\end{proof}

\subsection{A key identity for log densities} \label{se:keyidentity}

The following two lemmas summarize crucial calculations that underlie Lemma \ref{le:entropydynamics} and Proposition \ref{pr:Istability}, respectively, and will appear again later.

\begin{lemma} \label{le:conditioning-identity}
Let $P \in \Pi(\mu,\nu)$ satisfy $P \ll \pi$ and $\nabla\log dP/d\pi \in L^1(P;\R^{2d})$. Then
\begin{align*}
\E_P\bigg[\nabla_x\log \frac{dP}{d\pi}(X,Y)\,\Big|\,X \bigg] &= \epsilon^{-1}\big( \E_P[\nabla_xc(X,Y)\,|\,X] - \nabla \varphi(X)\big) \\
\E_P\bigg[\nabla_y\log \frac{dP}{d\pi}(X,Y)\,\Big|\,Y \bigg] &= \epsilon^{-1}\big( \E_P[\nabla_yc(X,Y)\,|\,Y] - \nabla \psi(Y)\big).
\end{align*}
\end{lemma}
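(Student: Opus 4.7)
The strategy is to reduce both claims to the single marginalization identity $\E_P[\nabla_x\log p(X,Y)\,|\,X]=-\nabla U(X)$, where $p$ denotes the density of $P$; this identity encodes in infinitesimal form the fact that the $X$-marginal of $P$ equals $\mu(dx)=e^{-U(x)}dx$. I would only treat the first claim, as the second is entirely symmetric in the roles of the two marginals.

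From the form \eqref{intro:potentials} of $\pi$, together with $\mu=e^{-U}dx$ and $\nu=e^{-V}dy$, we read off
\[ \nabla_x\log\pi(x,y)=\epsilon^{-1}\nabla\varphi(x)-\epsilon^{-1}\nabla_x c(x,y)-\nabla U(x), \]
which is a bona fide (locally integrable) function in view of Proposition \ref{pr:derivatives-potentials} and Lemma \ref{le:momentbounds}. Since $\pi$ is strictly positive, $P\ll\pi$ gives a density $p=\pi e^R$ for $P$, and the chain rule (combined with the hypothesis $\nabla R\in L^1(P)$) yields $\nabla_x\log p=\nabla_x R+\nabla_x\log\pi$; equivalently,
\[ \nabla_x R(x,y)=\nabla_x\log p(x,y)+\epsilon^{-1}\nabla_x c(x,y)-\epsilon^{-1}\nabla\varphi(x)+\nabla U(x). \]
Taking conditional expectation $\E_P[\cdot\,|\,X]$ of both sides, the first claim of the lemma follows immediately (the two $\nabla U$ terms cancelling) provided the marginalization identity holds.

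To prove the marginalization identity, I would use a test-function argument. Fix $\phi\in C_c^\infty(\R^d)$ and a coordinate $i\in\{1,\ldots,d\}$; by the defining property of conditional expectation together with Fubini's theorem,
\[ \int_{\R^d}\phi(x)\,\E_P[\partial_{x_i}\log p(X,Y)\,|\,X=x]\,\mu(dx)=\int_{\R^{2d}}\phi(x)\,\partial_{x_i}p(x,y)\,dxdy. \]
Integrating by parts in $x$, using compact support of $\phi$, turns the right-hand side into $-\int_{\R^{2d}}\partial_i\phi(x)\,p(x,y)\,dxdy=-\int_{\R^d}\partial_i\phi(x)\,e^{-U(x)}\,dx$, and a second integration by parts converts this into $-\int_{\R^d}\phi(x)\,\partial_iU(x)\,\mu(dx)$. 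Since $\phi$ and $i$ were arbitrary, this yields $\E_P[\nabla_x\log p\,|\,X]=-\nabla U(X)$ coordinatewise, $\mu$-almost surely.

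The main technical obstacle is the weak-derivative bookkeeping: justifying rigorously, from the hypothesis $\nabla R\in L^1(P)$ alone, that $p$ admits a locally integrable weak $x$-gradient satisfying the chain rule $\nabla_x\log p=\nabla_x R+\nabla_x\log\pi$, and that the integration by parts against $\phi$ above is valid. These rest on the facts that $\log\pi$ is $C^1$ with locally bounded gradient (by Lemma \ref{le:momentbounds}(4)), and that multiplication by the compactly supported $\phi$ tames the tails, so that standard Sobolev-type chain-rule and product-rule arguments apply on compact sets.
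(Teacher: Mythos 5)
Your proposal is correct and follows essentially the same route as the paper: both decompose $\nabla_x\log\tfrac{dP}{d\pi}=\nabla_x\log P-\nabla_x\log\pi$, compute $\E_P[\nabla_x\log\pi\,|\,X]$ from the explicit form \eqref{intro:potentials}, and reduce the remaining term to the marginalization identity $\E_P[\nabla_x\log P\,|\,X]=-\nabla U(X)$ coming from the constraint that the first marginal is $\mu$. Your test-function derivation of that identity is just the spelled-out version of the paper's "differentiate $\log\int P(x,y)\,dy=-U(x)$" step, whose interchange of weak derivative and integral the paper likewise defers to the condition \eqref{ap:weakdiffcondition} in Appendix \ref{ap:differentiability}.
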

\begin{proof}
We prove only the first identity, as the second is analogous.
Let us identify $P$ with its density, $P(dx,dy)=P(x,y)dxdy$.
From Lemma \ref{le:momentbounds} and  the  formula \eqref{intro:potentials} for $\pi$, it follows easily that $|\nabla \log \pi| \in L^1(P)$.
Hence, the assumption $|\nabla\log dP/d\pi| \in L^1(P)$  implies $|\nabla\log P| \in L^1(P)$.
Since $P$ has first marginal $\mu(dx)=e^{-U(x)}dx$, we have
\begin{equation*}
\log \int_{\R^d} P(x,y)\,dy = -U(x).
\end{equation*}
Differentiate to find
\begin{equation*}
\E_{P}\big[ \nabla_x\log P(X,Y)\,|\,X=x] = \frac{\int_{\R^d} \nabla_x P(x,y)\,dy}{\int_{\R^d}  P(x,y)\,dy} = -\nabla U(x).
\end{equation*}
(See Appendix \ref{ap:differentiability}, particularly \eqref{ap:weakdiffcondition}, for details on how to rigorously justify this exchange of weak derivative and integral.)
On the other hand, the explicit form \eqref{intro:potentials} of $\pi$ shows that
\begin{align*}
\E_{P}\big[ \nabla_x\log \pi(X,Y)\,|\,X] &= \E_P\Big[ \epsilon^{-1}\big(\nabla \varphi(X) - \nabla_x c(X,Y)\big) - \nabla U(X) \,|\, X\Big] \\
	&= \epsilon^{-1}\Big(\nabla \varphi(X) - \E_P[\nabla_xc(X,Y)\,|\,X]\Big) - \nabla U(X).
\end{align*}
Take the difference between the previous two identities to complete the proof.
\end{proof}

Note that the identities \eqref{intro:pi-condexp} follow as a special case of Lemma \ref{le:conditioning-identity} by taking $P=\pi$. 
We next need a differentiated version of the formula of Lemma \ref{le:conditioning-identity}. We only state the $\nabla_x$ case, as the $\nabla_y$ case is completely analogous.

\begin{lemma} \label{le:Df}
Let $P \in \Pi(\mu,\nu)$ satisfy $\overline{I}(P\,|\,\pi) < \infty$, and set
\begin{align*}
f(x) = \E_P\bigg[ \nabla_x \log\frac{dP}{d\pi}(X,Y)\,\Big|\,X=x\bigg].
\end{align*}
Then $f$ is weak differentiable, and its weak Jacobian matrix is given by
\begin{align}
Df(x) &= \epsilon^{-1}\Cov_P\bigg(\nabla_xc(X,Y),\,\nabla_x\log \frac{dP}{d\pi}(X,Y)\,\Big|\,X=x\bigg)  -\epsilon^{-2}\Cov_P(\nabla_xc(X,Y) \,|\,X=x) \nonumber  \\
	&\qquad + \epsilon^{-1}\E_P[\nabla_{xx}^2c(X,Y)\,|\,X=x] - \epsilon^{-1}\nabla^2\varphi(x). \label{eq:Df-formula}
\end{align}
In particular, there is a constant $C$ which does not depend on the choice of $P$ such that
\begin{align}
\int \|Df\|_{\mathrm{op}}\,d\mu &\le C + C \overline{I}(P\,|\,\pi). \label{ineq:Df-bound}
\end{align}
\end{lemma}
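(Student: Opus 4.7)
The plan is to reduce the weak differentiability of $f$ to that of the conditional mean $h(x) := \E_P[\nabla_x c(X,Y)\,|\,X=x]$, using Lemma \ref{le:conditioning-identity} to write $f(x) = \epsilon^{-1}(h(x) - \nabla\varphi(x))$. Since $\nabla\varphi$ has weak Jacobian $\nabla^2\varphi$ by Proposition \ref{pr:derivatives-potentials}, it remains only to produce a formula for the weak Jacobian of $h$. The starting point is the pointwise identity
\begin{equation*}
\mu(x)\, h(x) = \int_{\R^d} \nabla_x c(x,y)\,P(x,y)\,dy,
\end{equation*}
where $P(x,y)$ denotes the joint density of $P$; the required regularity of this density follows from $\overline{I}(P\,|\,\pi)<\infty$ together with the explicit form \eqref{intro:potentials} of $\pi$.

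Differentiating both sides weakly, the product rule gives $\nabla(\mu h) = \mu(\nabla h - \nabla U \otimes h)$ on the left, while on the right one exchanges weak derivative and integral---justified by smooth truncation and dominated convergence in the style of Appendix \ref{ap:differentiability}---to obtain $\mu\,\E_P[\nabla_{xx}^2 c\,|\,X=x] + \int \nabla_x c(x,y) \otimes \nabla_x P(x,y)\,dy$. Writing $\nabla_x P = P(\nabla_x\log\pi + \nabla_x R)$ and using $\nabla_x\log\pi(x,y) = \epsilon^{-1}(\nabla\varphi(x) - \nabla_x c(x,y)) - \nabla U(x)$ from \eqref{intro:potentials}, the $\nabla U \otimes h$ contributions cancel, and after dividing by $\mu(x)$ one obtains
\begin{align*}
\nabla h(x) &= \E_P[\nabla_{xx}^2 c\,|\,X=x] + \epsilon^{-1} h(x)\otimes\nabla\varphi(x) \\
	&\quad - \epsilon^{-1}\E_P[\nabla_x c\otimes\nabla_x c\,|\,X=x] + \E_P[\nabla_x c\otimes\nabla_x R\,|\,X=x].
\end{align*}
Substituting $h = \nabla\varphi + \epsilon f$ into the cross term $h\otimes\nabla\varphi$ and using $\E_P[\nabla_x R\,|\,X=x] = f(x)$ to repackage the off-diagonal contributions as covariances produces \eqref{eq:Df-formula} upon subtracting $\nabla^2\varphi$ and dividing by $\epsilon$.

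For the bound \eqref{ineq:Df-bound}, I would estimate each of the four terms on the right-hand side of \eqref{eq:Df-formula} separately. The Hessian term is pointwise bounded by $\epsilon^{-1}\|\nabla^2 c\|_\infty$, and $\int \|\nabla^2\varphi\|_{\mathrm{op}}\,d\mu < \infty$ by Lemma \ref{le:momentbounds}(3), both independently of $P$. For the conditional covariances, dominating the operator norm by the Frobenius norm and applying Jensen yields $\|\Cov_P(A,B\,|\,X=x)\|_{\mathrm{op}} \le \E_P[|A-\E_P[A|X]||B-\E_P[B|X]|\,|\,X=x]$, which after integration against $\mu$ and Cauchy--Schwarz gives
\begin{align*}
\int \|\Cov_P(\nabla_x c\,|\,X)\|_{\mathrm{op}}\,d\mu &\le \E_P\big[|\nabla_x c|^2\big], \\
\int \|\Cov_P(\nabla_x c,\nabla_x R\,|\,X)\|_{\mathrm{op}}\,d\mu &\le \E_P\big[|\nabla_x c|^2\big]^{1/2}\,\overline{I}(P\,|\,\pi)^{1/2}.
\end{align*}
Lemma \ref{le:momentbounds}(1) bounds $\E_P|\nabla_x c|^2$ uniformly over $\Pi(\mu,\nu)$, and the elementary inequality $ab \le \tfrac12(a^2+b^2)$ absorbs the square root to deliver \eqref{ineq:Df-bound}. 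The main technical hurdle is the exchange of weak derivative and integral in the computation of $\nabla h$; the uniform-in-$P$ character of the final bound is then essentially automatic, since every estimate above depends on $P$ only through quantities controlled by Lemma \ref{le:momentbounds} (which involves only $\mu,\nu$) and by the scalar $\overline{I}(P\,|\,\pi)$.
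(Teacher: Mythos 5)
Your proposal is correct and follows essentially the same route as the paper: both start from the identity $f=\epsilon^{-1}(\E_P[\nabla_xc\,|\,X=\cdot]-\nabla\varphi)$ furnished by Lemma \ref{le:conditioning-identity}, differentiate under the integral sign, split $\nabla_x\log P$ into $\nabla_x\log(dP/d\pi)+\nabla_x\log\pi$ using the explicit form of $\pi$, and bound the cross-covariance term by Young's inequality together with the uniform moment bounds of Lemma \ref{le:momentbounds}. The only differences are cosmetic (you differentiate $\mu h$ and track the cancellation of the $\nabla U$ terms by hand, whereas the paper works directly with the conditional density $P/\mu$ and recognizes the covariance immediately), so no further comment is needed.
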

\begin{proof}
It is an easy consequence of Lemma \ref{le:momentbounds} that $|\nabla \log\pi | \in L^2(P)$.
Thus $\overline{I}(P\,|\,\pi) < \infty$ implies $|\nabla\log dP/d\pi| \in L^2(P)$, which in turn implies $|\nabla\log P| \in L^2(P)$.
We first use Lemma \ref{le:conditioning-identity} to write
\begin{align*}
f(x) &= \epsilon^{-1} \int_{\R^d} \nabla_x c(x,y)\,\frac{P(x,y)}{\mu(x)}\,dy - \epsilon^{-1} \nabla\varphi(x).
\end{align*}
In the sense of weak derivatives, we have
\begin{align*}
\nabla_x \int_{\R^d} \nabla_x c(x,y)\,\frac{P(x,y)}{\mu(x)}\,dy &= \int_{\R^d} \nabla_{xx}^2 c(x,y)\,\frac{P(x,y)}{\mu(x)}\,dy \\
	&\quad + \int_{\R^d} \nabla_x c(x,y) \otimes \bigg(\nabla_x\log\frac{P(x,y)}{\mu(x)}\bigg)\frac{P(x,y)}{\mu(x)}\,dy
\end{align*}
Indeed, using the fact that $\mu$ is locally bounded from above and below away from zero, and recalling  also that $|\nabla_x\log P|$ and $|\nabla \log \mu|$ are in $L^2(P)$, it is easy to justify this interchange of integral and weak derivative (see Appendix \ref{ap:differentiability} for related discussion). Noting that
\begin{align*}
\int_{\R^d} \bigg(\nabla_x\log\frac{P(x,y)}{\mu(x)} \bigg)\frac{P(x,y)}{\mu(x)}\,dy = 0,
\end{align*}
we have
\begin{align*}
\int_{\R^d} \nabla_x c(x,y) \otimes \bigg(\nabla_x\log\frac{P(x,y)}{\mu(x)}\bigg)\frac{P(x,y)}{\mu(x)}\,dy = \Cov_P(\nabla_xc(X,Y),\,\nabla_x\log P(X,Y)\,|\,X=x).
\end{align*}
Put together the preceding identities to deduce
\begin{align*}
Df(x) &= \epsilon^{-1}\Cov_P(\nabla_xc(X,Y),\,\nabla_x\log P(X,Y)\,|\,X=x)  - \epsilon^{-1}\nabla^2\varphi(x) \\
	&\qquad + \epsilon^{-1}\E_P[\nabla_{xx}^2c(X,Y)\,|\,X=x].
\end{align*}
Next, notice that
\begin{align*}
\Cov_P(\nabla_xc(X,Y),\,\nabla_x\log P(X,Y)\,|\,X) &= \Cov_P\bigg(\nabla_xc(X,Y),\,\nabla_x\log \frac{dP}{d\pi}(X,Y)\,\Big|\,X\bigg) \\
	&\quad + \Cov_P(\nabla_xc(X,Y),\,\nabla_x\log \pi(X,Y)\,|\,X).
\end{align*}
Using the identity
\begin{align*}
\nabla_x\log\pi(x,y)=\epsilon^{-1}\nabla\varphi(x)-\epsilon^{-1}\nabla_xc(x,y)-\nabla U(x),
\end{align*}
we deduce
\begin{align*}
\Cov_P(\nabla_xc(X,Y),\,\nabla_x\log \pi(X,Y)\,|\,X) &= -\epsilon^{-1}\Cov_P(\nabla_xc(X,Y) \,|\,X).
\end{align*}
This yields the claimed identity. To prove the ``in particular" claim, set $R=\log dP/d\pi$, and note for any unit vector $v \in \R^d$ that
\begin{align*}
v^\top &\Cov_P\big(\nabla_xc(X,Y),\,\nabla_x R(X,Y) \,|\,X\big)v \\
	&\le \frac12 \Var_P\big(v \cdot \nabla_xc(X,Y) \,|\,X\big) +  \frac12 \Var_P\big(v \cdot \nabla_xR(X,Y) \,|\,X\big) \\
	&\le \frac12 \E_P\big[|\nabla_xc(X,Y)|^2 \,|\,X\big] + \frac12 \E_P\Big[ \big|\nabla_x R(X,Y) - \E_P[\nabla_x R(X,Y)\,|\,X]\big|^2 \,\big|\, X\Big].
\end{align*}
Hence,
\begin{align*}
\E \Big\|\Cov_P\big(\nabla_xc(X,Y),\,\nabla_x R \,|\,X\big)\Big\|_{\mathrm{op}} &\le \frac12\E_P[|\nabla_xc(X,Y)|^2] + \frac12 \overline{I}(P\,|\,\pi).
\end{align*}
The remaining terms in the identity \eqref{eq:Df-formula} for $Df(x)$ are clearly bounded in $L^1(\mu)$ uniformly in $P$, thanks to Lemma \ref{le:momentbounds} and the assumption that $\nabla^2c$ is bounded.
\end{proof}

\subsection{Entropy dynamics}

\begin{proof}[Proof of Lemma \ref{le:entropydynamics}]
It follows from Lemma \ref{le:entropycomesdown} that $H(P_t\,|\,\mu \otimes \nu) < \infty$ for a.e.\ $t > 0$.
Since $P_t \in \Pi(\mu,\nu)$, the precise form of $\pi$ implies the identity 
\begin{align*}
H(P_t\,|\,\pi) = H(P_t\,|\,\mu \otimes \nu) - H(\pi\,|\,\mu \otimes \nu) + \epsilon^{-1}\int c\,dP_t - \epsilon^{-1}\int c\,d\pi.
\end{align*}
Since $\nabla^2 c$ is bounded  and $(\mu,\nu)$ have finite second moments, we have $\int c\,dQ < \infty$ for every $Q \in \Pi(\mu,\nu)$.
Hence, $H(P_t\,|\,\mu \otimes \nu) < \infty$ implies  $H(P_t\,|\,\pi) < \infty$. We can upgrade from ``a.e.\ $t > 0$" to ``every $t > 0$" once the inequality is proven, using lower semicontinuity of relative entropy and weak continuity of $t \mapsto P_t$.

Let us first give the formal argument for the main inequality, ignoring questions of smoothness: Suppose $(Q^i_t)_{t \ge 0}$ for $i=1,2$ are weakly continuous flows of probability measures on $\R^{2d}$ which solve the Fokker-Planck equation
\begin{align*}
\partial_tQ^i = -\mathrm{div}(Q^ib^i) + \epsilon\Delta Q^i = -\mathrm{div}(Q^i(b^i - \epsilon\nabla \log Q^i)),
\end{align*}
for a time-dependent vector field $b^i$. Then
\begin{align}
\frac{d}{dt}H(Q^1_t\,|\,Q^2_t) = \int (b^1_t - b^2_t) \cdot \nabla \log \frac{dQ^1_t}{dQ^2_t}\,dQ^1_t - \epsilon \int \bigg|\nabla \log\frac{dQ^1_t}{dQ^2_t}\bigg|^2\,dQ^1_t. \label{pf:entropydynamics-formal1}
\end{align}
The computation goes as follows, omitting time indices, with all integrals taken with respect to to Lebesgue measure:
\begin{align*}
\frac{d}{dt}H(Q^1\,|\,Q^2) &= \frac{d}{dt}\int \log\frac{Q^1}{Q^2}\,Q^1 \\
	&= \int  \Big(\partial_t Q^1 - \partial_tQ^2\frac{Q^1}{Q^2} + \log\frac{Q^1}{Q^2}\partial_tQ^1 \Big) \\
	&= 0 + \int \Big(\frac{Q^1}{Q^2}\mathrm{div}(Q^2(b^2 - \epsilon\nabla \log Q^2)) - \log\frac{Q^1}{Q^2}\mathrm{div}(Q^1(b^1 - \epsilon\nabla \log Q^1))\Big) \\
	&= \int \Big(-\nabla\Big(\frac{Q^1}{Q^2}\Big) \cdot (b^2 - \epsilon\nabla \log Q^2)Q^2 + \nabla \Big(\log\frac{Q^1}{Q^2}\Big) \cdot (b^1 - \epsilon\nabla \log Q^1)Q^1\Big) \\
	&= \int \nabla\Big(\log\frac{Q^1}{Q^2}\Big) \cdot \Big(\epsilon\nabla \log Q^2 -b^2 + b^1 - \epsilon\nabla \log Q^1\Big)Q^1.
\end{align*}
This is exactly \eqref{pf:entropydynamics-formal1}.

We next show how \eqref{pf:entropydynamics-formal1} applies in our context.
Let $f_t(x)=\E[\nabla_x c(X_t,Y_t)\,|\,X_t=x]$ and $g_t(y)=\E[\nabla_y c(X_t,Y_t)\,|\,Y_t=y]$. Note that $\pi$ is the law of a stationary solution of the SDE
\begin{align}
\begin{split}
dX_t &= \bigg( \nabla\varphi(X_t)  - \nabla_x c(X_t,Y_t) - \epsilon \nabla U(X_t)\bigg)dt + \sqrt{2\epsilon }dW_t, \\
dY_t &= \bigg(\nabla\psi(Y_t)  - \nabla_y c(X_t,Y_t)  - \epsilon \nabla V(Y_t)\bigg)dt + \sqrt{2\epsilon }dB_t.
\end{split} \label{def:piSDE}
\end{align}
This is exactly the same drift as in the original SDE \eqref{def:mainSDE}, except with $\nabla\varphi$ and $\nabla\psi$ replacing the conditional expectations terms, which we abbreviated as $f_t$ and $g_t$ in \eqref{def:ft,gt}.
Hence, the entropy identity \eqref{pf:entropydynamics-formal1} specializes to
\begin{align*}
\frac{d}{dt}H(P_t\,|\,\pi) = \E_{P}\bigg[&\big(f_t(X_t) - \nabla \varphi(X_t)\big) \cdot \nabla_x\log\frac{dP_t}{d\pi}(X_t,Y_t) \\
	& + \big(g_t(Y_t) - \nabla \psi(Y_t)\big) \cdot \nabla_y\log\frac{dP_t}{d\pi}(X_t,Y_t) \\
	& - \epsilon \Big|\nabla_x\log\frac{dP_t}{d\pi}(X_t,Y_t)\Big|^2 - \epsilon \Big|\nabla_y\log\frac{dP_t}{d\pi}(X_t,Y_t)\Big|^2 \bigg].
\end{align*}
Since $P_t \in \Pi(\mu,\nu)$, we have from Lemma \ref{le:conditioning-identity} that
\begin{align*}
\E_P\bigg[\nabla_x\log \frac{dP_t}{d\pi}(X_t,Y_t)\,\Big|\,X_t \bigg] &= \epsilon^{-1}\big( f_t(X_t) - \nabla \varphi(X_t)\big), \\
\E_P\bigg[\nabla_y\log \frac{dP_t}{d\pi}(X_t,Y_t)\,\Big|\,Y_t \bigg] &= \epsilon^{-1}\big( g_t(Y_t) - \nabla \psi(Y_t)\big).
\end{align*}
We deduce that
\begin{align*}
\frac{d}{dt}H(P_t\,|\,\pi) = \epsilon \E_{P}\bigg[& \nabla_x\log\frac{dP_t}{d\pi}(X_t,Y_t) \cdot \E_P\bigg[\nabla_x\log \frac{dP_t}{d\pi}(X_t,Y_t)\,\Big|\,X_t \bigg] \\
	& + \nabla_y\log\frac{dP_t}{d\pi}(X_t,Y_t) \cdot \E_P\bigg[\nabla_y\log \frac{dP_t}{d\pi}(X_t,Y_t)\,\Big|\,Y_t \bigg] \\
	& - \Big|\nabla_x\log\frac{dP_t}{d\pi}(X_t,Y_t)\Big|^2 - \Big|\nabla_y\log\frac{dP_t}{d\pi}(X_t,Y_t)\Big|^2 \bigg].
\end{align*}
Conditioning shows that the right-hand side is precisely $-\epsilon \bar{I}(P_t\,|\,\pi)$, 
which implies the claimed inequality \eqref{def:entropydecay} upon integrating.

To make the formula \eqref{pf:entropydynamics-formal1} rigorous, one must of course argue that the integrals are all well-defined, and that the densities $Q^1$ and $Q^2$ are positive and sufficiently smooth in order to justify differentiating under the integral sign. We omit the details of this tedious but well understood task, which can be accomplished by a mollification of $(Q^1,Q^2)$ and a smooth approximation of the function $x \mapsto x\log x$. The differential formula \eqref{pf:entropydynamics-formal1} is not the easiest to work with during these approximations; more stable is the integrated form relaxed to an inequality,
\begin{align}
H(Q^1_t\,|\,Q^2_t) + \epsilon\int_s^t \int \bigg(\bigg|\nabla \log\frac{dQ^1_r}{dQ^2_r}\bigg|^2 - (b^1_r - b^2_r) \cdot \nabla \log \frac{dQ^1_r}{dQ^2_r}\bigg)\,dQ^1_r\,dr  \le H(Q^1_s\,|\,Q^2_s). \label{pf:entropydynamics-general}
\end{align}
We refer to \cite[Proof of Lemma 2.4]{BogRocSha} for the details of this kind of approximation.
This approximation argument also shows, as a byproduct, that the Fisher information $\int |\nabla \log(dQ^1_r/dQ^2_r)|^2dQ^1_r$ is well defined and finite for a.e.\ $r$, which in our context justifies the application of Lemma \ref{le:conditioning-identity} above. Another route to justifying this kind of formula uses the subdifferential calculus on Wasserstein space, such as  \cite[Theorem 10.4.6]{AGSbook}.
\end{proof}

\subsection{Key properties of $\overline{I}$}

This section proves the crucial Proposition \ref{pr:Istability}. We first treat a special case as a warm-up, by showing that $\overline{I}(P\,|\,\pi)=0$ implies $P=\pi$.
In this section we favor an alternative expression for $\bar{I}$, which is somewhat more enlightening:
\begin{align*}
\overline{I}(P\,|\,\pi) &= \E_{P}\bigg[  \Var_P\bigg(\nabla_x \log\frac{dP}{d\pi} \,\Big|\,X\bigg) + \Var_P\bigg(\nabla_y \log\frac{dP}{d\pi} \,\Big|\,Y\bigg) \bigg].
\end{align*}
Here the variance of a random vector is defined as the sum of the variances of the coordinates.

\begin{proposition}  \label{pr:Imin=0}
Let $P\in\Pi(\mu,\nu)$. Then $\overline{I}(P\,|\,\pi)=0$ if and only if $P=\pi$.
\end{proposition}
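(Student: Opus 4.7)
The ``if'' direction is immediate: when $P=\pi$, the weak gradient of $\log dP/d\pi$ vanishes, so $\overline{I}(P\,|\,\pi)=0$. The work lies in the converse.

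The plan is to exploit the variance reformulation
\begin{equation*}
\overline{I}(P\,|\,\pi) = \E_{P}\bigg[  \Var_P\bigg(\nabla_x R \,\Big|\,X\bigg) + \Var_P\bigg(\nabla_y R \,\Big|\,Y\bigg) \bigg],
\end{equation*}
with $R:=\log dP/d\pi$. Assuming $\overline{I}(P\,|\,\pi)=0$, both conditional variances vanish $P$-a.s., so $\nabla_x R$ agrees $P$-a.s.\ with an $X$-measurable random vector, and similarly $\nabla_y R$ agrees $P$-a.s.\ with a $Y$-measurable one. Since $P$ is equivalent to $\pi$ on its support and $\pi$ is equivalent to $\mu \otimes \nu$ (by \eqref{intro:potentials} and positivity of the exponential), and since $\mu \otimes \nu$ has full support in the product structure through the regularity assumptions on $(U,V)$, this translates into the Lebesgue-a.e.\ statements
\begin{equation*}
\nabla_x R(x,y) = F(x), \qquad \nabla_y R(x,y) = G(y),
\end{equation*}
for some Borel maps $F,G : \R^d \to \R^d$.

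The key intermediate step, which I expect to be the main technical obstacle, is to upgrade these two statements about separated partial gradients into a separated form $R(x,y)=\Phi(x)+\Psi(y)+\mathrm{const}$ for the function itself, in the weak sense. I would argue as follows. The equality of mixed weak partials forces $\partial_{x_j}F_i = \partial_{y_i}(\nabla_x R)_j = 0$ in distribution, so $F$ depends on $x$ only and is curl-free on $\R^d$; as $\R^d$ is simply connected, $F=\nabla \Phi$ for some $\Phi \in W^{1,1}_{\mathrm{loc}}$, and similarly $G=\nabla \Psi$. Then the weak function $R(x,y)-\Phi(x)-\Psi(y)$ has both partial weak gradients identically zero, hence is (Lebesgue-a.e.)\ constant by a standard Poincar\'e/Fubini argument. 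Care must be taken to justify integrability so the slices $R(\cdot,y)$ and $R(x,\cdot)$ make sense, but this follows from the fact that $\overline{I}(P\,|\,\pi)<\infty$ combined with Lemma \ref{le:momentbounds} ensures $R \in W^{1,1}_{\mathrm{loc}}(P) \subset W^{1,1}_{\mathrm{loc}}(\mathrm{Leb})$.

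Once this decomposition is in hand, the conclusion is immediate. Multiplying by the explicit density \eqref{intro:potentials} of $\pi$, one obtains
\begin{equation*}
P(dx,dy) = \exp\!\big((\widetilde{\varphi}(x)+\widetilde{\psi}(y) - c(x,y))/\epsilon \big)\,\mu(dx)\,\nu(dy),
\end{equation*}
with $\widetilde{\varphi}:=\varphi+\epsilon\Phi$ and $\widetilde{\psi}:=\psi+\epsilon\Psi+\mathrm{const}$. Since $P \in \Pi(\mu,\nu)$ and $P$ has the exponential product-structure form \eqref{intro:potentials}, the uniqueness statement recalled just after \eqref{intro:potentials} in the introduction forces $P=\pi$, completing the proof.
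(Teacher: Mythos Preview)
Your proposal is correct and follows essentially the same route as the paper: reduce $\overline{I}(P\,|\,\pi)=0$ to the statement that $\nabla_x R$ and $\nabla_y R$ are functions of $x$ and $y$ alone, then deduce an additive splitting $R(x,y)=\Phi(x)+\Psi(y)+\mathrm{const}$, and conclude by uniqueness of Schr\"odinger potentials.

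One remark on the intermediate step. Your curl-freeness argument is garbled as written: the displayed equation $\partial_{x_j}F_i = \partial_{y_i}(\nabla_x R)_j = 0$ would force $F$ to be constant, not merely curl-free, and a Schwarz-type argument would require second weak derivatives of $R$, which are not available. The paper bypasses this entirely. By Fubini one can fix $(x_0,y_0)$ such that $F(\cdot)=\nabla_x R(\cdot,y_0)$ and $G(\cdot)=\nabla_y R(x_0,\cdot)$ hold a.e.; hence $F$ is already a gradient, with antiderivative $\Phi(x):=R(x,y_0)$, and similarly $\Psi(y):=R(x_0,y)$. Then $R-\Phi-\Psi$ has vanishing weak gradient and is therefore a.e.\ constant. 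This is slightly more direct than your Poincar\'e-lemma route and avoids any appeal to higher regularity.
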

\begin{proof}
The ``if" implication is obvious.
Suppose $\overline{I}(P\,|\,\pi)=0$. Then $P \ll \pi$ is absolutely continuous with respect to Lebesgue measure. Let $R=\log dP/d\pi$. Then $\nabla R$ exists in $L^2(\pi)$, and 
\begin{equation*}
\Var_{P}\big(\nabla_x R(X,Y) \,\big|\,X\big) = \Var_{P}\big(\nabla_y R(X,Y) \,\big|\,Y\big) =0, \quad a.s.
\end{equation*}
We deduce that there exist Borel functions $g,h : \R^d \to \R^d$ such that 
\begin{equation}
\nabla_x R(x,y) = g(x), \qquad \nabla_y R(x,y) = h(y), \quad a.e. \ (x,y) \in \R^{2d}. \label{pf:I=01}
\end{equation}
We claim that such a function must be of the form $R(x,y) = F(x) + G(y)$ for some Borel functions $F,G : \R^d \to \R$. Once this is established, it follows immediately from the uniqueness of Schr\"odinger potentials \cite[Theorem 2.1(b)]{nutz2021introduction} that $P=\pi$.

To prove this claim, note by Fubini that \eqref{pf:I=01} implies that the sets $\{x \in \R^d : \nabla_x R(x,y)=g(x), \ a.e. \ y\}$ and $\{y \in \R^d :  \nabla_y R(x,y)= h(y), \ a.e. \ x\}$ have full measure. We may thus choose $(x_0,y_0)$ such that $g(x)=\nabla_x R(x,y_0)$ for a.e.\ $x$ and $h(y)=\nabla_y R(x_0,y)$ for a.e.\ $y$. Let $F(x) := R(x,y_0)$ and $G(y):=R(x_0,y)$.  Letting $\bar{R}(x,y) := F(x)+G(y)$, we see that the weak gradient of $R-\bar{R}$ vanishes (a.e.). This implies $R-\bar{R}$ is a.e.\ constant, which proves the claim.
\end{proof}

\begin{proof}[Proof of Proposition \ref{pr:Istability}]
Because $P_n \in \Pi(\mu,\nu)$ and $\mu$ and $\nu$ have finite moments of every order, the claimed $\W_2$-convergence will follow if we show that $P_n \to \pi$ weakly.
The assumption $\sup_n H(P_n\,|\,\pi) < \infty$ implies that the sequence $(P_n)$ is tight, so it suffices to show that any subsequential weak limit point must equal $\pi$. 
We may thus assume henceforth that $P_n \to P$ weakly for some $P \in \Pi(\mu,\nu)$. 
The assumption $\sup_nH(P_n\,|\,\pi) < \infty$ then  implies also the ($L^1(\pi)$,weak) convergence of $dP_n/d\pi$ to $dP/d\pi$.
It remains to show that necessarily $P=\pi$. 
Define $f_n \in L^2(\mu;\R^d)$ by
\begin{equation*}
f_n(x) := \E_{P_n}\bigg[\nabla_x \log\frac{dP_n}{d\pi}(X,Y)\,\Big|\,X=x\bigg].
\end{equation*}
\textbf{Step 1.} We claim first that there exist $f \in \cap_{p \ge 1}L^p(\mu;\R^d)$ such that, along a subsequence, $f_n \to f$ in $L^p(\mu;\R^d)$ norm for each $p \ge 1$. We first use Lemma \ref{le:conditioning-identity} to write
\begin{align}
f_n(x) &= \epsilon^{-1} \E_{P_n}[\nabla_x c(X,Y)\,|\,X=x] - \epsilon^{-1} \nabla\varphi(x). \label{pf:fn-identity}
\end{align}
Note by Lemma \ref{le:momentbounds} that
\begin{equation}
\sup_n\int_{\R^d}|f_n|^p\,d\mu < \infty, \quad \forall p \ge 1. \label{pf:fn-momentbound}
\end{equation}
Next, use the inequality \eqref{ineq:Df-bound} of Lemma \ref{le:Df},
\begin{align*}
\sup_n \int_{\R^d}\|Df_n\|_{\mathrm{op}}\,d\mu  \le C + C\overline{I}(P_n\,|\,\pi).
\end{align*}
The right-hand side is uniformly bounded because $\overline{I}(P_n\,|\,\pi) \to 0$ by assumption. Since the density of $\mu$ is continuous and strictly positive, it is bounded away from zero on any ball $B \subset \R^d$, and we deduce that $\{Df_n|_B\}$ is a bounded sequence in $L^1(B,dx;\R^{d \times d})$.
Similarly, $\{f_n\}$ is bounded in $L^1(\mu;\R^d)$ by \eqref{pf:fn-momentbound}, and so $\{f_n|_B\}$ is also bounded in $L^1(B,dx;\R^d)$.
In particular, for each ball $B$, $\{f_n|_B\}$ is bounded in the Sobolev space $W^{1,1}(B;\R^d)$, which embeds compactly in $L^1(B,dx;\R^d)$ by the Rellich-Kondrachov theorem \cite[Section 5.7, Theorem 1]{EvansPDE}.  Hence, with a diagonal argument, we may find $f \in L^1_{\mathrm{loc}}(\R^d;\R^d)$ and a subsequence (relabeled) such that $\|(f_n-f)1_B\|_{L^1(\R^d;\R^d)} \to 0$ for every ball $B$. The convergence in $\mu$-measure of $f_n$ to $f$ now follows from the local boundedness of the density $\mu$.
The $p$-moment bound of \eqref{pf:fn-momentbound} lets us upgrade from convergence in measure to $L^p(\mu;\R^d)$-convergence $f_n \to f$; in particular, $f \in L^p(\mu;\R^d)$.

{\ }

\noindent\textbf{Step 2.} We claim next that it necessarily holds $\mu$-a.s.\ that
\begin{equation}
f(x) = \epsilon^{-1} \E_{P}[\nabla_x c(X,Y)\,|\,X=x] - \epsilon^{-1} \nabla\varphi(x). \label{pf:step2-f}
\end{equation}
To see this, let $h \in C^\infty_c(\R^{d};\R^d)$, and note that the identity \eqref{pf:fn-identity} implies
\begin{equation}
0 = \E_{P_n}\big[h(X) \cdot (\nabla\varphi(X) + \epsilon f_n(X) - \nabla_xc(X,Y))\big]. \label{pf:fn-limit1}
\end{equation}
Since $h$ and $\nabla c$ are continuous and $h \cdot \nabla c$ is bounded, it follows from the weak convergence $P_n \to P$ that
\begin{equation*}
\E_{P_n}[h(X) \cdot \nabla_xc(X,Y)] \to \E_{P}[h(X)  \cdot  \nabla_xc(X,Y)].
\end{equation*}
Since $P_n$ has first marginal $\mu$ and $| \nabla \varphi| \in L^1(\mu)$ by Lemma \ref{le:momentbounds}, we deduce from the $L^1(\mu)$-convergence of $f_n\to f$ and boundedness of $h$  that
\begin{equation*}
\E_{P_n}\big[h(X) \cdot (\nabla\varphi(X) + \epsilon f_n(X))\big] = \int_{\R^d} h \cdot (\nabla\varphi + \epsilon f_n) \,d\mu \to \int_{\R^d} h \cdot (\nabla\varphi + \epsilon f) \,d\mu.
\end{equation*}
Thus, taking limits in \eqref{pf:fn-limit1} and using $P \in \Pi(\mu,\nu)$, we get
\begin{equation*}
0 = \E_{P}\big[h(X) \cdot (\nabla\varphi(X) + \epsilon f(X) - \nabla_xc(X,Y))\big].
\end{equation*}
As $h \in C^\infty_c(\R^d;\R^d)$ was arbitrary, the claim \eqref{pf:step2-f} follows.

{\ }

\noindent\textbf{Step 3.} We next prove the identity of weak derivatives,
\begin{align*}
\nabla_x \log \frac{dP}{d\pi}(x,y) = f(x).
\end{align*}
Equivalently, we will show that $\nabla_x (dP/d\pi)(x,y) = (dP/d\pi)(x,y) f(x)$. To do this, we must show that for any $h$ belonging to the space $C^1_c(\R^{2d};\R^d)$ of continuously differentiable functions $\R^{2d} \to \R^d$ of compact support, we have
\begin{align*}
-\int_{\R^{2d}} \mathrm{div}_x\, h(x,y)\, \frac{dP}{d\pi}(x,y)\,dxdy &= \int_{\R^{2d}} h(x,y) \cdot f(x)\, \frac{dP}{d\pi}(x,y)\,dxdy.
\end{align*}
Note that $\pi$ is continuously differentiable by Lemma \ref{le:momentbounds}(4) and bounded away from zero on compact sets, so the map $C^1_c(\R^{2d};\R^d) \ni h \mapsto \pi h \in C^1_c(\R^{2d};\R^d)$ is bijective. Hence, replacing $h$ by $\pi h$ in the above equation,  it suffices to show that
\begin{align}
-\E_P\Big[\mathrm{div}_x\, h(X,Y)+ \nabla_x\log \pi(X,Y) \cdot h(X,Y)\Big] &= \E_P[h(X,Y) \cdot f(X)]. \label{pf:deriv-identity1}
\end{align}
To prove \eqref{pf:deriv-identity1}, we define $\xi_n(x,y) := \nabla_x \log(dP_n/d\pi)(x,y)-f_n(x)$, and note that  the same logic which led to \eqref{pf:deriv-identity1} yields
\begin{align}
-\E_{P_n}\Big[\mathrm{div}_x\, h(X,Y)+ \nabla_x\log \pi(X,Y) \cdot h(X,Y)\Big] &= \E_{P_n}\big[h(X,Y) \cdot \big(f_n(X)+\xi_n(X)\big)\big]. \label{pf:deriv-identity2}
\end{align}
It remains to let $n \to \infty$ on both sides of \eqref{pf:deriv-identity2} to prove \eqref{pf:deriv-identity1}.
It is immediate that the left-hand side of \eqref{pf:deriv-identity2} converges to that of \eqref{pf:deriv-identity1}, since $\mathrm{div}_x\,h$ and $\nabla_x\log \pi \cdot h$ are continuous with compact support. We take limits on the right-hand side in several steps. Note first that $\E_{P_n}|\xi_n(X,Y)|^2 = \overline{I}(P_n\,|\,\pi) \to 0$ by assumption, so it remains to show that $\E_{P_n} [h(X,Y) \cdot f_n(X)] \to \E_{P} [h(X,Y) \cdot f(X)]$. Because $h$ is bounded and $f_n \to f$ in $L^1(\mu)$, we have
\begin{align*}
|\E_{P_n}[h(X,Y) \cdot (f_n(X) - f(X)) ]| \le \|h\|_\infty \int_{\R^d} |f_n-f|\,d\mu \to 0.
\end{align*}
Hence, to prove \eqref{pf:deriv-identity1} it remains to show that
\begin{align}
\int_{\R^{2d}} h(x,y) \cdot f(x) \,(P_n-P)(dx,dy) \to 0. \label{pf:deriv-identity3}
\end{align} 
Hence, for each $a > 0$, 
\begin{align*}
\int_{\R^{2d}} h(x,y) \cdot f(x)1_{\{|f(x)| \le a\}} \,(P_n-P)(dx,dy) \to 0.
\end{align*} 
We know from Step 1 that $f \in L^1(\mu)$, and since $P_n$ and $P$ have $\mu$ as the first marginal, we deduce that
\begin{align*}
\sup_n \bigg|\int_{\R^{2d}} h(x,y) \cdot f(x)1_{\{|f(x)| > a\}} \,(P_n-P)(dx,dy)\bigg| &\le 2\|h\|_\infty \int_{\R^d} |f| 1_{\{|f| > a\}}\,d\mu \to 0
\end{align*}
as $a \to \infty$.  The claim \eqref{pf:deriv-identity3} follows, completing the proof of Step 3.

{\ }

\noindent\textbf{Conclusion of the proof.} By Step 3, and the analogous argument for the $\nabla_y$ term, we have shown that there exist $f \in L^1(\mu)$ and $g \in L^1(\nu)$ such that
\begin{align*}
\nabla_x \log\frac{dP}{d\pi}(x,y) = f(x), \ \mu-a.s., \qquad  \nabla_y \log\frac{dP}{d\pi}(x,y) = g(y), \ \nu-a.s.
\end{align*}
It follows, as in the proof of Proposition \ref{pr:Imin=0}, that there exist Borel functions $F,G$ such that $\log (dP/d\pi)(x,y) = F(x)+G(y)$, and thus $P=\pi$ by uniqueness of Schr\"odinger potentials.
\end{proof}

\section{Exponential convergence and a new logarithmic Sobolev inequality} \label{se:expconvergence}

In this section we first prove Theorem \ref{thm:suff_cond_new_LSI}. Then, we proceed to the proof of Corollary \ref{intro:corollary_log_conc} and Theorem \ref{thm:conv_prof_rigorous}, that is a more general version of Theorem \ref{thm:conv_prof_informal} above. Both results provide lower bounds on the LSI constant for the conditional distributions $\pi^x$ and $\pi^y$, with the assumptions of Theorem \ref{thm:conv_prof_rigorous} being weaker than those of Corollary \ref{intro:corollary_log_conc}. It can be read directly from \eqref{intro:potentials} that the conditional distribution $\pi^y$ can be written in the form of a Gibbs measure as 
\begin{equation*}
\pi^y(dx) = \frac{1}{Z^y} \exp\Big(\frac{1}{\epsilon}(\varphi(x)-c(x,y)) -\nabla U(x)\Big)\,dx,
\end{equation*}
where $Z^y$ is a normalization constant. Therefore, in view of showing that $\pi^y$ satisfies LSI, one is naturally led to try to obtain some concavity properties of the Schr\"odinger potential $\varphi$. When marginals are strictly log-concave one can indeed show, using the results of \cite{chewi2022entropic}, that $\pi^y(dx)$ is a strongly log-concave probability measure. Then, lower bounds on the LSI constant follow directly from the Bakry-\'Emery criterion. In the more delicate situation when only bounds on the integrated convexity and concavity profile of $U$ are available, which corresponds to the setting of Theorem \ref{thm:conv_prof_rigorous} the proof consists of two independent steps. The first step is to leverage the results of \cite{conforti2022weak,conforti2023convergence} to establish integrated convexity bounds on $-\varphi$ that are strong enough to imply that $\pi^y$ has an asymptotically positive convexity profile uniformly in $y$. The second step consists in establishing LSI for a probability measure with an asymptotically positive convexity profile, which is a question of independent interest. This is achieved at Theorem \ref{th:newLSI-kappa} below, thus completing the proof of Theorem \ref{thm:conv_prof_rigorous}.

\subsection{Proof of Theorem \ref{thm:suff_cond_new_LSI}} \label{se:newLSIproof-suff}
\begin{proof}
    We deduce from the basic properties of conditional expectation that the projected LSI \eqref{intro:newLSI} is equivalent to 
    \begin{equation}\label{eq:suff_cond_LSI_1}
        \begin{split}
        H(P\,|\,\pi)&+ \frac{\epsilon}{r}\E_{P}[|\E_{P}[\nabla_x R(X,Y)|X]|^2]\\
        &+\frac{\epsilon}{r}\E_{P}[|\E_{P}[\nabla_y R(X,Y)|Y]|^2] \leq \frac{\epsilon}{r}I(P\,|\,\pi), \qquad \forall P \in \Pi(\mu,\nu).
        \end{split}
    \end{equation}
Fix $P \in \Pi(\mu,\nu)$, and assume that $I(P\,|\,\pi) < \infty$ as otherwise there is nothing to prove.
Using Lemma \ref{le:conditioning-identity} in the case $c(x,y)=|x-y|^2/2$, along with the identites \eqref{intro:pi-condexp}, we have
     \begin{align*}
     \E_{P}[\nabla_xR(X,Y) |X] &= \epsilon^{-1}(\E_{P}[Y|X]-\E_{\pi}[Y|X] ), \\
        \E_{P}[\nabla_yR(X,Y) |Y] &= \epsilon^{-1}(\E_{P}[X|Y]-\E_{\pi}[X|Y] ).
     \end{align*}
     Thus, \eqref{eq:suff_cond_LSI_1} is equivalent to
        \begin{equation}\label{eq:suff_cond_LSI_2}
        \begin{split}
          H(P\,|\,\pi)&+ \frac{1}{  \epsilon r}\E_{P}[|\E_{P}[Y|X]-\E_{\pi}[Y|X]|^2]\\
          &+\frac{1}{\epsilon r}\E_{P}[|\E_{P}[X|Y]-\E_{\pi}[X|Y]|^2] \leq \frac{\epsilon}{r}I(P\,|\,\pi), \qquad \forall P\in\Pi(\mu,\nu).
        \end{split}
        \end{equation}
        We will bound separately the left and right hand side in the above inequality. Let $x\mapsto P^{x}(dy)$ denote a version of the conditional distribution of $Y$ given $X$ under $P$. Using $\pi,P\in\Pi(\mu,\nu)$ and the assumed LSI for the conditional measures $\pi^x$, we have
    \begin{equation*}
        \begin{split}
            \E_P[|\nabla_y R(X,Y)|^2]&= \int\Big( \int \Big|\nabla_y        \log \frac{dP^x}{d\pi^x}(y)\Big|^2 P^x(dy)\Big) \mu(dx)\\
            &\geq\kappa^{Y|X}\int H(P^x|\pi^x) \,\mu(dx) =  \kappa^{Y|X}H(P\,|\,\pi).
         \end{split}
    \end{equation*}
    Arguing in the same way,
    \begin{equation*}
         \E_P[|\nabla_x R(X,Y)|^2] \geq   \kappa^{X|Y}H(P\,|\,\pi).
    \end{equation*}
    Then, observing that $r>0$ under the current assumptions, we have 
    \begin{equation}\label{eq:suff_cond_LSI_3}
        \frac{\epsilon}{r}I(P\,|\,\pi) \geq \frac{\epsilon}{r}(\kappa^{Y|X}+\kappa^{X|Y})H(P\,|\,\pi).
    \end{equation}
To handle the terms on the left hand side of \eqref{eq:suff_cond_LSI_2} containing conditional expectations,  we first recall that since  $\pi^x$ satisfies $\lsi{\kappa^{Y|X}}$ uniformly on $x$, then the Otto-Villani Theorem \cite[Thm. 1]{otto2000generalization} gives that the following Talagrand inequality holds uniformly in $x \in \R^d$:
\begin{equation*}
\mathcal{W}_2^2(q,\pi^x) \leq \frac{4}{\kappa^{Y|X}}H(q|\pi^x), \quad \forall q\in\mathcal{P}(\R^d).
\end{equation*}
Then  the following inequalities hold $P$-a.s.:
\begin{equation}\label{eq:talagrand_conditional}
    |\E_{P}[Y|X]-\E_{\pi}[Y|X]|^2  \leq \mathcal{W}^2_2(P^X,\pi^X) \leq \frac{4}{\kappa^{Y|X}}H(P^X|\pi^X)
\end{equation}
Therefore, recalling that $r>0$, we arrive at
\begin{equation}
\begin{split}
  H(P\,|\,\pi)&+ \frac{1}{  \epsilon r}\E_{P}[|\E_{P}[Y|X]-\E_{\pi}[Y|X]|^2] +\frac{1}{\epsilon r}\E_{P}[|\E_{P}[X|Y]-\E_{\pi}[X|Y]|^2]\\
          &\leq \Big(1+\frac{4}{\epsilon r}\Big(\frac{1}{\kappa^{X|Y}}+\frac{1}{\kappa^{Y|X}}\Big)\Big)   H(P\,|\,\pi). 
 \end{split}\label{eq:suff_cond_LSI_4}
\end{equation}
The definition of $r$ in \eqref{intro:newLSI_r} is precisely what makes the right-hand sides of  \eqref{eq:suff_cond_LSI_3} and \eqref{eq:suff_cond_LSI_4} equal. This yields \eqref{eq:suff_cond_LSI_2}, thus proving at once that $\pi$ satisfies the projected LSI \eqref{intro:newLSI} with constant $r$ given by \eqref{intro:newLSI_r} and the exponential decay of the entropy \eqref{eq:exp_decay_entropy}.  Let us now proceed to prove \eqref{eq:exp_conv_drift}. We only prove the first inequality, the proof of the second one being conceptually identical. To do, we simply observe that since $P_t\in\Pi(\mu,\nu)$, combining \eqref{eq:talagrand_conditional} with \eqref{intro:pi-condexp} we obtain that $\mu-$a.e.
\begin{equation*}
 |\mathbb{E}[X_t-Y_t|X_t=x] - \nabla\varphi(x)|^2  \leq\frac{4}{\kappa^{Y|X}} H(P^{x}_0|\pi^x)
\end{equation*}
Integrating on both sides with respect to $\mu$ and using \eqref{eq:exp_decay_entropy}, the conclusion follows.
\end{proof}

\subsection{Proof of Corollary \ref{intro:corollary_log_conc}} \label{se:logconcave-proofs}
In order to prove Corollary \ref{intro:corollary_log_conc} we make use of  a result of \cite{chewi2022entropic} which derived lower bounds on the Hessian of Schr\"odinger potentials in the case of strongly log-concave marginals.

\begin{theorem}[Theorem 5 in \cite{chewi2022entropic}]\label{thm:chewi_pool}
Let $c(x,y)=|x-y|^2/2$. Assume that $U$ and $V$ are twice continuously differentiable and such that for all $x,y\in\R^d$ 
\begin{equation*}
\alpha_U \mathrm{I} \preceq    \nabla^2 U(x) \preceq \beta_U \mathrm{I}, \quad \alpha_V \mathrm{I} \preceq   \nabla^2 V(y) \preceq \beta_{V} \mathrm{I}
\end{equation*}
in semidefinite order. Then, if we define
\begin{equation}\label{eq:tilde_pot}
    \tilde{\varphi} = |\cdot|^2/2-\varphi(\cdot), \quad \tilde{\psi}=|\cdot|^2/2-\psi(\cdot)
\end{equation}
it holds for all $x,y \in \R^d$, in semidefinite order, that
\begin{equation*}
    \nabla^2 \tilde{\varphi}(x) \succeq \frac{1}{2} \Big(\sqrt{4\alpha_U/\beta_V+\epsilon^2\alpha^2_U}-\epsilon\alpha_U \Big)\mathrm{I}\quad \text{and} \quad
    \nabla^2 \tilde{\psi} (y)\succeq \frac{1}{2} \Big(\sqrt{4\alpha_V/\beta_U+\epsilon^2\alpha^2_V}-\epsilon\alpha_V \Big)\mathrm{I}.
\end{equation*}
\end{theorem}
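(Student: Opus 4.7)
The plan is to reformulate the target Hessian inequalities as variance lower bounds on the conditional measures $\pi^x,\pi^y$ via Proposition \ref{pr:derivatives-potentials}, and then close a self-improving feedback loop between these variances using the Brascamp--Lieb Poincar\'e inequality (upper bound on covariance) and a Cram\'er--Rao-type lower bound on covariance. I focus on the bound for $\tilde\varphi$; the one for $\tilde\psi$ follows by swapping the roles of $(U,\alpha_U,\beta_U,\tilde\varphi)$ and $(V,\alpha_V,\beta_V,\tilde\psi)$. Specialising the identities \eqref{intro:pi-condexp2} to $c(x,y)=|x-y|^2/2$ (so $\nabla_{xx}^2 c=I$ and $\nabla_x c=x-y$) yields
\[
\nabla^2\tilde\varphi(x)=\epsilon^{-1}\Cov_\pi(Y\,|\,X=x), \qquad \nabla^2\tilde\psi(y)=\epsilon^{-1}\Cov_\pi(X\,|\,Y=y),
\]
and direct substitution in \eqref{intro:potentials} shows $\pi^x(dy)\propto\exp\bigl(-\tilde\psi(y)/\epsilon + x\cdot y/\epsilon - V(y)\bigr)dy$, so the Hessian of $-\log\pi^x$ in $y$ equals $\epsilon^{-1}\nabla^2\tilde\psi(y)+\nabla^2 V(y)$; symmetrically for $\pi^y$.

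\emph{Feedback step.} Suppose uniformly $\nabla^2\tilde\varphi\succeq a\,I$ for some $a\ge 0$ (trivially true for $a=0$ by the first display). Then $-\log\pi^y$ is $(\epsilon^{-1}a+\alpha_U)$-strongly convex, so the Brascamp--Lieb variance inequality applied to $\pi^y$ gives $\Cov_\pi(X\,|\,Y=y)\preceq(\epsilon^{-1}a+\alpha_U)^{-1}I$, equivalently $\nabla^2\tilde\psi\preceq(a+\epsilon\alpha_U)^{-1}I$ uniformly in $y$. Combined with the hypothesis $\nabla^2 V\preceq\beta_V I$, the Hessian of $-\log\pi^x$ is bounded above, in the psd sense, by $[\epsilon^{-1}(a+\epsilon\alpha_U)^{-1}+\beta_V]I$. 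Now I invoke a Cram\'er--Rao-type lower bound $\Cov_\rho(Y)\succeq(\E_\rho[\nabla^2(-\log\rho)])^{-1}$, valid for sufficiently regular $\rho$, and apply it to $\pi^x$ to obtain
\[
\nabla^2\tilde\varphi(x)\succeq T(a)\,I, \qquad T(a):=\frac{a+\epsilon\alpha_U}{1+\epsilon\beta_V(a+\epsilon\alpha_U)}.
\]

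\emph{Fixed point.} The map $T:[0,\infty)\to(0,\infty)$ is strictly increasing; the equation $T(a)=a$ rearranges to $a^2+\epsilon\alpha_U a-\alpha_U/\beta_V=0$, whose unique nonnegative root is $a^*=\tfrac12\bigl(\sqrt{\epsilon^2\alpha_U^2+4\alpha_U/\beta_V}-\epsilon\alpha_U\bigr)$, which is exactly the announced constant. Moreover $T(a)>a$ for $0\le a<a^*$ and $T(a)<a$ for $a>a^*$. Setting $a_0:=\inf_x\lambda_{\min}(\nabla^2\tilde\varphi(x))\in[0,\infty)$, the feedback step gives $a_0\ge T(a_0)$; if $a_0<a^*$ this would force $T(a_0)>a_0$, a contradiction. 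Hence $a_0\ge a^*$, proving the bound on $\nabla^2\tilde\varphi$.

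\emph{Main obstacle.} The delicate point is the Cram\'er--Rao lower bound, which is less canonical than its Brascamp--Lieb partner: it relies on the identity $\E_\rho[\nabla^2(-\log\rho)]=\E_\rho[\nabla\log\rho\otimes\nabla\log\rho]$ (integration by parts, requiring enough decay of $\rho$) combined with the matrix Cauchy--Schwarz inequality $\Cov(Y)\succeq\Cov(Y,s)\Cov(s)^{-1}\Cov(s,Y)$ for the score $s=\nabla\log\rho$ and the identity $\E_\rho[Y\otimes s]=-I$. These ingredients are available for $\pi^x$ because Lemma \ref{le:momentbounds}(4) gives $C^1$ potentials with H\"older derivatives and the Brascamp--Lieb step a posteriori implies strong log-concavity of $\pi^x$, ensuring the Gaussian-type tails that make all boundary terms vanish.
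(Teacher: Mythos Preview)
The paper does not prove Theorem~\ref{thm:chewi_pool}; it is quoted verbatim from \cite{chewi2022entropic} and used as a black box in the proof of Corollary~\ref{intro:corollary_log_conc}. There is therefore no proof in the paper to compare your proposal against.

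That said, your argument is correct and is essentially the proof given in \cite{chewi2022entropic}. The key ingredients---the covariance identities $\nabla^2\tilde\varphi(x)=\epsilon^{-1}\Cov_\pi(Y\,|\,X=x)$ from Proposition~\ref{pr:derivatives-potentials}, the Brascamp--Lieb upper bound on the conditional covariance, and the Cram\'er--Rao lower bound---are exactly those used there, and the resulting fixed-point equation $a^2+\epsilon\alpha_U a-\alpha_U/\beta_V=0$ with its unique nonnegative root matches the announced constant. One small simplification in your ``Main obstacle'' paragraph: you do not need the Brascamp--Lieb step to justify the tail decay of $\pi^x$. Since $\nabla^2\tilde\psi\succeq 0$ holds trivially (it is $\epsilon^{-1}$ times a covariance) and $\nabla^2 V\succeq\alpha_V I$ by hypothesis, the measure $\pi^x$ is $\alpha_V$-strongly log-concave from the outset, which already supplies the Gaussian tails needed for the integration by parts in the Cram\'er--Rao identity.
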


\begin{proof}[Proof of Corollary \ref{intro:corollary_log_conc}]
By the formula \eqref{intro:potentials}, a version of the regular conditional distribution $y\mapsto \pi^{y}$ is given by
    \begin{align*}
        \pi^y(dx)&= \frac{1}{Z^y}\exp\Big(\frac{\varphi(x)-|x|^2/2+x\cdot y}{\epsilon}-\nabla U(x)\Big)dx \\
        	&= \frac{1}{Z^y}\exp\Big(\frac{x\cdot y - \tilde{\varphi}(x)}{\epsilon}-\nabla U(x)\Big)dx,
    \end{align*}
    where $Z^y$ is a normalization constant.
    By Theorem \ref{thm:chewi_pool}, we have
    \begin{equation*}
    -\nabla_x^2\log \pi^y(x) = \epsilon^{-1}\nabla^2\tilde\varphi(x) + \nabla^2 U(x) \ge  (\kappa^{X|Y}/2)\,\mathrm{I}
    \end{equation*}
    in semidefinite order,
where $\kappa^{X|Y}$ was defined in \eqref{def:LSI-logco-kappa}.
    We deduce from the Bakry-\'Emery sufficient condition for LSI that $\pi^{y}$ satisfies $\lsi{\kappa^{X|Y}}$ for each $y$.
    Arguing in the same way, we obtain that $\pi^{x}$ satisfies $\lsi{\kappa^{Y|X}}$ uniformly in $x$.
    It remains to use the assumption \eqref{eq:log_conc_suff_cond}  to prove that the assumption $(\kappa^{X|Y}\kappa^{Y|X})^{1/2}\epsilon > 2$ of Theorem \ref{thm:suff_cond_new_LSI} is satisfied.
    It is equivalent to show that there exists $\theta>0$ such that 
    \begin{equation*}
        \epsilon\kappa^{X|Y} > 2\theta \quad \text{and} \quad \epsilon\kappa^{Y|X}> 2/\theta.
    \end{equation*}
    Noting that $x=\epsilon\kappa^{X|Y}/2$ (resp.\ $y=\epsilon\kappa^{Y|X}/2$) is the unique (positive) root of the increasing function $\R_+ \ni x \mapsto \alpha_U^{-1}x -\beta_V^{-1}x^{-1}-\epsilon$ (resp.\ $y \mapsto \alpha_V^{-1}y -\beta_U^{-1}y^{-1}=\epsilon$), it is equivalent to find $\theta > 0$ such that
    \begin{equation*}
        \epsilon > \theta\alpha^{-1}_U - \theta^{-1}\beta^{-1}_V  \quad \text{and} \quad 
        \epsilon > \theta^{-1}\alpha^{-1}_V - \theta\beta^{-1}_U.
    \end{equation*}
    The existence of such a $\theta > 0$ is equivalent to the inequality
    \begin{equation*}
        \epsilon>\inf_{\theta>0} \max\{\theta\alpha^{-1}_U - \theta^{-1}\beta^{-1}_V, \theta^{-1}\alpha^{-1}_V - \theta\beta^{-1}_U\}.
    \end{equation*}
    The infimum is attained when the two arguments of $\max\{\cdot,\cdot\}$ are equal, or 
    \begin{equation*}
        \theta^{\star}= \Big( \frac{\alpha^{-1}_V +\beta^{-1}_V}{\alpha^{-1}_U +\beta^{-1}_U}\Big)^{1/2},
    \end{equation*}
    and the value of the infimum is 
    \begin{equation*}
        \frac{\alpha^{-1}_U\alpha^{-1}_V-\beta^{-1}_U\beta^{-1}_V}{(\alpha^{-1}_U+\beta^{-1}_U)^{1/2}(\alpha^{-1}_V+\beta^{-1}_V)^{1/2}}.
    \end{equation*}
    This shows that \eqref{eq:log_conc_suff_cond} is equivalent to $(\kappa^{X|Y}\kappa^{Y|X})^{1/2}\epsilon > 2$, completing the proof.
    \end{proof}

\subsection{Asymptotically log-concave marginals}\label{sec:proof_conc_prof}
The main result of this section is Theorem \ref{thm:conv_prof_rigorous} below, which is a more detailed version of Theorem \ref{thm:conv_prof_informal}.

\subsubsection{The convexity profile of Schr\"odinger potentials} 
In this section we illustrate how to translate lower bounds on the integrated convexity profile of $U$ and $V$ into lower bounds on the integrated convexity profile of a modified version $(\bar{\varphi},\bar{\psi})$ of the potentials $(\varphi,\psi)$, namely 
\begin{equation}\label{def:bar_potentials}
    \bar\varphi = -\epsilon^{-1}\varphi+U, \quad \bar\psi = -\epsilon^{-1}\psi+V.
\end{equation}
From now on, we shall refer to $\bar\varphi,\bar\psi$ as to Schr\"odinger potentials. Integrated convexity estimates for Schr\"odinger potentials have been recently established in \cite{conforti2022weak,conforti2023convergence}; they constitute the backbone of the proof of Theorem \ref{thm:conv_prof_rigorous} and we report them at Theorem \ref{gio2023:thm}, following their notation along the way. To prepare for this result, we begin by introducing two sets of functions, $\mathcal{G}$ and $\hat{\mathcal{G}}$ below, that we will need in order to state our assumptions on $U$ and $V$.
  \begin{align}
   \mathcal{G}&=\left\{g\in\mathcal{C}^2((0,\infty),\,\R_+)\, :\, r\mapsto r^{1/2}\,g(r^{1/2})\text{ is non-decreasing and concave,}\quad \lim_{r\downarrow0}r\,g(r)=0\right\}, \\
\hat{\mathcal{G}}&:=\biggl\{g\in\mathcal{G} \ \ \text{ bounded such that } g' \ge 0 \text{ and } 2g{''}+g\,g{'}\leq 0 \biggr\}\,.
\end{align}
Note that every function $\hat{g} \in \hat{\mathcal{G}}$ is concave, because $2g'' \le -gg' \le 0$.
Moreover, we introduce for a given potential $W\in\mathcal{C}^1(\R^d,\R)$ the function $\ell_W$, that quantifies integrated concavity properties of $W$. Its definition is an obvious adaptation of that of $\kappa_{W}$.
\begin{equation*}
\ell_W(r):= \sup\biggl\{\frac{\langle\nabla W(x)-\nabla W(y),\,x-y\rangle}{|x-y|^2}:\quad |x-y|=r\biggr\}\,.
\end{equation*}
Throughout this section, we make the following set of assumptions.

\begin{assumption} \label{ass:conv_prof_rig}
{\ }
\begin{enumerate}[(i)]
\item There exist $\alpha_U\in(0,\infty)$, $\hat g_U\in\hat{\mathcal{G}}$, $\beta_V\in(0,\infty]$, and $g_V\in\mathcal{G}$ such that for all $r>0$
    \begin{equation}\label{A:kappahat}
    \kappa_{U}(r)\geq \alpha_U-r^{-1}\,\hat g _{U}(r)\quad\text{ and }\quad \ell_{V}(r)\leq \beta_V+r^{-1}\,g_V(r)\,.
           \end{equation}
           Moreover, the (concave) function $\hat{g}_U$ admits finite right-derivative at $0$, denoted $\hat{g}_U'(0)$.
\item There exist $\alpha_V\in(0,\infty)$, $\hat g_V\in\hat{\mathcal{G}}$, $\beta_U\in(0,\infty]$, and $g_U\in\mathcal{G}$ such that for all $r>0$
    \begin{equation*} 
    \kappa_{V}(r)\geq \alpha_V-r^{-1}\,\hat g _{V}(r)\quad\text{ and }\quad \ell_{U}(r)\leq \beta_U+r^{-1}\,g_U(r)\,.
    \end{equation*}
    Moreover, the (concave) function $\hat{g}_V$ admits finite right-derivative at $0$, denoted $\hat{g}_V'(0)$.
\end{enumerate}
\end{assumption}
Note that when $\beta_V=\infty$ the assumption on $\ell_V$ in \eqref{A:kappahat} trivializes and is always satisfied (and analogously for $U$ in place of $V$). However, if $\beta_V$ is finite, we will obtain sharper lower bounds on the convexity profile of Schr\"odinger potentials.
At this stage, one might wonder why we state our assumptions in terms of the sets $\mathcal{G}$ and $\hat{\mathcal{G}}$ instead of   the more easily understandable form \eqref{def:as_conv_prof}. To address this, we first note that Assumption \ref{ass:conv_prof_rig} is no less general than \eqref{def:as_conv_prof}:

\begin{proposition}[Prop 5.1 in \cite{conforti2022weak}]
    \label{prop:gen_ass}
Assume that $W$ satisfies \eqref{def:as_conv_prof} for some $\alpha_W>0,L_W,R_W\geq0$.
Then 
\begin{equation*}
\kappa_W(r) \geq \alpha_W-r^{-1}\hat{g}_{\hat{L}}(r) \quad \forall r>0,
\end{equation*}
where for any $L\geq0$ the function $\hat{g}_L\in\hat{\mathcal{G}}$ given by $\hat{g}_L(r) = 2\sqrt{L}\tanh(r\sqrt{L} )$, 
and $\hat{L}$ is given by 
\begin{equation*} 
\hat{L}:=\inf\{L:R^{-1}_W\hat{g}_{L}(R_W)\geq L_W\} \ \text{ if } R_W > 0, \text{ and } \hat{L}=0  \ \text{ if } R_W=0.
\end{equation*}
\end{proposition}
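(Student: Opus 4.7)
The plan is to reduce the claimed inequality $\kappa_W(r) \geq \alpha_W - r^{-1}\hat g_{\hat L}(r)$ to two elementary facts: a monotonicity property of $r\mapsto r^{-1}\hat g_L(r)$ in $r$, and the defining property of $\hat L$. Since the hypothesis \eqref{def:as_conv_prof} is piecewise on $\{r\le R_W\}$ and $\{r>R_W\}$, the proof naturally splits along these two regions.

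First I would record the shape of $\hat g_L$. Since $\hat g_L(r) = 2\sqrt{L}\tanh(r\sqrt{L})$, the map $r \mapsto r^{-1}\hat g_L(r) = 2\sqrt{L}\,\tanh(r\sqrt L)/r$ is strictly decreasing on $(0,\infty)$, because $x\mapsto \tanh(x)/x$ is strictly decreasing on $(0,\infty)$ (a standard consequence of the concavity of $\tanh$ on $[0,\infty)$ with $\tanh(0)=0$). A convenient reformulation is that $r^{-1}\hat g_L(r)$ decreases from $\lim_{r\downarrow 0} r^{-1}\hat g_L(r) = 2L$ to $\lim_{r\to\infty} r^{-1}\hat g_L(r) = 0$. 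In particular $r^{-1}\hat g_L(r)\ge 0$ for all $r>0$.

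Next I would analyze $\hat L$ when $R_W>0$. The map $L\mapsto R_W^{-1}\hat g_L(R_W) = 2R_W^{-1}\sqrt L\,\tanh(R_W\sqrt L)$ is continuous and nondecreasing on $[0,\infty)$, vanishes at $L=0$, and tends to $+\infty$ as $L\to\infty$. Consequently the set $\{L:R_W^{-1}\hat g_L(R_W)\ge L_W\}$ is nonempty, and by continuity the infimum $\hat L$ is attained, so $R_W^{-1}\hat g_{\hat L}(R_W) \ge L_W$.

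Finally I would assemble the case analysis. If $r>R_W$, the hypothesis gives $\kappa_W(r)\ge \alpha_W\ge \alpha_W - r^{-1}\hat g_{\hat L}(r)$, the last step using nonnegativity of $\hat g_{\hat L}$. If $0<r\le R_W$ with $R_W>0$, the monotonicity from the first step yields $r^{-1}\hat g_{\hat L}(r) \ge R_W^{-1}\hat g_{\hat L}(R_W)\ge L_W$, whence $\alpha_W - r^{-1}\hat g_{\hat L}(r)\le \alpha_W - L_W\le \kappa_W(r)$ by hypothesis. If $R_W=0$, then $\hat L=0$ and $\hat g_0\equiv 0$, so the asserted lower bound reduces to $\kappa_W(r)\ge \alpha_W$, which is precisely the assumption for $r>R_W=0$. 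The only mild technicalities are the elementary monotonicity of $\tanh(x)/x$ and the continuity argument for the attainment of $\hat L$; there is no substantive obstacle.
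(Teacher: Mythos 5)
Your argument is correct. Note that the paper itself gives no proof of this proposition --- it is quoted verbatim from Proposition 5.1 of the cited reference \cite{conforti2022weak} --- so there is nothing to compare against; but your reduction to (a) the monotonicity of $r\mapsto\tanh(r\sqrt{L})/r$ and (b) the attainment of the infimum defining $\hat L$, followed by the two-region case analysis, is exactly the natural elementary route, and every step checks out (including the degenerate case $R_W=0$). The only item you leave unverified is the assertion, embedded in the statement, that $\hat g_L\in\hat{\mathcal G}$; this is a routine computation ($g'=2L\,\mathrm{sech}^2(r\sqrt L)\ge 0$ and $2g''+gg'=-4L^{3/2}\mathrm{sech}^2(r\sqrt L)\tanh(r\sqrt L)\le 0$, plus concavity of $r\mapsto r^{1/2}\hat g_L(r^{1/2})$), and since that membership is needed later when $\hat g_U,\hat g_V$ are fed into Assumption \ref{ass:conv_prof_rig} and Theorem \ref{thm:HJB}, it would be worth recording explicitly.
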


However, the main reason for introducing $\mathcal{G}$ and $\hat{\mathcal{G}}$ is that any element $\hat{g}\in\hat{\mathcal{G}}$ defines a set of functions that is stable under the action of the mapping $h\mapsto -\log S_{\epsilon}e^{-h}$, where $(S_t)_{t\ge 0}$ is the semigroup generated by a standard Brownian motion on $\R^d$, that is to say
\begin{equation}
S_tf(x) = \frac{1}{{(2\pi t)}^{d/2}} \int e^{-f(y) - |y-x|^2/2t } dy. \label{def:BrownianSemigroup}
\end{equation}

\begin{theorem}[Theorem 2.1 in \cite{conforti2022weak}]\label{thm:HJB}
    For any function $\hat{g}\in\hat{\mathcal{G}}$, consider the class
    \begin{equation}\label{def:stable_set}
    \mathcal{F}_{\hat{g}}:=\{h\in\mathcal{C}^1(\R^d)\colon\kappa_h(r)\geq- r^{-1}\,\hat{g}(r)\quad\forall\,r>0\}.
    \end{equation}
    Then we have
    \begin{equation}\label{eq:invariance}
    h\in\mathcal{F}_{\hat{g}}\Rightarrow -\log S_te^{-h}\in\mathcal{F}_{\hat{g}}\quad\forall  t \ge 0.
    \end{equation}
\end{theorem}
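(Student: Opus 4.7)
The plan is to work with the Hamilton--Jacobi--Bellman (HJB) equation
\begin{equation*}
\partial_t v_t \;=\; \tfrac12\Delta v_t \;-\; \tfrac12|\nabla v_t|^2,\qquad v_0=h,
\end{equation*}
satisfied by $v_t:=-\log S_te^{-h}$, which follows by the chain rule from $\partial_t(S_te^{-h})=\tfrac12\Delta(S_te^{-h})$. Under the hypothesis $\hat g\in\hat{\mathcal G}$ (in particular $\hat g$ bounded and $\hat g'(0)$ finite) together with $h\in\mathcal F_{\hat g}$, one has $\nabla h$ globally Lipschitz, and standard Bernstein-type propagation estimates along the HJB flow let us assume $v_t\in \mathcal C^\infty(\R^d)$ with uniformly controlled derivatives on every compact time interval (if needed, via mollification of $h$). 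The invariance \eqref{eq:invariance} then amounts to propagating the nonnegativity of the defect
\begin{equation*}
\Psi(t,x,y)\;:=\;\langle\nabla v_t(x)-\nabla v_t(y),\,x-y\rangle\;+\;|x-y|\,\hat g(|x-y|)
\end{equation*}
under this PDE flow, given $\Psi(0,\cdot,\cdot)\ge 0$.

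The strategy is a parabolic maximum principle on $\Psi$. Assume for contradiction that invariance fails; then there exists a first time $t_*>0$ and a pair $(x_*,y_*)$ with $x_*\neq y_*$ at which $\Psi(t_*,x_*,y_*)=0$ and $\Psi(t_*,\cdot,\cdot)$ attains an interior minimum in $(x,y)$. Existence of this contact point on the non-compact domain $\R^d\times\R^d$ is secured by a penalization $+\eta(|x|^2+|y|^2)$, combined with the Lipschitz bound on $\nabla v_t$ and the boundedness of $\hat g$ for coercivity, and then letting $\eta\downarrow 0$. The finiteness of $\hat g'(0)$, together with local $\mathcal C^2$ bounds on $v_t$, forces $\Psi$ to be strictly positive near the diagonal, so the contact point stays bounded away from $x=y$.

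At the contact point, writing $r:=|x_*-y_*|$ and $e:=(x_*-y_*)/r$, the first-order conditions $\nabla_x\Psi=\nabla_y\Psi=0$ combined with $\Psi=0$ collapse to the clean identities
\begin{equation*}
(\nabla^2 v_{t_*}(x_*)-\nabla^2 v_{t_*}(y_*))\,e=0,\qquad (\nabla v_{t_*}(x_*)-\nabla v_{t_*}(y_*))\cdot e=-\hat g(r),\qquad e^\top\nabla^2 v_{t_*}(x_*)\,e=-\hat g'(r).
\end{equation*}
Differentiating the HJB equation in space yields $\partial_t\nabla v_t=\tfrac12\Delta\nabla v_t-\nabla^2 v_t\,\nabla v_t$, and substituting the identities above reduces $\partial_t\Psi$ at the contact point to
\begin{equation*}
\partial_t\Psi\big|_{(t_*,x_*,y_*)}=\tfrac{r}{2}\bigl[\Delta\partial_e v_{t_*}(x_*)-\Delta\partial_e v_{t_*}(y_*)\bigr]-r\,\hat g(r)\hat g'(r)-r\,(\nabla v_{t_*}(x_*)-\nabla v_{t_*}(y_*))_\perp\cdot(\nabla^2 v_{t_*}(x_*)\,e)_\perp,
\end{equation*}
where $\perp$ denotes orthogonal projection onto $e^\perp$. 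The second-order condition $\mathrm{Hess}_{x,y}\Psi\succeq 0$ at the minimum, tested against translation directions $(\xi,\xi)$ for an orthonormal basis and against mixed $e$/$e^\perp$ directions, provides the right lower bound on the Laplacian difference and controls the orthogonal cross-term; assembling these contributions one obtains
\begin{equation*}
\partial_t\Psi\big|_{(t_*,x_*,y_*)}\;\ge\;-r\bigl(2\hat g''(r)+\hat g(r)\hat g'(r)\bigr)\;\ge\;0,
\end{equation*}
the last inequality being precisely the structural condition defining $\hat{\mathcal G}$. This contradicts $t_*$ being a first downward-crossing time of $\Psi$ and closes the argument. The hard part is the last step: extracting the correct lower bound on the Laplacian-difference term and the orthogonal cross-term from the full Hessian PSD condition, and matching them with the HJB-induced $\hat g\hat g'$ contribution so that the inequality $2\hat g''+\hat g\hat g'\le 0$ appears sharply.
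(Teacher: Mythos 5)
First, a point of reference: the paper does not prove this statement at all. It is imported verbatim from \cite{conforti2022weak} (with the extension from $\hat g=\hat g_L$ to general $\hat g\in\hat{\mathcal{G}}$ delegated to \cite{conforti2023convergence}), and the proof there is probabilistic: a coupling-by-reflection analysis of the stochastic representation of $\nabla(-\log S_te^{-h})$, in which the condition $2\hat g''+\hat g\,\hat g'\le 0$ enters as the differential inequality governing the coupled distance process. Your two-point maximum-principle route is therefore genuinely different from the source's. Its skeleton is sound: the HJB equation for $v_t$, the first-order conditions at the contact point, and the resulting expression for $\partial_t\Psi$ are all correct. Moreover, the step you flag as ``the hard part'' does close, though not quite as you describe: the orthogonal cross term is killed by the \emph{first}-order condition rather than by mixed second-order tests (projecting $\nabla_x\Psi=0$ onto $e^\perp$ gives $r(\nabla^2v_{t_*}(x_*)e)_\perp=-(\nabla v_{t_*}(x_*)-\nabla v_{t_*}(y_*))_\perp$, so that term contributes $+|(\nabla v_{t_*}(x_*)-\nabla v_{t_*}(y_*))_\perp|^2\ge0$); the $\hat g''$ term comes from the second-order condition in the single direction $(e,-e)$, and the transverse translations $(\xi,\xi)$, $\xi\perp e$, give a nonnegative contribution to the Laplacian difference. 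Assembling these yields exactly $\partial_t\Psi\ge -r(2\hat g''+\hat g\,\hat g')\ge 0$.

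The genuine gaps are in the infrastructure that legitimizes the contact-point argument on $\R^d$. (i) Your claim that $h\in\mathcal F_{\hat g}$ forces $\nabla h$ to be globally Lipschitz is false: $\kappa_h(r)\ge -r^{-1}\hat g(r)$ is only a one-sided monotonicity bound (every convex $\mathcal C^1$ function belongs to $\mathcal F_{\hat g}$), so the ``uniformly controlled derivatives'' and the coercivity of your penalization are unjustified. The only two-sided information available a priori is $\nabla^2 v_t\succeq -t^{-1}\mathrm{I}$, which permits $\Psi\sim -t^{-1}|x-y|^2$ at infinity; the penalty $\eta(|x|^2+|y|^2)$ does not dominate this for small $\eta$, so attainment of the minimum is not established. (ii) The claim that $\Psi(t_*,\cdot,\cdot)$ is strictly positive near the diagonal amounts to $\nabla^2 v_{t_*}\succ -\hat g'(0)\mathrm{I}$, which is precisely the $r\downarrow0$ limit of the conclusion being proved; as written this is circular, and since $\Psi\equiv0$ on $\{x=y\}$, keeping the penalized contact points off the diagonal needs a real argument (note also that the theorem as stated does not assume $\hat g'(0)<\infty$). (iii) The first-crossing-time framing only yields $\partial_t\Psi\le0$ at the contact point, so matching it against $\partial_t\Psi\ge0$ gives no contradiction without a strictness perturbation such as replacing $\Psi$ by $\Psi+\delta(1+t)$. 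These are exactly the difficulties the probabilistic proof in the cited reference is designed to avoid, so to make your PDE route rigorous you would need to supply the compactness, regularity, and diagonal-exclusion arguments in full.
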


Actually, Theorem 2.1 of \cite{conforti2022weak} deals only with the $\hat{g}=\hat{g}_L$ of the form in Proposition \ref{prop:gen_ass}, but we refer to Theorem 14 of \cite{conforti2023convergence} and the subsequent paragraph for an explanation of the straightforward extension to general $\hat{g} \in \hat{\mathcal{G}}$. In the special case $\hat{g} \equiv 0$, \eqref{eq:invariance} simplifies to the well known fact that $-\log S_t e^{-h}$ is convex when $h$ is convex, which can be deduced quickly from the Pr\'ekopa-Leindler inequality. Theorem \ref{thm:HJB} is a significant generalization of this  fact. 

Using the modified potentials from \eqref{def:bar_potentials},
 the Schr\"odinger equations \eqref{eq:Schrsyst} can be equivalently written as 
\begin{equation*}
\bar\varphi = U + \log S_{\epsilon}\exp(-\bar\psi), \qquad     \bar\psi = V+ \log S_{\epsilon}\exp(-\bar\varphi).
\end{equation*}
This explains why Theorem \ref{thm:HJB} is relevant to the goal of finding lower bounds on the integrated convexity profiles $\kappa_{\bar\varphi},\kappa_{\bar\psi}$. This goal is achieved at Theorem \ref{gio2023:thm} below. In turn, observing that $\pi$ rewrites as 
\begin{equation}\label{eq:static_schr_br_dec}
\pi(dx\,dy)\propto \exp(-\bar\varphi(x)-\bar\psi(y) - |x-y|^2/2\epsilon )\,dx\,dy,
\end{equation}
lower bounds on $\kappa_{\bar\varphi},\kappa_{\bar\psi}$ inform us about the log-concavity properties of $\pi$ and its conditional distributions, which is exactly what we need to understand for showing that the conditional distributions of $\pi$ satisfy a logarithmic Sobolev inequality.

Let us now proceed to state the announced lower bounds on the profiles $\kappa_{\varphi},\kappa_{\psi}$. To do so, we need to introduce some more notation. Given $\beta\in(0,\infty]$, $\alpha>-\epsilon^{-1}$, $g\in\mathcal{G}$, and $\hat g\in\hat{\mathcal{G}}$, we define the functions $F_\beta^{g,\hat g}(\alpha,\cdot)$, $ G_\beta^{g,\hat g}(\alpha,\cdot):\R_+\longrightarrow\R_+$ as follows
    \begin{equation}\label{def:F:G}
    \begin{aligned}
    F_\beta^{g,\hat g}(\alpha,t)=&\,\beta \, +\frac{t}{\epsilon(1+\epsilon\alpha)}+t^{1/2}\,g(t^{1/2})+\frac{t^{1/2}\,\hat g(t^{1/2})}{(1+\epsilon\alpha)^2}\,,\\
    G_\beta^{g,\hat g}(\alpha,u)=&\,\inf\{t\geq 0\,\colon\, F_\beta^{g,\hat g}(\alpha,t)\geq u\}\,,
\end{aligned}
\end{equation}  
with the convention $F_\beta^{g,\hat g}(\alpha,t)\equiv\infty, G_\beta^{g,\hat g}(\alpha,u)\equiv0$ whenever $\beta=\infty$. 

\begin{theorem}[Extracted from Theorem 11 in \cite{conforti2023convergence}, see also Thm. 1.1. in \cite{conforti2022weak} ]\label{gio2023:thm}
Consider the Schr\"odinger potentials $\bar\varphi,\bar\psi$ defined at \eqref{def:bar_potentials}.
\begin{enumerate}
   \item Let Assumption \ref{ass:conv_prof_rig} (i) hold. Then we have  for all $r>0$
\begin{equation}\label{kappa:psi0}
\kappa_{\bar\varphi}(r)\geq \alpha_{\bar\varphi}-r^{-1}\hat g _{U}(r) \,,
\end{equation}
where $\alpha_{\bar\varphi}\in(\alpha_U-\epsilon^{-1},\alpha_U-\epsilon^{-1}+(\beta_V\,\epsilon^2)^{-1}]$ is the smallest solution to the fixed point equation
\begin{equation}\label{def:sistem:alpha:psi}
\alpha= \alpha_U-\epsilon^{-1}+(2 \epsilon^2)^{-1}G_{\beta_V}^{g_V,\hat g _{U}}(\alpha,2).
\end{equation}
\item Let Assumption \ref{ass:conv_prof_rig} (ii) hold. Then we have for all $r>0$
\begin{equation}\label{eq_bound_gio_teo:varphi}
\kappa_{\bar\psi}(r)\geq \alpha_{\bar\psi}-r^{-1}\hat g _{V}(r)\,,
\end{equation}
where $\alpha_{\bar\psi}\in(\alpha_V-\epsilon^{-1},\alpha_V-\epsilon^{-1}+(\beta_U\,\epsilon^2)^{-1}]$ is the smallest solution of the fixed point equation
\begin{equation}\label{def:sistem:alpha:varphi}
\alpha= \alpha_V-\epsilon^{-1}+(2 \epsilon^2)^{-1} G_{\beta_U}^{g_U,\hat g _{V}}(\alpha,2).
\end{equation}
\end{enumerate}
\end{theorem}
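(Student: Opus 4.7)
The plan is to derive coupled bounds on $\kappa_{\bar\varphi}$ and $\ell_{\bar\psi}$ (and symmetrically for $\bar\psi,\bar\varphi$) by iterating the modified Schr\"odinger system $\bar\varphi = U - T\bar\psi$, $\bar\psi = V - T\bar\varphi$, where $Th := -\log S_\epsilon e^{-h}$, and then to solve a self-consistency relation for the best exponent. Because $\kappa$ is super-additive under sums of functions and $\ell$ is sub-additive (both immediate from the definitions), these identities yield the pointwise estimates $\kappa_{\bar\varphi}(r) \ge \kappa_U(r) - \ell_{T\bar\psi}(r)$ and $\ell_{\bar\psi}(r) \le \ell_V(r) - \kappa_{T\bar\varphi}(r)$, so the task reduces to tracking how the operator $T$ transforms the profiles $\kappa_h$ and $\ell_h$.

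The lower bound on $\kappa_{Th}$ is essentially the content of Theorem \ref{thm:HJB}: if $\kappa_h(r) \ge \alpha - r^{-1}\hat g(r)$ with $\hat g \in \hat{\mathcal{G}}$, one absorbs the $\alpha|\cdot|^2/2$ part of $h$ into a Gaussian rescaling of $S_\epsilon$ (reducing the effective variance by a factor $(1+\epsilon\alpha)^{-1}$ and producing a pointwise quadratic shift), then applies the invariance of $\mathcal{F}_{\hat g}$ under $T$ to the residual; a direct Brascamp--Lieb computation gives the clean estimate $\kappa_{Th}(r) \ge \alpha/(1+\epsilon\alpha) - r^{-1}\hat g(r) = \epsilon^{-1} - [\epsilon(1+\epsilon\alpha)]^{-1} - r^{-1}\hat g(r)$, which is the source of the shift $-\epsilon^{-1}$ appearing in \eqref{def:sistem:alpha:psi}. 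The matching upper bound on $\ell_{Th}$ does not follow from Theorem \ref{thm:HJB} alone and constitutes the core technical step carried out in \cite{conforti2022weak,conforti2023convergence}: one analyses the Hamilton--Jacobi--Bellman equation $u_t = \tfrac12\Delta u - \tfrac12|\nabla u|^2$ satisfied by $u(t,\cdot) := -\log S_te^{-h}$, derives an ODE for the evolution of an upper bound on $\ell_{u(t,\cdot)}(r)$ coupled to the concurrently evolving lower bound on $\kappa_{u(t,\cdot)}(r)$, and integrates from $t=0$ to $t=\epsilon$. The functional $F_\beta^{g,\hat g}(\alpha,\cdot)$ is designed to majorize the resulting ODE solution: the linear term $t/[\epsilon(1+\epsilon\alpha)]$ reflects the Gaussian rescaling above, $\beta$ is the initial value $\ell_h(0) \le \beta$, and the $g,\hat g$ terms absorb the non-quadratic corrections permitted by the classes $\mathcal{G}, \hat{\mathcal{G}}$.

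Substituting these two propagation estimates into the Schr\"odinger identities together with the hypotheses $\kappa_U(r) \ge \alpha_U - r^{-1}\hat g_U(r)$ and $\ell_V(r) \le \beta_V + r^{-1}g_V(r)$ yields a closed self-consistency inequality for the leading exponent in $\kappa_{\bar\varphi}$, which is exactly \eqref{def:sistem:alpha:psi}: the shape function $\hat g_U$ is preserved because $\hat{\mathcal{G}}$ is stable under $T$, while $G_{\beta_V}^{g_V,\hat g_U}$ appears as the generalised inverse of $F_{\beta_V}^{g_V,\hat g_U}$, solving for the value of $t=r^2$ at which the upper bound on $\ell_{T\bar\psi}$ hits the critical level $2$. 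The announced bracket $(\alpha_U - \epsilon^{-1}, \alpha_U - \epsilon^{-1} + (\beta_V\epsilon^2)^{-1}]$ for $\alpha_{\bar\varphi}$ and the existence of a smallest solution of \eqref{def:sistem:alpha:psi} then follow from continuity and monotonicity of $F_{\beta_V}^{g_V,\hat g_U}(\alpha,\cdot)$, combined with $F_{\beta_V}^{g_V,\hat g_U}(\alpha,0) = \beta_V < 2$ and the crude bound $G_{\beta_V}^{g_V,\hat g_U}(\alpha,2) \le (2-\beta_V)\epsilon(1+\epsilon\alpha)$ obtained by discarding the non-negative $g,\hat g$ corrections; the symmetric argument, exchanging the roles of $U$ and $V$, gives the statement for $\kappa_{\bar\psi}$. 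The hardest step is the quantitative HJB analysis of the upper bound on $\ell_{Th}$: unlike the $\kappa_{Th}$ side, which benefits from a Pr\'ekopa--Leindler-type variational structure, the concavity side has no such clean representation, and the precise functional form of $F_\beta^{g,\hat g}$ must be tuned so that the Schr\"odinger iteration does not accumulate spurious error and so that the fixed-point equation admits a solution in the stated range.
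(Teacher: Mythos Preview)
The paper does not supply its own proof of this theorem: it is quoted from Theorem~11 of \cite{conforti2023convergence} (with Theorem~1.1 of \cite{conforti2022weak} as an antecedent), and Section~\ref{sec:proof_conc_prof} merely records the statement together with the supporting definitions \eqref{def:F:G}. There is therefore no in-paper argument to compare against; your proposal is a reconstruction of the proof in those external references.

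As such a reconstruction, the outline is structurally faithful. You correctly identify the rewriting $\bar\varphi = U + \log S_\epsilon e^{-\bar\psi}$, $\bar\psi = V + \log S_\epsilon e^{-\bar\varphi}$, the super/sub-additivity of $\kappa$ and $\ell$ under sums, the Gaussian-rescaling trick that propagates $\kappa_h$ through $h \mapsto -\log S_\epsilon e^{-h}$ via Theorem~\ref{thm:HJB}, the fact that the matching upper bound on $\ell_{Th}$ is the genuinely hard step requiring the HJB analysis in the cited works, and the closure by a fixed-point argument. Two details deserve tightening. First, the formula you give for $\kappa_{Th}$ suppresses the rescaling of the argument of $\hat g$ induced by the change of variables; the factor $(1+\epsilon\alpha)^{-2}$ in front of $\hat g$ in \eqref{def:F:G} is a trace of exactly this, and without it the self-consistency equation does not close in the stated form. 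Second, your derivation of the bracket for $\alpha_{\bar\varphi}$ appeals to ``$F_{\beta_V}^{g_V,\hat g_U}(\alpha,0)=\beta_V<2$'', but nothing in Assumption~\ref{ass:conv_prof_rig} forces $\beta_V<2$, and your crude bound $G\le (2-\beta_V)\epsilon(1+\epsilon\alpha)$ does not recover the stated upper endpoint $\alpha_U-\epsilon^{-1}+(\beta_V\epsilon^2)^{-1}$; the argument in \cite{conforti2023convergence} establishes the bracket by a different route. These are refinements of a correct strategy rather than gaps in it.
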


\subsubsection{Logarithmic Sobolev inequality for conditional distributions via heat flow maps}
With preliminary results from \cite{conforti2022weak,conforti2023convergence} now summarized, we are ready to state and prove the main result of this section:

\begin{theorem}\label{thm:conv_prof_rigorous}
{\ }
    \begin{enumerate}
        \item  If Assumption \ref{ass:conv_prof_rig}(i) holds, then $\pi^y$ satisfies $\lsi{\kappa^{X|Y}}$  uniformly in $y\in \R^d$ with
        \begin{equation}\label{eq:conditional_kappa_1}
        \kappa^{X|Y} = 2\big(\alpha_{\bar\varphi}+\epsilon^{-1}\big)\exp\Big(-\frac{\hat{g}'_U(0)}{\alpha_{\bar\varphi}+\epsilon^{-1}}\Big) .
    \end{equation}
    In particular, $\kappa^{X|Y}$ depends only on $\alpha_U,\hat{g}_U,\beta_V,g_V$. 
        \item  If Assumption \ref{ass:conv_prof_rig}(ii) holds, then $\pi^x$ satisfies $\lsi{\kappa^{Y|X}}$ uniformly in $x\in \R^d$ with
        \begin{equation}\label{eq:conditional_kappa_2}
        \kappa^{Y|X} = 2\big(\alpha_{\bar\psi}+\epsilon^{-1}\big)\exp\Big(-\frac{\hat{g}'_V(0)}{\alpha_{\bar\psi}+\epsilon^{-1}}\Big).
    \end{equation}
     In particular, $\kappa^{Y|X}$ depends only on $\alpha_V,\hat{g}_V,\beta_U,g_U$. 
    \end{enumerate}
   In particular, if 
   \begin{equation}\label{eq:int_conv_prof_suff_cond}
       \epsilon > (\alpha_U\alpha_V)^{-1/2} \exp\Big( \frac{\hat{g}'_U(0)}{2\alpha_U} +\frac{\hat{g}'_V(0)}{2\alpha_V} \Big),
   \end{equation}
   then \eqref{intro:newLSI} holds for some positive constant $r>0$ given by \eqref{intro:newLSI_r}. Furthermore, the exponential $L^2$-convergence \eqref{eq:exp_conv_drift} of the conditional expectations $\mathbb{E}[X_t-Y_t\,|\,X_t]$ and $\mathbb{E}[Y_t-X_t\,|\,Y_t]$ to the gradients of the Schr\"odinger potentials also holds.
   \end{theorem}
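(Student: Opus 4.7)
Plan: The proof naturally decomposes into three stages, following the structure already telegraphed by the formulas in the statement and the discussion in the introduction. The first stage identifies the integrated convexity profile of the conditional densities, the second converts such a profile into an LSI constant via heat flow transport maps (the content of Theorem \ref{th:newLSI-kappa}), and the third verifies the sufficient condition of Theorem \ref{thm:suff_cond_new_LSI} and assembles the conclusion.

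Stage 1 (convexity of the conditional log-densities). Starting from the factorization \eqref{eq:static_schr_br_dec}, the conditional law $\pi^y$ has density proportional to $\exp(-W_y(x))$ with $W_y(x) := \bar\varphi(x) + |x-y|^2/(2\epsilon)$. Since the quadratic term has integrated convexity profile identically equal to $\epsilon^{-1}$ and the infimum defining $\kappa_{W_y}(r)$ is super-additive over sums, Theorem \ref{gio2023:thm}(1) yields, uniformly in $y \in \R^d$,
\[
\kappa_{W_y}(r) \;\geq\; (\alpha_{\bar\varphi}+\epsilon^{-1}) - r^{-1}\hat g_U(r), \qquad \forall\,r>0.
\]
An identical argument, using Theorem \ref{gio2023:thm}(2), gives the analogous bound for $\pi^x$ with constant $\alpha_{\bar\psi}+\epsilon^{-1}$ and profile defect $\hat g_V$.

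Stage 2 (from convexity profile to LSI). This is where the announced Theorem \ref{th:newLSI-kappa} is invoked: any probability density on $\R^d$ whose negative logarithm has an integrated convexity profile bounded below by $\alpha - r^{-1}\hat g(r)$ with $\hat g\in\hat{\mathcal{G}}$ and $\alpha>0$ is a Lipschitz push-forward of the standard Gaussian, and hence satisfies $\mathrm{LSI}$ with constant $2\alpha\exp(-\hat g'(0)/\alpha)$. Applying this to $\pi^y$ with $\alpha=\alpha_{\bar\varphi}+\epsilon^{-1}$ and $\hat g=\hat g_U$ gives exactly \eqref{eq:conditional_kappa_1}, and applying it to $\pi^x$ yields \eqref{eq:conditional_kappa_2}. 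Proving Theorem \ref{th:newLSI-kappa} is the genuinely substantive step and is the main obstacle of the whole argument: the plan is to follow the Kim--Milman heat flow construction, realizing the target density as the time-$1$ marginal of a diffusion and using Theorem \ref{thm:HJB} to propagate the integrated convexity profile of $-\log S_t e^{-h}$ along the flow (with $S_t$ the heat semigroup \eqref{def:BrownianSemigroup}), which in turn allows one to bound the gradient of the resulting transport map. Obtaining the sharp prefactor $2\alpha\exp(-\hat g'(0)/\alpha)$ is the delicate point and will require tracking carefully how the perturbation $\hat g$ is absorbed in the ODE governing the Jacobian of the map as $t \downarrow 0$.

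Stage 3 (verifying the sufficient condition and concluding). It remains to check the hypothesis \eqref{eq:exp_conf_suff_cond} of Theorem \ref{thm:suff_cond_new_LSI}. Since $\alpha_{\bar\varphi}+\epsilon^{-1}>\alpha_U$ by Theorem \ref{gio2023:thm}(1), and since the map $\alpha\mapsto 2\alpha\exp(-c/\alpha)$ is strictly increasing on $(0,\infty)$ for every $c\geq 0$ (as follows immediately from computing its derivative, noting $\hat g_U'(0)\geq 0$ because $\hat g_U$ is non-decreasing), we obtain $\kappa^{X|Y}>2\alpha_U\exp(-\hat g_U'(0)/\alpha_U)$, and symmetrically $\kappa^{Y|X}>2\alpha_V\exp(-\hat g_V'(0)/\alpha_V)$. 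Multiplying these bounds and taking square roots, the assumption \eqref{eq:int_conv_prof_suff_cond} rearranges exactly into $(\kappa^{X|Y}\kappa^{Y|X})^{1/2}\epsilon>2$. Theorem \ref{thm:suff_cond_new_LSI} then delivers the projected LSI \eqref{intro:newLSI} with the explicit rate \eqref{intro:newLSI_r}, the exponential entropy decay \eqref{eq:exp_decay_entropy}, and the $L^2$-convergence \eqref{eq:exp_conv_drift} of the conditional expectations to $\nabla\varphi$ and $\nabla\psi$, completing the proof.
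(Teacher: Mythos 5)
Your proposal is correct and follows essentially the same route as the paper: extract the integrated convexity profile of $-\log\pi^y$ from Theorem \ref{gio2023:thm} (the paper does this relative to the Gaussian, i.e.\ with $\alpha=\alpha_{\bar\varphi}+\epsilon^{-1}-1$, while you work relative to Lebesgue with $\alpha=\alpha_{\bar\varphi}+\epsilon^{-1}$ — an equivalent normalization), invoke the Kim--Milman/heat-flow-map result (Theorem \ref{th:newLSI-kappa}) to obtain the LSI constants \eqref{eq:conditional_kappa_1}--\eqref{eq:conditional_kappa_2}, and then use the monotonicity of $\alpha\mapsto 2\alpha e^{-c/\alpha}$ together with $\alpha_{\bar\varphi}+\epsilon^{-1}>\alpha_U$ to reduce \eqref{eq:int_conv_prof_suff_cond} to the hypothesis $(\kappa^{X|Y}\kappa^{Y|X})^{1/2}\epsilon>2$ of Theorem \ref{thm:suff_cond_new_LSI}. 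Your Stage 3 is in fact slightly more explicit than the paper's one-line justification of the same step.
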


Theorem \ref{thm:conv_prof_rigorous} will be deduced quickly using the preliminary results above in combination with the following result  of independent interest, which we state separately to emphasize that it has nothing to do with entropic optimal transport or the SDE studied in this paper. It shows that asymptotic positivity of the integrated convexity profile of the log-density of a probability measure is enough to establish that the measure is a Lipschitz image of a Gaussian. In the following, let $\gamma$ be the standard Gaussian measure on $\R^d$.

\begin{theorem} \label{th:newLSI-kappa}
Let $\alpha > -1$ and $\hat{g} \in \hat{\mathcal{G}}$. Assume $\hat{g}$ admits finite right-derivative at $0$, denoted $\hat{g}'(0)$. Let $\rho \in \P(\R^d)$ satisfy $\rho \ll \gamma$, and assume $\bar{h}:=-\log d\rho/d\gamma$ satisfies $\kappa_{\bar{h}}(r) \ge \alpha - r^{-1} \hat{g}(r)$ for all $r > 0$. Then there exists an $L$-Lipschitz map pushing forward $\gamma$ to $\rho$, where
\begin{equation}
L = (\alpha+1)^{-1/2}\exp\Big(\frac{\hat{g}'(0)}{2(\alpha+1)}\Big).
\end{equation}
In particular, $\rho$ satisfies $\lsi{2L^{-2}}$.
\end{theorem}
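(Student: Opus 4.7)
The plan is to construct an $L$-Lipschitz map $T:\R^d\to\R^d$ with $T_{\#}\gamma=\rho$, for $L$ as in the statement; the conclusion $\lsi{2L^{-2}}$ for $\rho$ then follows from the fact that $\gamma$ satisfies $\lsi{2}$ together with the stability of the LSI under Lipschitz pushforward with a factor $L^{-2}$. The map $T$ will be built via a heat flow map in the spirit of Kim-Milman \cite{kim2012generalization}.

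The first step is a pointwise Hessian bound on the regularized potentials $\bar h_t:=-\log S_te^{-\bar h}$. Since the hypothesis $\kappa_{\bar h}(r)\ge\alpha-r^{-1}\hat g(r)$ is equivalent to $\bar h-\tfrac{\alpha}{2}|\cdot|^2\in\mathcal F_{\hat g}$, Theorem \ref{thm:HJB} yields $-\log S_te^{-(\bar h-\alpha|\cdot|^2/2)}\in\mathcal F_{\hat g}$ for every $t>0$. Completing the square in the Gaussian convolution gives
\begin{equation*}
\bar h_t(x)=\frac{\alpha|x|^2}{2(1+\alpha t)}+\mathrm{const}-\log S_{t/(1+\alpha t)}e^{-(\bar h-\alpha|\cdot|^2/2)}\Big(\tfrac{x}{1+\alpha t}\Big),
\end{equation*}
and combining this with the pointwise consequence $\nabla^2 h\succeq-\hat g'(0)\,\mathrm I$ for any $C^2$ element $h\in\mathcal F_{\hat g}$ (obtained by letting $r\downarrow 0$ in the integrated bound, using $\hat g(0+)=0$ and the finiteness of $\hat g'(0)$), one obtains
\begin{equation*}
\nabla^2\bar h_t(x)\succeq\frac{(1+\alpha t)\alpha-\hat g'(0)}{(1+\alpha t)^2}\,\mathrm I.
\end{equation*}

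The map $T$ itself is built from the Ornstein-Uhlenbeck interpolation $f_s:=P^{\mathrm{OU}}_s e^{-\bar h}$, which satisfies $f_s\gamma\to\gamma$ as $s\to\infty$. The associated deterministic flow $\dot Y_s=-\nabla\log f_s(Y_s)$ started from $Y_0\sim\rho$ satisfies $\mathrm{Law}(Y_s)=f_s\gamma$, so $Y_\infty\sim\gamma$ and $T:=Y_\infty^{-1}$ pushes $\gamma$ onto $\rho$. Using the identity $f_s(x)=S_{1-e^{-2s}}e^{-\bar h}(e^{-s}x)$ together with the Hessian bound above, the Jacobian $J_s=DY_s$ obeys $\frac{d}{ds}|J_sv|^2\ge 2\lambda(s)|J_sv|^2$ with
\begin{equation*}
\lambda(s):=e^{-2s}\cdot\frac{(1+\alpha(1-e^{-2s}))\alpha-\hat g'(0)}{(1+\alpha(1-e^{-2s}))^2},
\end{equation*}
so that $\|DT\|_{\mathrm{op}}\le\exp(-\int_0^\infty\lambda(s)\,ds)$. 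The change of variables $u=1-e^{-2s}$ evaluates $\int_0^\infty\lambda(s)\,ds=\tfrac{1}{2}\log(1+\alpha)-\tfrac{\hat g'(0)}{2(1+\alpha)}$, giving exactly $L=(\alpha+1)^{-1/2}\exp(\hat g'(0)/(2(\alpha+1)))$.

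The main delicate point is the passage from the integrated convexity bound $\kappa_h(r)\ge -r^{-1}\hat g(r)$ to the pointwise Hessian lower bound $\nabla^2 h\succeq-\hat g'(0)\,\mathrm I$. This uses the identity $\inf_x\lambda_{\min}(\nabla^2h(x))=\lim_{r\downarrow 0}\kappa_h(r)$ valid for any $h\in C^2$, together with the finite right-derivative hypothesis on $\hat g$, and requires the $C^\infty$ regularity of $\bar h_t$ for $t>0$ ensured by the Brownian regularization. Modulo this analytic point and standard technicalities regarding invertibility of $Y_\infty$, the remaining ingredients --- Theorem \ref{thm:HJB} and the Lipschitz pushforward inheritance of LSI --- are classical.
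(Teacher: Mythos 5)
Your proposal is correct and follows essentially the same route as the paper's proof: the Kim--Milman heat flow map along the Ornstein--Uhlenbeck interpolation, with the Hessian (equivalently, one-sided Lipschitz) bound on $-\log(dp_t/d\gamma)$ obtained exactly as in Lemma \ref{lemma:log_hess_bound} by combining the Mehler formula, a completion of squares isolating $h=\bar h-\tfrac{\alpha}{2}|\cdot|^2\in\mathcal F_{\hat g}$, Theorem \ref{thm:HJB}, and the concavity bound $r^{-1}\hat g(r)\le\hat g'(0)$, followed by Gronwall on the Jacobian and the same change of variables to evaluate $L$. The one presentational difference is that you run the flow forward from $\rho$ and invert the (expansive) map $Y_\infty$, whereas the paper runs the time-reversed flow on $[\delta_N,N-\delta_N]$ so that the transport map is directly contracting, and then disposes of the $s=0$ (possibly non-Lipschitz velocity $\nabla\bar h$) and $s\to\infty$ issues you defer by invoking the stability of Lipschitz transport under weak limits (Lemma \ref{lemma:Lipschitz_stability}).
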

\begin{proof}[Proof of Theorem \ref{thm:conv_prof_rigorous}]
Taking $\rho=\pi^y$ in Theorem \ref{th:newLSI-kappa}, notice that
\begin{align*}
    \bar{h}(x) &:=-\log \frac{d\pi^y}{d\gamma}(x) \stackrel{\eqref{eq:static_schr_br_dec}}{=} \bar\varphi(x) + \frac{1}{2\epsilon}|x-y|^2  - \frac12|x|^2 + \log  Z^y, \\ 
    Z^y &:= (2\pi)^{-d/2}\int \exp(-\bar\varphi(\bar{x}) +|\bar{x}-y|^2/2\epsilon)\,dx.
\end{align*} 
It follows from Theorem \ref{gio2023:thm} that, for all $r > 0$,
\begin{equation*}
\kappa_{\bar{h}}(r) \ge \kappa_{\bar{\varphi}}(r) + \epsilon^{-1} - 1 \ge \alpha_{\bar\varphi} + \epsilon^{-1} - 1 - r^{-1}\hat g _{U}(r).
\end{equation*}
Theorem \ref{thm:conv_prof_rigorous}(i) now follows immediately from Theorem \ref{th:newLSI-kappa}, by taking $\hat{g}=\hat{g}_U$ and $\alpha=\alpha_{\bar\varphi} + \epsilon^{-1} - 1$. The proof of Theorem \ref{thm:conv_prof_rigorous}(ii) is analogous. The fact that \eqref{eq:int_conv_prof_suff_cond} is a sufficient condition for the validity of \eqref{intro:newLSI} with constant given by \eqref{intro:newLSI_r} is a direct consequence of Theorem \ref{thm:suff_cond_new_LSI} and the fact that $\alpha_{\bar\varphi}\geq \alpha-\epsilon^{-1},\alpha_{\bar\psi}\geq \alpha_V-\epsilon^{-1}$, which is a consequence Theorem \ref{gio2023:thm}.
\end{proof}

The rest of this section is devoted to the proof of Theorem \ref{th:newLSI-kappa}.
We follow the technique of heat flow maps first introduced by Kim and Milman in \cite{kim2012generalization}: this powerful method combines ideas from time reversal and optimal transport to construct a deterministic map pushing forward the Gaussian distribution onto a target measure, thus allowing to transfer a wealth of properties, including $\mathrm{LSI}$, from the Gaussian onto the target. Roughly speaking, the basic idea is to interpolate between the target distribution and the Gaussian law following the marginal flow of the Ornstein-Uhlenbeck process. Reverting the time and representing the time-reversed flow through the associated velocity field in the sense of optimal transport, one obtains a deterministic map, called the heat flow map, pushing the Gaussian onto the target.
Recent works,  see \cite{mikulincer2021brownian,mikulincer2022lipschitz,fathi2023transportation}, have successfully applied this construction to show that probability measures in a certain class satisfy $\mathrm{LSI}$. However, to the best of our knowledge and understanding, this is the first time when the method is successfully applied to construct Lipschitz maps under the only assumption that the target distribution has an asymptotically positive convexity profile. 

Before proving Theorem \ref{thm:conv_prof_rigorous} we shall need two preparatory results. The first is a technical lemma about Lipschitz transport maps and the second lemma contains the main estimate needed for the proof of the Theorem.

\begin{lemma}[Lemma 1 in \cite{mikulincer2022lipschitz}]\label{lemma:Lipschitz_stability}
Let $(\gamma^N)_{N\geq1}$, $(\rho^N)_{N\geq1}$, $\gamma$, and $\rho$ be probability measures on $\R^d$ such that 
\begin{itemize}
\item For all $N$ there exist a $L_N$-Lipschitz map pushing forward $\gamma^N$ onto $\rho^N$,
\item $L:=\lim\sup_{N\rightarrow\infty}L_N<\infty$, and
\item $\gamma_N$  converges to $\gamma$ and $\rho_N$ converges to $\rho$ in distribution as $N\rightarrow \infty$.
\end{itemize}
Then there exists $L$-Lipschitz map pushing forward $\gamma$ onto $\rho$.
\end{lemma}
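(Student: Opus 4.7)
The plan is to combine equicontinuity (from the uniform Lipschitz bound) with local pointwise boundedness (from tightness of both marginals) to run Arzel\`a--Ascoli, and then to verify the pushforward identity by a weak-convergence argument. Let $T_N : \R^d \to \R^d$ denote an $L_N$-Lipschitz map sending $\gamma^N$ to $\rho^N$. Passing first to a subsequence, I may assume $L_N \to L = \limsup_N L_N$.

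The first main step is to anchor the maps $T_N$ in space. Weak convergence makes both $(\gamma^N)$ and $(\rho^N)$ tight, so I fix a compact $K \subset \R^d$ and $\delta < 1/2$ with $\gamma^N(K),\rho^N(K) \ge 1-\delta$ for every $N$. Because $T_N \# \gamma^N = \rho^N$, the set $\{x \in K : T_N(x) \in K\}$ has $\gamma^N$-measure at least $1-2\delta > 0$, so there exists $x_N \in K$ with $T_N(x_N) \in K$. Writing $C := \sup_{z \in K}|z|$, the Lipschitz estimate gives $|T_N(x)| \le C + L_N(|x|+C)$ for every $x \in \R^d$, which is uniformly bounded on every compact subset because $\sup_N L_N < \infty$.

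The second step is a standard Arzel\`a--Ascoli compactness combined with a diagonal extraction over a countable exhaustion of $\R^d$ by compact sets: this yields a further subsequence, still denoted $(T_N)$, converging locally uniformly to a continuous map $T : \R^d \to \R^d$. Since locally uniform limits of $L_N$-Lipschitz maps with $L_N \to L$ are themselves $L$-Lipschitz, $T$ has Lipschitz constant at most $L$.

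The last step is to verify $T\#\gamma = \rho$. For a bounded Lipschitz test function $\phi$, the identity $\int \phi\circ T_N\,d\gamma^N = \int \phi\,d\rho^N$ together with $\rho^N \to \rho$ weakly yields $\int \phi\circ T_N\,d\gamma^N \to \int \phi\,d\rho$. To identify this limit with $\int \phi \circ T\,d\gamma$, I split over a ball $B_R$ and its complement: tightness of $(\gamma^N)$ with boundedness of $\phi$ controls the complement uniformly in $N$, while local uniform convergence $T_N \to T$ on $B_R$ and the weak convergence $\gamma^N \to \gamma$ handle the bulk term; letting $R \to \infty$ gives $\int \phi\circ T\,d\gamma = \int \phi\,d\rho$ on a separating family, so $T\#\gamma = \rho$. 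The main subtlety lies in the anchoring step: without tightness of the targets $(\rho^N)$ the maps $T_N$ could a priori escape to infinity, so both weak-convergence hypotheses are genuinely used; the rest is routine Arzel\`a--Ascoli and a weak-convergence diagram chase.
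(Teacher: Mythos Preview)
Your proof is correct. Note that the paper does not actually prove this lemma: it is quoted verbatim as Lemma~1 of \cite{mikulincer2022lipschitz} and used as a black box. Your argument is the standard one --- anchor the maps via tightness of both source and target sequences, extract a locally uniformly convergent subsequence by Arzel\`a--Ascoli, and pass to the limit in the pushforward identity by testing against bounded Lipschitz functions --- and all steps are justified. One small remark on the final step: since $\phi\circ T$ is globally bounded and continuous, you can use $\gamma^N \to \gamma$ directly on the whole space for that piece, which slightly streamlines the splitting; but your argument as written is fine.
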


\begin{lemma} \label{lemma:log_hess_bound}
Grant the assumptions of Theorem \ref{th:newLSI-kappa}. Let $\sigma_t = 1-e^{-2t}$ for $t \ge 0$.
Consider the SDE
\begin{equation}\label{eq:OU_X}
    dX_t= -X_tdt +\sqrt{2}dB_t, \quad X_0\sim \rho.
\end{equation}
Denote by $p_t$ the density of the law of $X_t$.
    Then, for all $t>0$ and $z\in\R^d$, in semidefinite order
    \begin{equation}\label{eq:log_hess_bound_p}
-\frac{e^{-2t}}{\sigma_t} \mathrm{I} \preceq \nabla^2\log \frac{dp_t}{d\gamma}(z)  \preceq -\frac{e^{-2t} }{1+\sigma_t \alpha}\Big(\alpha-\frac{\hat{g}'(0)}{1+\sigma_t\alpha}\Big)\mathrm{I}
    \end{equation}
\end{lemma}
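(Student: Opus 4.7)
The strategy is to express $\log(dp_t/d\gamma)$ in terms of the Brownian semigroup $S_t$ applied to $e^{-\bar{h}}$, and then to use Theorem \ref{thm:HJB} for one side of the inequality and a direct covariance identity for the other. Writing $X_t = e^{-t}X_0 + \sqrt{\sigma_t}Z$ with $Z \sim N(0,\mathrm{I})$ independent of $X_0 \sim \rho$, and using $d\rho/d\gamma = e^{-\bar{h}}$, integrating out $Z$ and completing the square in the exponent (the identity $-|x|^2/2 + |y|^2/2 - |y-e^{-t}x|^2/2\sigma_t = -|x-e^{-t}y|^2/2\sigma_t$) yields the clean Mehler-type formula
\begin{equation*}
\frac{dp_t}{d\gamma}(y) = S_{\sigma_t}e^{-\bar h}(e^{-t}y), \qquad \text{so} \qquad \nabla^2 \log \frac{dp_t}{d\gamma}(y) = -e^{-2t}\,\nabla^2 q_t(e^{-t}y),
\end{equation*}
where $q_t := -\log S_{\sigma_t}e^{-\bar h}$. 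It therefore suffices to establish $\frac{1}{1+\alpha\sigma_t}\bigl(\alpha - \hat g'(0)/(1+\alpha\sigma_t)\bigr)\mathrm{I} \preceq \nabla^2 q_t(z) \preceq \sigma_t^{-1}\mathrm{I}$.

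The upper bound on $\nabla^2 q_t$ (equivalent to the \emph{lower} bound in \eqref{eq:log_hess_bound_p}) is immediate from a direct calculation that uses no structural assumption on $\bar h$. Differentiation under the integral sign and the usual covariance-of-exponentials identity yield
\begin{equation*}
\nabla^2 q_t(z) = \sigma_t^{-1}\mathrm{I} - \sigma_t^{-2}\,\Cov_{\mu_z}(Y), \qquad \mu_z(dy) \propto e^{-\bar h(y) - |y-z|^2/2\sigma_t}\,dy,
\end{equation*}
and positive semidefiniteness of the covariance gives $\nabla^2 q_t(z) \preceq \sigma_t^{-1}\mathrm{I}$.

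For the lower bound on $\nabla^2 q_t$ (equivalent to the upper bound in \eqref{eq:log_hess_bound_p}), decompose $\bar h(x) = \bar h_0(x) + \tfrac{\alpha}{2}|x|^2$, so that $\kappa_{\bar h_0}(r) = \kappa_{\bar h}(r)-\alpha \ge -r^{-1}\hat g(r)$, i.e.\ $\bar h_0 \in \mathcal F_{\hat g}$. Completing the square in the quadratic $\alpha|y|^2/2$ inside the Gaussian kernel of $S_{\sigma_t}$ absorbs this term into a new Gaussian of variance $\tau := \sigma_t/(1+\alpha\sigma_t)$, giving
\begin{equation*}
q_t(z) = \tfrac{d}{2}\log(1+\alpha\sigma_t) + \frac{\alpha|z|^2}{2(1+\alpha\sigma_t)} + u_t\!\left(\frac{z}{1+\alpha\sigma_t}\right), \qquad u_t := -\log S_\tau e^{-\bar h_0}.
\end{equation*}
Applying Theorem \ref{thm:HJB} to $\bar h_0 \in \mathcal F_{\hat g}$ shows $u_t \in \mathcal F_{\hat g}$, i.e.\ $\kappa_{u_t}(r) \ge -r^{-1}\hat g(r)$ for all $r>0$. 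Since $u_t$ is $C^\infty$ (convolution with a Gaussian kernel), we may send $r \to 0^+$ in the integrated inequality; using $\hat g(0^+) = 0$ and the finiteness of the right-derivative $\hat g'(0)$, this gives the \emph{pointwise} Hessian bound $\nabla^2 u_t(w) \succeq -\hat g'(0)\,\mathrm{I}$. Differentiating the displayed expression for $q_t$ twice yields
\begin{equation*}
\nabla^2 q_t(z) = \frac{\alpha}{1+\alpha\sigma_t}\mathrm{I} + \frac{1}{(1+\alpha\sigma_t)^2}\nabla^2 u_t\!\left(\frac{z}{1+\alpha\sigma_t}\right) \succeq \frac{1}{1+\alpha\sigma_t}\Big(\alpha - \frac{\hat g'(0)}{1+\alpha\sigma_t}\Big)\mathrm{I}.
\end{equation*}

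The main technical point is the transfer from the \emph{integrated} profile bound $\kappa_{u_t}(r) \ge -r^{-1}\hat g(r)$ to a \emph{pointwise} Hessian bound, which requires $u_t \in C^2$ (automatic from Gaussian smoothing) and the hypothesis that $\hat g(r) \sim \hat g'(0)\,r$ as $r \to 0^+$. Finiteness of $S_{\sigma_t}e^{-\bar h}$ and the ability to differentiate under the integral follow from the asymptotic $\alpha$-convexity of $\bar h$ forced by the $\kappa$-bound, which guarantees sufficient Gaussian integrability.
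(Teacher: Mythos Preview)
Your proof is correct and follows essentially the same route as the paper: the Mehler-type identity $dp_t/d\gamma(z)=S_{\sigma_t}e^{-\bar h}(e^{-t}z)$, the quadratic split $\bar h=\bar h_0+\tfrac{\alpha}{2}|\cdot|^2$ with completion of the square to pass to $S_\tau e^{-\bar h_0}$ at variance $\tau=\sigma_t/(1+\alpha\sigma_t)$, and the invocation of Theorem~\ref{thm:HJB} together with $r^{-1}\hat g(r)\le \hat g'(0)$ are exactly the paper's steps. The only cosmetic differences are that the paper cites the bound $\nabla^2\log S_\tau e^{-\bar h}\succeq -\tau^{-1}\mathrm{I}$ from \cite{mikulincer2021brownian} rather than deriving it via your covariance identity, and it phrases the upper bound as a uniform-in-$r$ lower bound on $\kappa_{-\log(dp_t/d\gamma)}$ rather than passing to the pointwise Hessian via $r\to0$; both formulations are equivalent for the smooth function at hand.
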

\begin{proof}
Let $(S_t)_{t\geq0}$ be the semigroup generated by Brownian motion as defined in \eqref{def:BrownianSemigroup}. 
Denote by $(X^{\infty}_t)_{t\geq0}$ the stationary reversible Ornstein-Uhlenbeck process, i.e., the solution of \eqref{eq:OU_X} for the initial condition $X_0\sim\gamma$. We have by construction that, for any fixed $t>0$, the Radon-Nikodym derivative of the law of $(X_s)_{s \in [0,t]}$ on $C([0,T];\R^d)$ with respect to the law of $(X^\infty_s)_{s \in [0,t]}$ is given by $\exp(-\bar{h}(X^{\infty}_0))$. Noting that $dp_t/d\gamma$ is the Radon-Nikodym derivative of the law of $X_t$ with respect to the law of $X^\infty_t$, we deduce
\begin{equation}\label{eq:marg_density_cond_exp}
\begin{split}
 \frac{dp_t}{d\gamma}(z) &= \E[\exp(-\bar{h}(X^{\infty}_0))|X^{\infty}_t=z]\\&
 = \E[\exp(-\bar{h}(X^{\infty}_t))|X^{\infty}_0=z]=S_{\sigma_t}\exp(-\bar{h})(e^{-t}z),
\end{split}
\end{equation}
where to obtain the last identity we used the explicit representation of the Ornstein-Uhlenbeck transition density. At this point, the lower bound in \eqref{eq:log_hess_bound_p} is a consequence of the following known inequality, valid for all $\tau>0$ (see \cite[ Eq. 3.3]{mikulincer2021brownian}\footnote{Note that the paper \cite{mikulincer2021brownian} uses the notation $v(t,x):=\nabla\log P_{1-t}f(x)$ at equation 3.3 } for example)
\begin{equation*}
\nabla^2 \log S_{\tau} e^{-\bar{h}}  \succeq -\frac{1}{\tau}\mathrm{I}.
\end{equation*}
We now proceed to establish the upper bound in \eqref{eq:log_hess_bound_p}. Owing to the notation of Theorem \ref{thm:HJB},  the assumption $\kappa_{\bar{h}}(r) \geq \alpha -r^{-1}\hat{g}_{U}(r)$ for all $r>0$ can be equivalently rewritten as
\begin{equation}\label{eq:h_inv}
h\in\mathcal{F}_{\hat{g}}, \quad \text{where} \quad h(x) =\bar{h}(x)-\frac12 \alpha |x|^2 .
\end{equation}
Rewriting the semigroup in integral form and completing the squares we find, after some tedious though elementary calculations, that 
\begin{equation}\label{eq:tedious}
\log S_{\sigma_t}\exp(-\bar{h})(e^{-t}z)= -\frac{\alpha e^{-2t}}{2(1+\sigma_t \alpha )}|z|^{2} + \log S_{\tau}e^{-h}\Big(\frac{e^{-t}z}{1+\sigma_t \alpha }\Big) + \frac{d}{2} \log \tau/\sigma_t
\end{equation}
with $\tau := \sigma_t/(1+\sigma_t\alpha)$. Note that $\tau\geq0$ since $\alpha>-1$ and therefore $S_{\tau}e^{-h}$ is well-defined.
To conclude, we observe for $r > 0$ and   $\bar{r}:=r e^{-t}/(1+\sigma_t \alpha)$ that
\begin{equation*}
\begin{split}
\kappa_{-\log \frac{dp_t}{d\gamma}}(r) & \   \stackrel{\eqref{eq:marg_density_cond_exp}+\eqref{eq:tedious}}{=}  \ \frac{\alpha e^{-2t}}{1+\sigma_t \alpha} + \frac{e^{-2t}}{(1+\sigma_t \alpha)^2}\Big(\kappa_{-\log S_{\tau}e^{-h}}\Big)\Big(\bar{r}\Big)\\
&\stackrel{\eqref{eq:h_inv}+\text{Thm.} \ref{thm:HJB}}{\geq}\frac{\alpha e^{-2t}}{1+\sigma_t \alpha} -\frac{e^{-2t}}{(1+\sigma_t \alpha)^2} {\bar r}^{-1}\hat{g}(\bar{r}) \\
&\qquad \ \geq \qquad \  \frac{\alpha e^{-2t}}{1+\sigma_t \alpha} - \frac{e^{-2t}}{(1+\sigma_t \alpha)^2}\hat{g}'(0),
\end{split}
\end{equation*}
where the last step used the concavity of $\hat{g}$. Since the last bound holds uniformly on $r>0$ the desired conclusion follows by definition of  $\kappa$ in \eqref{eq:kappa_def}.
\end{proof}

\begin{proof}[Proof of Theorem \ref{th:newLSI-kappa}]
 Let $(X_t)_{t\geq0}$ be the Ornstein-Uhlenbeck process started at $\rho$ as in \eqref{eq:OU_X}. Fix a sequence $\delta_N$ such that $\lim_N\delta_N=0$ and define the time-reversed flow $(\overleftarrow{p}^N_{t})_{t\in[0,N-\delta_N]}=(p_{N-t})_{t\in[0,N-\delta_N]}$. It is well known that 
$(p_t)_{t\in[0,N]}$ is a classical solution over  $(0,N]\times\R^d$ of the Fokker Planck-equation
\begin{equation*}
\partial_t p_{t} -\Delta  p_{t} + \nabla \cdot ((\nabla \log \gamma) p_{t}) =0.
\end{equation*}
But then, $(\overleftarrow{p}^N_{t})_{t\in[0,N-\delta_N]}$ is a classical solution over $[0,N-\delta_N)\times\R^d$ of the continuity equation
\begin{equation*}
\partial_t\overleftarrow{p}^N_{t} + \nabla \cdot (v^N_t \overleftarrow{p}^N_{t})=0, \quad \text{with} \quad v^N_t(\cdot)=\log\frac{d\overleftarrow{p}^N_{t}}{d\gamma}(\cdot).
\end{equation*}
We can invoke item (1) from Lemma \ref{lemma:log_hess_bound} to obtain that for all $N>0$
\begin{equation*}
\sup_{t\in[0,N-\delta_N]} \| D v^N_t\|_{op} <\infty
\end{equation*}
But then, by Picard-Lindel\"of Theorem, for any $x\in\R^d$  there exist a unique global solution for
\begin{equation*}
\frac{d}{dt}\Phi^N(t,x) = v^N_t(\Phi^N(t,x)), \qquad
\Phi^N(0,x)=x.
\end{equation*}
Recognizing these as the characteristic curves of the above continuity equation, it holds that $\Phi^N(N-\delta_N,\cdot)$ pushes forward $p_N$ onto $p_{\delta_n}$.
Moreover, invoking again item (1) from Lemma \ref{lemma:log_hess_bound} provides the one sided Lipschitz estimate
\begin{equation}\label{eq:one_sided}
\langle v^N_t(x)-v^N_t(\hat{x}),x-\hat{x} \rangle \leq  -\frac{e^{-2(N-t)} }{1+\sigma_{N-t} \alpha}\Big(\alpha-\frac{\hat{g}'(0)}{1+\sigma_{N-t}\alpha}\Big) |x-\hat{x}|^2,
\end{equation}
for all $x,\hat{x}\in\R^d$ and $t \in [0,N-\delta_N]$.
But then, we have for all such $(x,\hat{x},t)$ that
\begin{equation*}
    \frac{d}{dt}|\Phi^N(t,x)-\Phi^N(t,\hat{x})|^2\leq -\frac{2e^{-2(N-t)} }{1+\sigma_{N-t} \alpha}\Big(\alpha-\frac{\hat{g}'(0)}{1+\sigma_{N-t}\alpha}\Big)
  |\Phi^N(t,x)-\Phi^N(t,\hat{x})|^2.
\end{equation*}
An application of Gronwall's Lemma then shows that $\Phi^N(N-\delta_N,\cdot)$ is $L_N$-Lipschitz with 
\begin{equation*}
L_N:=\exp\Big( -\int_{\delta_N}^N \frac{e^{-2t} }{1+\sigma_{t} \alpha}\Big(\alpha-\frac{\hat{g}'(0)}{1+\sigma_{t}\alpha}\Big) dt \Big).
\end{equation*}
We will compute $L:=\lim_{N\to\infty}L_N$ is below.
Summarizing the previous discussion, we have
\begin{itemize}
\item $\Phi^N(N-\delta_N,\cdot)$ pushes forward $p_N$ onto $p_{\delta_N}$
\item $\Phi^N(N-\delta_N,\cdot)$ is $L_N$-Lipschitz.  with $\lim_{N\rightarrow + \infty } L_N  = L$.
\item $p_N$  converges to $\gamma$ and $p_{\delta_n}$ converges to $\rho$ in distribution as $N\rightarrow \infty$. 
\end{itemize}
Applying Lemma \ref{lemma:Lipschitz_stability}, we deduce that there exist a $L$-Lipschitz map pushing forward $\gamma$ onto $\rho$.
To compute $L$, we define the function $\theta(t)=1+\sigma_t \alpha=1+\alpha(1-e^{-2t})$ and observe that
\begin{align*}
L &= \exp\bigg( -\int_0^\infty \frac{e^{-2t} }{1+\sigma_{t} \alpha}\Big(\alpha-\frac{\hat{g}'(0)}{1+\sigma_{t}\alpha}\Big) dt \bigg) \\
	&=\exp\bigg(-\frac12\int_{0}^{\infty} \Big(\frac{ \theta'(t)}{\theta(t)} - \frac{\hat{g}'(0)}{\alpha}\frac{\theta'(t)}{\theta^2(t)}\Big)dt\bigg) \\
	&= \bigg(\frac{\theta(\infty)}{\theta(0)}\bigg)^{-1/2}\exp\Bigg(   \frac{\hat{g}'(0)}{2\alpha}\Big(\frac{1}{\theta(0)}-\frac{1}{\theta(\infty)}\Big)\Bigg)\\
&=(\alpha+1)^{-1/2}\exp\Big( \frac{\hat{g}'(0)}{2(\alpha+1)}\Big). 
\end{align*}
This completes the proof. To prove the ``in particular" claim, note that $\gamma$ satisfies $\lsi{2}$, and a well known and straightforward argument shows that the $L$-Lipschitz pushforward $\rho$ satisfies $\lsi{2L^{-2}}$.
\end{proof}

\appendix

\section{Differentiability of potentials} \label{ap:differentiability}

Throughout this section, we impose the same assumptions on $(c,U,V)$ as in Theorem \ref{th:main}.
Here we will explain some technical details on the differentiation of the Schr\"odinger potentials, justifying Proposition \ref{pr:derivatives-potentials}. Recall the formula for $\pi$,
\begin{align}
\pi(x,y) = \exp\big((\varphi(x)+\psi(y) - c(x,y))/\epsilon \big)\mu(x)\nu(y),  \label{ap:def:pi}
\end{align}
with the potentials  satisfying
\begin{align*}
\exp(-\varphi(x)/\epsilon) &= \int_{\R^d} \exp\big( ( \psi(y) - c(x,y))/\epsilon \big) \nu(dy), \ \ \text{for all } x, \\
\exp(-\psi(y)/\epsilon) &= \int_{\R^d} \exp\big( (\varphi(x) - c(x,y))/\epsilon\big) \mu(dx), \ \ \text{for all } y.
\end{align*}
Recall that $(\varphi,\psi)$ are uniquely defined only up to an additive constant. We may choose the additive constant so that $\int \varphi\,d\mu=0$. Then, taking logs in \eqref{ap:def:pi} and integrating with respect to $\pi \in \Pi(\mu,\nu)$, we have
\begin{align*}
\int \varphi\,d\mu + \int \psi\,d\nu &= \epsilon \int \log\frac{d\pi}{d\mu\otimes\nu}\,d\pi + \int c\,d\pi \ge 0,
\end{align*}
because $c \ge 0$.
We thus obtain $\int\psi\,d\nu \ge 0$.
We start with a lemma on integrability.

\begin{lemma} \label{ap:le:integrability}
For any bounded set $B \subset\R^d$ and any $p > 0$, we have
\begin{align*}
\int_{\R^d} \sup_{x \in B}\exp\big( p( \psi(y) - c(x,y))/\epsilon \big)\,\nu(dy) < \infty.
\end{align*}
\end{lemma}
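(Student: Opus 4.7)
The plan is to dominate the integrand $\sup_{x \in B}\exp(p(\psi(y)-c(x,y))/\epsilon)$ by $K\,e^{\alpha |y|}$ for constants $K,\alpha$ depending on $B$, $p$, $\epsilon$ and the data, and then conclude via the subgaussianity of $\nu$, which implies $\int e^{\alpha |y|}\,\nu(dy)<\infty$ for every $\alpha>0$. The argument rests on a cancellation between the Schr\"odinger equation's upper bound on $\psi$ and the quasi-Lipschitz behavior of $c(\cdot,y)$ on compact sets.

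First I would fix a reference point $x_0 \in \R^d$; the convenient choice is $x_0=\int x\,\mu(dx)$, which is finite since $\mu$ is subgaussian. Because $\nabla^2 c$ is bounded by some $M$, one has $|\nabla_x c(\xi,y)|\le |\nabla_x c(0,0)|+M(|\xi|+|y|)$, and the mean value theorem along the segment from $x_0$ to $x\in B$ then gives $|c(x,y)-c(x_0,y)|\le C_B(1+|y|)$ uniformly in $x\in B$, with $C_B$ depending only on $B$, $x_0$ and the data. Consequently, $\sup_{x\in B}\exp(-pc(x,y)/\epsilon)\le \exp(-pc(x_0,y)/\epsilon)\,\exp(pC_B(1+|y|)/\epsilon)$. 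Next, applying Jensen's inequality to the Schr\"odinger identity $\exp(-\psi(y)/\epsilon)=\int\exp((\varphi(x)-c(x,y))/\epsilon)\,\mu(dx)$, together with the normalization $\int\varphi\,d\mu=0$ already chosen in the paper, yields $\psi(y)\le \int c(x,y)\,\mu(dx)$. A second-order Taylor expansion of $c(\cdot,y)$ at $x_0$ with the Hessian bound gives $\int c(x,y)\,\mu(dx)\le c(x_0,y)+\tfrac{M}{2}\int|x-x_0|^2\,\mu(dx)=:c(x_0,y)+C_0$, which is finite thanks to the finite second moment of $\mu$ (the linear term in the Taylor expansion vanishes by the choice of $x_0$).

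Multiplying the sup-bound by $\exp(p\psi(y)/\epsilon)$ and applying the upper bound on $\psi$, the $\pm p\,c(x_0,y)/\epsilon$ terms cancel exactly, leaving $\sup_{x\in B}\exp(p(\psi(y)-c(x,y))/\epsilon)\le e^{p(C_0+C_B)/\epsilon}\,e^{pC_B|y|/\epsilon}$. Integrating against $\nu$ yields the result, since subgaussianity of $\nu$ implies $\int e^{\alpha|y|}\,\nu(dy)<\infty$ for every $\alpha$ via the elementary inequality $\alpha|y|\le \alpha^2/(4\delta)+\delta|y|^2$. The main obstacle, and the only delicate point, is that the bound must hold for \emph{all} $p>0$, not merely $p$ small. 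A naive route using only $\psi(y)\le \int c(x,y)\,\mu(dx)\le \mathrm{const}\cdot(1+|y|^2)$ would yield integrability only for $p$ small enough that $p|y|^2/\epsilon$ remains dominated by the Gaussian tail of $\nu$. The crucial cancellation between the upper bound on $\psi(y)$ and the lower bound on $\inf_{x\in B}c(x,y)$, both carrying the same term $c(x_0,y)$, reduces the effective growth in $y$ from quadratic to linear, after which the subgaussian tail of $\nu$ absorbs any $p>0$.
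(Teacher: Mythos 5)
Your proof is correct. It shares with the paper the two central ingredients — Jensen's inequality applied to the Schr\"odinger equation under the normalization $\int\varphi\,d\mu=0$, $\int\psi\,d\nu\ge 0$ (giving $\psi(y)\le\int c(x,y)\,\mu(dx)$), and the boundedness of $\nabla^2 c$ — but the way you then control $\int c(x',y)\,\mu(dx')-c(x,y)$ is genuinely different. The paper Taylor-expands $c(x',y)$ around $x$, applies Young's inequality to the cross term $(x'-x)\cdot\nabla_xc(x,y)$, and ends up with a bound of the form $g(x)\exp(\delta R'|y|^2/2\epsilon)$; arbitrary $p>0$ is then handled by taking $\delta$ small enough that $p\delta R'/2\epsilon$ falls below the subgaussianity exponent of $\nu$. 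You instead expand around the barycenter $x_0=\int x\,\mu(dx)$, so the linear term integrates to zero and $c(x_0,y)$ cancels exactly against the mean-value bound $|c(x,y)-c(x_0,y)|\le C_B(1+|y|)$ on the compact set $B$. This reduces the growth in $y$ from quadratic-with-small-coefficient to linear, so you only need $\int e^{\alpha|y|}\,\nu(dy)<\infty$ for all $\alpha$ (a strictly weaker consequence of subgaussianity) rather than a quadratic exponential moment below the subgaussian threshold. Both arguments are equally rigorous; yours is marginally more robust in that it would survive under weaker tail assumptions on $\nu$ (any distribution with all exponential moments), while the paper's is marginally shorter since it never needs to single out the barycenter.
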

\begin{proof}
First, recall from above that we have chosen the normalization that makes $\int\varphi\,d\mu=0$ and $\int\psi\,d\nu\ge 0$. By Jensen's inequality, this yields
\begin{align*}
\psi(y) &= -\epsilon \log \int_{\R^d} \exp\big( (\varphi(x) - c(x,y))/\epsilon\big) \mu(dx) \\
	&\le \int_{\R^d}  (-\varphi(x) + c(x,y) ) \mu(dx) \\
	&\le \int_{\R^d} c(x,y) \,\mu(dx).
\end{align*}
Using this and the boundedness of $\nabla^2c$ in operator norm by some constant $R$, we have
\begin{align*}
 \psi(y) - c(x,y) &\le \int_{\R^d} \big(c(x',y) - c(x,y)\big) \,\mu(dx') \\
 	&\le \int_{\R^d} \Big((x'-x)\cdot\nabla_xc(x,y) + \frac12R|x'-x|^2\Big) \,\mu(dx') \\
 	&\le \frac12(R+\delta^{-1})\int_{\R^d} |x'-x|^2\,\mu(dx') + \frac12\delta|\nabla_xc(x,y)|^2,
\end{align*}
for any $\delta > 0$. Use boundedness of $\nabla^2c$ once again to find a constant $R'$ such that $|\nabla_xc(x,y)|^2 \le R'(1+|x|^2+|y|^2)$. Then
\begin{align*}
\exp\big( ( \psi(y) - c(x,y))/\epsilon \big) &\le g(x)  \exp\Big(\frac{\delta R'}{2\epsilon} |y|^2\Big),  \\
\text{where } \quad g(x) &:= \exp\bigg(\frac{R+\delta^{-1}}{2\epsilon}\int_{\R^d}|x'-x|^2\,\mu(dx') + \frac{\delta R'}{2}(1+|x|^2)\bigg).  
\end{align*}
Because $\nu$ is subgaussian, for any $p > 0$ we may choose $\delta$ small enough to guarantee that $\int_{\R^d}\exp(p\delta R' |y|^2/ 2\epsilon)\,\nu(dy) < \infty$. Because $\mu$ has finite second moment, the function $g$ is continuous and thus locally bounded.
\end{proof}

We now turn to differentiability. 
Recall the definition of weak derivatives: For locally integrable functions $f : \R^d \to \R$ and $g:\R^d\to\R^d$, we say $g=\nabla f$ in the weak sense if
\begin{equation*}
\int_{\R^d} f \,\mathrm{div} (h) = - \int_{\R^d} h \cdot \nabla g, \quad \forall h \in C^\infty_c(\R^d;\R^d).
\end{equation*}
Here $C^\infty_c(\R^d;\R^d)$ is the space of smooth compactly supported functions from $\R^d\to\R^d$.
Differentiating under the integral sign is quite a simple matter when working with weak derivatives. Indeed, if a weakly differentiable function $f : \R^d \times \R^d \to \R$ is such that 
\begin{equation}
1_B(x)f(x,y), \ 1_B(x)|\nabla_xf(x,y)| \in L^1(dx\nu(dy)), \ \text{ for every open ball } B \subset \R^d, \label{ap:weakdiffcondition}
\end{equation}
then it is  an easy consequence of Fubini's theorem that (in the sense of weak derivatives)
\begin{align*}
\nabla_x \int_{\R^d} f(x,y)\,\nu(dy) = \int_{\R^d} \nabla_x f(x,y)\,\nu(dy) .
\end{align*}

We record here a well known \emph{chain rule} for weak derivatives, and an easy extension:

\begin{lemma} \label{le:chainrule}
Suppose $f > 0$ is weakly differentiable and that $|\nabla f|/f$ and $\log f$ are locally integrable. Then $\log f$ is weakly differentiable, and $\nabla \log f = \nabla f/f$.
\end{lemma}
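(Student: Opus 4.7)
The plan is a standard truncation-and-mollification of the logarithm, reducing to the smooth chain rule and then passing to the limit with the two hypotheses $|\nabla f|/f, \log f \in L^1_{\mathrm{loc}}$ furnishing the dominating functions. The main point to set up is a family of smooth approximations $\eta_k : (0,\infty) \to \R$ of $\log$ which are \emph{globally} bounded and Lipschitz (so the classical chain rule applies on Sobolev functions), and which satisfy the two pointwise bounds $|\eta_k(t)| \le |\log t|$ and $|\eta_k'(t)| \le 1/t$ for all $t > 0$, together with $\eta_k(t) \to \log t$ and $\eta_k'(t) \to 1/t$ as $k \to \infty$.

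Concretely I would take a smooth cutoff $\chi : \R \to [0,1]$ with $\chi \equiv 1$ on $[-1,1]$ and $\chi \equiv 0$ off $[-2,2]$, and define $\eta_k(t) := \int_1^t \chi((\log s)/k)\,s^{-1}ds$. Then $\eta_k$ is smooth, $\eta_k'(t) = \chi((\log t)/k)/t \in [0,1/t]$, $\eta_k$ is bounded (it is constant for $|\log t| \ge 2k$), and a direct case analysis on the regions $|\log t| \le k$, $k < |\log t| \le 2k$, and $|\log t| > 2k$ gives $|\eta_k(t)| \le |\log t|$. Since $\eta_k$ is $C^1$ and $\eta_k'$ is bounded, the standard chain rule for Sobolev functions (see, e.g., \cite[Section 5.2.2, Theorem 4]{EvansPDE}, applied locally since $f$ need only be in $W^{1,1}_{\mathrm{loc}}$) gives $\eta_k \circ f \in W^{1,1}_{\mathrm{loc}}$ with weak derivative $\eta_k'(f)\nabla f$, i.e.
\begin{equation*}
\int_{\R^d} \eta_k(f)\,\mathrm{div}(h)\,dx = -\int_{\R^d} \eta_k'(f)\nabla f \cdot h\,dx, \qquad \forall h \in C^\infty_c(\R^d;\R^d).
\end{equation*}

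To finish, I would let $k \to \infty$ in the identity above. Since $f > 0$ a.e., we have $\eta_k(f) \to \log f$ and $\eta_k'(f)\nabla f \to \nabla f/f$ pointwise a.e. The bound $|\eta_k(f)| \le |\log f|$ together with the assumed local integrability of $\log f$ handles the left-hand side by dominated convergence (for each fixed $h$, the integrand is supported in the compact set $\mathrm{supp}(h)$). Similarly, $|\eta_k'(f)\nabla f| \le |\nabla f|/f$, which is locally integrable by hypothesis, so dominated convergence applies on the right. This yields the defining identity for $\log f$ to be weakly differentiable with gradient $\nabla f/f$.

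The only place requiring genuine care is the construction and verification of the bounds on $\eta_k$ and $\eta_k'$; the rest is a routine application of dominated convergence once the hypotheses on $|\nabla f|/f$ and $\log f$ are recognized as precisely the ones needed to dominate the two sides.
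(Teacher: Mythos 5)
Your proof is correct and follows essentially the same route as the paper's: approximate $\log$ by globally Lipschitz functions, invoke the chain rule for such compositions with Sobolev functions, and pass to the limit by dominated convergence using exactly the two local integrability hypotheses. The only difference is cosmetic — you truncate symmetrically in log-scale (giving the clean bounds $|\eta_k|\le|\log|$ and $|\eta_k'(t)|\le 1/t$), whereas the paper modifies $\log$ only below $1/n$; note the relevant chain rule reference is \cite[Section 4.2.2]{EvansGariepy} rather than the PDE text.
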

\begin{proof}
Let $g_n : \R \to \R$ be smooth functions such that $\|g_n'\|_\infty < \infty$ for each $n$, $g_n(x) \downarrow \log x$ as $n\to\infty$, and $g_n(x) = \log(x)$ for $x \ge 1/n$. Using the well known chain rule for Lipschitz functions of weak derivatives \cite[Section 4.2.2]{EvansGariepy}, $g_n ( f)$ is weakly differentiable with $\nabla g_n (f) = g_n'(f)\nabla f$. Then, for any $h \in C^\infty_c(\R^d;\R^d)$, dominated convergence yields
\begin{align*}
\int \mathrm{div} (h ) \log f &= \lim_n \int \mathrm{div} (h ) g_n(f) = -\lim_n 
\int g_n'(f) h \cdot \nabla f = -\int h \cdot \frac{\nabla f}{f}.  \qedhere
\end{align*}
\end{proof}

\begin{proof}[Proof of Proposition \ref{pr:derivatives-potentials}]
We prove the claims only for $\varphi$, as $\psi$ is handled analogously.
We start by justifying the claimed formula for the first derivative, which was
\begin{align}
\nabla\varphi(x) = \E_\pi[\nabla_xc(X,Y)\,|\,X=x]. \label{ap:pf:claim0}
\end{align}
Note that the right-hand side is in $L^1(\mu)$ and thus in $L^1_{\mathrm{loc}}(\R^d)$, because $\mu$ is bounded away from zero on compact sets.
By the chain rule in the form of Lemma \ref{le:chainrule}, it suffices to show that
\begin{align}
\nabla_x e^{-\varphi(x)/\epsilon} = -\frac{1}{\epsilon}\int_{\R^d} \nabla_xc(x,y) \exp\big((\psi(y)-c(x,y))/\epsilon\big)\,\nu(dy). \label{ap:pf:claim1}
\end{align}
Indeed, then the calculation
\begin{align*}
\nabla \varphi(x) &= -\epsilon \frac{\nabla_x e^{-\varphi(x)/\epsilon}}{e^{-\varphi(x)/\epsilon}} = \frac{\int_{\R^d} \nabla_xc(x,y) \exp\big((\psi(y)-c(x,y))/\epsilon\big)\,\nu(dy)}{\int_{\R^d}  \exp\big((\psi(y)-c(x,y))/\epsilon\big)\,\nu(dy)} \\
	&= \frac{\int_{\R^d} \nabla_xc(x,y) \pi(x,y)\,dy}{\int_{\R^d} \pi(x,y)\,dy}
\end{align*}
completes the proof of \eqref{ap:pf:claim0}. To prove \eqref{ap:pf:claim1}, we must justify the interchange of weak derivative and integral, by checking that the function
\begin{align*}
f(x,y) &= \exp\big( ( \psi(y) - c(x,y))/\epsilon \big)
\end{align*}
satisfies the condition \eqref{ap:weakdiffcondition}. But this follows easily from Lemma \ref{ap:le:integrability}, after recalling that $|\nabla_xc(x,y)|$ has linear growth and $\nu$ has finite moments of every order.

We finally show the formula for the second derivatives,
\begin{align*}
\nabla^2\varphi(x) = \E_\pi[\nabla_{xx}^2 c(X,Y)\,|\,X=x] - \epsilon^{-1}\Cov_\pi(\nabla_x c(X,Y)\,|\,X=x).
\end{align*}
Indeed, the details of the chain rule and condition \eqref{ap:weakdiffcondition} are checked similary as in the proof of \eqref{ap:pf:claim0}, and we may differentiate a second time under the integral sign to get
\begin{align*}
\nabla_{xx}^2 e^{-\varphi(x)/\epsilon} &= -\frac{1}{\epsilon}\int_{\R^d} \nabla_{xx}^2c(x,y) \exp\big((\psi(y)-c(x,y))/\epsilon\big)\,\nu(dy) \\
	&\qquad  + \frac{1}{\epsilon^2}\int_{\R^d} \nabla_xc(x,y)^{\otimes 2} \exp\big((\psi(y)-c(x,y))/\epsilon\big)\,\nu(dy).
\end{align*}
Then, using the identity $\nabla^2\log h =  \nabla^2 h/h - (\nabla \log h)^{\otimes 2}$, we have
\begin{align*}
\nabla_{xx}^2\varphi(x) &= -\epsilon\bigg( \frac{\nabla_{xx}^2 e^{-\varphi(x)/\epsilon}}{e^{-\varphi(x)/\epsilon}} - \epsilon^{-2}(\nabla_x\varphi(x))^{\otimes 2}\bigg) \\
	&= \E_\pi[\nabla_{xx}^2 c(X,Y)\,|\,X=x] - \epsilon^{-1}\E_\pi[(\nabla_x c(X,Y))^{\otimes 2}\,|\,X=x] \\
	&\qquad + \epsilon^{-1}\E_\pi[\nabla_x c(X,Y)\,|\,X=x]^{\otimes 2}. \qedhere
\end{align*}
\end{proof}

\bibliographystyle{amsplain}
\bibliography{EOTbib}

\end{document}